\newcommand{\ie}{i.e.,\xspace}
\newtheorem{theorem}{Theorem}[section]
\newtheorem{corollary}[theorem]{Corollary}
\newtheorem{observation}[theorem]{Observation}
\newtheorem{lemma}[theorem]{Lemma}
\theoremstyle{definition}
\newtheorem{definition}[theorem]{Definition}
\theoremstyle{remark}
\newtheorem{remark}[theorem]{Remark}
\newcommand{\cB}{\ensuremath{\mathcal{B}}}
\newcommand{\cC}{\ensuremath{\mathcal{C}}}
\newcommand{\cD}{\ensuremath{\mathcal{D}}}
\newcommand{\cE}{\ensuremath{\mathcal{E}}}
\newcommand{\cF}{\ensuremath{\mathcal{F}}}
\newcommand{\cH}{\ensuremath{\mathcal{H}}}
\newcommand{\cI}{\ensuremath{\mathcal{I}}}
\newcommand{\cL}{\ensuremath{\mathcal{L}}}
\newcommand{\cP}{\ensuremath{\mathcal{P}}}
\newcommand{\cQ}{\ensuremath{\mathcal{Q}}}
\newcommand{\cR}{\ensuremath{\mathcal{R}}}
\newcommand{\cS}{\ensuremath{\mathcal{S}}}
\newcommand{\cT}{\ensuremath{\mathcal{T}}}
\newcommand{\cY}{\ensuremath{\mathcal{Y}}}
\newcommand{\cX}{\ensuremath{\mathcal{X}}}
\newcommand{\cZ}{\ensuremath{\mathcal{Z}}}
\newcommand{\XY}{\mathcal{\mathcal{D}}^*}
\newcommand{\bN}{\ensuremath{\mathbb{N}}}
\newcommand{\zo}{\{0,1\}}
\newcommand\xor{\oplus}
\newcommand\Xor{\bigoplus}
\newcommand{\EH}{Erd\H{o}s-Hajnal\xspace}
\DeclareMathOperator{\dgn}{dgn}
\DeclareMathOperator{\sch}{sch}
\DeclareMathOperator{\ch}{ch}
\DeclareMathOperator{\degree}{deg}
  \newcommand{\SuggestChange}[2]{{\color{red} \relax\ifmmode\text{\st{$#1$}}\else \st{#1}\fi}\ {\color{blue} #2}}
\tikzset{
    emptygraph/.pic={
        % First column of vertices
        \foreach \i in {1, 2, 3} {
            \node[circle, draw, fill=black, inner sep=1pt] (col1-\i) at (0, -\i*0.5) {};
        }
    
        % Second column of vertices
        \foreach \i in {1, 2, 3} {
            \node[circle, draw, fill=black, inner sep=1pt] (col2-\i) at (0.75, -\i*0.5) {};
        }
    
        % Add horizontal dots between second and third columns
        \node at (1.45, -1) {\dots};
    
        % Third column of vertices
        \foreach \i in {1, 2, 3} {
            \node[circle, draw, fill=black, inner sep=1pt] (col3-\i) at (2, -\i*0.5) {};
        }
    }  
}
\tikzset{
    oneedgegraph/.pic={
        % The edge
        \node[circle, draw, fill=black, inner sep=1pt] (col1-1) at (0.2, -0.65) {};
        \node[circle, draw, fill=black, inner sep=1pt] (col1-2) at (0.2, -1.35) {};
        \draw (col1-1) -- (col1-2); % Edge between the two nodes
    
        % Second column of vertices
        \foreach \i in {1, 2, 3} {
            \node[circle, draw, fill=black, inner sep=1pt] (col2-\i) at (0.75, -\i*0.5) {};
        }
    
        % Add horizontal dots between second and third columns
        \node at (1.45, -1) {\dots};
    
        % Third column of vertices
        \foreach \i in {1, 2, 3} {
            \node[circle, draw, fill=black, inner sep=1pt] (col3-\i) at (2, -\i*0.5) {};
        }
    }  
}
\tikzset{
    cliqueplusvertex/.pic={
        % Draw the larger circle
        \draw[thick] (0, 0) circle (0.7cm);
    
        % Draw the two diametral lines rotated by 45 degrees
        \draw[thick, rotate around={45:(0,0)}] (-0.7, 0) -- (0.7, 0); % Horizontal diameter rotated by 45 degrees
        \draw[thick, rotate around={45:(0,0)}] (0, -0.7) -- (0, 0.7); % Vertical diameter rotated by 45 degrees
    
        % Draw the small node
        \node[circle, draw, fill=black, inner sep=1pt] at (1.3, 0) {};
    }
}
\tikzset{
    cliqueplusis/.pic={
        % Draw the larger circle
        \draw[thick] (0, 0) circle (0.7cm);
    
        % Draw the two diametral lines rotated by 45 degrees
        \draw[thick, rotate around={45:(0,0)}] (-0.7, 0) -- (0.7, 0); % Horizontal diameter rotated by 45 degrees
        \draw[thick, rotate around={45:(0,0)}] (0, -0.7) -- (0, 0.7); % Vertical diameter rotated by 45 degrees
    
        % First column of vertices
        \foreach \i in {1, 2, 3} {
            \node[circle, draw, fill=black, inner sep=1pt] (col2-\i) at (1.25, -\i*0.5+1) {};
        }
    
        % Add horizontal dots between second and third columns
        \node at (1.95, 0) {\dots};
    
        % Second column of vertices
        \foreach \i in {1, 2, 3} {
            \node[circle, draw, fill=black, inner sep=1pt] (col3-\i) at (2.5, -\i*0.5+1) {};
        }
    }
}
\tikzset{
    degone/.pic={
        % First edge
        \node[circle, draw, fill=black, inner sep=1pt] (col1-1) at (0, 0.35) {};
        \node[circle, draw, fill=black, inner sep=1pt] (col1-2) at (0, -0.35) {};
        \draw (col1-1) -- (col1-2); % Edge between the two nodes

        % Second edge
        \node[circle, draw, fill=black, inner sep=1pt] (col2-1) at (0.75, 0.35) {};
        \node[circle, draw, fill=black, inner sep=1pt] (col2-2) at (0.75, -0.35) {};
        \draw (col2-1) -- (col2-2); % Edge between the two nodes

        % Add horizontal dots between second and third columns
        \node at (1.55, 0) {\dots};

        % Third edge
        \node[circle, draw, fill=black, inner sep=1pt] (col3-1) at (2.2, 0.35) {};
        \node[circle, draw, fill=black, inner sep=1pt] (col3-2) at (2.2, -0.35) {};
        \draw (col3-1) -- (col3-2); % Edge between the two nodes
    }  
}
\tikzset{
    equivgraph/.pic={
        % Draw the first larger circle
        \draw[thick] (-1.8, 0) circle (0.7cm);
    
        % Draw the two diametral lines rotated by 45 degrees
        \draw[thick, rotate around={45:(-1.8,0)}] (-2.5, 0) -- (-1.1, 0); % Horizontal diameter rotated by 45 degrees
        \draw[thick, rotate around={45:(-1.8,0)}] (-1.8, -0.7) -- (-1.8, 0.7); % Vertical diameter rotated by 45 degrees

        % Draw the first larger circle
        \draw[thick] (0, 0) circle (0.7cm);
    
        % Draw the two diametral lines rotated by 45 degrees
        \draw[thick, rotate around={45:(0,0)}] (-0.7, 0) -- (0.7, 0); % Horizontal diameter rotated by 45 degrees
        \draw[thick, rotate around={45:(0,0)}] (0, -0.7) -- (0, 0.7); % Vertical diameter rotated by 45 degrees
    
        % Add horizontal dots between second and third columns
        \node at (1.5, 0) {\dots};
    
        % Draw the second larger circle
        \draw[thick] (3, 0) circle (0.7cm);
    
        % Draw the two diametral lines rotated by 45 degrees
        \draw[thick, rotate around={45:(3,0)}] (2.3, 0) -- (3.7, 0); % Horizontal diameter rotated by 45 degrees
        \draw[thick, rotate around={45:(3,0)}] (3, -0.7) -- (3, 0.7); % Vertical diameter rotated by 45 degrees
    }
}
\title{Boolean combinations of graphs}
\author{Sarosh Adenwalla\thanks{Department of Computer Science, University of Liverpool, UK, \texttt{sarosh.adenwalla@liverpool.ac.uk}, \orcidlink{0009-0009-8582-1281}} 
\and 
Samuel Braunfeld\thanks{Charles University, Computer Science Institute, Malostransk\'{e} N\'{a}m\v{e}st\'{i} 25, 150 00 Prague, Czech Republic; and The Czech Academy of Sciences, Institute of Computer Science, Pod Vod\'{a}renskou v\v{e}\v{z}\'{\i} 2, 182 00 Prague, Czech Republic, \texttt{sbraunfeld@iuuk.mff.cuni.cz}, \orcidlink{0000-0003-3531-9970}} 
\and 
John Sylvester\thanks{Department of Computer Science, University of Liverpool, UK, \texttt{john.sylvester@liverpool.ac.uk}, \orcidlink{0000-0002-6543-2934}} 
\and 
Viktor Zamaraev\thanks{Department of Computer Science, University of Liverpool, UK, \texttt{viktor.zamaraev@liverpool.ac.uk}, \orcidlink{0000-0001-5755-4141}}}
\date{}
\begin{document}

\maketitle

\begin{abstract}
    Boolean combinations allow combining given combinatorial objects to obtain new, potentially more complicated, objects. In this paper, we initiate a systematic study of this idea applied to graphs. In order to understand expressive power and limitations of boolean combinations in this context, we investigate how they affect different combinatorial and structural properties of graphs, in particular $\chi$-boundedness, as well as characterize the structure of boolean combinations of graphs from various classes.
\end{abstract}

\newpage

\setcounter{tocdepth}{2}
\tableofcontents

\newpage

%%%%%%%%%%%%%%%%%%%%%%%%%%%%%%%%%%%%%%%%%%%%%%
\section{Introduction}
%%%%%%%%%%%%%%%%%%%%%%%%%%%%%%%%%%%%%%%%%%%%%%

Understanding complexity of graphs with respect to combinatorial, structural, or algorithmic properties is a reoccurring theme in graph theory.
A broad approach to measure complexity of graphs is to express them using a small number of simpler graphs.
This includes modular decomposition \cite{Gal67}, graph coverings \cite{Bei68}, edge-decompositions \cite{Mer16}, tree decomposition \cite{Hal76,RS84}, and logical transductions \cite{CE12}.

For example logical transductions, are used to construct (transduce) target graphs from base graphs using some additional relations and logics of certain type. When the base graphs are simple enough (e.g., trees) and logic is not too general (e.g., first order or monadic second order logic) this approach leads to families of graph classes that admit efficient algorithms for the entire class of problems that can be formulated in the respective logic \cite{CE12,GKS17}.

Graph coverings and edge-decompositions are examples of how graphs can be expressed as unions of other graphs.
The corresponding measures of complexity of a graph $G$
with respect to a class $\cX$ are defined as the minimum number of graphs in $\cX$ whose union or edge-disjoint union coincides with $G$.
These include clique cover number (with respect to cliques) \cite{EGP66,Alon86},
threshold dimension (wrt threshold graphs) \cite{CH77}, 
thickness (wrt planar graphs) \cite{MOS98},
equivalence covering number (wrt equivalence graphs) \cite{Duc79,Alon86}, arboricity (wrt forests) \cite{NW64}, linear arboricity (wrt linear forests) \cite{Har70}, star arboricity (wrt star forests) \cite{AK84,AA89}, biclique cover number or bipartite dimension (wrt complete bipartite graphs) \cite{FH96}, biparticity (wrt bipartite graphs) \cite{HHM77}.

Similarly, graphs can be expressed as intersections of several graphs with certain properties. Complexity measures associated with such representations are known as intersection dimensions.
The intersection dimension of a graph $G$ with respect to a graph class $\cX$ is the minimum number $k$ such that $G$ is the intersection of $k$ graphs from $\cX$ \cite{CozzensR89,KratochvilT94}. Intersection dimensions of graphs with respect to specific graph classes have been studied extensively 
and can be encountered under different names, including boxicity (with respect to interval graphs) and cubicity (wrt unit interval graphs) \cite{Rob69}, 
circular dimension (wrt circular-arc graphs) \cite{Fei79, She80}, overlap dimension (wrt circle graphs \cite{CozzensR89}),
chordality (wrt chordal graphs) \cite{McKeeS93}, Ferrers dimension (wrt chain graphs) \cite{ChaplickHOSU17}, thereshold dimension (wrt threshold graphs) and cograph dimension (wrt cographs) \cite{ChackoF21}.

Another related type of graph representations is expression of graphs as XORs (sum mod 2) of several simpler graphs.
XORs of complete bipartite graphs \cite{BCCNORY23}, graphs consisting of a clique and isolated vertices \cite{BuchananPR22,PouzetKT21}, paths \cite{BBCFRY23} have been studied in the literature.

A natural generalization of the above three types of graphs representations (i.e., unions, intersections, and XORs) are expressions of graphs as arbitrary boolean combinations of several simpler graphs.
Boolean combinations of graphs have recently proved useful in the context of adjacency labeling schemes\footnote{We postpone the definitions of most terms mentioned in the introduction until the next section.} \cite{Cha23, HZ24}. This is due to a simple fact that if graphs from a class can be expressed as boolean combinations of graphs from another class, then an adjacency labeling scheme for the later class can naturally be extended to an adjacency labeling scheme for the former class and the size of adjacency labels is increased only by a constant factor.

Motivated by this application, the main goal of this paper is to initiate a systematic study of boolean combinations of graphs with the aim of understanding their expressive power and limitations. In order to do this, we study how boolean combinations affect different combinatorial and structural graph properties, as well as characterizing the structure of boolean combinations of graphs from various classes.

To outline our results, we first give formal definitions of boolean functions of graphs and graph classes.
For a graph $G=(V,E)$ and two distinct vertices $u,w \in V$, we denote by $G(u,w)$ the indicator of the adjacency relation between $u$ and $w$ in $G$, \ie $G(u,w)=1$ if $u$ and $w$ are adjacent in $G$ and $G(u,w)=0$ otherwise.
Let $G, H_1, H_2, \ldots, H_k$ be graphs on the same vertex set $V$. 
We say that $G$ is a \emph{boolean combination} or a \emph{boolean function} (or simply a \emph{function}) \emph{of}  $H_1, H_2, \ldots, H_k$, if there exists a boolean function
$f : \{0,1\}^k \rightarrow \{0,1\}$ such that 
$$
	G(a,b) = f\left( H_1(a,b), H_2(a,b), \ldots, H_k(a,b) \right)
$$
holds for all distinct $a,b \in V$. 

This definition naturally extends to classes of graphs. A \emph{class} of graphs is a set of graphs closed under isomorphism.
Let $\cX$ and $\cY$ be two classes of graphs and $k \in \bN$. We say that the class $\cX$ is a \emph{$k$-function} of $\cY$ if every graph $G \in \cX$ is a function of at most $k$ graphs in $\cY$. 
We say that $\cX$ is a \emph{function} of $\cY$, if $\cX$ is a $k$-function of $\cY$ for some $k \in \bN$. 

\paragraph{Boolean combinations and properties of graph classes.}
In \cref{sec:properties}, we consider several properties of graph classes and show how they are affected by boolean combinations. 
Let $\cX$ and $\cY$ be two graph classes such that $\cY$ is a function of $\cX$.
We observe that the number of graphs in $\cY$ cannot be too much larger than that in $\cX$ (\cref{sec:speed}), and 
note that an adjacency labeling scheme for graphs in $\cX$ can naturally be used to construct an adjacency labeling scheme for graphs in $\cY$ of the same size up to a constant factor (\cref{sec:adj-labeling}).
We further show that the \EH property (\cref{sec:EH-property}) and edge-stability (\cref{sec:stability}) are preserved by boolean combinations of graph classes. We also analyze how boolean combinations affect the 
neighborhood complexity and VC dimension of a graph class (\cref{sec:neighborhood-complexity-vc-dimension}).

Naturally, not every property of graph classes is preserved by boolean combinations. For example,  monadic stability, monadic dependence, and $\chi$-boundedness, are not preserved. The first two of these properties, together with a broader model-theoretic perspective into boolean combinations of graphs, are discussed in \cref{sec:model-theory}.
The effect of boolean combinations on $\chi$-boundedness is studied in detail in  \cref{sec:chi-boundedness} and discussed further later in this section.

\paragraph{Boolean functions of special graph classes.} 

In \cref{sec:special-graph-classes}, we provide characterizations of boolean combinations of some special classes of graphs.
In \cref{sec:intersection-closed-classes}, we observe a convenient way of representing boolean combinations of intersection-closed graph classes. We apply this to boolean combination of monotone graph classes in \cref{sec:monotone-classes}, and, in particular, characterize boolean combinations of graph classes of bounded degree, bounded degeneracy, bounded chromatic number, and weakly-sparse graph classes.
In \cref{sec:equivalence-graphs}, we consider a hierarchy of subclasses of equivalence graphs (i.e., graphs in which every component is a clique) and characterize boolean combinations of these classes. In particular, we show that some known families of graph classes can be characterized precisely as boolean functions of some classes of equivalence graphs.

\paragraph{Boolean functions of graphs and $\chi$-boundedness.}

Informally, a graph class is $\chi$-bounded, if  the chromatic number of every induced subgraph $G$ of every graph in the class can be bounded in terms of the clique number of $G$ via a function that depends only on the class.
Classes of graphs that are $\chi$-bounded are natural extensions of the class of perfect graphs,
which are exactly the graphs in which chromatic number of each induced subgraph is equal to its clique number. 
A systematic study of $\chi$-bounded graph classes was initiated by Gy{\'a}rf{\'a}s \cite{Gya87} in the 1980s and remains an active area of research to date (see the survey \cite{SS20}).
The main goal of this line of research is to understand which graph classes are $\chi$-bounded.
Since boolean functions allow us to construct new graph classes from a given class in a controlled but rather general way, it is natural to understand how boolean combinations affect $\chi$-boundedness. 
We investigate this question in detail in \cref{sec:chi-boundedness}.

In general, it is known that boolean combinations do not preserve $\chi$-boundedness (see \cref{sec:intbdd}).
We identify in \cref{sec:intbdd} some broad conditions on a class that imply preservation of $\chi$-boundedness under boolean combinations.
We further show that $\chi$-boundedness is preserved in some special classes of graphs including split graphs (\cref{sec:inteval-graphs}) and permutation graphs (\cref{sec:permutation}). In fact, for the latter two classes, we show that their boolean combinations are \emph{polynomially} $\chi$-bounded, i.e., the chromatic number is upper-bounded by a polynomial function of the clique number.
Finally, in \cref{sec:chi-boundedness-equiv-graphs} we study $\chi$-boundedness of boolean combinations of classes of equivalence graphs, one of the simplest classes of perfect graphs.
Equivalence graphs are permutation graphs, and therefore, by the above, their boolean combinations are polynomially $\chi$-bounded. We show that, in general, the polynomial upper bound cannot be improved (\cref{sec:chi-non-linear}). We complement this by identifying some restrictions under which boolean combinations of equivalence graphs are \emph{linearly} $\chi$-bounded (\cref{sec:chi-linear}), and even perfect (\cref{sec:chi-perfect}).

%%%%%%%%%%%%%%%%%%%%%%%%%%%%%%%%%%%%%%%%%%%%%%
\section{Preliminaries}
%%%%%%%%%%%%%%%%%%%%%%%%%%%%%%%%%%%%%%%%%%%%%%

In this section, we introduce most of the notions and notation that we use in the paper. 
We denote by $\bN$ the set of natural numbers, i.e., the set of positive integers. For a set $S$ and a natural number $k \in \bN$, we denote by $\binom{S}{k}$ the set of all $k$-element subsets of $S$.

%%%%%%%%%%%%%%%%%%%%%%%%%%%%%%%%%%%%%%%%%%%%%%
\subsection{Graphs}
%%%%%%%%%%%%%%%%%%%%%%%%%%%%%%%%%%%%%%%%%%%%%%

All graphs in this work are simple, i.e., undirected, without loops or multiple edges. 
We denote by $V(G)$ and $E(G)$ the vertex set and the edge set of a graph $G$, respectively.
Given a graph $G=(V,E)$ and a vertex $v \in V$, the neighborhood of $v$ in $G$ is the set of its neighbors, i.e., vertices $w\in V$, such that $w$ is adjacent to $v$. The neighborhood of $v$ is denoted by $N_G(v)$. The degree of $v$, denoted by $\degree_G(v)$, is the cardinality of $N_G(v)$; the co-degree of $v$ is $|V|-\degree_G(v)-1$, i.e., the number of non-neighbors of $v$. 
When $G$ is clear from the context, we sometimes write $N(v)$ and $\degree(v)$ instead of $N_G(v)$ and $\degree_G(v)$.
The \emph{maximum degree of $G$}, denoted $\Delta(G)$, is the maximum degree of a vertex in $G$.
A \emph{complete} graph on a vertex set $V$ is the graph in which all vertices are pairwise adjacent; an \emph{empty} graph on $V$ is the graph with no edges. 
A graph is \emph{homogeneous} if it is either a complete or an empty graph.
A graph is \emph{bipartite} if its vertex set can be partitioned into two independent sets, called \emph{parts} of the bipartite graph.
A bipartite graph is \emph{complete} if every vertex in one of its parts is adjacent to every vertex in the other part.
A \emph{biclique} of size $k$ is a complete bipartite graph with $k$ vertices in each of the parts.

For a set of vertices $S \subseteq V$, we denote by $G[S]$ the subgraph of $G$ \emph{induced} by $S$, i.e., the graph with vertex set $S$ where $x,y \in S$ are adjacent in $G[S]$ if and only if $x,y$ are adjacent in $G$.
For disjoint sets $A,B \subseteq V$, we denote by $G[A,B]$ the bipartite graph with parts $A,B$, where $a \in A$ and $b \in B$ are adjacent in $G[A,B]$ if and only if they are adjacent in $G$; we say that $A$ is \emph{complete to} $B$ in $G$ if $G[A,B]$ is complete, and we say that $A$ is \emph{anticomplete to} $B$ in $G$ if $G[A,B]$ has no edges.
The \emph{complement} $\overline{G}$ of $G$ is the graph with the same vertex set as $G$ in which two vertices are adjacent if and only if they are not adjacent in $G$.
A set of pairwise adjacent vertices in $G$ is a \emph{clique} of $G$ and a set of pairwise non-adjacent vertices is an \emph{independent set} of $G$.
The maximum size of a clique (respectively, an independent set) in $G$ is called the \emph{clique number} (respectively, the \emph{independence number}) of $G$ and denoted as $\omega(G)$ (respectively, $\alpha(G)$). The \emph{chromatic number} $\chi(G)$ of $G$ is the minimum number of colors in a proper vertex coloring of $G$, i.e., a coloring of its vertices in which no two adjacent vertices get assigned the same color.

Given two graphs $G_1=(V_1,E_1)$ and $G_2=(V_2,E_2)$, the \emph{union} $G_1 \cup G_2$ and the \emph{intersection} $G_1 \cap G_2$ of the two graphs are the graphs $(V_1 \cup V_2, E_1 \cup E_2)$ and $(V_1 \cap V_2, E_1 \cap E_2)$, respectively.
If $V_1$ and $V_2$ are disjoint, then we call $G_1 \cup G_2$ the disjoint union of $G_1$ and $G_2$ and denote it by $G_1+G_2$. 
For a graph $G$ and $p \in \bN$ we denote by $pG$ a graph isomorphic to the disjoint union of $p$ copies of $G$.

We use standard notation for common graphs: $K_n$, $O_n$, $K_{n,m}$, $C_n$, $P_n$ denote respectively the $n$-vertex complete graph, the $n$-vertex empty graph, the complete bipartite graph with the parts of size $n$ and $m$, the $n$-vertex cycle, and the $n$-vertex path.

An \emph{equivalence graph} is a graph in which every connected component is a complete graph.

\paragraph{Graph parameters.}
We define a \emph{graph parameter} as a function $\sigma$ that assigns to every graph a non-negative integer such that
$\sigma(G_1) = \sigma(G_2)$ whenever $G_1$ and $G_2$ are isomorphic. The graph parameter $\sigma$ is bounded in a class of graphs $\cX$ if
there exists $c \in \bN$ such that $\sigma(G) \leq c$ for every $G \in \cX$; otherwise $\sigma$ is unbounded in $\cX$.

The maximum degree, the clique number, the independence number, and the chromatic number are examples of graph parameters. We now define some further graph parameters that we consider in this paper. For a graph $G$,
\begin{enumerate}
    \item the \emph{degeneracy} of $G$, denoted $\dgn(G)$, is the minimum integer $d$
    such that every subgraph of $G$ has a vertex of degree at most $d$; 

    \item the \emph{biclique number} of $G$ is the maximum $k\in \bN$ such that there exist two disjoint sets $A = \{a_1, \ldots, a_k\}$ and $B = \{b_1, \ldots, b_k\}$ where $a_i$ is adjacent to $b_j$ in $G$ for all~$i,j \in [k]$;

    \item the \emph{chain number} of
    $G$, denoted $\ch(G)$, is the maximum $k\in \bN$ such that there exist two disjoint sets $A = \{a_1, \ldots, a_k\}$ and $B = \{b_1, \ldots, b_k\}$ where $a_i$ is adjacent to $b_j$ in $G$ if and only if $i \leq j$.
\end{enumerate}

\paragraph{complementation operations.}
We will use a number of known complementation operations and also introduce a new complementation operation that generalizes the known ones. As we shall observe, each of these operations can be seen as the XOR function of the input graph with a certain equivalence graph.

Let $G=(V,E)$ be an $n$-vertex graph.
Given a vertex $v$ of $G$ \emph{the local complementation of $G$ centered at $v$} is the graph obtained from $G$ by complementing the subgraph induced by $N(v)$, the neighborhood of $v$.
A graph $H$ is a local complementation of $G$, if $H$ is the local complementation of $G$ centered at $v$ for some $v \in V$.
The local complementation operation is known to preserve the rank-width of a graph \cite{Oum05}.

The more general operation of subgraph complementation was studied in the context of clique-width \cite{KLM09}. In particular, it was shown that this operation preserves boundedness of clique-width.
Given a subset $U \subseteq V$, the \emph{subgraph complementation of $G$ with respect to $U$} is the graph that is obtained from $G$ by complementing the edges in the induced subgraph $G[U]$. A graph $H$ is a subgraph complementation of $G$, if $H$ is the subgraph complementation of $G$ with respect to some subset of $V$.
We note that subgraph complementation generalizes both the standard graph complementation and the local complementation operations (the local complementation of $G$ centered at some vertex $v$ is the subgraph complementation of $G$ with respect to $N(v)$).

We now introduce a new complementation operation that generalizes the subgraph complementation operation.
Given a partition $\cP = (V_1, V_2, \ldots, V_k)$ of $V$, the \emph{partition complementation of $G$ with respect to $\mathcal{P}$}
is the graph that is obtained from $G$ by complementing the edges in each of the induced subgraphs $G[V_1], G[V_2], \ldots, G[V_k]$.
A graph $H$ is a partition complementation of $G$, if $H$ is the partition complementation of $G$ with respect to some partition of $V$.
We observe that the subgraph complementation of $G$ with respect to $U$ is the partition complementation of $G$ with respect to $\cP$, where the partition classes of $\cP$ include $U$ and one singleton set for each vertex in~$V \setminus U$.

\paragraph{Graph classes.}
A \emph{class} of graphs is a set of graphs that is closed under isomorphism. For a class of graph $\cX$, we denote by $\cX^n$ the set of \emph{labeled} $n$-vertex graphs in $\cX$, i.e., graphs in $\cX$ with vertex set $[n]:= \{ 1,2, \ldots, n \}$.
The class $\overline{\cX}$ consists of the graphs $G$ such that $\overline{G}\in\cX$. Class $\cX$ is called \emph{self-complementary} if $\cX = \overline{\cX}$.

A graph class $\cX$ is \emph{hereditary} if it is closed under taking induced subgraphs or, equivalently, is closed under removing vertices. Class $\cX$ is \emph{monotone} if it is closed under taking subgraphs or, equivalently, is closed under removing edges and vertices. Class $\cX$ is \emph{intersection-closed} if for any two $G_1,G_2 \in \cX$, not necessarily on the same vertex set, the intersection $G_1 \cap G_2$ also belongs to $\cX$. Every monotone class is intersection-closed as the intersection of any two graphs is a subgraph of each of them. Furthermore, every intersection-closed class is hereditary. Indeed, let $\cX$ be an intersection-closed class, $G=(V,E)$ be a graph in $\cX$, and $v$ be a vertex in $G$. We claim that the graph $H$ obtained from $G$ by removing $v$ belongs to $\cX$. To see this, define $G'=(V',E')$ as a graph obtained from $G$ by renaming $v$; more formally, let $v'$ be an element outside $V$ and define $V' = (V \cup \{v'\}) \setminus \{v\}$ and $E'=\{ (x,y) \in E : x \neq v, y \neq v\} \cup \{(v',y) : (v,y) \in E \}$. Then $G'=(V',E')$ is isomorphic to $G$ and thus belongs to $\cX$. Furthermore, $G \cap G'$ is equal to $H$ and hence $H$ belongs to $\cX$.
Given a hereditary graph class $\cX$, we say that a graph class $\cY$ is a \emph{proper} subclass of $\cX$ if $\cY$ is hereditary and strictly contained in $\cX$. 

Let $M$ be a set of graphs. We say that a graph $G$ is \emph{$M$-free} if no graph in $M$ is an induced subgraph of $G$. In this case, the graphs in $M$ are called \emph{forbidden induced subgraphs} for $G$. If $M$ consists of a single graph $H$, we write $H$-free instead of $\{H\}$-free. 

A graph class $\cX$ is hereditary if and only if there exists a set $M$ of graphs such that a graph $G$ is in $\cX$ if and only if $G$ is $M$-free. The set $M$ is unique under the additional assumption that no element is an induced subgraph of any other.

\paragraph{Properties of graph classes.}
Let $\cX$ be a class of graphs. The function $n \mapsto |\cX^n|$ is the \emph{speed} of $\cX$.

The class $\cX$ is said to have the \emph{\EH property} if there exists a constant $\delta > 0$ such that every $n$-vertex graph $G \in \cX$ has an independent set or a clique of size at least $n^{\delta}$, i.e.,
if $\max\{ \alpha(G), \omega(G) \} \geq n^{\delta}$.

The class $\cX$ is \emph{$\chi$-bounded} if there exists a function $f : \bN \rightarrow \bN$ such that $\chi(H)\leq f(\omega(H))$ holds for every induced subgraph $H$ of a graph $G \in \cX$; 
in this case, the function $f$ is called a \emph{$\chi$-binding function} for $\cX$. If $\cX$ is $\chi$-bounded, then it has a \emph{smallest $\chi$-binding function} defined as
\[
    f^*(k) = \max\{ \chi(G) : G \in \cX, \omega(G) = k \}.
\]

Every graph parameter can be used to define a graph class property as ``boundedness'' of the parameter.
For example, a graph class is said to be
\begin{enumerate}
    \item of \emph{bounded degree} if the maximum degree is bounded in the class; 
    
    \item of \emph{bounded degeneracy} if the degeneracy is bounded in the class;

    \item \emph{weakly-sparse} if the biclique number is bounded in the class; and
    
    \item \emph{edge-stable} if the chain number is bounded in the class.
\end{enumerate}

%%%%%%%%%%%%%%%%%%%%%%%%%%%%%%%%%%%%%%%%%%%%%%%%%%%%
\subsection{Boolean functions}
%%%%%%%%%%%%%%%%%%%%%%%%%%%%%%%%%%%%%%%%%%%%%%%%%%%%

We usually represent boolean functions with boolean formulae which consist of variables and logical connectives (such as NOT, AND, OR, XOR). 
We say that a boolean formula is in disjunctive normal form (DNF) if it is a disjunction consisting of one or more conjunctive clauses, each of which is a conjunction of one or more literals (variables or their negations). 

A boolean function $f: \zo^k \rightarrow \zo$ is called \emph{monotone} if
$f(a_1, a_2, \ldots, a_k) \leq f(b_1, b_2, \ldots, b_k)$ for every 
$(a_1, a_2, \ldots, a_k), (b_1, b_2, \ldots, b_k) \in \zo^k$ with $a_i \leq b_i$ for every $i \in [k]$. Every monotone boolean function can be represented with a \emph{monotone} DNF, i.e., a DNF in which all literals are non-negated.

\begin{theorem}[see e.g.~{\cite[Theorem 1.21]{CH11}}]\label{th:monotone-DNF}
    If $f$ if a monotone boolean function, then $f$ can be represented with a monotone DNF.
\end{theorem}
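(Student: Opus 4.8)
The plan is to prove that every monotone boolean function $f : \zo^k \to \zo$ can be written as a monotone DNF, i.e.\ a disjunction of conjunctions of non-negated variables. The natural approach is to start from the canonical DNF (the ``sum of minterms'') that every boolean function admits, and then show that monotonicity lets us discard all the negated literals.

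\emph{Step 1: Reduce to the minterm expansion.} First I would recall that any boolean function $f$ can be written as
\[
    f(x_1,\ldots,x_k) = \bigvee_{\substack{a \in \zo^k \\ f(a)=1}} \; \bigwedge_{i=1}^{k} \ell_i^{a},
\]
where the literal $\ell_i^a$ is $x_i$ if $a_i = 1$ and $\overline{x_i}$ if $a_i = 0$. This is a DNF, but it may use negated literals, so it is not yet monotone. The goal is to replace this by a disjunction taken only over the \emph{minimal} satisfying assignments, dropping the negated literals.

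\emph{Step 2: Build a monotone DNF from the minimal true points.} For each minimal $a \in \zo^k$ with $f(a)=1$ (minimal in the coordinatewise order, meaning no $b < a$ has $f(b)=1$), form the monotone clause $C_a = \bigwedge_{i : a_i = 1} x_i$. Define $g = \bigvee_{a \text{ minimal}, f(a)=1} C_a$. This $g$ is by construction a monotone DNF, so it remains to show $g = f$. I would argue the two inclusions separately using the definition of monotonicity, namely that $x \le y$ coordinatewise implies $f(x) \le f(y)$.

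\emph{Step 3: Verify $g = f$.} For one direction, suppose $g(x)=1$. Then some clause $C_a$ is satisfied by $x$, meaning $x_i = 1$ for all $i$ with $a_i=1$; hence $a \le x$ coordinatewise, and since $f(a)=1$, monotonicity gives $f(x) \ge f(a) = 1$, so $f(x)=1$. For the converse, suppose $f(x)=1$. Since $\zo^k$ is finite, the set $\{b \le x : f(b)=1\}$ is nonempty (it contains $x$) and has a minimal element $a$; this $a$ is in fact globally minimal among the true points (any $b < a$ with $f(b)=1$ would contradict minimality within $\{b \le x\}$), so $C_a$ appears in $g$. Because $a \le x$, every variable in $C_a$ is set to $1$ by $x$, so $C_a$ is satisfied and $g(x)=1$. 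The main thing to be careful about is the minimality bookkeeping in this second direction, ensuring the chosen $a$ is genuinely one of the clauses used in defining $g$; the rest is a routine application of monotonicity. This completes the proof.
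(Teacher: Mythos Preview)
Your proof is correct and follows the standard argument via minimal true points. Note, however, that the paper does not give its own proof of this theorem: it is stated as a known fact with a citation to \cite[Theorem 1.21]{CH11}, so there is nothing in the paper to compare your argument against. Your Step~1 (the full minterm expansion) is not actually used in Steps~2--3 and could be omitted; the construction of $g$ from minimal satisfying assignments and the two-direction verification in Step~3 already constitute a complete proof.
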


Furthermore, it is known that any boolean function can be uniquely represented as a XOR of conjunctions of positive (i.e.~non-negated) variables.

\begin{theorem}[Zhegalkin polynomial aka Algebraic Normal Form \cite{Zhe27}]\label{th:anf}
    Any boolean function $f(x_1, x_2, \ldots, x_k)$ has a unique representation in Algebraic Normal Form, i.e., there exist 
    distinct sets $I_1, I_2, \ldots, I_s \subseteq [k]$ such that
    $$
        f(x_1, x_2, \ldots, x_k) = \Xor_{j=1}^{s} \bigwedge_{i \in I_j} x_i.
    $$
\end{theorem}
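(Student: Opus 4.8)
The plan is to phrase everything in terms of the $\bF_2$-vector space $V$ of all functions $f\colon \zo^k \to \zo$, where addition is pointwise XOR and scalars are taken from $\bF_2 = \zo$. Since $V$ is spanned by the $2^k$ indicator functions of the points of $\zo^k$, which are manifestly linearly independent, we have $\dim V = 2^k$. For each $I \subseteq [k]$ let $m_I$ denote the monomial function $m_I(x) = \bigwedge_{i \in I} x_i$, with the convention that $m_\emptyset$ is the constant function $1$; there are exactly $2^k$ such monomials. An ANF representation of $f$ as in the statement is nothing but an expression of $f$ as an $\bF_2$-linear combination of the $m_I$ (each monomial either appears, with coefficient $1$, or is absent), and the sets $I_1, \ldots, I_s$ are precisely those monomials whose coefficient equals $1$. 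Thus both existence and uniqueness of the ANF follow at once from the single claim that $\{m_I : I \subseteq [k]\}$ is a basis of $V$.

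Because this family has exactly $2^k = \dim V$ elements, it suffices to prove that the monomials are linearly independent over $\bF_2$; spanning, and hence existence of the representation, then follows automatically, while uniqueness is exactly the statement that the coefficients in such a linear combination are determined by the resulting function. I therefore reduce the entire theorem to this one linear independence statement.

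To prove linear independence, I would suppose toward a contradiction that some nonempty collection $\cS$ of subsets of $[k]$ satisfies $\Xor_{I \in \cS} m_I \equiv 0$ as a function on $\zo^k$. For $J \subseteq [k]$ write $\mathbf{1}_J \in \zo^k$ for the indicator vector of $J$, and observe that $m_I(\mathbf{1}_J) = 1$ if and only if $I \subseteq J$. Choose $I_0 \in \cS$ that is minimal with respect to inclusion, and evaluate the supposed identity at $x = \mathbf{1}_{I_0}$. The only $I \in \cS$ contributing a $1$ are those with $I \subseteq I_0$; by minimality of $I_0$ no proper subset of $I_0$ lies in $\cS$, so $I_0$ is the unique such set. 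Hence the left-hand side evaluates to $m_{I_0}(\mathbf{1}_{I_0}) = 1 \neq 0$, contradicting the assumption. This establishes independence and completes the proof.

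The main obstacle is isolating the right evaluation point: selecting a $\subseteq$-minimal $I_0 \in \cS$ and testing at $\mathbf{1}_{I_0}$ is exactly what forces all but one monomial to vanish, turning a potentially tangled cancellation over $\bF_2$ into a single surviving term. Everything else is bookkeeping with the dimension count and the observation that ANF expressions correspond bijectively to subsets of the $2^k$ monomials, of which there are $2^{2^k} = |V|$, matching the number of boolean functions on $k$ variables.
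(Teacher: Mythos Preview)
Your proof is correct. The paper does not supply its own proof of this theorem; it is stated with a citation to Zhegalkin's original work and used as a black box. Your argument via the $\bF_2$-vector space of boolean functions, the dimension count $\dim V = 2^k$, and the linear-independence proof by evaluating at the indicator of a $\subseteq$-minimal set is a standard and complete treatment, so there is nothing to compare against and no gap to point out.
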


%%%%%%%%%%%%%%%%%%%%%%%%%%%%%%%%%%%%%%%
\subsection{Boolean functions of graphs}
%%%%%%%%%%%%%%%%%%%%%%%%%%%%%%%%%%%%%%%

\paragraph{Functions of graphs.}
Let $G, H_1, H_2, \ldots, H_k$ be $n$-vertex graphs on the same vertex set $V$. 
We say that $G$ is a \emph{boolean combination} or a \emph{boolean function} (or simply a \emph{function}) of $H_1, H_2, \ldots, H_k$, if there exists a boolean function
$f : \{0,1\}^k \rightarrow \{0,1\}$ such that 
$$
    G(a,b) = f\left( H_1(a,b), H_2(a,b), \ldots, H_k(a,b) \right)
$$
holds for all distinct $a,b \in V$. 
In other words, the adjacency matrix of $G$ is obtained by applying $f$ to the adjacency matrices of $H_1, H_2, \ldots, H_k$ entry-wise (except the elements on the main diagonal).
Abusing notation, we will sometimes write $G = f(H_1, H_2, \ldots, H_k)$.
If $f$ is defined by $x_1 \vee x_2 \vee \cdots \vee x_k$, $x_1 \wedge x_2 \wedge \cdots \wedge x_k$, or $x_1 \xor x_2 \xor \cdots \xor x_k$, we will say that $G$ is the $k$-union, $k$-intersection, $k$-XOR, respectively, of $H_1, H_2, \ldots, H_k$. In these cases, we may write $G = H_1 \vee H_2 \vee \cdots \vee H_k$, $G = H_1 \wedge H_2 \wedge \cdots \wedge H_k$, $G = H_1 \xor H_2 \xor \cdots \xor H_k$, respectively.
In particular, the notation $H_1 \vee H_2$, $H_1 \wedge H_2$, $H_1 \xor H_2$
assumes that both $H_1=(V,E_1)$ and $H_2=(V,E_2)$ have the same vertex set, and   denotes the graphs $(V, E_1 \cup E_2)$, $(V, E_1 \cap E_2)$, and $(V, (E_1 \setminus E_2) \cup (E_2 \setminus E_1))$, respectively.

We make some useful observations about boolean combinations of graphs involving XOR functions. In particular, the first three observations describe how graph complementation, subgraph complementation, and partition complementation can be seen as XOR of the graph with certain equivalence graphs.

\begin{observation}\label{obs:xor-with-complete-graph}
    Let $G$ be an $n$-vertex graph. Then $\overline{G} = G \xor K_n$.
\end{observation}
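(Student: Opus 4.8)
The plan is to unpack the definitions and verify the edge-by-edge identity directly, since the statement $\overline{G} = G \xor K_n$ is really just a restatement of the definition of complementation in the language of boolean functions of graphs. First I would recall that for graphs on a common vertex set $V$, the notation $G \xor K_n$ denotes the graph on $V$ whose adjacency function is the XOR of the adjacency functions of $G$ and $K_n$; that is, for all distinct $a,b \in V$ we have $(G \xor K_n)(a,b) = G(a,b) \xor K_n(a,b)$, where $f(x_1,x_2) = x_1 \xor x_2$.

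The key step is to evaluate $K_n(a,b)$. Since $K_n$ is the complete graph on the $n$-element vertex set $V$, every pair of distinct vertices is adjacent, so $K_n(a,b) = 1$ for all distinct $a,b \in V$. Substituting this in gives $(G \xor K_n)(a,b) = G(a,b) \xor 1$ for all distinct $a,b$. Now $x \xor 1 = 1 - x = \lnot x$ for $x \in \{0,1\}$, so $(G \xor K_n)(a,b) = \lnot G(a,b)$, which equals $1$ precisely when $a$ and $b$ are non-adjacent in $G$.

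Finally I would match this against the definition of the complement: $\overline{G}(a,b) = 1$ if and only if $a,b$ are \emph{not} adjacent in $G$, i.e.\ $\overline{G}(a,b) = \lnot G(a,b)$ for all distinct $a,b$. Since $\overline{G}$ and $G \xor K_n$ agree on every pair of distinct vertices and share the vertex set $V$, they are the same graph. This completes the argument.

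There is no real obstacle here; the only thing to be careful about is the convention that boolean functions of graphs are applied only off the diagonal (to distinct pairs $a,b$), so that the absence of loops in all graphs involved is automatic and the diagonal never enters the computation. Everything reduces to the single observation $K_n(a,b)=1$ together with the truth table of XOR, so the proof is a one-line verification once the definitions are spelled out.
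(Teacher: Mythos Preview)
Your proof is correct and is exactly the direct definitional verification the paper has in mind; the paper states this as an observation without proof precisely because it follows immediately from $K_n(a,b)=1$ for all distinct $a,b$ and the identity $x \xor 1 = \lnot x$.
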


\begin{observation}\label{obs:xor-with-clique}
    Let $G=(V,E)$ be an $n$-vertex graph and let $S \subseteq V$ be a vertex subset of $G$. Let $H=(V,E')$ be the graph in which $S$ is a clique and all other vertices are isolated. 
    Then the subgraph complementation of $G$ with respect to $S$ is equal to $G \xor H$.
\end{observation}

\begin{observation}\label{obs:xor-with-equivalence}
    Let $G=(V,E)$ be a graph and $\mathcal{P}=(V_1,V_2,\ldots,V_k)$ be a partition of $V$. Let $H=(V,E')$ be the equivalence graph in which every $V_i$ is a maximal clique.
    Then the partition complementation of $G$ with respect to $\mathcal{P}$ is equal to $G \xor H$.
\end{observation}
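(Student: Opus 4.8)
The statement to prove is Observation~\ref{obs:xor-with-equivalence}: given a graph $G=(V,E)$ and a partition $\mathcal{P}=(V_1,\ldots,V_k)$ of $V$, if $H=(V,E')$ is the equivalence graph in which each $V_i$ is a maximal clique, then the partition complementation of $G$ with respect to $\mathcal{P}$ equals $G \xor H$.

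The plan is to verify the claimed equality at the level of individual vertex pairs, since both the partition complementation and the XOR operation are defined entrywise on adjacency. Let $G'$ denote the partition complementation of $G$ with respect to $\mathcal{P}$. I would fix an arbitrary pair of distinct vertices $a,b \in V$ and show that $G'(a,b) = (G \xor H)(a,b) = G(a,b) \xor H(a,b)$, and since this covers all off-diagonal entries of the adjacency matrices, the two graphs coincide.

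The argument splits into two cases according to whether $a$ and $b$ lie in the same partition class. First I would recall that, by construction of $H$, we have $H(a,b)=1$ precisely when $a$ and $b$ belong to the same class $V_i$ (since each $V_i$ is a clique and there are no edges between distinct classes), and $H(a,b)=0$ otherwise. In the case where $a,b \in V_i$ for some $i$, the definition of partition complementation complements the edges within $G[V_i]$, so $G'(a,b) = 1 - G(a,b) = G(a,b)\xor 1 = G(a,b)\xor H(a,b)$. In the case where $a$ and $b$ lie in different classes, the partition complementation leaves the adjacency of $a,b$ unchanged (it only modifies edges inside each $G[V_j]$), so $G'(a,b)=G(a,b)=G(a,b)\xor 0 = G(a,b)\xor H(a,b)$. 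In both cases the entrywise identity holds, which completes the proof.

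This observation is essentially a definitional unwinding rather than a deep result, so I do not anticipate any genuine obstacle. The only point requiring mild care is to confirm that $H$ records exactly the ``same-class'' relation: one must use both that each $V_i$ is a clique in $H$ (so intra-class pairs contribute $1$) and that distinct classes are mutually anticomplete in $H$ (so inter-class pairs contribute $0$), which is precisely the content of $H$ being the equivalence graph whose components are the classes $V_i$. With that correspondence in hand, the XOR with $H$ flips exactly those adjacencies that partition complementation is defined to flip.
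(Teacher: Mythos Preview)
Your proof is correct. The paper states this as an observation without proof, so there is nothing to compare against; your entrywise case analysis on whether the two vertices lie in the same partition class is exactly the routine verification the paper leaves implicit.
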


\begin{observation}\label{obs:xor-of-k-graphs}
    Let $G = H_1 \xor H_2 \xor \cdots \xor H_k$. Then two vertices $a,b \in V(G)$ are adjacent in $G$ if and only if $a$ and $b$ are adjacent in an odd number of graphs $H_1, H_2, \ldots, H_k$,
    i.e.,
    \[
        (a,b) \in E(G) \quad \Longleftrightarrow \quad 
        | \{i ~:~ (a,b) \in E(H_i), i \in [k] \}| \equiv 1 \pmod{2}.
    \]
\end{observation}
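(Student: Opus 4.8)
The plan is to unwind the definition of the $k$-XOR and reduce the statement to the elementary fact that the boolean XOR of a list of bits records the parity of how many of them equal~$1$. First I would invoke the definition of the $k$-XOR given just above: $G = H_1 \xor \cdots \xor H_k$ means $G = f(H_1, \ldots, H_k)$ for the boolean function $f(x_1,\ldots,x_k) = x_1 \xor \cdots \xor x_k$. By the definition of a boolean function of graphs, this says precisely that for all distinct $a,b \in V$ we have $G(a,b) = H_1(a,b) \xor \cdots \xor H_k(a,b)$, i.e.\ $G(a,b) = \Xor_{i=1}^{k} H_i(a,b)$.

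Next I would record the key arithmetic fact: for bits $c_1, \ldots, c_k \in \zo$, the value $c_1 \xor \cdots \xor c_k$ equals $\sum_{i=1}^{k} c_i \bmod 2$, and hence equals $1$ exactly when $|\{i : c_i = 1\}|$ is odd. This follows from the single-bit identity $x \xor y = (x+y) \bmod 2$ together with a trivial induction on $k$: assuming $c_1 \xor \cdots \xor c_{k-1} \equiv \sum_{i < k} c_i \pmod{2}$, one further XOR with $c_k$ adds $c_k$ modulo $2$, preserving the invariant.

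Finally I would specialize this to $c_i = H_i(a,b)$. Since $H_i(a,b) = 1$ holds exactly when $(a,b) \in E(H_i)$, the set $\{i : c_i = 1\}$ coincides with $\{i : (a,b) \in E(H_i),\, i \in [k]\}$. Combining the two steps, $(a,b) \in E(G)$ is equivalent to $G(a,b) = 1$, which is equivalent to $\Xor_{i=1}^{k} H_i(a,b) = 1$, which by the parity fact holds if and only if $|\{i : (a,b) \in E(H_i),\, i \in [k]\}|$ is odd. This is exactly the claimed equivalence.

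There is no genuine obstacle here: once one observes that multi-bit XOR computes parity, the statement is essentially a restatement of the definition of the $k$-XOR of graphs. The only point requiring mild care is that the equivalence is asserted only for distinct pairs $a,b$, so the main diagonal plays no role, consistent with the convention that $G(a,b)$ is defined only for distinct $a,b$.
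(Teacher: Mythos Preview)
Your proposal is correct and matches the paper's treatment: the paper states this observation without proof, regarding it as immediate from the definition of the $k$-XOR, and your argument simply unwinds that definition together with the standard fact that iterated boolean XOR computes parity.
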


\paragraph{Functions of graph classes.}
Let $\cX$ and $\cY$ be two classes of graphs and $k \in \bN$. We say that the class $\cY$ is a \emph{$k$-function} of $\cX$ if every graph $G \in \cY$ is a function of at most $k$ graphs in $\cX$. 
If $\cY$ is a $k$-function of $\cX$ for some $k \in \bN$, then we say that $\cY$ is a \emph{function} of $\cX$.
For graph classes $\cX_1, \cX_2, \ldots, \cX_k$ and a boolean function
$f : \{0,1\}^k \rightarrow \{0,1\}$, we write $f(\cX_1, \cX_2, \ldots, \cX_k)$ to denote the class of graphs $\{G=f(H_1, H_2, \ldots, H_k) : H_i=(V,E_i) \in \cX_i \}$.
It is easy to see that if all $\cX_i$, $i \in [k]$, are hereditary
then $f(\cX_1, \cX_2, \ldots, \cX_k)$ is also hereditary. 
If all $\cX_i$, $i \in [k]$, are the same and equal to $\cX$, we will sometimes write $f(\cX)$ to denote $f(\cX_1, \cX_2, \ldots, \cX_k)$.

\begin{remark}\label{rem:single-function}
    We note, by definition, if a class $\cY$ is a $k$-function of a class $\cX$, then $\cY \subseteq \bigcup_{f} f(\cX)$, where the union is taken over all $k$-variable boolean functions. For some class properties, such as (linear/polynomial) $\chi$-boundedness, a union of boundedly many classes possessing the property, also has the property. In such cases, we will usually establish the property for $f(\cX)$ for an arbitrary, but fixed $k$-variable boolean function $f$.
\end{remark}

We denote 
\[
    \bigvee_{i=1}^k \cX_i := \big\{ H_1 \vee H_2 \vee \cdots \vee H_k : H_i=(V,E_i) \in \cX_i, i \in [k] \big\},
\]
\[
    \bigwedge_{i=1}^k \cX_i := \big\{ H_1 \wedge H_2 \wedge \cdots \wedge H_k : H_i=(V,E_i) \in \cX_i, i \in [k] \big\},
\]
\[
    \Xor_{i=1}^k \cX_i := \big\{ H_1 \xor H_2 \xor \cdots \xor H_k : H_i=(V,E_i) \in \cX_i, i \in [k] \big\},
\]
and call these classes the union, intersection, and XOR of $\cX_1, \cX_2, \ldots, \cX_k$, respectively.
If all $\cX_i$, $i \in [k]$, are the same and equal to $\cX$, then we call $\bigvee_{i=1}^k \cX$, $\bigwedge_{i=1}^k \cX$, and $\Xor_{i=1}^k \cX$, the $k$-union, $k$-intersection, and $k$-XOR of $\cX$, respectively.

It is easy to see that if $\cX_i$, $i \in [k]$, are monotone,
then both $\bigvee_{i=1}^k \cX_i$ and $\bigwedge_{i=1}^k \cX_i$ are monotone. However, if $\cX$ is monotone, $\overline{\cX}$ is not necessarily monotone (which can be easily seen on the class of all edgeless graphs). Moreover, if $\cX$ contains all complete graphs, then $\overline{\cX} \subseteq \cX \xor \cX$ (as $G \xor K_n = \overline{G}$ holds for every $n$-vertex graph $G$), and thus the monotonicity of $\cX$ does not necessarily imply that $\cX \xor \cX$ is monotone.

%%%%%%%%%%%%%%%%%%%%%%%%%%%%%%%%%%%%%%%%
\section{Boolean functions and properties of graph classes}
\label{sec:properties}
%%%%%%%%%%%%%%%%%%%%%%%%%%%%%%%%%%%%%%%%

In this section we discuss how boolean combinations affect various properties of graph classes.

%%%%%%%%%%%%%%%%%%%%%%%%%%%%%%%%%%%%%%%%
\subsection{Speed of graph classes}
\label{sec:speed}
%%%%%%%%%%%%%%%%%%%%%%%%%%%%%%%%%%%%%%%%

Recall, the speed of a graph class $\cX$ is the function $n \mapsto |\cX^n|$, where 
$\cX^n$ is the set of graphs in $\cX$ with vertex set $[n]$.

\begin{lemma}\label{lem:speed}
    Let $\cX$ and $\cY$ be two classes of graphs, such that 
    $\cY$ is a function of $\cX$.
    Then $\log |\cY^n| = O(\log |\cX^n|)$.
\end{lemma}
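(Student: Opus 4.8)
The plan is to prove this by a direct counting argument: bound the number of labeled $n$-vertex graphs in $\cY$ by the number of ways to build such a graph as a boolean combination of graphs from $\cX$.

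Since $\cY$ is a function of $\cX$, there is a fixed constant $k \in \bN$ (depending only on $\cX$ and $\cY$) such that every graph in $\cY$ is a function of at most $k$ graphs from $\cX$. Fix $n$. For each $G \in \cY^n$, by definition there exist a boolean function $f \colon \zo^k \to \zo$ and graphs $H_1, \dots, H_k \in \cX^n$ on the common vertex set $[n]$ with $G = f(H_1, \dots, H_k)$. (If $G$ is a function of fewer than $k$ graphs in $\cX$, I would pad the list with arbitrary copies drawn from $\cX^n$, which is nonempty whenever $\cY^n$ is, and let $f$ ignore the extra coordinates; so one may always take exactly $k$ graphs.) Hence the map sending a triple $(f, H_1, \dots, H_k)$ to $f(H_1, \dots, H_k)$ is a surjection onto $\cY^n$ from the set of all such triples.

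First I would count the domain of this surjection. The number of boolean functions $f \colon \zo^k \to \zo$ is $2^{2^k}$, and the number of $k$-tuples $(H_1, \dots, H_k)$ of graphs from $\cX^n$ is $|\cX^n|^k$. Therefore $|\cY^n| \le 2^{2^k} \cdot |\cX^n|^k$, and taking logarithms gives $\log|\cY^n| \le 2^k + k \log|\cX^n|$.

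Finally, since $k$ is a constant independent of $n$, the term $k\log|\cX^n|$ is already $O(\log|\cX^n|)$, so it only remains to absorb the additive constant $2^k$. The one point that needs care is that $O(\log|\cX^n|)$ becomes vacuous if $\log|\cX^n|$ is not bounded away from zero; since $\cX$ is closed under isomorphism, this occurs precisely in the degenerate cases $\cX^n = \{K_n\}$ or $\cX^n = \{O_n\}$, where $|\cX^n| = 1$. Excluding these (equivalently, assuming $|\cX^n| \ge 2$ for all large $n$, so that $\log|\cX^n| \ge 1$), we obtain $2^k \le 2^k \log|\cX^n|$ and hence $\log|\cY^n| \le (k + 2^k)\log|\cX^n| = O(\log|\cX^n|)$, as required. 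I expect this bookkeeping with the additive constant to be the only subtle point; the heart of the argument is the one-line counting bound $|\cY^n| \le 2^{2^k}\,|\cX^n|^k$.
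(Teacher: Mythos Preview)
Your proof is correct and essentially identical to the paper's: both obtain the bound $|\cY^n| \le 2^{2^k}\,|\cX^n|^k$ by counting choices of a $k$-tuple of graphs and a boolean function, then take logarithms. You are in fact more careful than the paper, which simply writes $k\log|\cX^n| + 2^k = O(\log|\cX^n|)$ without commenting on the degenerate case $|\cX^n|=1$.
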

\begin{proof}
    Let $k \in \bN$ be such that $\cY$ is a $k$-function of $\cX$.
    By definition, any $n$-vertex graph in $\cY$ is determined %can be uniquely described
    by an ordered $k$-tuple $(H_1, H_2, \ldots, H_k)$ of $n$-vertex graphs from $\cX$ and a boolean function $f : \zo^k \rightarrow \zo$.
    Thus, the number of graphs in $\cY^n$ does not exceed $|\cX^n|^k \cdot 2^{2^k}$, and therefore
    \[
        \log |\cY^n| \leq k \cdot \log |\cX^n| + 2^k = O(\log |\cX^n|),
    \]
    as required.
\end{proof}

%%%%%%%%%%%%%%%%%%%%%%%%%%%%%%%%%%%%%%%%
\subsection{Adjacency labelling schemes}
\label{sec:adj-labeling}
%%%%%%%%%%%%%%%%%%%%%%%%%%%%%%%%%%%%%%%%

Let $\cX$ be a~class of graphs and $b : \bN \rightarrow \bN$ be a~function. An \emph{$b(n)$-bit adjacency labeling scheme} for $\cX$ is a~pair (encoder, decoder) of algorithms where for any $n$-vertex graph $G\in \cX_n$ the encoder assigns binary strings, called \emph{labels}, of length $b(n)$ to the vertices of $G$ such that the adjacency between any pair of vertices can be inferred by the decoder only from their labels.
We note that the decoder depends on the class $\cX$, but not on the graph~$G$.
The function~$b(\cdot)$ is the \emph{size} of the labeling scheme.

\begin{lemma}\label{lem:LS}
    Let $\cX$ and $\cY$ be two classes of graphs, such that 
    $\cY$ is a function of $\cX$.
    If $\cX$ admits a $b(n)$-bit adjacency labeling scheme, then
    $\cY$ admits a $O(b(n))$-bit adjacency labeling scheme.
\end{lemma}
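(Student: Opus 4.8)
The plan is to run, in parallel, the labeling scheme for $\cX$ on each of the $k$ graphs that witness $G$ as a function, and to record the combining boolean function inside the labels themselves. First I would fix $k \in \bN$ such that $\cY$ is a $k$-function of $\cX$, and let $(\mathrm{enc}, \mathrm{dec})$ denote the given $b(n)$-bit adjacency labeling scheme for $\cX$. Throughout, $k$ is a constant depending only on $\cX$ and $\cY$, which is exactly what will keep the blow-up in label size down to a constant factor.

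Next I would describe the encoder and decoder for $\cY$. Given an $n$-vertex graph $G \in \cY$ on vertex set $V = [n]$, by definition there exist a boolean function $f : \zo^k \to \zo$ and graphs $H_1, \dots, H_k \in \cX$, all on $V$, such that $G(a,b) = f(H_1(a,b), \dots, H_k(a,b))$ for all distinct $a,b \in V$. The encoder fixes one such decomposition, applies $\mathrm{enc}$ to each $H_i$ to obtain a label $\ell_i(v)$ of length $b(n)$ for every $v \in V$, and assigns to $v$ the label formed by concatenating the truth table $\tau_f \in \zo^{2^k}$ of $f$ with $\ell_1(v), \ell_2(v), \dots, \ell_k(v)$. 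On input $\ell(a), \ell(b)$, the decoder uses the fact that each block $\ell_i(\cdot)$ has the same known length $b(n)$ and that $\tau_f$ has fixed length $2^k$ to parse both labels into their constituent blocks; it then computes $H_i(a,b) = \mathrm{dec}(\ell_i(a), \ell_i(b))$ for each $i \in [k]$, reads $f$ off from $\tau_f$, and outputs $f(H_1(a,b), \dots, H_k(a,b))$, which equals $G(a,b)$ by the choice of decomposition. Parsing is unambiguous because the decoder knows $k$ and $2^k$, and can recover $b(n)$ from the common total label length.

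The resulting labels have length $k \cdot b(n) + 2^k$, and since $k$ is a fixed constant this is $O(b(n))$, as required. The only genuine subtlety—and the point I expect to be the main obstacle—is that the combining function $f$ may depend on the individual graph $G$, whereas the decoder is permitted to depend only on the class. Hard-wiring $f$ into the decoder is therefore not allowed; the fix is to transmit $f$ through the labels by appending its truth table, which has constant size $2^k$ and so costs only an additive $O(1)$ per vertex. Everything else is the routine bookkeeping of concatenating and splitting labels of known block lengths.
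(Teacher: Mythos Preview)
Your proof is correct and follows essentially the same approach as the paper: encode each vertex by concatenating its labels in the $k$ witness graphs together with the truth table of $f$, then decode by applying the $\cX$-decoder coordinatewise and evaluating $f$. You even flag the one genuine subtlety (that $f$ varies with $G$, so it must be stored in the labels rather than in the decoder), which the paper handles in the same way.
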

\begin{proof}
    Let $k \in \bN$ be such that $\cY$ is a $k$-function of $\cX$,
    and let $G=(V,E)$ be an arbitrary $n$-vertex graph in $\cY$.
    By assumption, for some $r \leq k$, there exists a boolean function $f : \zo^r \rightarrow \zo$ and $r$ graphs $H_1, H_2, \ldots, H_{r} \in \cX$ on the vertex set $V$ such that for any distinct $a,b \in V$ we have 
    $$
        G(a,b) = f(H_1(a,b), H_2(a,b), \ldots, H_r(a,b)).
    $$ 
    
    We describe the encoder for graph $G$, and the corresponding decoder for the class $\cY$.
    
    \noindent
    \textbf{Encoder.} Let $\ell_1, \ell_2, \ldots, \ell_r : [n] \rightarrow \{0,1\}^{b(n)}$ be functions that assign adjacency labels of size $b(n)$ to the vertices of the graphs $H_1, H_2, \ldots, H_r$, respectively.
    We define the labeling function $\ell : [n] \rightarrow \{0,1\}^{b'(n)}$, $b'(n) = r \cdot b(n) + 2^r =O(b(n))$ as follows. For a vertex $a$ of $G$, the label $\ell(a)$ is the concatenation of $\ell_1(a), \ell_2(a), \ldots, \ell_r(a)$, and the vector $\text{val}(f)$ of the values of $f$ on all $2^r$ input binary vectors.

    \medskip
	
    \noindent
    \textbf{Decoder.} Given two vertices $a$ and $b$, their adjacency in $G$ can be decided by (1) evaluating the adjacencies of these vertices in each $H_i$, $i \in [r]$, using the $i$-th components $\ell_i(a)$ and $\ell_i(b)$ of their labels $\ell(a)$ and $\ell(b)$, respectively, and the decoder for class $\cX$; (2) reading the value of $f$ on the arguments $H_1(a,b), H_2(a,b), \ldots, H_r(a,b)$ from the vector $\text{val}(f)$.
\end{proof}

%%%%%%%%%%%%%%%%%%%%%%%%%%%%%%%%%%%%%%%%
\subsection{\texorpdfstring{Erd\H{o}s}{Erdos}-Hajnal property}
\label{sec:EH-property}
%%%%%%%%%%%%%%%%%%%%%%%%%%%%%%%%%%%%%%%%

\begin{lemma}\label{lem:sEH-general}
	Let $\cX$ be a hereditary class of graphs and $\cY$ be a class of graphs.
	If $\cY$ is a function of $\cX$, and $\cX$ has the \EH property, then $\cY$ does too.
\end{lemma}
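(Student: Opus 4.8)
The plan is to exploit the fact that on any vertex set where all of the defining graphs $H_1,\dots,H_k$ are simultaneously homogeneous (each induced subgraph complete or empty), the combination $G=f(H_1,\dots,H_k)$ is forced to be homogeneous as well. Concretely, fix $k\in\bN$ with $\cY$ a $k$-function of $\cX$, and let $\delta>0$ witness the \EH property of $\cX$. Take any $G\in\cY$ on a vertex set $V$ with $|V|=n$, and write $G=f(H_1,\dots,H_k)$ with each $H_i=(V,E_i)\in\cX$ and $f:\zo^k\to\zo$. If $S\subseteq V$ is such that each $H_i[S]$ is homogeneous, then for all distinct $a,b\in S$ the value $H_i(a,b)$ equals a constant $c_i\in\zo$ independent of the pair, so $G(a,b)=f(c_1,\dots,c_k)$ is constant on $S$; hence $G[S]$ is homogeneous and $S$ is a clique or an independent set of $G$.

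To build such an $S$, I would peel off the graphs one at a time. Set $S_0=V$. Given $S_{i-1}$ on which $H_1,\dots,H_{i-1}$ are all homogeneous, consider $H_i[S_{i-1}]$. Since $H_i\in\cX$ and $\cX$ is hereditary, $H_i[S_{i-1}]\in\cX$, so the \EH property yields a clique or independent set $S_i\subseteq S_{i-1}$ of $H_i[S_{i-1}]$ with $|S_i|\ge |S_{i-1}|^{\delta}$; by construction $H_i[S_i]$ is homogeneous, and because $S_i\subseteq S_{i-1}$ the subgraphs $H_1[S_i],\dots,H_{i-1}[S_i]$ remain homogeneous. Iterating $k$ times produces $S_k\subseteq V$ on which every $H_i$ is homogeneous, with
\[
    |S_k|\ \ge\ |S_{k-1}|^{\delta}\ \ge\ \cdots\ \ge\ n^{\delta^{k}}.
\]

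Combining the two steps, $G[S_k]$ is homogeneous, so $\max\{\alpha(G),\omega(G)\}\ge |S_k|\ge n^{\delta^{k}}$. As $k$ and $\delta$ depend only on the classes $\cX,\cY$ (and the fixed choice of $k$-function), not on $G$, the constant $\delta':=\delta^{k}>0$ witnesses the \EH property for $\cY$, as required.

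The one point that genuinely needs care is the iteration: each restriction step requires that $H_i[S_{i-1}]$ again lie in a class enjoying the \EH property, which is exactly why the hypothesis that $\cX$ is hereditary is essential---heredity guarantees $H_i[S_{i-1}]\in\cX$ so that the \EH bound applies at every stage. The only cost of iterating is that the exponent gets raised to the $k$-th power, which is harmless since $\delta^{k}$ is still a positive constant depending only on the classes.
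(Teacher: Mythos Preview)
Your proof is correct and follows essentially the same approach as the paper's: iteratively restrict to a homogeneous set for each $H_i$ in turn, using hereditariness to keep $H_i[S_{i-1}]\in\cX$ at each step, and conclude with the exponent $\delta^{k}$.
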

\begin{proof}
	Let $k$ be a constant such that every graph in $\cY$ is a $k$-function of graphs in $\cX$. Let $G$ be an arbitrary $n$-vertex graph and let $f : \{0,1\}^r \rightarrow \{0,1\}$ and $H_1, H_2, \ldots, H_r \in \cX$, $r \leq k$,
	be such that $G = f(H_1, H_2, \ldots, H_r)$.
	
	Assume $\cX$ has the \EH property, i.e., there exists a constant $\delta \in (0,1]$, such that every $n$-vertex graph in $\cX$ has a homogeneous subgraph of size at least $n^{\delta}$.
 	We claim that $G$ has a homogeneous subgraph of size at least $n^{\delta^r} \geq n^{\delta^k}$, and thus $\cY$ has the \EH property. To prove this, it is enough to show that there exists a set $S \subseteq [n]$ with at least $n^{\delta^r}$ elements such that each $H_i[S]$, $i \in [r]$, is homogeneous. Indeed, then for all pairs $a,b \in S$, $a \neq b$, the values $f(H_1(a,b), H_2(a,b), \ldots, H_k(a,b))$ are the same, because the tuples of arguments are the same.
	
	We prove the existence of $S$ by induction on $r$. If $r=1$, then, since $H_1$ belongs to $\cX$, there is a set $S_1$ of size at least $n^{\delta}$ such that $H_1[S_1]$ is homogeneous. In this case we take $S$ to be $S_1$. Suppose now there exists a set $S_{r-1}$ of size at least $n^{\delta^{(r-1)}}$ such that each $H_i[S_{r-1}]$, $i \in [r-1]$ is homogeneous. Since $\cX$ is hereditary, $H_r[S_{r-1}]$ belongs to $\cX$ and thus there exists a set $S_r \subseteq S_{r-1}$ of size $|S_r| \geq |S_{r-1}|^{\delta} \geq (n^{\delta^{(r-1)}})^{\delta} = n^{\delta^r}$ such that $H_r[S_r]$ is homogeneous.
	Consequently, all $H_i[S_r]$, $i \in [r]$ are homogeneous, and we take $S$ to be $S_r$.
\end{proof}

%%%%%%%%%%%%%%%%%%%%%%%%%%%%%%%%%%%%%%%%
\subsection{Edge-stability}
\label{sec:stability}
%%%%%%%%%%%%%%%%%%%%%%%%%%%%%%%%%%%%%%%%

In this section we show that boolean combinations of a graph class preserve edge-stability. This is a special case of a known fact in model theory (see e.g.~\cite[Remark 3.4]{palacin2018introduction}) and learning theory (see e.g.~\cite{AlonBMS20}). For completeness, we state it here and present a proof in the graph-theoretic language. 

We start by introducing the notion of strong chain number, which is closely related to the chain number, but more convenient to work with.
The \emph{strong chain number} of a graph $G$, denoted $\sch(G)$,  is the maximum $k\in \bN$ such that there exist two disjoint sets $A = \{a_1, \ldots, a_k\}$ and $B = \{b_1, \ldots, b_k\}$ where $a_i$ is adjacent to $b_j$ in $G$ if $i < j$, and $a_i$ is not adjacent to $b_j$ in $G$ if $i > j$.

Note that the strong chain number is similar to the chain number, but we are not concerned with the adjacencies between $a_i$ and $b_i$.
The following lemma (adapted from \cite{pilipczuk2022transducing}) relates the chain number and the strong chain number of $G$.

\begin{lemma}\label{lem:2chains}
    For a graph $G$, we have that $\lfloor \sch(G)/2 \rfloor\leq \ch(G)\leq \sch(G)$.
\end{lemma}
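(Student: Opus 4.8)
The plan is to prove the two inequalities separately, with the upper bound being essentially immediate and the lower bound requiring a small trick to handle the undetermined diagonal entries of a strong chain.

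For the upper bound $\ch(G) \le \sch(G)$, I would simply observe that any witness for the chain number is already a witness for the strong chain number. Indeed, if $A = \{a_1, \ldots, a_k\}$ and $B = \{b_1, \ldots, b_k\}$ witness $\ch(G) = k$, so that $a_i$ is adjacent to $b_j$ exactly when $i \le j$, then for $i < j$ we have $a_i \sim b_j$ and for $i > j$ we have $a_i \not\sim b_j$; the adjacencies on the diagonal $i = j$ play no role in the definition of $\sch$. Hence the same pair of sets witnesses $\sch(G) \ge k$, so $\ch(G) \le \sch(G)$.

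For the lower bound $\lfloor \sch(G)/2 \rfloor \le \ch(G)$, suppose $A = \{a_1, \ldots, a_k\}$ and $B = \{b_1, \ldots, b_k\}$ witness $\sch(G) = k$. The only obstruction to these sets directly forming a chain is that the adjacency between $a_i$ and $b_i$ is uncontrolled, whereas a chain would require $a_i \sim b_i$. The key idea is to pass to a subconfiguration that avoids the diagonal entirely. Setting $m = \lfloor k/2 \rfloor$, I would define $a'_t = a_{2t-1}$ and $b'_t = b_{2t}$ for $t \in [m]$. Then for any $s, t \in [m]$ the selected pair $(a'_s, b'_t) = (a_{2s-1}, b_{2t})$ satisfies $2s-1 \ne 2t$ by parity, so its adjacency is governed by the strong-chain conditions: if $s \le t$ then $2s-1 < 2t$ and so $a'_s \sim b'_t$, while if $s > t$ then $2s-1 > 2t$ and so $a'_s \not\sim b'_t$. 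Thus $a'_s$ is adjacent to $b'_t$ precisely when $s \le t$, so $\{a'_1, \ldots, a'_m\}$ and $\{b'_1, \ldots, b'_m\}$ form a chain of size $m$, yielding $\ch(G) \ge \lfloor \sch(G)/2 \rfloor$.

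The one point that requires care is exactly this interaction with the diagonal, and the offset $a'_t = a_{2t-1}$, $b'_t = b_{2t}$ is engineered so that every chosen pair $(a_{2s-1}, b_{2t})$ has indices of opposite parity and is therefore strictly off the diagonal, placing it inside the region where the strong-chain pattern is fully determined. Beyond this, only routine bookkeeping remains: $A' \subseteq A$ and $B' \subseteq B$ are disjoint since $A$ and $B$ are, and the largest indices used, namely $2m-1$ and $2m$, are at most $k$ because $m = \lfloor k/2 \rfloor$.
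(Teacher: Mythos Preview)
Your proof is correct and follows essentially the same approach as the paper: the upper bound is immediate from the definitions, and for the lower bound you select the same subsequence $a'_t = a_{2t-1}$, $b'_t = b_{2t}$ to avoid the diagonal via the parity argument. Your exposition is in fact slightly more detailed than the paper's in explaining why the offset works.
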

\begin{proof}
    Clearly, by the definitions of $\ch(G)$ and $\sch(G)$, we have $\ch(G)\leq \sch(G)$.
    Now, let $\sch(G)=k$. Then $G$ contains two disjoint sets $A = \{a_1, \ldots, a_k\}$ and $B = \{b_1, \ldots, b_k\}$ where $a_i$ is adjacent to $b_j$ and $a_j$ is non-adjacent to $b_i$ in $G$ for all $1\leq i < j \leq k$. If we set $c_i=a_{2i-1}$ and $d_i=b_{2i}$ for $1\leq i\leq \lfloor k/2 \rfloor$, then $\{ c_1,\ldots,c_{\lfloor k/2\rfloor} \}$ and $\{ d_1,\ldots,d_{\lfloor k/2\rfloor} \}$ are two disjoint sets such that $c_i$ is adjacent to $d_j$ if and only if $i \leq j$. Consequently, $\ch(G)\geq \lfloor k/2 \rfloor$.
\end{proof}

The following result with a worse bound was originally proved in \cite{AlonBMS20}, in a learning theory setting. The bound stated here was obtained in \cite{GhaziG0M21}, and we adapt their proof to a graph theory setting. 
\begin{lemma}[\cite{GhaziG0M21}]\label{lem:stability}
    Let $s \in \bN$ and $\cX$ be a hereditary class of graphs with chain number less than $s$. 
    If $\cY$ is a $k$-function of $\cX$,  then $\cY$ has chain number less than $(2k)^{4ks}$.
\end{lemma}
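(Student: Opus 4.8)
The plan is to argue by contradiction, using a Ramsey-type colouring of a long chain in order to extract a long ``half-graph'' pattern living entirely inside one of the underlying graphs, which then contradicts the chain-number bound on $\cX$. Concretely, fix a boolean function $f$ and graphs $H_1,\dots,H_k \in \cX$ on a common vertex set $V$ with $G = f(H_1,\dots,H_k) \in \cY$, and suppose for contradiction that $\ch(G) \ge m := (2k)^{4ks}$. Then $G$ contains a chain $A = \{a_1,\dots,a_m\}$, $B = \{b_1,\dots,b_m\}$ with $a_i$ adjacent to $b_j$ in $G$ if and only if $i \le j$. The goal is to find, after passing to a suitable subset of indices, a coordinate $\ell$ in which $H_\ell$ realises a long strong chain.

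The crucial step is a colouring of the unordered index pairs that uses only $2k$ colours. For $i < j$, the chain condition gives $G(a_i,b_j) = 1$ while $G(a_j,b_i) = 0$; since both values are obtained by applying the single function $f$ to the argument tuples $(H_\ell(a_i,b_j))_{\ell \in [k]}$ and $(H_\ell(a_j,b_i))_{\ell \in [k]}$, these tuples must differ. Hence there is at least one coordinate $\ell$ with $H_\ell(a_i,b_j) \neq H_\ell(a_j,b_i)$. I colour the pair $\{i,j\}$ (with $i<j$) by $(\ell^*, H_{\ell^*}(a_i,b_j))$, where $\ell^*$ is the least coordinate witnessing this disagreement; this is one of at most $2k$ colours.

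Now I apply the multicolour Ramsey theorem to the complete graph on $[m]$ with these $2k$ colours. Since $m = (2k)^{4ks} \ge R_{2k}(2s)$, there is a monochromatic index set $T$ of size $2s$, of some colour $(\ell, c)$. By construction, for all $i<j$ in $T$ we then have $H_\ell(a_i,b_j) = c$ and $H_\ell(a_j,b_i) = 1-c$. Thus, restricted to the vertices $\{a_i, b_i : i \in T\}$, the graph $H_\ell$ realises a strong half-graph on $2s$ indices (directly when $c=1$, and after reversing the linear order when $c=0$), so that $\sch(H_\ell) \ge 2s$. By \cref{lem:2chains} this gives $\ch(H_\ell) \ge \lfloor 2s/2\rfloor = s$, contradicting $\ch(H_\ell) < s$, which holds because $H_\ell \in \cX$. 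Therefore no chain of length $(2k)^{4ks}$ exists in $G$, i.e. $\ch(G) < (2k)^{4ks}$.

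The main obstacle, and the reason for the precise bound $(2k)^{4ks}$, is keeping the palette down to $2k$ colours. The naive colouring recording the full disagreement pattern across all $k$ graphs would use $2^{2k}$ colours and yield a bound doubly exponential in $k$; recording only a \emph{single} responsible coordinate $\ell^*$ together with one orientation bit is exactly what reduces this to $2k$ colours, after which the standard estimate $R_{2k}(2s) \le (2k)^{2k(2s-1)} < (2k)^{4ks}$ produces the claimed quantity. The remaining points are routine: checking that a monochromatic block genuinely witnesses a strong chain, and verifying that order reversal handles the $c=0$ case.
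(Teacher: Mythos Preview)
Your proof is correct and follows essentially the same approach as the paper's: both argue by contradiction, observe that for each pair $i<j$ some coordinate $\ell$ must witness $H_\ell(a_i,b_j)\neq H_\ell(a_j,b_i)$, colour $\{i,j\}$ by $(\ell,H_\ell(a_i,b_j))\in[k]\times\{0,1\}$, apply the multicolour Ramsey bound $R_{2k}(2s)\le(2k)^{4ks}$ to extract a monochromatic set of size $2s$, and conclude via \cref{lem:2chains} that the corresponding $H_\ell$ has chain number at least $s$. The only cosmetic differences are that you specify $\ell^*$ as the least witnessing coordinate (the paper leaves the choice arbitrary) and you handle the $c=0$ case by reversing the index order rather than by swapping the roles of the $a$'s and $b$'s.
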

\begin{proof}
    Let $G=(V,E)\in\cY$ and $G=f(H_1,\ldots,H_k)$ for some $H_1,\ldots,H_k\in \cX$ and a $k$-variable boolean function $f$. 
    Towards a contradiction, suppose that $G$ has chain number $t \geq (2k)^{4ks}$ and so there exist two disjoint sets of vertices, $A=\{a_1,\ldots,a_t\}$ and $B=\{b_1,\ldots,b_t\}$ in $V$, such that $a_i$ is adjacent to $b_j$ in $G$ if and only if $i \leq j$. 

    Note that for every $1 \leq i < j \leq t$, there must exist $\ell \in [k]$ such that $H_{\ell}(a_i,b_j)\neq H_{\ell}(a_j,b_i)$, as otherwise we would have
    \[              1=G(a_i,b_j)=f(H_1(a_i,b_j),\ldots,H_k(a_i,b_j))=f(H_1(a_j,b_i),\ldots,H_k(a_j,b_i))=G(a_j,b_i)=0,
    \]
    which is not possible.
    
    Define a complete graph $G'$ with vertex set $[t]$ and edges coloured as follows.
    For every $1 \leq i < j \leq t$ and $\ell \in [k]$ such that $H_{\ell}(a_i,b_j)\neq H_{\ell}(a_j,b_i)$,
    colour the edge between $i,j$ by $(\ell,0)$ if $H_{\ell}(a_i,b_j)=0$, and
    by $(\ell,1)$ if $H_{\ell}(a_i,b_j)=1$.
    Clearly, such a colouring uses at most $2k$ colours.

    It follows from \cite{greenwood1955combinatorial}, that if $n\geq c^{rc}$, then in every edge coloring of $K_n$ with $c$ colors there exists a monochromatic clique of size $r$. 
    Thus, $G'$ contains a monochromatic clique $C$ with $2s$ vertices. 
    Let $c_1,\ldots,c_{2s} \in [t]$ be the vertices of $C$, where $c_1 < c_2 < \cdots < c_{2s}$.

    If the edges of $G'[C]$ are coloured $(\ell,0)$,
    then for each $0 \leq i < j \leq 2s$ we have that $H_{\ell}(a_{c_i}, b_{c_j}) = 0$ and $H_{\ell}(a_{c_j}, b_{c_i}) = 1$.
    If the edges of $G'[C]$ are coloured $(\ell,1)$,
    then for each $0 \leq i < j \leq 2s$ we have that $H_{\ell}(b_{c_j}, a_{c_i}) = 1$ and $H_{\ell}(b_{c_i}, a_{c_j}) = 0$. Both cases witness $\sch(H_{\ell})\geq 2s$, and so, by Lemma \ref{lem:2chains}, $\ch(H_{\ell})\geq s$.
    The latter contradicts the assumption that the chain number of $\cX$ is less than $s$.
 \end{proof}

As a corollary we obtain the following

\begin{theorem}
    If $\cY$ is a function of $\cX$ and $\cX$ is edge-stable, then $\cY$ is edge-stable.
\end{theorem}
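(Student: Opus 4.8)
The plan is to derive the result directly from \cref{lem:stability}, treating edge-stability as a uniform-boundedness statement about the chain number. Recall that a class is edge-stable precisely when its chain number is bounded, \ie there is a constant $s$ such that $\ch(G) < s$ for every $G$ in the class. So the entire content of the theorem is to convert the quantitative bound of \cref{lem:stability} into this qualitative statement.

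First I would unpack the hypotheses. Since $\cY$ is a function of $\cX$, by definition there is a fixed $k \in \bN$ such that $\cY$ is a $k$-function of $\cX$. Since $\cX$ is edge-stable, its chain number is bounded, so there exists $s \in \bN$ with $\ch(G) < s$ for every $G \in \cX$. The one technical point to check is that \cref{lem:stability} requires $\cX$ to be \emph{hereditary}, so I would either assume this is available (it is the standing setting for these preservation lemmas) or note that edge-stability is preserved under passing to the hereditary closure, since taking induced subgraphs cannot increase the chain number.

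With these constants in hand, I would simply invoke \cref{lem:stability}: since $\cY$ is a $k$-function of the hereditary class $\cX$ whose chain number is less than $s$, the lemma gives that $\cY$ has chain number less than $(2k)^{4ks}$. As $k$ and $s$ are both fixed constants (depending only on $\cX$ and on the function-defining parameter $k$), the quantity $(2k)^{4ks}$ is a finite constant, so the chain number of $\cY$ is bounded. By the definition of edge-stability, $\cY$ is therefore edge-stable, completing the proof.

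There is essentially no obstacle here: the real work was already done in \cref{lem:stability}, and this theorem is a one-line corollary that merely discards the explicit bound in favor of the qualitative conclusion. The only thing warranting a sentence of care is the hereditariness hypothesis of \cref{lem:stability}; if $\cX$ is not assumed hereditary, I would first replace it by its hereditary closure (which shares the same chain-number bound) before applying the lemma, and observe that $\cY$ being a function of $\cX$ implies it is also a function of this larger class.
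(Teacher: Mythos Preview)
Your proposal is correct and matches the paper's approach exactly: the paper simply states the theorem as an immediate corollary of \cref{lem:stability}, with no further argument. Your additional remark about passing to the hereditary closure to meet the hereditariness hypothesis is a nice bit of care that the paper does not even make explicit.
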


%%%%%%%%%%%%%%%%%%%%%%%%%%%%%%%%%%%%%%%%
\subsection{Neighbourhood complexity and VC dimension}
\label{sec:neighborhood-complexity-vc-dimension}
%%%%%%%%%%%%%%%%%%%%%%%%%%%%%%%%%%%%%%%%

A common approach to capture the complexity of a graph is by using complexity measures of the set system induced by the neighborhoods of the vertices of the graph. Two such measures that we will consider here are the neighborhood complexity and the VC dimension of a graph. They are respectively defined as the shatter function and the VC dimension of the corresponding set system. 
Graph classes with low neighborhood complexity have nice structural \cite{RVS19,EGKKP16,BDSZ24}, algorithmic \cite{EGKKP16,BKRTW22,DEMMPT23}, and combinatorial \cite{BDSZ24} properties. 
Similarly, graph classes of bounded VC dimension have useful combinatorial and structural properties \cite{Alo24,NSS23,JP24,FPS21}. 
We will show that boolean combinations do not increase neighborhood complexity by much, and they also preserve bounded VC dimension of graph classes. 
We proceed with the formal definitions.

A pair $(X, \cS)$, where $X$ is a~finite set and $\cS$ is a~family of subsets of $X$, is called a~set system.
The \emph{neighborhood set system} of a graph $G$ is the set system  $(V(G), \mathcal{N}_G)$ where $\mathcal{N}_G= \{ N_G(v) : v\in V(G)\}.$
Given a~set system $(X, \mathcal{S})$, a~subset $A \subseteq X$ is \emph{shattered} by $\cS$ if every subset $B$ of $A$ can be obtained as the intersection $B = A \cap Y$ for some $Y \in \cS$, i.e., $\{ Y \cap A~|~ Y \in \cS \} = 2^A$.
The \emph{Vapnik–Chervonenkis dimension} (or \emph{VC dimension} for short) of $\cS$ is the maximum size of a shattered subset of $X$.
The \emph{VC dimension of $G$} is the VC dimension of its neighborhood set system.
The VC dimension of a graph class $\cX$ is the minimum $d$ such that the VC dimension of every graph in $\cX$ does not exceed~$d$.

The \emph{shatter function} of a~set system $(X, \mathcal{S})$ is the function $\pi_{\mathcal{S}}$ given by 
\[
    \pi_{\mathcal{S}}(m)= \max_{A\subseteq X, \, |A|=m} |\{Y\cap A~:~Y \in \mathcal{S} \}|. 
\]
The \emph{neighborhood complexity} of a~graph $G$, denoted by $\nu_{G}$, is the shatter function of its neighborhood set system, i.e., $\nu_{G}(m) = \pi_{\mathcal{N}_G}(m)$ for all $m \in [|V(G)|]$.
The \emph{neighborhood complexity} of a graph class $\cX$ is the function 
$\nu_{\cX}: \mathbb{N} \rightarrow \mathbb{N}$ defined by 
\[
    \nu_{\cX}(n) := \max_{G\in \cX,\, A\subseteq V(G), \,|A|=n} |\{N(v)\cap A~:~v \in V(G) \}| 
                            = \max_{G\in \cX} \pi_{\mathcal{N}(G)}(n)
                            = \max_{G\in \cX} \nu_G(n).
\]

Next, we show how boolean combinations affect the neighborhood complexity of a graph class.  

\begin{lemma}\label{lem:neighborhoodcomplexity}
    Let $k \in \bN$ and $\cX$ be a hereditary class of graphs.
    If $\cY$ is a $k$-function of $\cX$, then $\nu_{\cY}(n) \leq (\nu_{\cX}(n))^{k}$ holds for every $n \in \bN$.
\end{lemma}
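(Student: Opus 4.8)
The plan is to bound the number of distinct ``traces'' $N_G(v) \cap A$ that can appear in $G = f(H_1, \ldots, H_k)$, in terms of the corresponding quantities for the $H_i$. Fix a graph $G \in \cY$ with $G = f(H_1, \ldots, H_k)$ for some $H_1, \ldots, H_k \in \cX$ on a common vertex set $V$, and fix $A \subseteq V$ with $|A| = n$. The key observation is that for any vertex $v \in V$ and any $a \in A$, whether $a \in N_G(v)$ is completely determined by the tuple $(H_1(v,a), \ldots, H_k(v,a))$, via $f$. Hence the trace $N_G(v) \cap A$ is a function of the $k$-tuple of traces $\bigl(N_{H_1}(v)\cap A, \ldots, N_{H_k}(v)\cap A\bigr)$.

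\textbf{Counting step.}
The heart of the argument is the following map. Consider the assignment
\[
    v \;\longmapsto\; \bigl(N_{H_1}(v)\cap A,\; N_{H_2}(v)\cap A,\; \ldots,\; N_{H_k}(v)\cap A\bigr).
\]
By the observation above, if two vertices $v, w \in V$ agree in every coordinate of this tuple, then they produce the same trace in $G$: indeed for each $a \in A$ we have $H_i(v,a) = H_i(w,a)$ for all $i$, so $f$ returns the same value, giving $N_G(v)\cap A = N_G(w)\cap A$. (One must handle the minor edge case $a = v$ or $a = w$, where the diagonal is excluded; since we only care about sets $N_G(v)\cap A$, membership of $v$ itself is never in question.) Therefore the number of distinct traces $\{N_G(v)\cap A : v \in V\}$ is at most the number of distinct tuples in the image of the map, which is at most the product of the number of distinct values in each coordinate.

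\textbf{Applying heredity and assembling the bound.}
Each coordinate count is $|\{N_{H_i}(v)\cap A : v \in V\}|$, which by definition is at most $\nu_{H_i}(n)$, and since $H_i \in \cX$ this is at most $\nu_{\cX}(n)$. Taking the product over the $k$ coordinates yields
\[
    |\{N_G(v)\cap A : v \in V\}| \;\leq\; \prod_{i=1}^{k} \nu_{H_i}(n) \;\leq\; \bigl(\nu_{\cX}(n)\bigr)^{k}.
\]
Since this holds for every choice of $G \in \cY$ and every $A$ of size $n$, taking the maximum gives $\nu_{\cY}(n) \leq (\nu_{\cX}(n))^k$, as required. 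Note heredity of $\cX$ is what guarantees each $\nu_{H_i}(n) \le \nu_{\cX}(n)$ is a legitimate bound, but in fact here we only use that $H_i \in \cX$ directly; heredity would be needed if we instead restricted to subgraphs, so I would double-check whether the hypothesis is invoked essentially or merely for uniformity with the rest of the section.

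\textbf{Expected obstacle.}
I do not anticipate a serious obstacle: the argument is a clean ``product of coordinate projections'' count. The only subtlety is the careful bookkeeping around the exclusion of the diagonal (pairs $(a,a)$) in the definition of $G(a,b)$, ensuring that the determination of $N_G(v)\cap A$ genuinely depends only on off-diagonal values of the $H_i$; this is routine but should be stated explicitly to keep the argument airtight.
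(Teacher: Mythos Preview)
Your proposal is essentially identical to the paper's proof: fix $G=f(H_1,\ldots,H_k)$ and $A\subseteq V$, observe that $N_G(v)\cap A$ is determined by the tuple $(N_{H_1}(v)\cap A,\ldots,N_{H_k}(v)\cap A)$, and bound the number of such tuples by the product $\prod_i \nu_{H_i}(n)\le (\nu_{\cX}(n))^k$. Your side remarks are apt: the paper likewise does not use heredity beyond $H_i\in\cX$, and it too glosses over the diagonal case you flag.
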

\begin{proof}
    Let $G=(V,E)\in\cY$ and $G=f(H_1,\ldots,H_k)$ for some $H_1,\ldots,H_k\in \cX$ and a $k$-variable boolean function $f$. 
    By assumption, for any $i \in [k]$, $m \in \bN$, and any set $S \subseteq V$ of size $m$, we have $|\{N_{H_i}(v) \cap S : v\in V\}| \leq \nu_{\cX}(m)$. We wish to show that $|\{N_{G}(v)\cap S : v\in V\}| \leq (\nu_{\cX}(m))^k$.
    
    Fix $v\in V$. Then note that for any $w\in S$, as $G(v,w)=f(H_1(v,w),\ldots,H_k(v,w))$, we see that $G(v,w)$ is entirely determined by the ordered tuple $(H_1(v,w),\ldots,H_k(v,w))$, which in turn
    is determined by $N_{H_1}(v)\cap S,\ldots, N_{H_k}(v)\cap S$. It follows that $N_G(v)\cap S=\{w\in S : G(v,w)=1\}$ is determined by
    the ordered tuple $S(v) := (N_{H_1}(v)\cap S,\ldots, N_{H_k}(v)\cap S)$. 
    Thus, $$|\{N_G(v)\cap S : v\in V\}|\leq |\{S(v) : v\in V\}|\leq \prod_{i=1}^k |\{N_{H_i}(v)\cap S : v\in V\}|\leq \prod_{i=1}^k \nu_{H_i}(m)\leq (\nu_{\cX}(m))^k.$$  
    
    Since $G$ and $S$ were chosen arbitrarily, we have
    $$
        \nu_{\cY}(m) = \max_{G \in \cY, S\subseteq V(G), |S|=m} |\{N_G(v)\cap S : v\in V\}|\leq (\nu_{\cX}(m))^k,
    $$
    as desired.
\end{proof}

\begin{lemma}\label{lem:VC-dimension}
    Let $\cX$ be a hereditary class of graphs and $\cY$ be a class of graphs.
    If $\cY$ is a function of $\cX$, and $\cX$ has bounded VC dimension, then $\cY$ has bounded VC dimension. 
\end{lemma}
\begin{proof}
Let $k \in \bN$ be such that $\cY$ is a $k$-function of $\cX$. By the Sauer-Shelah lemma \cite{Sau72,She72}, if $\cX$ has VC dimension bounded by $d$, then $\nu_{\cX}(m) \leq \sum_{i=0}^d \binom{m}{i} = O(m^d)$. Thus, by \cref{lem:neighborhoodcomplexity}, we have that $\nu_{\cY}(m) = O(m^{kd})$. So there exists $d' := d'(k,d)$ such that $\nu_{\cY}(m)<2^m$ holds for every $m\geq d'$. This implies that $\cY$ has VC dimension less than $d'$.  
\end{proof}

%%%%%%%%%%%%%%%%%%%%%%%%%%%%%%%%%%%%%%%%
\section{Boolean functions of special graph classes}
\label{sec:special-graph-classes}
%%%%%%%%%%%%%%%%%%%%%%%%%%%%%%%%%%%%%%%%

%%%%%%%%%%%%%%%%%%%%%%%%%%%%%%%%%%%%%%%%
\subsection{Intersection-closed graph classes}
\label{sec:intersection-closed-classes}
%%%%%%%%%%%%%%%%%%%%%%%%%%%%%%%%%%%%%%%%

In this section, we show that functions of intersection-closed graph classes have convenient representation via XOR functions or their negations.

\begin{lemma}\label{lem:intersection-closed-xor-representation}
    Let $\cX$ be an intersection-closed class of graphs, and $\cY$ be a $k$-function of $\cX$.
    Then for every graph $G \in \cY$ there exist $s$ graphs $H_1, H_2, \ldots, H_s \in \cX$, where $s \leq 2^k$, such that $G = \Xor_{i=1}^s H_i$ or $G = \overline{\Xor_{i=1}^s H_i}$.
    Furthermore, if $\cX$ contains all complete graphs, then $G$ can always be expressed as $\Xor_{i=1}^s H_i$.
\end{lemma}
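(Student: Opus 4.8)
The plan is to exploit the Algebraic Normal Form of the boolean function witnessing membership in $\cY$, matching its two operations — conjunction of positive variables and XOR — against the intersection-closedness of $\cX$ and the graph XOR operation, respectively. Fix $G \in \cY$ on a vertex set $V$ with $|V| = n$. Since $\cY$ is a $k$-function of $\cX$, there are $r \leq k$ graphs $H_1', \ldots, H_r' \in \cX$, all on $V$, and a boolean function $f : \zo^r \to \zo$ with $G = f(H_1', \ldots, H_r')$. By \cref{th:anf}, write $f(x_1, \ldots, x_r) = \Xor_{j=1}^{s'} \bigwedge_{i \in I_j} x_i$ for distinct sets $I_1, \ldots, I_{s'} \subseteq [r]$; since $[r]$ has only $2^r \leq 2^k$ subsets, $s' \leq 2^k$.

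The first key step is to read each conjunctive term as a graph in $\cX$. For a nonempty $I_j$, substituting $x_i = H_i'(a,b)$ turns $\bigwedge_{i \in I_j} x_i$ into $\bigwedge_{i \in I_j} H_i'(a,b)$, which is exactly the adjacency indicator of the intersection graph $H_j := \bigcap_{i \in I_j} H_i'$. As $\cX$ is intersection-closed it is closed under finite intersections (by induction on $|I_j|$), so each such $H_j \in \cX$. The only term not of this shape is the one with $I_j = \emptyset$, if it occurs, since the empty conjunction is the constant $1$; and at most one such term occurs because the $I_j$ are distinct.

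The second step is to assemble $G$. If no empty term appears, evaluating the ANF entrywise over all distinct $a,b \in V$ gives $G(a,b) = \Xor_{j=1}^{s'} H_j(a,b)$, so $G = \Xor_{j=1}^{s} H_j$ with $s = s' \leq 2^k$ and every $H_j \in \cX$. If the empty term is present, its contribution is a global $\xor 1$ flipping every adjacency, so $G = \overline{\Xor_{j} H_j}$ where the XOR ranges over the $s = s' - 1 \leq 2^k - 1$ nonempty terms, again with all $H_j \in \cX$; this is the claimed dichotomy. For the furthermore part, assume $\cX$ contains all complete graphs; only the complemented case needs work. By \cref{obs:xor-with-complete-graph}, $\overline{\Xor_j H_j} = \bigl( \Xor_j H_j \bigr) \xor K_n$, and since $K_n \in \cX$ we absorb it as one extra XOR factor, expressing $G$ as a XOR of $s + 1 \leq 2^k$ graphs of $\cX$.

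There is no deep obstacle here; the point demanding the most care is the bookkeeping around the ANF constant term — recognising that the empty conjunction equals $1$ (not $0$), that it yields a complementation rather than an additional graph, and verifying that after absorbing $K_n$ in the final step the number of XOR factors still obeys the bound $2^k$.
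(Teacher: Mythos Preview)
Your proof is correct and follows essentially the same approach as the paper: both use the Algebraic Normal Form (\cref{th:anf}), interpret each nonempty conjunctive clause as an intersection of graphs from $\cX$, and handle the possible empty clause as a constant $1$ yielding a global complementation. The only cosmetic difference is that in the ``furthermore'' case the paper observes directly that the empty-clause term \emph{is} $K_n \in \cX$, whereas you first pass to the complemented form and then re-absorb $K_n$ via \cref{obs:xor-with-complete-graph}; either way the bound $s \leq 2^k$ is met.
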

\begin{proof}
    Let $G$ be a graph in $\cY$. By assumption, there exist $r \leq k$ and a boolean function $f : \zo^r \rightarrow \zo$ such that $G=f(F_1, F_2, \ldots, F_r)$ for some $F_1, F_2, \ldots, F_r \in \cY$.

    Let $I_1, I_2, \ldots, I_s \subseteq [r]$ be such that
    \[
        f(x_1, x_2, \ldots, x_k) = \Xor_{j=1}^{s} \bigwedge_{i \in I_j} x_i
    \]
    is the Algebraic Normal Form representation of $f$, which exists and unique by \cref{th:anf}.
    Then, we have $G = \Xor_{j=1}^{s} H_j$, where $H_j = \bigcap_{i \in I_j} F_i$. If $I_j = \emptyset$, then $H_j$ is the complete graph on $n = |V(G)|$ vertices; otherwise, $H_j$ belongs to $\cX$ as $\cX$ is intersection-closed by assumption. Thus, if one of the sets $I_1, I_2, \ldots, I_s$, say $I_s$, is empty and $K_n \not\in \cX$, we have $G = K_n \xor \Xor_{i=1}^{s-1} H_i= \overline{\Xor_{i=1}^{s-1} H_i}$;
    otherwise $G = \Xor_{i=1}^{s} H_i$, where in both cases all $H_i$'s are from $\cX$, as required.
\end{proof}

Using \cref{obs:xor-of-k-graphs}, the above lemma can equivalently be stated as follows.

\begin{lemma}\label{lem:X-xor-Y}
    Let $\cX$ be an intersection-closed class of graphs, and $\cY$ be a $k$-function of $\cX$.
    Then for every graph $G \in \cY$ there exist $\alpha \in \zo$ and $s$ graphs $H_1, H_2, \ldots, H_s \in \cX$, where $s \leq 2^k$, such that
    for any two distinct vertices $a,b \in V(G)$ 
    \[
        (a,b) \in E(G) \quad \Longleftrightarrow \quad 
        | \{i ~:~ (a,b) \in E(H_i), i \in [s] \}| \equiv \alpha \pmod{2}.
    \]
    Furthermore, if $\cX$ contains all complete graphs, then $\alpha$ can always be chosen to be 1.
\end{lemma}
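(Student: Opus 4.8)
The plan is to derive this statement directly from \cref{lem:intersection-closed-xor-representation} by reinterpreting its XOR conclusion through the parity characterization in \cref{obs:xor-of-k-graphs}. Concretely, I would first invoke \cref{lem:intersection-closed-xor-representation} on $G \in \cY$ to obtain $s \leq 2^k$ graphs $H_1, H_2, \ldots, H_s \in \cX$ such that either $G = \Xor_{i=1}^s H_i$ or $G = \overline{\Xor_{i=1}^s H_i}$. All the substantive work—extracting the Algebraic Normal Form of the defining boolean function and using intersection-closedness to keep the $H_j$ inside $\cX$—has already been done there, so the remaining task is purely a translation of ``XOR'' into ``odd parity of adjacencies.''

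In the uncomplemented case $G = \Xor_{i=1}^s H_i$, I would set $\alpha = 1$. By \cref{obs:xor-of-k-graphs}, a pair of distinct vertices $a,b$ is adjacent in $G$ if and only if it is adjacent in an odd number of the $H_i$, i.e.\ $|\{i : (a,b) \in E(H_i)\}| \equiv 1 \pmod 2$, which is exactly the claimed equivalence with $\alpha = 1$.

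In the complemented case $G = \overline{\Xor_{i=1}^s H_i}$, I would set $\alpha = 0$. Here $a,b$ are adjacent in $G$ precisely when they are \emph{non}-adjacent in $\Xor_{i=1}^s H_i$; applying \cref{obs:xor-of-k-graphs} again, non-adjacency in the XOR means $a,b$ lie in an even number of the $H_i$, i.e.\ $|\{i : (a,b) \in E(H_i)\}| \equiv 0 \pmod 2$, giving the equivalence with $\alpha = 0$. For the furthermore clause, when $\cX$ contains all complete graphs the corresponding part of \cref{lem:intersection-closed-xor-representation} already guarantees the representation $G = \Xor_{i=1}^s H_i$, placing us in the first case, so $\alpha = 1$ may always be taken.

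The only point requiring any care is the bookkeeping in the complemented case—verifying that negating the XOR flips the target parity from odd to even—and this is the sole (minor) obstacle; everything else is a faithful rephrasing of the already-established \cref{lem:intersection-closed-xor-representation}.
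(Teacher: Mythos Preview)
Your proposal is correct and matches the paper's own approach: the paper simply notes that \cref{lem:X-xor-Y} is an equivalent restatement of \cref{lem:intersection-closed-xor-representation} via \cref{obs:xor-of-k-graphs}, and your argument spells out precisely that translation, including the $\alpha=0$ case for the complemented representation and the furthermore clause.
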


%%%%%%%%%%%%%%%%%%%%%%%%%%%%%%%%%%%%%%%%%%%%%%%%
\subsection{Monotone graph classes}
\label{sec:monotone-classes}
%%%%%%%%%%%%%%%%%%%%%%%%%%%%%%%%%%%%%%%%%%%%%%%%

In this section, we consider functions of monotone graph classes that come from families closed under the union functions.

\begin{lemma}\label{lem:union-closed}
    Let $\cF$ be a family of \emph{monotone} graph classes such that for every $s \in \bN$ and $\cX_1,\ldots,\cX_s\in\cF$, there exists a class $\cX' \in \cF$ so that $\bigvee_{i=1}^s \cX_i \subseteq \cX'$.
    Then for any $\cX \in \cF$ and any graph class $\cY$ that is a function of $\cX$, there exists $\cZ \in \cF$ such that $\cY \subseteq \cZ \cup \overline{\cZ}$.
\end{lemma}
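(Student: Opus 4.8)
The plan is to combine the XOR-representation of functions of intersection-closed classes (\cref{lem:intersection-closed-xor-representation}) with the union-closure hypothesis on $\cF$. Since $\cX \in \cF$ is monotone, it is in particular intersection-closed, so \cref{lem:intersection-closed-xor-representation} applies: for every $G \in \cY$ there exist graphs $H_1, \ldots, H_s \in \cX$ with $s \leq 2^k$ such that $G = \Xor_{i=1}^s H_i$ or $G = \overline{\Xor_{i=1}^s H_i}$. Thus it suffices to understand XORs of graphs from $\cX$, and the goal will be to show $\Xor_{i=1}^s \cX \subseteq \cZ$ for a single $\cZ \in \cF$ (the bar on $\cZ$ then absorbs the complemented case).

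First I would reduce the XOR to a union. The key observation is \cref{obs:xor-of-k-graphs}: an edge appears in $G = H_1 \xor \cdots \xor H_s$ only if it appears in at least one $H_i$. Hence $E(G) \subseteq E(H_1) \cup \cdots \cup E(H_s)$, which means $G$ is a \emph{subgraph} of the union $H_1 \vee \cdots \vee H_s$. Since each $H_i \in \cX$, the union $H_1 \vee \cdots \vee H_s$ lies in $\bigvee_{i=1}^s \cX$, and by the hypothesis on $\cF$ there exists $\cX' \in \cF$ with $\bigvee_{i=1}^s \cX \subseteq \cX'$. Because $\cX' \in \cF$ is monotone, it is closed under taking subgraphs, so $G$ itself belongs to $\cX'$. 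This shows that whenever $G = \Xor_{i=1}^s H_i$ we have $G \in \cX'$.

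Next I would handle the bookkeeping over the value $k$. The number of XOR-terms $s$ is bounded by $2^k$, a fixed constant once $\cY$ is fixed as a $k$-function of $\cX$, so the union-closure hypothesis may be invoked with a single uniform $s = 2^k$ (taking all $\cX_i = \cX$); let $\cZ := \cX'$ be the resulting class in $\cF$. For a $G$ of the form $\Xor_{i=1}^s H_i$ we get $G \in \cZ$ by the paragraph above; for a $G$ of the form $\overline{\Xor_{i=1}^s H_i}$ we get $\overline{G} = \Xor_{i=1}^s H_i \in \cZ$, hence $G \in \overline{\cZ}$. Therefore every $G \in \cY$ lies in $\cZ \cup \overline{\cZ}$, giving $\cY \subseteq \cZ \cup \overline{\cZ}$ as required.

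I do not anticipate a serious obstacle; the proof is essentially an assembly of \cref{lem:intersection-closed-xor-representation}, \cref{obs:xor-of-k-graphs}, and monotonicity. The one point requiring care is making sure the XOR-to-subgraph-of-union step is applied to the right object: the complemented case must be routed through $\overline{\cZ}$ rather than attempting to bound the complement directly, since $\cX'$ being monotone says nothing about complements. A minor subtlety is that fewer than $s$ of the $H_i$ could repeat or be empty, but since $\cX$ is hereditary (indeed monotone) and a union of $s' \leq s$ copies embeds into a union of $s$ copies, padding with empty graphs is harmless and keeps everything inside $\bigvee_{i=1}^s \cX \subseteq \cZ$.
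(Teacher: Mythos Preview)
Your proposal is correct and follows essentially the same route as the paper: invoke \cref{lem:intersection-closed-xor-representation} (using that monotone implies intersection-closed), observe that a XOR is a subgraph of the corresponding union, and then use monotonicity together with the union-closure hypothesis to place everything in a single $\cZ \in \cF$, with the complemented case landing in $\overline{\cZ}$. The only cosmetic difference is that the paper uses monotonicity of $\bigvee_{i=1}^{2^k}\cX$ to absorb the subgraph, whereas you use monotonicity of the containing class $\cZ \in \cF$; both are fine.
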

\begin{proof}
    Let $\cX$ be a class from $\cF$ and let $\cY$ be a $k$-function of $\cX$ for some $k \in \bN$.
    Since $\cX$ is monotone and every monotone class is intersection-closed,
    by \cref{lem:intersection-closed-xor-representation}, for every $G \in \cY$ there exist $H_1, \ldots, H_s \in \cX$, for some $s \leq 2^k$, such that $G=\Xor_{i=1}^s H_i$ or $G=\overline{\Xor_{i=1}^s H_i}$.

    Note that $\Xor_{i=1}^s H_i$ is a subgraph of $\bigvee_{i=1}^s H_i = \bigcup_{i=1}^s H_i$.
    Let $\cZ$ be a class of graphs in $\cF$ such that $\bigvee_{i=1}^{2^k} \cX \subseteq \cZ$. Since $\cX$ is monotone, $\bigvee_{i=1}^{2^k} \cX$ is also monotone, and hence the graph $\Xor_{i=1}^s H_i$ belongs to $\bigvee_{i=1}^{2^k} \cX \subseteq \cZ$.
    Consequently, we have that every graph $G \in \cY$ belongs to $\cZ$ or $\overline{\cZ}$, i.e., $\cY \subseteq \cZ \cup \overline{\cZ}$ as claimed.    
\end{proof}

For $k \in \bN$ denote by $\cD_k$ the class of graphs of maximum degree at most $k$. Clearly, $\cD_k$ is a monotone class and for any $s \in \bN$ the class $\bigvee_{i=1}^s \cD_k$ is a subclass of $\cD_{s \cdot k}$. Thus, by applying \cref{lem:union-closed} to the family of graph classes $\cF = \{ \cD_k : k \in \bN \}$, we derive the following

\begin{corollary}[Functions of bounded degree classes]\label{cor:bdd-deg}
    Let $\cX$ be a graph class of bounded degree and $\cY$ be a function of $\cX$. Then there exists $k\in \bN$ such that $\cY \subseteq \cD_k \cup \overline{\cD_k}$.
\end{corollary}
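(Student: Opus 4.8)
The plan is to apply \cref{lem:union-closed} to a carefully chosen family $\cF$ of monotone graph classes, namely the family of bounded-degree classes $\cF = \{\cD_k : k \in \bN\}$, where $\cD_k$ denotes the class of graphs of maximum degree at most $k$. The corollary is essentially a direct instantiation of the lemma, so the main work is to verify that this family satisfies the hypotheses of \cref{lem:union-closed}. Two things must be checked: that each $\cD_k$ is monotone, and that $\cF$ is closed under unions in the appropriate sense.

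First I would observe that $\cD_k$ is monotone: removing an edge or a vertex from a graph cannot increase the degree of any vertex, so if $G$ has maximum degree at most $k$ then so does every subgraph of $G$. Next, I would verify the union-closure condition. Given classes $\cD_{k_1}, \ldots, \cD_{k_s}$ from $\cF$ and graphs $H_i \in \cD_{k_i}$ on a common vertex set $V$, the union $\bigcup_{i=1}^s H_i$ has, at each vertex $v$, a neighborhood contained in the union of the neighborhoods $N_{H_i}(v)$, so $\degree(v) \leq \sum_{i=1}^s k_i$. Hence $\bigvee_{i=1}^s \cD_{k_i} \subseteq \cD_{k'}$ where $k' = \sum_{i=1}^s k_i$, and $\cD_{k'} \in \cF$. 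This confirms both hypotheses.

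With the hypotheses verified, \cref{lem:union-closed} applies: for any $\cX \in \cF$ and any class $\cY$ that is a function of $\cX$, there exists $\cZ \in \cF$ with $\cY \subseteq \cZ \cup \overline{\cZ}$. Since every class $\cZ \in \cF$ is of the form $\cD_k$ for some $k \in \bN$, this gives exactly the desired conclusion $\cY \subseteq \cD_k \cup \overline{\cD_k}$. The only remaining wrinkle is that the corollary is stated for an arbitrary bounded-degree class $\cX$ rather than specifically some $\cD_k$; but a graph class of bounded degree is by definition contained in $\cD_k$ for some $k$, so being a function of $\cX$ makes $\cY$ a function of $\cD_k$ as well, and we may take $\cX = \cD_k$ in the lemma.

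I do not expect a genuine obstacle here, as the proof is a routine verification that the bounded-degree family fits the abstract framework of \cref{lem:union-closed}. The one point requiring a moment of care is the degree bound for unions: the correct observation is that degrees add under union (the neighborhoods need not be disjoint, but the bound $\sum k_i$ still holds since $|N_{H_1}(v) \cup \cdots \cup N_{H_s}(v)| \leq \sum_i |N_{H_i}(v)|$), so the union of bounded-degree graphs again has bounded degree. Everything else is immediate from the definitions.
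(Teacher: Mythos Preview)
Your proposal is correct and follows essentially the same approach as the paper: apply \cref{lem:union-closed} to the family $\cF = \{\cD_k : k \in \bN\}$, after checking that each $\cD_k$ is monotone and that unions of bounded-degree classes remain in $\cF$. Your write-up is in fact slightly more thorough, since you explicitly handle the reduction from an arbitrary bounded-degree class $\cX$ to some $\cD_k$, which the paper leaves implicit.
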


For the next three corollaries the argument is similar, but requires a bit more detail.

\begin{corollary}[Functions of bounded degeneracy classes]\label{cor:bdd-dgn}
    Let $\cX$ be a graph class of bounded degeneracy and $\cY$ be a function of $\cX$. Then there exists a class $\cZ$ of bounded degeneracy such that $\cY \subseteq \cZ \cup \overline{\cZ}$.
\end{corollary}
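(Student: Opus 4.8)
The plan is to mirror the proof of \cref{cor:bdd-deg} by applying \cref{lem:union-closed} to the family $\cF = \{\cG_d : d \in \bN\}$, where $\cG_d$ denotes the class of graphs of degeneracy at most $d$. First I would verify that each $\cG_d$ is monotone: the defining property of $d$-degeneracy, namely that every subgraph has a vertex of degree at most $d$, is clearly inherited by taking further subgraphs, so $\cG_d$ is closed under removing edges and vertices. This places $\cF$ within the scope of \cref{lem:union-closed}, provided I can establish the union-closure condition required by that lemma.

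The heart of the argument, and the one place where degeneracy behaves differently from maximum degree, is bounding the degeneracy of a union. For maximum degree this was immediate since $\Delta$ is subadditive under $\cup$; for degeneracy there is no analogous single-ordering argument, so I would instead pass through the edge-density characterization. Recall that if $\dgn(H) \le d$, then every induced subgraph $H[S]$ satisfies $|E(H[S])| \le d|S|$ (summing the back-degrees along a degeneracy ordering of $H[S]$). Hence if $H_1, \ldots, H_s$ each have degeneracy at most $d$ and $G = \bigvee_{i=1}^s H_i = \bigcup_{i=1}^s H_i$, then for every $S \subseteq V(G)$,
\[
    |E(G[S])| \le \sum_{i=1}^s |E(H_i[S])| \le s\,d\,|S|.
\]
By averaging, $G[S]$ contains a vertex of degree at most $2sd$, and since $S$ was arbitrary this shows $\dgn(G) \le 2sd$. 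Therefore $\bigvee_{i=1}^s \cG_d \subseteq \cG_{2sd}$, and more generally, for $\cG_{d_1}, \ldots, \cG_{d_s} \in \cF$ with $d = \max_i d_i$, we get $\bigvee_{i=1}^s \cG_{d_i} \subseteq \cG_{2sd} \in \cF$, which is exactly the union-closure hypothesis of \cref{lem:union-closed}.

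Finally I would handle the passage from the abstract family back to the stated corollary. Since $\cX$ has bounded degeneracy, there is some $d$ with $\cX \subseteq \cG_d$, and any graph expressible as a $k$-function of graphs in $\cX$ is in particular a $k$-function of graphs in $\cG_d$ (the chosen graphs $H_i \in \cX$ lie in $\cG_d$ as well); thus $\cY$ is a function of $\cG_d \in \cF$. Applying \cref{lem:union-closed} to $\cG_d$ then yields a class $\cZ \in \cF$ — hence a class of bounded degeneracy — with $\cY \subseteq \cZ \cup \overline{\cZ}$, as required. I expect the only genuinely non-routine step to be the union bound displayed above; the monotonicity check and the reduction to $\cG_d$ are routine bookkeeping around \cref{lem:union-closed}.
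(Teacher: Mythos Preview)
Your proposal is correct and follows essentially the same route as the paper: verify that the classes $\cG_d$ of $d$-degenerate graphs are monotone, bound the degeneracy of an $s$-fold union via the edge-density characterization, and then invoke \cref{lem:union-closed}. Your bound $\dgn\bigl(\bigvee_{i=1}^s H_i\bigr) \le 2sd$ is in fact the correct one---the paper's stated bound of $sd$ loses a factor of $2$ in the averaging step---but this does not affect the argument, only the constant.
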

\begin{proof}
    For $k \in \bN$ denote by $\cR_k$ the class of graphs of degeneracy at most $k$. It is easy to see from the definition of degeneracy that every $\cR_k$ is a monotone class. 
    
    We claim that for any $s \in \bN$ the class $\bigvee_{i=1}^s \cR_k$ has degeneracy at most $s \cdot k$.
    To see this, first observe that if a graph $G$ has degeneracy at most $k$, then any subgraph $H$ of $G$ has at most $k \cdot |V(H)|$ edges (this can be seen by starting with $H$ and iteratively removing a vertex of degree at most $k$ until no vertex is left). This property implies that for any $s$ graphs $H_1, H_2, \ldots, H_s \in \cR_k$
    every subgraph $H$ of the graph $\bigvee_{i=1}^s H_i = \bigcup_{i=1}^s H_i$ has at most $s \cdot k \cdot |V(H)|$ edges, which in turn implies that $H$ has a vertex of degree at most $s \cdot k$. Thus the degeneracy of $\bigvee_{i=1}^s H_i$ is a most $s \cdot k$, and therefore $\bigvee_{i=1}^s \cR_k \subseteq \cR_{s \cdot k}$.
    
    By applying \cref{lem:union-closed} to the family of graph classes $\cF = \{ \cR_k : k \in \bN \}$ we derive the statement.
\end{proof}

\begin{corollary}[Functions of graphs of bounded chromatic number]\label{cor:bounded-chromatic-number}
    Let $\cX$ be a graph class of bounded chromatic number, and $\cY$ be a function of $\cX$. Then there exists a class $\cZ$ of bounded chromatic number such that $\cY \subseteq \cZ \cup \overline{\cZ}$.
\end{corollary}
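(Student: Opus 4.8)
The plan is to mimic the structure of \cref{cor:bdd-dgn} by applying \cref{lem:union-closed} to the family of graph classes defined by bounded chromatic number. First I would define, for each $k \in \bN$, the class $\cC_k$ of graphs with chromatic number at most $k$. I would verify that each $\cC_k$ is monotone: if $G$ is properly colorable with $k$ colors, then so is any subgraph of $G$, since removing edges and vertices cannot invalidate a proper coloring. This gives the family $\cF = \{ \cC_k : k \in \bN \}$ to which we want to apply the lemma.

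The key step is to check that $\cF$ satisfies the union-closure hypothesis of \cref{lem:union-closed}, namely that for any $s \in \bN$ and $\cC_{k_1}, \ldots, \cC_{k_s} \in \cF$, their union $\bigvee_{i=1}^s \cC_{k_i}$ is contained in some member of $\cF$. The natural claim is $\bigvee_{i=1}^s \cC_{k} \subseteq \cC_{k^s}$. To see this, take graphs $H_1, \ldots, H_s$ on a common vertex set $V$, each $H_i$ with a proper coloring $c_i : V \to [k]$. For the union $\bigcup_{i=1}^s H_i$, assign to each vertex $v$ the tuple $(c_1(v), \ldots, c_s(v)) \in [k]^s$ as its color. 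If $u,v$ are adjacent in $\bigcup_{i=1}^s H_i$, they are adjacent in some $H_i$, whence $c_i(u) \neq c_i(v)$, so the tuples differ; thus this is a proper coloring using at most $k^s$ colors. Hence $\bigvee_{i=1}^s \cC_k \subseteq \cC_{k^s}$, and since $\cC_k \subseteq \cC_{k'}$ whenever $k \le k'$, we can absorb distinct $k_i$ into their maximum and conclude the closure property holds.

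With both hypotheses verified, applying \cref{lem:union-closed} to $\cF = \{ \cC_k : k \in \bN \}$ directly yields: for any $\cX \in \cF$ of bounded chromatic number and any $\cY$ that is a function of $\cX$, there exists $\cZ \in \cF$ (hence of bounded chromatic number) with $\cY \subseteq \cZ \cup \overline{\cZ}$. Since $\cX$ having bounded chromatic number means $\cX \subseteq \cC_k$ for some $k$, this gives the statement.

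I do not expect a serious obstacle here, as the argument is a routine instantiation of \cref{lem:union-closed}. The only point demanding a small amount of care is the product-coloring argument establishing the union-closure property; the rest is bookkeeping. One subtlety worth stating explicitly is that membership $\cX \subseteq \cC_k$ must be translated into applying the lemma with the single class $\cC_k$ rather than $\cX$ itself, but since $\cY$ being a function of $\cX$ implies $\cY$ is a function of the larger monotone class $\cC_k$ (as $\cX \subseteq \cC_k$), this causes no difficulty.
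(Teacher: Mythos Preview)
Your proposal is correct and follows essentially the same route as the paper: define $\cC_k$, note monotonicity, verify $\bigvee_{i=1}^s \cC_k \subseteq \cC_{k^s}$ via the product coloring, and apply \cref{lem:union-closed} to $\cF = \{\cC_k : k \in \bN\}$. The paper's proof is terser (it simply asserts the $k^s$ bound as ``easy to see'') and does not spell out the $\cX \subseteq \cC_k$ subtlety, but the argument is the same.
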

\begin{proof}
    For $k \in \bN$ denote by $\cC_k$ the class of graphs of chromatic number at most $k$. Removing an edge or a vertex from a graph cannot increase its chromatic number, so every $\cC_k$ is a monotone class. 
    It is also easy to see that for any $s \in \bN$ the graphs in the class $\bigvee_{i=1}^s \cC_k$ have chromatic number at most $k^s$.
    By applying \cref{lem:union-closed} to the family of graph classes $\cF = \{ \cC_k : k \in \bN \}$ we derive the statement.
\end{proof}

\begin{corollary}[Functions of weakly-sparse classes]\label{cor:weakly-sparse}
    Let $\cX$ be a weakly-sparse graph class and $\cY$ be a function of $\cX$. Then there exists a weakly-sparse class $\cZ$ such that $\cY \subseteq \cZ \cup \overline{\cZ}$.
\end{corollary}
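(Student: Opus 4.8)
The plan is to follow exactly the template established by the three preceding corollaries, applying \cref{lem:union-closed} to the family $\cF = \{ \cW_k : k \in \bN\}$, where $\cW_k$ denotes the class of graphs with biclique number at most $k$. To invoke the lemma I must verify two things: that each $\cW_k$ is monotone, and that the family is closed under union in the required sense, i.e.\ that for every $s \in \bN$ the union $\bigvee_{i=1}^s \cW_k$ is contained in some $\cW_{k'}$ from the family.

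Monotonicity of $\cW_k$ is immediate: removing an edge or vertex cannot create a new biclique, so the biclique number cannot increase, and hence $\cW_k$ is closed under taking subgraphs. The substantive step is the union-closure bound, and this is where I expect the only real work to lie. First I would note that $\bigvee_{i=1}^s H_i = \bigcup_{i=1}^s H_i$ for graphs on a common vertex set, so I need to bound the biclique number of a union of $s$ graphs each having biclique number at most $k$. The clean way to do this is via Ramsey's theorem: if the union contains a biclique with parts $A, B$ of size $t$ each, then every pair $(a,b) \in A \times B$ is an edge contributed by at least one of the $s$ graphs. Assigning to each such pair a colour in $[s]$ recording a graph responsible for that edge, a monochromatic (in colour $\ell$) sub-biclique of parts $A' \subseteq A$, $B' \subseteq B$ would be a biclique in $H_\ell$, forcing $|A'|, |B'| \le k$; so a bipartite Ramsey argument bounds $t$ in terms of $s$ and $k$. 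This yields $\bigvee_{i=1}^s \cW_k \subseteq \cW_{g(s,k)}$ for a suitable function $g$, which is precisely the hypothesis needed for \cref{lem:union-closed}.

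With the union-closure established, the conclusion is automatic: \cref{lem:union-closed} gives a class $\cZ \in \cF$, i.e.\ $\cZ = \cW_{k'}$ for some $k'$, which is weakly-sparse by definition, such that $\cY \subseteq \cZ \cup \overline{\cZ}$. The main obstacle is solely the bipartite Ramsey estimate in the union-closure step; everything else is bookkeeping parallel to \cref{cor:bounded-chromatic-number} and \cref{cor:bdd-dgn}. One should take care that the relevant Ramsey statement is genuinely the bipartite one (shrinking both parts simultaneously to a monochromatic sub-biclique), rather than the ordinary clique version, since bicliques, not cliques, are what we are controlling; but any explicit or existential bound suffices, as \cref{lem:union-closed} only requires containment in \emph{some} member of $\cF$, not a quantitatively optimal one.
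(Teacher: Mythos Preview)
Your proposal is correct and matches the paper's proof essentially verbatim: the paper also applies \cref{lem:union-closed} to the family of classes $\cQ_k$ of graphs with biclique number at most $k$, checks monotonicity trivially, and establishes union-closure via the bipartite Ramsey number $R(k+1,s)$ by colouring each edge of a putative large biclique in the union according to which $H_i$ contributes it and extracting a monochromatic $K_{k+1,k+1}$ for a contradiction.
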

\begin{proof}
    For $k \in \bN$ denote by $\cQ_k$ the class of graphs with biclique number at most $k$. As edge or vertex removal cannot increase the biclique number of a graph, every $\cQ_k$ is a monotone class.
    
    We claim that for any $s \in \bN$ the class $\bigvee_{i=1}^s \cQ_k$ is weakly-sparse, i.e., the biclique number is bounded in the class.
    To see this, first denote by $R(t,c)$ the bipartite Ramsey number, i.e., the minimum number $p$ such any edge coloring of $K_{p,p}$ in $c$ colors contains a monochromatic $K_{t,t}$. Suppose now that $\bigvee_{i=1}^s \cQ_k$ is not weakly-sparse, and let $G = \bigvee_{i=1}^s H_i$, $H_i \in \cQ_k, i \in [s]$, be a graph from the class with the biclique number at least $R=R(k+1,s)$, witnessed by a biclique $G[A,B]$ of size $R$. Color each edge of $G[A,B]$ by one of $s$ colors depending on which of the $s$ graphs $H_1, H_2, \ldots, H_s$ the edge belongs to (if the edge belongs to more than one graph, choose one of the respective colors arbitrarily). By Ramsey's theorem, there exist sets $A' \subseteq A$ and $B' \subseteq B$ such that $G[A',B']$ a monochromatic biclique of size $k+1$. This means that for some $i \in [s]$, $H_i[A',B']$ is also a biclique, contradicting the assumption that $H_i$ belongs to $\cQ_k$.
    This contradiction implies that the class $\bigvee_{i=1}^s \cQ_k$ is weakly-sparse and in particular it is a subclass of $\cQ_R$.

    By applying \cref{lem:union-closed} to the family of graph classes $\cF = \{ \cQ_k : k \in \bN \}$ we derive the statement.
\end{proof}

%%%%%%%%%%%%%%%%%%%%%%%%%%%%%%%%%%%%%%%%%%%%%%
\subsection{Hierarchy of classes of equivalence graphs}
\label{sec:equivalence-graphs}
%%%%%%%%%%%%%%%%%%%%%%%%%%%%%%%%%%%%%%%%%%%%%%

In this section, we examine the expressive power
of boolean functions of equivalence graphs.
We do this by revealing a hierarchy of hereditary classes of equivalence graphs with respect to the expressiveness of their boolean functions (see \cref{fig:hierarchy}).
This hierarchy is complete in the sense that a class in the hierarchy cannot be expressed as boolean function of any class below it, but its every proper subclass can. This holds true for every class in the hierarchy except $\cE_1$, for which the property almost holds: any proper subclass $\cX$ of $\cE_1$ becomes a function of $\cE_0$ after removing finitely many graphs from $\cX$.

In addition to $\cD_1$, which as before denotes the class of graphs of degree at most 1, in \cref{fig:hierarchy},
the calligraphic letters denote the following classes:
\begin{enumerate}
    \item[$\cE_k$] is the class of graphs with at most $k$ edges;
    \item[$\cL$] is the class of equivalence graphs consisting of all complete graphs and graphs with two connected components one of which is an isolated vertex. 
    \item[$\cC$] is the class of equivalence graphs with at most one connected component of size more than 1, i.e., each graph in $\cC$ consists of a clique and isolated vertices.
\end{enumerate}

As we shall see, some known families of graph classes can be characterized as boolean combinations of restricted classes of equivalence graphs.
For example, functions of $\cE_1$ are precisely classes consisting of graphs with bounded number of edges or bounded number of non-edges; and functions of $\cC \cup \cD_1$ characterize graph classes of structurally bounded degree.

Boolean functions of some classes in the hierarchy characterize families of hereditary graph classes with certain speed.
It is known \cite{SZ94,Ale97,BBW00} that the speed of a hereditary graph class $\cX$ cannot be arbitrary. In particular, if $|\cX_n|$ grows as $2^{O(n \log n)}$, then it grows as either $\Theta(1)$ or $n^{\Theta(1)}$ or $2^{\Theta(n)}$ or $2^{\Theta(n \log n)}$.
In these cases class $\cX$ is said to be constant, polynomial, exponential, and factorial, respectively.
The boolean functions of class $\cL$ are precisely subexponential classes, i.e., classes with speed $2^{o(n)}$; and the boolean functions of class $\cC$ characterize subfactorial classes, i.e., classes with speed $2^{o(n \log n)}$.

\begin{figure}%[H]
    \centering 
    \begin{tikzpicture}[node distance=3.5cm, thick]
        \node[draw, ultra thin, fill=yellow!10!white,  text=purple, minimum width=3cm, minimum height=2cm, rounded corners, align=left] (A) at (0,0) {\parbox{3cm}{$\cE_0$}};
        \pic[scale=0.75] at (-0.5, 0.7) {emptygraph};
        \node[right=0.5cm of A, align=center] {\parbox{3.5cm}{Empty or complete}};
        
        \node[draw, ultra thin, fill=yellow!10!white, text=purple, minimum width=3cm, minimum height=2cm, rounded corners, align=left] (B) at (0,3) {\parbox{3cm}{$\cE_1$}};
        \pic[scale=0.75] at (-0.5, 3.7) {oneedgegraph};
        \node[right=0.5cm of B, align=center] {\parbox{3.5cm}{Bounded $\#$ of \\ edges or non-edges}};
        
        \node[draw, ultra thin, fill=yellow!10!white, text=purple, minimum width=3cm, minimum height=2cm, rounded corners, align=left] (C)  at (-3,6) {\parbox{3cm}{$\cL$}};
        \pic[scale=0.75] at (-3.3, 6) {cliqueplusvertex};
        \node[left=0.5cm of C, align=center] {\parbox{2.3cm}{Twin class of \\ size $n-O(1)$; \\ Subexponential \\ speed}};
 
        \node[draw, ultra thin, fill=yellow!10!white, text=purple, minimum width=3cm, minimum height=2cm, rounded corners, align=left] (D) at (3,6) {\parbox{3cm}{$\cD_1$}};
        \pic[scale=0.75] at (2.7, 6) {degone};
        \node[right=0.5cm of D, align=center] {\parbox{3.5cm}{Bounded \\ degree or co-degree}};
 
        \node[draw, ultra thin, fill=yellow!10!white, text=purple, minimum width=3cm, minimum height=2cm, rounded corners, align=left] (E) at (-3,9) {\parbox{3cm}{$\cC$}};
        \pic[scale=0.65] at (-3.3, 9) {cliqueplusis};
        \node[left=0.5cm of E, align=center] {\parbox{2.3cm}{Bounded $\#$ of\\ twin classes; \\ Subfactorial \\ speed}};
 
        \node[draw, ultra thin, fill=yellow!10!white, text=purple, minimum width=3cm, minimum height=2cm, rounded corners, align=center] (F) at (3,9) {$\cL \cup \cD_1$};

        \node[draw, ultra thin, fill=yellow!10!white, text=purple, minimum width=3cm, minimum height=2cm, rounded corners, align=center] (G) at (0,12) {$\cC \cup \cD_1$};
        \node[right=0.5cm of G, align=center] {\parbox{3cm}{Structurally \\bounded degree}};
 
        \node[draw, ultra thin, fill=yellow!10!white, text=black, minimum width=3cm, minimum height=2cm, rounded corners, align=center] (H) at (0,15) {};
        \node[align=center] at (0.15,15.7) {\parbox{3cm}{\scalebox{0.85}{Equivalence graphs}}};
        \pic[scale=0.4] at (-0.23, 14.85) {equivgraph};
        \node[right=0.5cm of H, align=center] {\parbox{4cm}{Bounded partition \\ complementation \\ number}};
 
        \draw[->, dashed, line width=.2mm] (A) -- (B);
        \draw[->, line width=.2mm] (B) -- (C); 
        \draw[->, line width=.2mm] (B) -- (D); 
        \draw[->, line width=.2mm] (C) -- (F);
        \draw[->, line width=.2mm] (C) -- (E);
        \draw[->, line width=.2mm] (D) -- (F);
        \draw[->, line width=.2mm] (E) -- (G); 
        \draw[->, line width=.2mm] (F) -- (G);
        \draw[->, line width=.2mm] (G) -- (H);
    \end{tikzpicture}
    \caption{A hierarchy of classes of equivalence graphs with respect to the expressiveness of their boolean functions. Each node in the hierarchy corresponds to a hereditary class of equivalence graphs. For each class in the hierarchy, except $\cE_1$, every proper subclass of this class is a boolean function of one the classes immediately below it; the same almost holds for $\cE_1$, namely, any proper subclass $\cE_1$ becomes a function of $\cE_0$ after removing finitely many graphs from it (this exceptional relation is reflected in the picture with the dashed arrow).
    The text beside a node explains characterization(s) of boolean functions of the corresponding class.}
    \label{fig:hierarchy}
\end{figure}
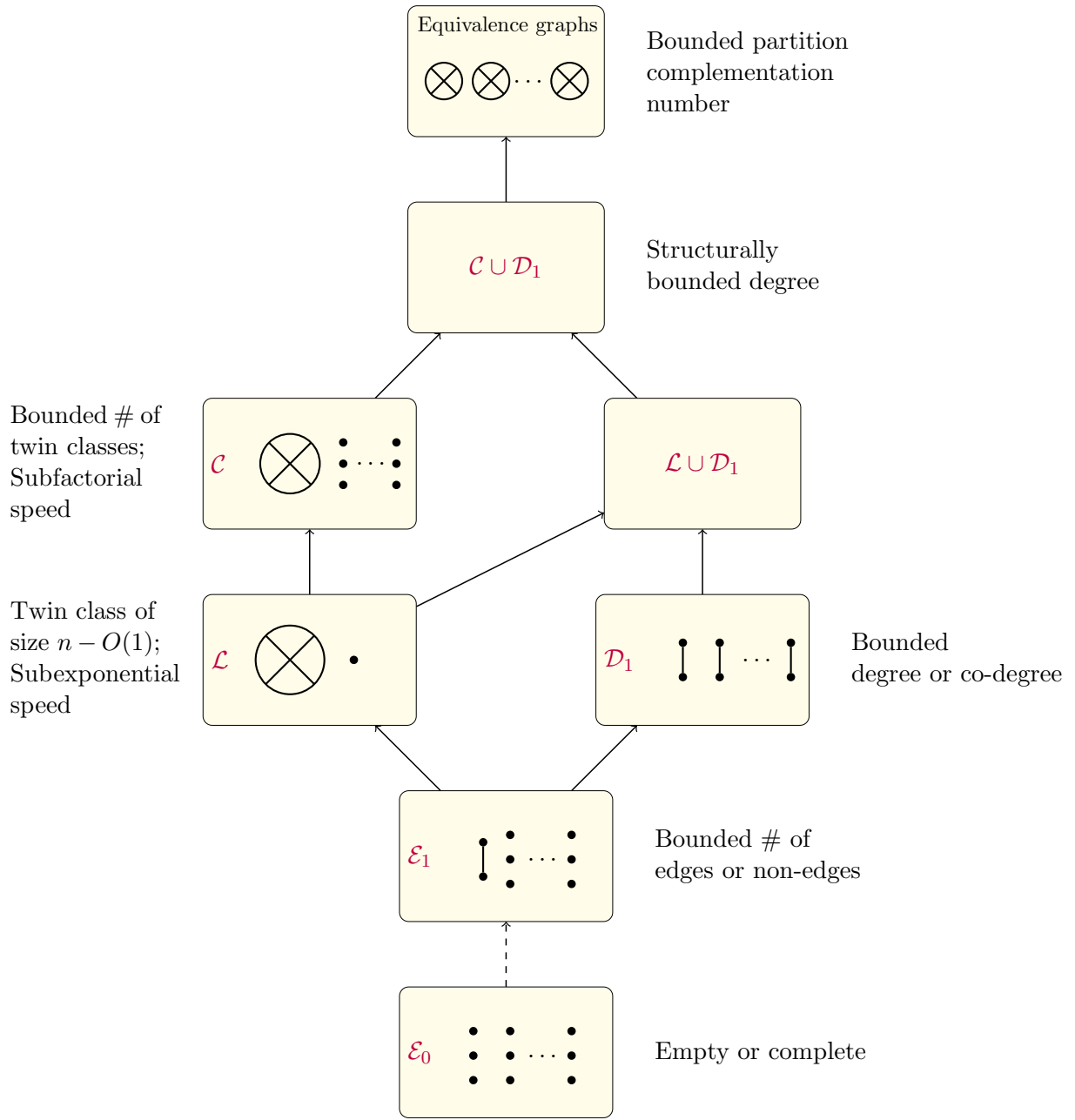

%%%%%%%%%%%%%%%%%%%%%%%%%%%%%%%%%%%%%%%%%%%%%%
\subsubsection{Empty graphs}
%%%%%%%%%%%%%%%%%%%%%%%%%%%%%%%%%%%%%%%%%%%%%%

\begin{theorem}\label{th:empty}
    A class of graphs is a function of $\cE_0$
    if and only if each of its graphs is either complete or empty.
\end{theorem}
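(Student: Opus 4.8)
The plan is to prove both directions directly from the definitions, relying on the single observation that every graph in $\cE_0$ has adjacency indicator identically $0$ on all distinct pairs of vertices.

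For the ``if'' direction I would show that a class $\cY$ all of whose members are complete or empty is in fact a $1$-function of $\cE_0$. Given $G \in \cY$ on vertex set $V$: if $G$ is empty, then $G$ itself lies in $\cE_0$ and $G = f(G)$ with $f$ the identity; if $G$ is complete, then taking the empty graph $O = (V,\emptyset) \in \cE_0$ I would use $G = \overline{O}$, realized by the single-variable function $f(x) = 1 - x$, since $G(a,b) = \neg O(a,b) = 1$ for all distinct $a,b$. In either case $G$ is a function of a single graph from $\cE_0$, so $\cY$ is a function of $\cE_0$.

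For the ``only if'' direction I would suppose $\cY$ is a $k$-function of $\cE_0$, fix $G \in \cY$ on vertex set $V$, and write $G = f(H_1,\ldots,H_r)$ with $r \le k$, some boolean $f : \zo^r \to \zo$, and each $H_i \in \cE_0$. The key step is to exploit that each $H_i$ is empty, so $H_i(a,b) = 0$ for every distinct pair $a,b$. This forces
\[
    G(a,b) = f(H_1(a,b),\ldots,H_r(a,b)) = f(0,\ldots,0)
\]
for all distinct $a,b$, a constant independent of the chosen pair. Hence $G$ is complete if $f(0,\ldots,0)=1$ and empty if $f(0,\ldots,0)=0$, as required.

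There is no genuine obstacle here; the only subtlety to keep in mind is that the definition of a $k$-function permits the boolean function $f$ and the witnessing tuple of graphs to depend on $G$, which is precisely what lets the ``if'' direction use the identity for empty members and the negation for complete members.
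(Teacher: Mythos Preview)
Your proof is correct. The ``if'' direction is essentially identical to the paper's. For the ``only if'' direction, you take a more direct and elementary route: you observe that since every $H_i \in \cE_0$ has $H_i(a,b)=0$ on every pair, the value $G(a,b)=f(0,\ldots,0)$ is a constant, forcing $G$ to be complete or empty. The paper instead invokes its general machinery for intersection-closed classes (\cref{lem:intersection-closed-xor-representation}) to write $G$ as a XOR of graphs from $\cE_0$ or its complement, then notes the XOR of empty graphs is empty. Your argument is shorter and avoids that lemma entirely; the paper's route is not wrong, but it is using a sledgehammer where your bare-hands observation suffices. The benefit of the paper's approach is uniformity with how it treats the other classes in the hierarchy, where the XOR representation is genuinely needed.
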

\begin{proof}
    Clearly, each empty or complete $n$-vertex graph is a function of an $n$-vertex empty graph; in the former case, the corresponding boolean function is $f(x) = x$, and in the latte case it is $f(x) = \overline{x}$.
    
    Conversely, let $\cX$ be a $k$-function of the class $\cE_0$ of empty graphs for some $k \in \bN$. Since $\cE_0$ is intersection-closed, by \cref{lem:intersection-closed-xor-representation}, each graph $G=(V,E)$ in $\cX$ is equal to
    $\Xor_{i=1}^s H_i$ or $\overline{\Xor_{i=1}^s H_i}$, where $s \leq 2^k$ and $H_1, H_2, \ldots, H_s$ are some 
    graphs from $\cE_0$ on the same vertex set $V$. Since $\cE_0$ has only one graph on $V$, all graphs $H_i, i \in [s]$, are equal to this unique empty graph. Therefore, 
    $\Xor_{i=1}^s H_i$ is the empty graph on $V$ and $\overline{\Xor_{i=1}^s H_i}$ is the complete graph on $V$. This completes the proof.
\end{proof}

%%%%%%%%%%%%%%%%%%%%%%%%%%%%%%%%%%%%%%%%%%%%%%
\subsubsection{Graphs with at most one edge}
%%%%%%%%%%%%%%%%%%%%%%%%%%%%%%%%%%%%%%%%%%%%%%

In this section, we show that graph classes in which graphs have bounded number of edges or bounded number of non-edges are precisely functions of the graphs with at most $1$ edge.

\begin{theorem}\label{th:one-edge}
    A class of graphs $\cX$ is a function of $\cE_1$ if and only if there exists a constant $t$ such that every graph in $\cX$ has at most $t$ edges or at most $t$ non-edges.
\end{theorem}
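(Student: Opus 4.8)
The plan is to prove both directions, with the forward direction (functions of $\cE_1$ have bounded edges or non-edges) being the substantive one. I would begin with the easy converse: if every graph in $\cX$ has at most $t$ edges, then each such $G$ is a function of the $t$ graphs $H_1,\ldots,H_t \in \cE_1$, where $H_i$ is the single-edge graph carrying the $i$-th edge of $G$, combined via the OR function $f = x_1 \vee \cdots \vee x_t$ (padding with empty graphs, which lie in $\cE_1$, if $G$ has fewer than $t$ edges). Symmetrically, if $G$ has at most $t$ non-edges, then $\overline{G}$ has at most $t$ edges, hence is a function of $\cE_1$ graphs; since $\overline{G} = G \xor K_n$ by \cref{obs:xor-with-complete-graph} and $K_n$ itself is a function of $\cE_0 \subseteq \cE_1$ graphs, $G$ is a function of $\cE_1$ too. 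So $\cX$ is a function of $\cE_1$.

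For the forward direction, suppose $\cX$ is a $k$-function of $\cE_1$. The key structural tool is \cref{lem:intersection-closed-xor-representation}: since $\cE_1$ is intersection-closed (the intersection of two graphs with at most one edge has at most one edge), every $G \in \cX$ can be written as $\Xor_{i=1}^s H_i$ or $\overline{\Xor_{i=1}^s H_i}$ with $s \leq 2^k$ and each $H_i \in \cE_1$. By \cref{obs:xor-of-k-graphs}, a pair $(a,b)$ is an edge of $\Xor_{i=1}^s H_i$ only if it is an edge of some $H_i$. Since each $H_i$ contributes at most one edge, $\Xor_{i=1}^s H_i$ has at most $s \leq 2^k$ edges. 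Thus in the first case $G$ has at most $2^k$ edges, and in the second case $\overline{G}$ has at most $2^k$ edges, i.e.~$G$ has at most $2^k$ non-edges. Setting $t := 2^k$ gives the desired constant, uniform over all $G \in \cX$ because $s$ is bounded by $2^k$ independently of $G$.

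The main subtlety to handle carefully is that the number of $H_i$'s and the partition into the XOR-versus-complemented-XOR cases may vary from graph to graph within $\cX$, so I must emphasize that the bound $s \leq 2^k$ from \cref{lem:intersection-closed-xor-representation} is uniform, depending only on $k$ and not on the individual $G$. This uniformity is exactly what lets a single constant $t$ work for the entire class. I do not anticipate a genuine obstacle here; the work is in correctly invoking the XOR-representation lemma and the observation that XOR cannot create an edge outside the support of the constituent graphs. One minor point worth stating explicitly is that in the converse direction, when $G$ has at most $t$ edges but strictly fewer, we pad the tuple to exactly $t$ graphs using copies of the empty graph, so that the class membership "$k$-function of $\cE_1$'' holds with the uniform value $k = t$.
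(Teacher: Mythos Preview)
Your proposal is correct and follows essentially the same approach as the paper. The only cosmetic difference is that for the forward direction the paper invokes the general \cref{lem:union-closed} (about union-closed families of monotone classes) applied to $\cF = \{\cE_k : k \in \bN\}$, whereas you apply \cref{lem:intersection-closed-xor-representation} directly and observe that a XOR of at most $2^k$ one-edge graphs has at most $2^k$ edges; since the proof of \cref{lem:union-closed} itself goes through \cref{lem:intersection-closed-xor-representation}, you are simply inlining that argument. For the converse in the non-edge case, the paper takes the slightly shorter route $G = \overline{\bigvee_{e \in \overline{E}} H_e}$ (a $t$-function of $\cE_1$), avoiding the detour through $K_n$, but your version is also fine.
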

\begin{proof}
    Let $\cX$ be a class of graphs such that there exists a constant $t \in \bN$ and every graph in $\cX$ has at most $t$ edges or at most $t$ non-edges. We claim that $\cX$ is a $t$-function of $\cE_1$. Let $G=(V,E)$ be a graph in $\cX$. Assume first that $G$ has at most $t$ edges. Then $G$ can be expressed as the disjunction $\bigvee_{e \in E} H_e$ of $|E| \leq t$ graphs, where $H_e := (V, \{e\})$. Suppose now that $G$ has at most $t$ non-edges, then $\overline{G} = (V, \overline{E})$, where $\overline{E} = \binom{V}{2} \setminus E$, has  at most $t$ edges, and thus, as before, $\overline{G} = \bigvee_{e \in \overline{E}} H_e$ and thus $G = \overline{\bigvee_{e \in \overline{E}} H_e}$ is a function of at most $|\overline{E}| \leq t$ graphs from $\cE^1$.

    To prove the converse, we observe that $\cE_k$ is a monotone class and for any $s \in \bN$ the class $\bigvee_{i=1}^s \cE_k$ is equal to $\cE_{s \cdot k}$. Thus, applying \cref{lem:union-closed} to the family of graph classes $\cF = \{ \cE_k : k \in \bN \}$, we conclude that for any graph class $\cX$ that is a function of $\cE_1$ there exists a $t \in \bN$ such that $\cX \subseteq \cE_t \cup \overline{\cE_t}$.
\end{proof}

%%%%%%%%%%%%%%%%%%%%%%%%%%%%%%%%%%%%%%%%%%%%%%
\subsubsection{Clique + isolated vertex}
%%%%%%%%%%%%%%%%%%%%%%%%%%%%%%%%%%%%%%%%%%%%%%

In this section we characterize graph classes that are functions of $\cL$.

Let $G$ be a graph. We say that two vertices $a,b$ in $G$ are \emph{twins} if $N(a) \setminus \{b\} = N(b) \setminus \{a\}$. It is easy to see that the twin relation is an equivalence relation, and each equivalence class is a clique or an independent set in $G$. We refer to an equivalence class of the twin relation as a \emph{twin class}.
We call the number of twin classes the \emph{twin number} of $G$.

\begin{theorem}\label{th:clique+isolated}
    A class of graphs $\cX$ is a function of $\cL$ if and only if there exists an integer $k \geq 0$ such that every graph in $\cX$ has a twin class that contains all but at most $k$ vertices.
\end{theorem}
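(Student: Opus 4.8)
The class $\cL$ consists of equivalence graphs that are either complete graphs, or have exactly two components one of which is a single isolated vertex. To characterize functions of $\cL$, I would attack the two directions separately, and for the forward direction lean heavily on the XOR representation provided by \cref{lem:intersection-closed-xor-representation}, after first checking that $\cL$ is intersection-closed.

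\textbf{Easy direction (sufficiency).}
Suppose every graph $G \in \cX$ has a twin class containing all but at most $k$ vertices. I would show $\cX$ is a function of $\cL$ with the number of input graphs bounded in terms of $k$. Fix $G=(V,E)$ and let $T$ be the large twin class, so $|V \setminus T| \le k$; call the exceptional vertices $v_1,\dots,v_m$ with $m \le k$. The key point is that $T$ is either a clique or an independent set (being a twin class), and all vertices of $T$ have identical adjacency to each $v_i$. So the whole adjacency structure is determined by: (i) the homogeneous behaviour inside $T$, (ii) for each $v_i$, whether $v_i$ is complete or anticomplete to $T$, and (iii) the $O(k^2)$ adjacencies among $v_1,\dots,v_m$. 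Each of these pieces can be encoded by a constant number of graphs from $\cL$: the internal structure of $T$ as a single complete graph (or empty graph, which lies in $\cL$ via $\overline{\cdot}$ once we allow the outer boolean function), and each "$v_i$ versus $T$" cut as a graph whose unique nontrivial component is $T \cup \{v_i\}$ (or its complement), which is precisely of the $\cL$ shape. I would bound the total number of required $\cL$-graphs by a function of $k$ and fold everything together with one $k$-variable boolean function.

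\textbf{Hard direction (necessity).}
Assume $\cX$ is a $k$-function of $\cL$; I must produce a uniform bound on how many vertices lie outside a single twin class. Since $\cL \subseteq$ equivalence graphs is intersection-closed and contains all complete graphs, \cref{lem:intersection-closed-xor-representation} (and \cref{lem:X-xor-Y}) gives, for each $G \in \cX$, graphs $H_1,\dots,H_s \in \cL$ with $s \le 2^k$ and $G = \Xor_{i=1}^s H_i$. Each $H_i \in \cL$ is a partition of $V$ into at most two "blocks" (a big clique-block and possibly one isolated singleton, or just one all-clique block), and adjacency in $H_i$ is the equivalence "same block, nonsingleton" relation. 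The plan is to refine $V$ by the common pattern of block-membership across all $s$ graphs: two vertices $a,b$ receiving the same vector of block-labels in $(H_1,\dots,H_s)$ necessarily have the same adjacency to any third vertex, hence are twins in $G$. Because each $H_i \in \cL$ splits off at most one vertex as its isolated singleton, the singletons across all $H_i$ account for at most $s \le 2^k$ vertices; every remaining vertex sits in the "big" block of every $H_i$ simultaneously. Those remaining vertices all share one common block-label vector, so they form a single twin class of $G$, leaving at most $2^k$ exceptional vertices, giving the bound with $k := 2^k$ (or a similar explicit constant).

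\textbf{Main obstacle.}
The delicate point is the necessity direction: verifying that "same block-label vector" really forces the twin relation, and correctly accounting for the singletons. The subtlety is that a vertex which is the isolated singleton of $H_i$ is non-adjacent to everything in $H_i$, so its label differs, but I must confirm that only these $O(2^k)$ singleton-vertices can fail to join the common large class, and that no additional splitting is forced by the XOR of the homogeneous block structures. I would handle this by arguing that on the set $W$ of vertices that are never a singleton, every $H_i[W]$ is a complete graph, so $G[W]$ is $\Xor$ of complete graphs restricted to $W$ — but $\Xor$ of complete graphs on a common vertex set is itself complete or empty, making $W$ a single homogeneous twin class and completing the bound.
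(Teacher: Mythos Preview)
Your overall strategy is sound, and the ``block-label vector'' insight in the necessity direction is exactly the right idea --- it is, in fact, essentially the paper's argument. However, there is a genuine error in how you set things up.

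\textbf{The class $\cL$ is not intersection-closed.} Take $|V| = n \ge 3$ and two distinct vertices $a, b \in V$. Let $H_1$ be the $\cL$-graph with $a$ isolated and $V \setminus \{a\}$ a clique, and $H_2$ the $\cL$-graph with $b$ isolated. Then $H_1 \cap H_2$ has both $a$ and $b$ isolated and $V \setminus \{a,b\}$ a clique; this graph has two isolated vertices and hence does not belong to $\cL$. So you cannot invoke \cref{lem:intersection-closed-xor-representation} for $\cL$, and the XOR representation $G = \Xor_{i=1}^s H_i$ with $H_i \in \cL$ is not available. (If you pass to a larger intersection-closed class such as $\cC$, you lose the ``at most one singleton per $H_i$'' count that your bound relies on.)

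\textbf{The fix is to drop the XOR detour entirely.} Your own observation --- that two vertices receiving the same block-label vector in $(H_1,\dots,H_p)$ are twins in every $H_i$ and hence twins in $f(H_1,\dots,H_p)$ for \emph{any} boolean function $f$ --- already works directly on the original representation $G = f(H_1,\dots,H_p)$ with $p \le k$. Each $H_i \in \cL$ has a twin class $W_i$ of size $\ge n-1$ (the big clique, or all of $V$ if $H_i = K_n$); the intersection $W = \bigcap_i W_i$ has size $\ge n - p \ge n - k$, and the vertices of $W$ are pairwise twins in every $H_i$, hence pairwise twins in $G$. This is precisely the paper's argument, and it gives the sharper bound $k$ rather than $2^k$. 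The final paragraph about $G[W]$ being a XOR of complete graphs is then unnecessary: the twin property on $W$ follows from the block-label reasoning alone, not from the internal homogeneity of $G[W]$.

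For the sufficiency direction your sketch is on the right track; the paper makes it explicit by building everything from the graphs $C_a$ (clique on $V \setminus \{a\}$, $a$ isolated) for $a$ ranging over the at most $k$ exceptional vertices, and handling the $O(k^2)$ internal adjacencies among the exceptional vertices one pair at a time via $H_{a,b} = \overline{C_a} \wedge \overline{C_b}$.
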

\begin{proof}
    Suppose there exists an integer $k \geq 0$ such that every graph in $\cX$ has a twin class containing all but at most $k$ vertices. Let $G=(V,E)$ be a graph in $\cX$, and let $Q \subseteq V$ be a twin class in $G$ containing all but $p \leq k$ vertices. Then $Q$ is either a clique or an independent set, and every vertex in $P = V \setminus Q$ is either anticomplete or complete to $Q$. We claim that $G$ is a function of at most $p$ graphs from $\cL$.

    To show this, we first introduce some notation. 
    Denote by $P_1$ the set of vertices in $P$ that are anticomplete to $Q$, and by $P_2$ the set of vertices in $P$ that are complete to $Q$. For a set of vertices $U \subseteq V$, denote by $C_U$ the equivalence graph in which $V \setminus U$ is a clique, and all vertices in $U$ are isolated; if $U=\{a\}$, we will write $C_a$ to denote $C_U$. Note that for every $a \in V$, graph $C_a$ is in $\cL$, and $C_U$ is equal to $\bigwedge_{a \in U} C_a$.
    Further, for a pair of distinct vertices $a,b \in V$, denote by $H_{a,b}$ the graph on vertex set $V$ with a unique edge $(a,b)$, and note that $H_{a,b} = \overline{C_a} \wedge \overline{C_b}$.

    Let $G' = C_{P_1} \wedge \overline{C_{P_2}}$.
    In this graph, every vertex in $P_1$ is anticomplete to $Q$, every vertex in $P_2$ is complete to $Q$, and $Q$ is an independent set.
    If $Q$ is also an independent set in $G$, then we define $G'' = G'$. Otherwise, if 
    $Q$ is a clique in $G$, then we define $G'' = G' \vee C_P$.

    Graph $G''$ is almost the same as $G$. Namely, any two vertices in $V$ that are not both in $P$ are adjacent in $G''$ if and if they are adjacent in $G$. In order to turn $G''$ to $G$ we observe that for any graph $F=(V,E)$ and any distinct $a,b \in V$, if $a,b$ are not adjacent in $F$, then taking the union of $F$ with $H_{a,b}$ adds the edge $(a,b)$ to $F$ (and nothing else); and if $a,b$ are adjacent in $F$, then intersecting $F$ with $\overline{H_{a,b}}$ removes the edge $(a,b)$ from $F$.

    Thus, denoting by $E_1 \subseteq \binom{P}{2}$ the set of pairs of vertices in $P$ that are not adjacent in $G''$, but adjacent in $G$; and 
    by $E_2 \subseteq \binom{P}{2}$ the set of pairs that are adjacent in $G''$, but not adjacent in $G$, we conclude that
    \[
        G = \left( G'' \vee \bigvee_{\{a,b\} \in E_1} H_{a,b} \right) \wedge \bigwedge_{\{a,b\} \in E_2} \overline{H_{a,b}}.
    \]
    Since each of the graphs in the above formula is a boolean combination of graphs in $\{C_a~|~a \in P\} \subseteq \cL$, we conclude that $G$ is a function of $p$ graphs from $\cL$.

    Now, suppose that $\cX$ is a $k$-function of $\cL$ for some $k \in \bN$. We will show that every graph in $\cL$ has a twin class containing all but at most $k$ vertices.
    Let $G=(V,E)$ be an $n$-vertex graph in $\cX$, and $G$ a function of $p \leq k$ graphs $C_1, C_2, \ldots, C_p \in \cL$. Each of the graphs $C_1, C_2, \ldots, C_p$ has a twin class of size $n-1$. For $i \in [p]$, denote by $W_i$ the corresponding twin class in $C_i$. Then $W = \bigcap_{i \in [p]} W_i$ is a set of pairwise twins in each of the graphs $C_1, C_2, \ldots, C_p$, which implies that $W$ is also a set of pairwise twins in $G$. Consequently, $G$ contains a twin class of size at least $|W| \geq n-p \geq n-k$, as desired.
\end{proof}

It is known that a hereditary class $\cX$ is subexponential if and only if there exists $k \in \bN$ such that every graph in $\cX$ has a twin class containing all but at most $k$ vertices \cite{SZ94,Ale97,BBW00}. Combining this fact with \cref{th:clique+isolated} we obtain a characterization of subexponential graph classes as functions of class $\cL$.

\begin{theorem}\label{th:clique-isolated-vertex-subexponential}
    A hereditary graph class is subexponential if and only if it is a function of $\cL$.
\end{theorem}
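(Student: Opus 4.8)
\textbf{Proof proposal for \cref{th:clique-isolated-vertex-subexponential}.}

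The plan is to combine the structural characterization of subexponential classes quoted from \cite{SZ94,Ale97,BBW00} with the equivalence established in \cref{th:clique+isolated}. The quoted fact states that a hereditary class $\cX$ is subexponential if and only if there exists $k \in \bN$ such that every graph in $\cX$ has a twin class containing all but at most $k$ vertices. Meanwhile \cref{th:clique+isolated} states that $\cX$ is a function of $\cL$ if and only if the very same condition holds: there exists $k \geq 0$ such that every graph in $\cX$ has a twin class containing all but at most $k$ vertices. Since both ``subexponential'' and ``function of $\cL$'' are characterized by the identical intermediate condition, the theorem follows immediately by transitivity of logical equivalence.

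Concretely, I would argue as follows. Let $\cX$ be a hereditary graph class. First I would invoke the result of \cite{SZ94,Ale97,BBW00} to replace the statement ``$\cX$ is subexponential'' by the equivalent twin-class condition: there exists $k \in \bN$ such that every graph in $\cX$ has a twin class containing all but at most $k$ vertices. Then I would apply \cref{th:clique+isolated}, which asserts that this twin-class condition is in turn equivalent to ``$\cX$ is a function of $\cL$''. Chaining these two equivalences yields that $\cX$ is subexponential if and only if it is a function of $\cL$, as desired.

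There is essentially no obstacle in the proof itself, as it is a one-line consequence of two previously established equivalences. The only mild point of care is that the theorem restricts attention to \emph{hereditary} classes: the characterization of subexponential speed in \cite{SZ94,Ale97,BBW00} is stated for hereditary classes, and so the hypothesis that $\cX$ is hereditary is what allows the first equivalence to be invoked. I would note explicitly that \cref{th:clique+isolated} holds for arbitrary classes $\cX$ (its statement imposes no hereditariness assumption), so it is only the speed characterization that needs the hereditary hypothesis; both directions of the final theorem therefore rely on the twin-class condition as the common pivot, and the hereditary assumption is carried through from the speed side.
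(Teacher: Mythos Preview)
Your proposal is correct and matches the paper's own argument exactly: the paper also derives this theorem in one line by combining the characterization of subexponential hereditary classes from \cite{SZ94,Ale97,BBW00} with \cref{th:clique+isolated}. Your additional remark about where the hereditary hypothesis is actually used is accurate and a nice clarification, though the paper does not spell this out.
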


%%%%%%%%%%%%%%%%%%%%%%%%%%%%%%%%%%%%%%%%%%%%%%
\subsubsection{Graphs of degree at most 1}
%%%%%%%%%%%%%%%%%%%%%%%%%%%%%%%%%%%%%%%%%%%%%%

In this section, we show that graph classes in which graphs have bounded degree or bounded co-degree are precisely functions of the graphs of maximum degree at most 1.
Recall, for $k \in \bN$, $\cD_k$ denotes the class of graphs of maximum degree at most $k$.

\begin{theorem}\label{th:degree-1}
    A class of graphs $\cX$ is a function of $\cD_1$ if and only if there exists a $k \in \bN$ such that $\cX \subseteq \cD_k \cup \overline{\cD_k}$.
\end{theorem}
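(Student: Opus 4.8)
The plan is to prove the two directions separately, with the forward direction being essentially free and the converse reducing to a single classical fact.

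For the ``only if'' direction, I would simply invoke \cref{cor:bdd-deg}: the class $\cD_1$ has bounded degree, so if $\cX$ is a function of $\cD_1$, then there exists $k \in \bN$ with $\cX \subseteq \cD_k \cup \overline{\cD_k}$, which is exactly what is claimed.

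For the converse, assume $\cX \subseteq \cD_k \cup \overline{\cD_k}$ for some fixed $k$. It suffices to show that $\cD_k \cup \overline{\cD_k}$ is a function of $\cD_1$, since any subclass of a $k'$-function is again a $k'$-function. First I would show that $\cD_k$ is a function of $\cD_1$. The key step is that every graph $G$ of maximum degree at most $k$ has bounded chromatic index: by Vizing's theorem its edges partition into at most $k+1$ matchings $M_1, \ldots, M_{k+1}$ (any constant bound depending only on $k$ would do, e.g.\ $2k-1$ obtained from greedily colouring the line graph). Setting $H_i = (V(G), M_i) \in \cD_1$ and noting that the $M_i$ cover $E(G)$, we obtain $G = H_1 \vee \cdots \vee H_{k+1}$, so $\cD_k \subseteq \bigvee_{i=1}^{k+1} \cD_1$ is a $(k+1)$-function of $\cD_1$. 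Then I would observe that $\overline{\cD_k}$ is a function of $\cD_1$ for free: whenever $G' = f(H_1, \ldots, H_m)$ with $H_i \in \cD_1$ and a boolean function $f$, we also have $\overline{G'} = (\neg f)(H_1, \ldots, H_m)$ over the same inputs, so complementing a graph merely negates the governing boolean function. Applying this to the representation of graphs in $\cD_k$ shows every graph in $\overline{\cD_k}$ is likewise a $(k+1)$-function of $\cD_1$, and combining the two cases yields that $\cX$ is a $(k+1)$-function of $\cD_1$.

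I expect no genuine obstacle here: the argument is routine once \cref{cor:bdd-deg} supplies the forward direction. The only nontrivial ingredient is the matching decomposition of bounded-degree graphs, which is precisely the bound on the chromatic index given by Vizing's theorem; the complementation step is immediate, since negating the output of a boolean combination leaves the input graphs untouched.
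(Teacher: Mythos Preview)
Your proposal is correct and follows essentially the same approach as the paper: invoke \cref{cor:bdd-deg} for the forward direction, and for the converse use Vizing's theorem to write any graph of maximum degree at most $k$ as a union of at most $k+1$ matchings, then observe that negating the boolean function handles the complements.
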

\begin{proof}
    By \cref{cor:bdd-deg}, if $\cX$ is a function of $\cD_1$, then $\cX \subseteq \cD_k \cup \overline{\cD_k}$ for some $k \in \bN$.

    To prove the converse, suppose there exists $k \in \bN$ such that for every graph $G \in \cX$ we have $\Delta(G) \leq k$ or $\Delta(\overline{G}) \leq k$.
    We let $F=(V,E)$ denote the graph $G$ if $\Delta(G) \leq k$, and the graph $\overline{G}$ otherwise.
    By Vizing's theorem \cite{Viz64}, the edges of $F$ can be colored in at most $\Delta(F)+1 \leq k+1$ colors in such a way that no two edges of the same color share a vertex. In other words, the edge set $E$ of $F$ can be partitioned in at most $k+1$ matchings. Let $M_1, M_2, \ldots, M_s \subseteq E$, where $s \leq k+1$, be the matchings of such a partition of $E$. Then $F = \bigvee_{i=1}^s H_i$, where $H_i = (V,M_i)$ for $i \in [s]$. Consequently, $G = \bigvee_{i=1}^s H_i$ if $\Delta(G) \leq k$, or, otherwise, $G = \overline{\bigvee_{i=1}^s H_i}$. Since the maximum degree of each of the graphs $H_1, H_2, \ldots, H_s$ is at most one, we conclude that $\cX$ is a $(k+1)$-function of $\cD_1$.
\end{proof}

%%%%%%%%%%%%%%%%%%%%%%%%%%%%%%%%%%%%%%%%%%%%%%
\subsubsection{Clique + isolated vertex and graphs of degree at most 1}
%%%%%%%%%%%%%%%%%%%%%%%%%%%%%%%%%%%%%%%%%%%%

Let $\XY_k$ be the class of graphs where all but at most $k$ vertices have degree at most $k$, and the remaining vertices have co-degree at most $k$. In this section we show that the classes that are functions of  $\mathcal{L} \cup \mathcal{D}_1$ are precisely those contained in $\XY_k \cup \overline{\XY_k}$ for some $k$.

\begin{lemma} \label{lem:XY}
    Let $k,r \in \mathbb{N}$ and $f$ be an $r$-variable boolean function. Then $f(\XY_k) \subseteq \XY_{s} \cup \overline{\XY_{s}}$, where $s=kr2^{r}$.
\end{lemma}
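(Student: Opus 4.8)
The plan is to run a \emph{dominant-value} argument. By \cref{rem:single-function} it suffices to fix a single $r$-variable boolean function $f$ and graphs $H_1,\dots,H_r\in\XY_k$, set $G=f(H_1,\dots,H_r)$, and show $G\in\XY_s\cup\overline{\XY_s}$. First I would record the structure of each input: since $H_i\in\XY_k$, I fix a set $B_i\subseteq V$ with $|B_i|\le k$ such that every $v\in B_i$ has co-degree at most $k$ in $H_i$, while every $v\notin B_i$ has $\degree_{H_i}(v)\le k$. Putting $B=\bigcup_{i=1}^r B_i$, we have $|B|\le kr$, and $B$ is the only place where ``high degree'' can occur across the $H_i$.

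The crux is to assign to each vertex $v$ a \emph{type} $\mathbf a(v)=(a_1(v),\dots,a_r(v))\in\{0,1\}^r$, where $a_i(v)=1$ if $v\in B_i$ and $a_i(v)=0$ otherwise, and to observe that the adjacency of $v$ is almost entirely dictated by this type. Concretely, for each $i$ the set $\{w\neq v : H_i(v,w)\neq a_i(v)\}$ has size at most $k$: it is exactly $N_{H_i}(v)$ when $a_i(v)=0$ and the set of non-neighbours of $v$ when $a_i(v)=1$. Taking the union over $i\in[r]$ yields a set $T(v)$ with $|T(v)|\le kr$ such that $\bigl(H_1(v,w),\dots,H_r(v,w)\bigr)=\mathbf a(v)$, and hence $G(v,w)=f(\mathbf a(v))$, for every $w\notin T(v)\cup\{v\}$. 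Writing $\phi(v)=f(\mathbf a(v))$, this means $\degree_G(v)\le kr$ whenever $\phi(v)=0$, and the co-degree of $v$ in $G$ is at most $kr$ whenever $\phi(v)=1$. Crucially, every vertex outside $B$ has type $\mathbf a(v)=\mathbf 0$, so $\phi(v)=f(\mathbf 0)$ for all $v\notin B$.

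To finish I would split on the value $f(\mathbf 0)$. If $f(\mathbf 0)=0$, then all of the (many) vertices outside $B$ satisfy $\degree_G(v)\le kr$, while the set $S=\{v:\phi(v)=1\}$ is contained in $B$, so $|S|\le kr$ and each vertex of $S$ has co-degree at most $kr$; taking $S$ as the exceptional set gives $G\in\XY_{kr}$. If instead $f(\mathbf 0)=1$, I apply the identical argument to $\overline f = 1\xor f$, which satisfies $\overline f(\mathbf 0)=0$ and $\overline G=\overline f(H_1,\dots,H_r)$, obtaining $\overline G\in\XY_{kr}$. Since $\XY_a\subseteq\XY_b$ for $a\le b$ and $kr\le s=kr2^r$, in both cases $G\in\XY_s\cup\overline{\XY_s}$, as required. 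I expect the only real obstacle to be the key type observation itself---pinning down that the ``bad'' set $T(v)$ has size uniformly bounded by $kr$ and that type $\mathbf 0$ captures \emph{all} vertices outside $B$; the complementation step that handles $f(\mathbf 0)=1$ is routine, and the factor $2^r$ in the stated $s$ is not actually needed by this argument and simply provides slack.
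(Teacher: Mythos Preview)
Your proof is correct and in fact sharper than the paper's, yielding $G\in\XY_{kr}\cup\overline{\XY_{kr}}$ rather than the stated $\XY_{kr2^r}\cup\overline{\XY_{kr2^r}}$; as you note, the factor $2^r$ is slack. The paper takes a different route: it first establishes the closure properties $\XY_{k_1}\wedge\XY_{k_2}\subseteq\XY_{k_1+k_2}$ and $\XY_{k_1}\oplus\XY_{k_2}\subseteq\XY_{k_1+k_2}$, then writes $f$ in Algebraic Normal Form as $\Xor_{j=1}^{q}\bigwedge_{i\in I_j}x_i$ (possibly after a global complementation) and applies the two closure bounds iteratively, picking up the $\sum_j k|I_j|\le kr2^r$ bound along the way. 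Your dominant-value/type argument bypasses the ANF decomposition entirely by observing directly that each vertex has a single ``generic'' adjacency value $f(\mathbf a(v))$ witnessed on all but $kr$ partners, which is both more elementary and tighter. The paper's approach has the mild advantage that the two closure facts $\XY_{k_1}\wedge\XY_{k_2},\,\XY_{k_1}\oplus\XY_{k_2}\subseteq\XY_{k_1+k_2}$ are reusable building blocks, but for the lemma as stated your argument is cleaner.
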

\begin{proof}
    We start by showing that, for any $k_1,k_2 \in \bN$, we have
    $\XY_{k_1} \wedge \XY_{k_2} \subseteq \XY_{k_1+k_2}$ and $\XY_{k_1} \oplus \XY_{k_2} \subseteq \XY_{k_1+k_2}$.
    Let $G_1 \in \XY_{k_1}$ and $G_2 \in \XY_{k_2}$ be graphs on the same vertex set $V$, and for $i \in \{1,2\}$, let $B_i$ be the set of vertices of 
    co-degree at most $k_i$ in $G_i$, and denote $A_i = V \setminus B_i$.

    First, consider $H = G_1 \wedge G_2$ and a vertex $v \in V$.
    If $v \in A_1$ or  $v \in A_2$, then it has degree at most $\max\{k_1, k_2\}$ in $H$. If $v \in B \cap B'$, then it has co-degree at most $k_1+k_2$ in $H$. Since $|B_1 \cap B_2| \leq \min\{k_1, k_2\}$, we conclude that $H \in \XY_{k_1+k_2}$.

    Next, consider $H = G_1 \xor G_2$ and a vertex $v \in V$. If $v \in A_1 \cap A_2$ or $v \in B_1 \cap B_2$, then $v$ has degree at most $k_1+k_2$ in $H$. If $v \in A_1 \cap B_2$ or $v \in A_2 \cap B_1$, then $v$ has co-degree at most $k_1+k_2$ in $H$.
    Since $|A_1 \cap B_2| + |A_2 \cap B_1| \leq k_1+k_2$, we conclude that $H \in \XY_{k_1+k_2}$.

    Now, let $G$ be an arbitrary graph in $f(\XY_k)$.
    It follows from \cref{th:anf} that
    there exist $r$ graphs $H_1, H_2, \ldots, H_r \in \XY_k$ and
    distinct non-empty sets $I_1, I_2, \ldots, I_q \subseteq [r]$ such that either $G$ or $\overline{G}$ is equal to
    $$
        \Xor_{j=1}^{q} \bigwedge_{i \in I_j} H_i.
    $$
    Consequently, by the above, either $G$ or $\overline{G}$ belongs to $\XY_{s}$, where $s = \sum_{i=1}^{q} k \cdot |I_q| \leq kr 2^r$, as desired.
\end{proof}

\begin{theorem}
    A class of graphs $\cX$ is a function of $\mathcal{L} \cup \mathcal{D}_1$ if and only if there exists a $k \in \bN$ such that $\cX \subseteq \XY_k \cup \overline{\XY_k}$.    
\end{theorem}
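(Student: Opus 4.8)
The plan is to prove both implications, relying on \cref{lem:XY} for the forward direction and on an explicit construction for the converse.

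For the forward direction, I would first observe that $\mathcal{L} \cup \mathcal{D}_1 \subseteq \XY_1 \cup \overline{\XY_1}$. Indeed, every graph in $\mathcal{D}_1$ has all degrees at most $1$ and hence lies in $\XY_1$; a complete graph $K_n$ has empty complement, so $K_n \in \overline{\XY_1}$; and a graph $K_m + K_1 \in \mathcal{L}$ has complement equal to the star $K_{1,m}$, whose leaves all have degree $1$ and whose single centre has co-degree $0$, so $K_{1,m} \in \XY_1$ and thus $K_m + K_1 \in \overline{\XY_1}$. Now let $\cY$ be a $k$-function of $\mathcal{L} \cup \mathcal{D}_1$ and let $G = f(H_1, \ldots, H_r)$ with $r \le k$ and each $H_i \in \mathcal{L} \cup \mathcal{D}_1$. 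Replacing each $H_i$ lying in $\overline{\XY_1}$ by its complement $H_i' := \overline{H_i} \in \XY_1$ and negating the corresponding input of $f$, I obtain an $r$-variable boolean function $f'$ with $G = f'(H_1', \ldots, H_r')$ and all $H_i' \in \XY_1$. Applying \cref{lem:XY} with $k = 1$ then gives $G \in \XY_s \cup \overline{\XY_s}$ for $s = r 2^r \le k 2^k$, and since $\XY_a \subseteq \XY_b$ for $a \le b$, we conclude $\cY \subseteq \XY_{k 2^k} \cup \overline{\XY_{k 2^k}}$.

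For the converse, it suffices to show that every graph in $\XY_k$ is a function of a bounded (in $k$) number of graphs from $\mathcal{L} \cup \mathcal{D}_1$: the case of $\overline{\XY_k}$ then follows by composing with output negation, equivalently by XOR-ing with a complete graph $K_n \in \mathcal{L}$ (cf.~\cref{obs:xor-with-complete-graph}). So fix $G \in \XY_k$ with vertex set $V$, let $A$ be the set of at most $k$ vertices of co-degree at most $k$, and let $B = V \setminus A$, so every vertex of $B$ has degree at most $k$. Write $C_a \in \mathcal{L}$ for the graph in which $V \setminus \{a\}$ is a clique and $a$ is isolated, so $\overline{C_a}$ is the star centred at $a$; and write $H_{u,v} \in \mathcal{D}_1$ for the single-edge graph on $V$ with edge $\{u,v\}$. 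The key construction is an auxiliary graph $G^+$ in which every pair meeting $A$ is an edge and the pairs inside $B$ agree with $G$. On the one hand $\bigvee_{a \in A} \overline{C_a}$ realises exactly the ``$A$ complete to everything, $B$ internally empty'' part; on the other hand $G[B]$ has maximum degree at most $k$, so by Vizing's theorem it decomposes into at most $k+1$ matchings $M_1, \ldots, M_s \in \mathcal{D}_1$, whence $G^+ = \bigvee_{a \in A} \overline{C_a} \vee \bigvee_{i=1}^s M_i$.

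Finally I would correct $G^+$ down to $G$. The graphs $G$ and $G^+$ differ only on pairs meeting $A$, namely on the set $N$ of non-edges of $G$ incident to $A$; since each of the at most $k$ vertices of $A$ has at most $k$ non-neighbours, $|N| \le k^2$. Intersecting with $\overline{H_{u,v}}$ deletes exactly the edge $\{u,v\}$, so
\[
    G = \Big( \bigvee_{a \in A} \overline{C_a} \vee \bigvee_{i=1}^s M_i \Big) \wedge \bigwedge_{\{u,v\} \in N} \overline{H_{u,v}},
\]
exhibiting $G$ as a function of at most $|A| + s + |N| \le k + (k+1) + k^2 = (k+1)^2$ graphs from $\mathcal{L} \cup \mathcal{D}_1$. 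As this bound is uniform over $\XY_k$ and over $\overline{\XY_k}$, any $\cX \subseteq \XY_k \cup \overline{\XY_k}$ is a function of $\mathcal{L} \cup \mathcal{D}_1$. I expect the main obstacle to be this converse, specifically handling the dense bipartite adjacency between $A$ and $B$: the useful idea is that, although a star is not itself in $\mathcal{L} \cup \mathcal{D}_1$, it is the complement of a graph in $\mathcal{L}$, which lets me build the ``$A$-complete'' backbone cheaply and then repair the boundedly many missing edges with single-edge $\mathcal{D}_1$ graphs.
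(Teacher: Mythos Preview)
Your proof is correct and follows essentially the same approach as the paper: both directions rely on \cref{lem:XY} for the forward implication and on a ``stars $+$ matchings $+$ single-edge corrections'' construction for the converse. The only notable differences are that you make the reduction $\mathcal{L} \cup \mathcal{D}_1 \subseteq \XY_1 \cup \overline{\XY_1}$ explicit (the paper leaves it implicit), and you realize the single-edge correction graphs directly in $\mathcal{D}_1$ and delete via intersection, whereas the paper builds them as $\overline{C_u} \wedge \overline{C_v}$ from $\mathcal{L}$ and corrects via XOR---your version is slightly cleaner and yields the better constant $(k+1)^2$ versus the paper's $2k^2+2k+1$.
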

\begin{proof}
    It follows from \cref{lem:XY} that if $\cX$ is a function of $\cL \cup \cD_1$, then $\cX \subseteq \XY_k \cup \overline{\XY_k}$ for some $k \in \bN$.

    To prove the converse, suppose there exists $k \in \bN$ such that $\cX \subseteq \XY_k \cup \overline{\XY_k}$. Let $G$ be a graph from $\cX$ that belongs to $\XY_k$. Then $G$ can be obtained from a graph of degree at most $k$ (which is a union of at most $k+1$ graphs from $\mathcal{D}_1$) by taking a union with at most $k$ stars (each of which is the complement of a graph in $\mathcal{L}$) and then taking a XOR with at most $k^2$ single-edge graphs (each of which is the intersection of two graphs that are complements of graphs in $\mathcal{L}$). Thus, $G$ is a function of at most $2k^2+2k+1$ graphs from $\mathcal{L} \cup \mathcal{D}_1$.
    
    If $G$ belongs to $\overline{\XY_k}$, then by taking a XOR with the complete graph (which belongs to $\cL$) we obtain $\overline{G} \in \XY_k$.
    Therefore any graph in $\cX$ is a function of at most $2k^2+2k+2$ graphs from $\mathcal{L} \cup \mathcal{D}_1$.
\end{proof}

%%%%%%%%%%%%%%%%%%%%%%%%%%%%%%%%%%%%%%%%%%%%%%
\subsubsection{Clique + independent set}
%%%%%%%%%%%%%%%%%%%%%%%%%%%%%%%%%%%%%%%%%%%%%%

In this section we characterize graph classes that are functions of $\cC$.

We note that \cite{BuchananPR22,PouzetKT21} study XORs of graphs in $\cC$. Since $\cC$ is an intersection-closed class and contains all complete garphs, due to \cref{lem:intersection-closed-xor-representation}, XORs of $\cC$ are as expressive as boolean functions of $\cC$.

\begin{lemma}\label{lem:subgraph-complementation-twin-classes}
    Let $G=(V,E)$ be a graph and let $H=(V,E')$ be a subgraph complementation of $G$. If the twin number of $G$ is $k$, then the twin number of $H$ is at most $2k$.
\end{lemma}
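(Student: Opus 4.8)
The plan is to fix a set $U \subseteq V$ for which $H$ is the subgraph complementation of $G$ with respect to $U$, and to refine the twin partition of $G$ along the cut $(U, V \setminus U)$. The starting point is the clean adjacency formula supplied by \cref{obs:xor-with-clique}: for all distinct $x,y \in V$,
\[
    H(x,y) = G(x,y) \xor \mathbb{1}[x \in U \text{ and } y \in U],
\]
i.e.\ $H$ agrees with $G$ on every pair except those lying entirely inside $U$, whose adjacency is flipped.

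First I would let $T_1, \ldots, T_k$ be the twin classes of $G$ and form the refined partition $\mathcal{Q}$ whose parts are the nonempty sets among $T_i \cap U$ and $T_i \cap (V \setminus U)$ for $i \in [k]$. Since each $T_i$ contributes at most two nonempty parts, $\mathcal{Q}$ has at most $2k$ parts. The core step is then to show that every part $Q \in \mathcal{Q}$ is a set of pairwise twins in $H$. Fix $a,b \in Q$ and an arbitrary third vertex $c \neq a,b$. Because $a$ and $b$ lie on the same side of the cut, $\mathbb{1}[a \in U] = \mathbb{1}[b \in U]$, and hence the two correction terms coincide: $\mathbb{1}[a,c \in U] = \mathbb{1}[b,c \in U]$. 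Combining this with $G(a,c) = G(b,c)$ (as $a,b$ are twins in $G$) and the displayed formula gives $H(a,c) = H(b,c)$. Since $c$ was arbitrary, $a$ and $b$ are twins in $H$.

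Finally I would conclude by a coarsening argument: as each part of $\mathcal{Q}$ consists of pairwise twins in $H$, each part is contained in a single twin class of $H$, so the twin classes of $H$ are unions of parts of $\mathcal{Q}$; therefore their number is at most $|\mathcal{Q}| \leq 2k$, as required.

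I expect no serious obstacle here, as the argument is essentially a bookkeeping of the XOR correction term. The only point requiring genuine care — and the reason for the factor of two — is that the correction $\mathbb{1}[x,y \in U]$ must behave identically on the pairs $(a,c)$ and $(b,c)$; this holds precisely because $a$ and $b$ are on the same side of $(U, V \setminus U)$. Had we tried to use the coarser twin partition of $G$ directly, a class split by the cut could contain vertices $a \in U$ and $b \notin U$, and flipping only the pairs inside $U$ could then separate $a$ and $b$ into different twin classes of $H$; refining along the cut is exactly what rules this out.
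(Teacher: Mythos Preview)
Your proposal is correct and follows essentially the same approach as the paper's proof: refine each twin class of $G$ along the cut $(U, V\setminus U)$ and show that vertices in the same refined part remain twins in $H$. The only cosmetic difference is that you phrase the neighborhood comparison via the XOR correction term from \cref{obs:xor-with-clique}, whereas the paper computes $N_H(a)\setminus\{b\}$ and $N_H(b)\setminus\{a\}$ directly; the content is identical.
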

\begin{proof}
    Let $U \subseteq V$ be such that $H$ is the subgraph complementation of $G$ with respect to $U$. 

    If $a,b \in U$ are twins in $G$, they stay twins in $H$ as the complementation of the edges in the subgraph $G[U]$ changes the neighborhood of the two vertices in the same way. Specifically, 
    \[
        N_H(a) \setminus \{b\} 
        = (N_G(a) \setminus U) \cup (U \setminus (N_G(a) \cup \{a,b\}))
        = (N_G(b) \setminus U) \cup (U \setminus (N_G(b) \cup \{a,b\})) 
        = N_H(b) \setminus \{a\}.
    \]
    If two vertices $a,b \not\in U$ are twins in $G$,
    their neighborhoods are not affected by the complementation of the subgraph $G[U]$, and thus 
    they remain twins in $H$.

    It follows from the above that for any twin equivalence class $R \subseteq V$ of $G$, each of the sets $R \cap U$ and $R \setminus U$ consist of pairwise twins. Consequently, $H$ has at most twice as many twin equivalence classes as $G$ does, which completes the proof.
\end{proof}

The next theorem characterizes graph classes of bounded twin number as functions of $\cC$.

\begin{theorem}\label{th:clique-is}
    Let $\cX$ be a class of graphs. Then the twin number is bounded in $\cX$ if and only if $\cX$ is a function of $\cC$.
\end{theorem}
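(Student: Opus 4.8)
The plan is to prove both implications, relying on the machinery for intersection-closed classes and on the behaviour of the twin number under subgraph complementation.

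For the direction ``function of $\cC$ implies bounded twin number'', I would first observe that $\cC$ is intersection-closed---the intersection of two graphs consisting of a clique plus isolated vertices is again the clique on the intersection of the two cliques with the remaining vertices isolated---and that $\cC$ contains all complete graphs. Hence, letting $k$ be such that $\cX$ is a $k$-function of $\cC$, \cref{lem:intersection-closed-xor-representation} gives, for every $G \in \cX$, a representation $G = \Xor_{i=1}^s H_i$ with each $H_i \in \cC$ and $s \le 2^k$. Writing $G_0$ for the empty graph and $G_j = G_{j-1} \xor H_j$, \cref{obs:xor-with-clique} identifies each $G_j$ as the subgraph complementation of $G_{j-1}$ with respect to the clique part of $H_j$. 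Since the empty graph has twin number $1$ and, by \cref{lem:subgraph-complementation-twin-classes}, each subgraph complementation at most doubles the twin number, the graph $G = G_s$ has twin number at most $2^s \le 2^{2^k}$, which is bounded in $\cX$.

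For the converse, suppose the twin number of every graph in $\cX$ is at most $k$. Fix $G \in \cX$ with twin classes $T_1, \ldots, T_m$, $m \le k$. The structural fact I would record first is that $G$ is a blow-up of its quotient: within each twin class the graph is complete or empty, and between any two distinct twin classes the bipartite graph is complete or empty. For $i \in [m]$ let $A_i \in \cC$ be the clique on $T_i$ with all other vertices isolated, and for $i < j$ let $B_{ij} \in \cC$ be the clique on $T_i \cup T_j$ with all other vertices isolated. The crucial computation is that $B_{ij} \xor A_i \xor A_j$ is exactly the complete bipartite graph between $T_i$ and $T_j$ with no other edges: within $T_i$ one gets $1 \xor 1 \xor 0 = 0$, within $T_j$ one gets $1 \xor 0 \xor 1 = 0$, between $T_i$ and $T_j$ one gets $1 \xor 0 \xor 0 = 1$, and all remaining pairs contribute $0$. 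XOR-ing these gadgets over the edges of the quotient, and then XOR-ing in $A_i$ for each clique-type twin class to install the within-class edges, reconstructs $G$ exactly, since the gadgets contribute no within-class edges and each cross-pair is hit by a single gadget.

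Collecting terms, $G$ is the XOR of a sub-collection of the graphs $\{A_i : i \in [m]\} \cup \{B_{ij} : 1 \le i < j \le m\}$, all lying in $\cC$, of which there are at most $m + \binom{m}{2} \le \binom{k+1}{2}$. Thus every graph of $\cX$ is a function of at most $\binom{k+1}{2}$ graphs from $\cC$, so $\cX$ is a function of $\cC$. I expect the main obstacle to be this converse direction, specifically isolating the gadget $B_{ij} \xor A_i \xor A_j$ that produces a single complete bipartite graph between two twin classes while cancelling all spurious within-class edges, and verifying that XOR-ing these gadgets over the quotient (together with the $A_i$ for clique classes) leaves no leftover edges. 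The bookkeeping that the within-class contributions cancel and that each cross-pair is realised exactly once is where care is needed; the fact that the number of building blocks depends only on $k$ is what makes $\cX$ a genuinely bounded function of $\cC$.
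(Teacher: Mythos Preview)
Your proof is correct and follows essentially the same approach as the paper. The forward direction is identical: use that $\cC$ is intersection-closed with all complete graphs to pass via \cref{lem:intersection-closed-xor-representation} to a XOR of at most $2^k$ graphs from $\cC$, interpret each XOR as a subgraph complementation via \cref{obs:xor-with-clique}, and bound the twin number by repeated application of \cref{lem:subgraph-complementation-twin-classes}.

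For the converse, both arguments use the same building blocks---cliques on single twin classes and on pairs of twin classes---and arrive at the same bound $\binom{s}{2}+s$ on the number of graphs. The only difference is in the assembly: the paper takes $H = \bigvee_{\{i,j\} \in J} H_{\{i,j\}}$ (a disjunction over the edges of the quotient) to install the cross-class edges, and then corrects the within-class edges by XOR-ing with some of the $H_i$. You instead work purely in XOR, using the cancellation $B_{ij} \xor A_i \xor A_j$ to realise a single complete bipartite graph between $T_i$ and $T_j$ with no within-class residue. Your pure-XOR route is arguably tidier here, since by \cref{lem:intersection-closed-xor-representation} boolean combinations of $\cC$ reduce to XORs anyway; but the two constructions are minor variants of the same idea.
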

\begin{proof}
    Suppose $\cX$ is a $k$-function of $\cC$ for some $k \in \bN$. We claim that the twin number of graphs in $\cC$ is at most $2^{2^k}$.
    
    Let $G=(V,E)$ be a graph in $\cX$. 
    First note that $\cC$ is intersection-closed and contains all complete graphs; hence, by \cref{lem:intersection-closed-xor-representation}, there exist $s \leq 2^k$ and $H_1, H_2, \ldots, H_s \in \cC$ such that
    $G = \Xor_{i=1}^s H_i$.
    Since $\Xor_{i=1}^s H_i = (\Xor_{i=1}^{s-1} H_i) \xor H_s$, \cref{obs:xor-with-clique} implies that $G$ is obtained from the empty graph $(V, \emptyset)$ via the sequence of subgraph complementations associated with $H_1, H_2, \ldots, H_s$. Thus, from \cref{lem:subgraph-complementation-twin-classes}, we conclude that the number of equivalence classes in $G$ is at most $2^{s} \leq 2^{2^k}$.

    Conversely, let $\cX$ be a class of graphs, and suppose there exists $s \in \bN$ such that the twin number of every graph in $\cX$ is at most $s$. We claim that $\cX$ is a $k$-function of $\cC$, where $k = \binom{s}{2} + s$.

    Let $G=(V,E)$ be a graph in $\cX$ and let $t$ be the twin number of $G$, where $t \leq s$.
    Denote by $V_1, V_2, \ldots, V_t$ the twin equivalence classes of $G$.
    Note that for every $i \in [t]$, the class $V_i$ is a clique or an independent set; and for every distinct $i,j \in [t]$ either $V_i$ is complete to $V_j$ or $V_i$ is anticomplete to $V_j$.
    Let $J \subseteq \binom{[t]}{2}$ with $\{i,j\} \in J$ if and only if $V_i$ is complete to $V_j$.
    For every $\{i,j\} \in J$, let $H_{\{i,j\}}$ be the graph in $\cC$ on vertex set $V$, in which $V_i \cup V_j$ is a clique and all other vertices are isolated.
    Let $H = \bigvee_{e \in J} H_{e}$. By construction, for any two distinct $i,j \in [t]$, any two vertices $a \in V_i$ and $b \in V_j$ are adjacent in $G$ if and only if they are adjacent in $H$. Furthermore, for each $i \in [t]$, the set $V_i$ is a clique or independent in $H$. In order to make $H$ equal to $G$ we need to make subgraph complementation of $H$ with respect to each $V_i$, where $H$ and $G$ disagree. More formally, let $I \subseteq [t]$ be the set of indices $i$ such that $V_i$ is a clique in $G$ and an independent set in $H$, or $V_i$ is an independent set in $G$ and a clique in $H$. 
    For $i \in [t]$, let $H_i$ be the graph in $\cC$ on vertex set $V$, in which $V_i$ is a clique and all other vertices are isolated. 
    By \cref{obs:xor-with-clique}, the subgraph complementation of $H$ with respect to $V_i$ is equal to $H \xor H_i$. Thus, we have that $G = 
    H \xor \Xor_{i \in I} H_i$.
    Therefore, $G$ is a function of $|J|+|I|$ graphs from $\cC$, which implies the required claim as $|J| + |I| \leq \binom{t}{2} + t \leq \binom{s}{2} + s$.
\end{proof}

A hereditary graph class $\cX$ is called \emph{subfactorial} if the number $|\cX^n|$ of labeled $n$-vertex in $\cX$ grows as $2^{o(n\log n)}$. It is known that a hereditary class $\cX$ is subfactorial if and only if the twin number is bounded in $\cX$ \cite{SZ94,Ale97,BBW00}. Combining this fact with \cref{th:clique-is} we obtain a characterization of subfactorial graph classes as functions of class $\cC$.

\begin{theorem}\label{th:clique-is-subfactorial}
    A hereditary graph class is subfactorial if and only if it is a function of $\cC$.
\end{theorem}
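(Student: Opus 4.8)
The statement to prove is \cref{th:clique-is-subfactorial}: a hereditary graph class is subfactorial if and only if it is a function of $\cC$. The plan is to obtain this as an immediate corollary by combining two facts that are already available in the excerpt. The first ingredient is the structural characterization \cref{th:clique-is}, which says that a graph class $\cX$ has bounded twin number if and only if $\cX$ is a function of $\cC$. The second ingredient is the cited speed dichotomy of \cite{SZ94,Ale97,BBW00}, which states that a hereditary class $\cX$ is subfactorial (i.e.\ $|\cX^n| = 2^{o(n\log n)}$) if and only if the twin number is bounded in $\cX$. Chaining these two equivalences through the common middle condition ``bounded twin number'' yields the claim directly.

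Concretely, I would write the proof in two short directions, both hinging on boundedness of the twin number as the pivot. For the forward direction, suppose $\cX$ is hereditary and subfactorial; by the cited result of \cite{SZ94,Ale97,BBW00}, the twin number is bounded in $\cX$, and then \cref{th:clique-is} gives that $\cX$ is a function of $\cC$. For the converse, suppose $\cX$ is a function of $\cC$; then \cref{th:clique-is} gives that the twin number is bounded in $\cX$, and the cited result again yields that $\cX$ is subfactorial. One caveat worth noting explicitly is that \cref{th:clique-is} is stated for arbitrary classes, whereas the speed characterization requires $\cX$ to be hereditary; since the theorem statement already assumes a hereditary class, this matches, and moreover being a function of a hereditary class is itself hereditary (as observed earlier in the excerpt), so no additional hypothesis is needed.

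There is essentially no obstacle here, since both equivalences are either proved or cited above; the entire content is the observation that ``bounded twin number'' is the shared characterization linking the combinatorial notion (function of $\cC$) to the speed notion (subfactorial). If anything, the only point requiring mild care is to make sure the cited speed result is invoked in the correct logical direction in each half of the argument, but since it is stated as a genuine ``if and only if,'' both directions are available without further work.
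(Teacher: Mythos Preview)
Your proposal is correct and matches the paper's approach exactly: the paper also derives the theorem by combining \cref{th:clique-is} with the cited characterization of subfactorial hereditary classes via bounded twin number from \cite{SZ94,Ale97,BBW00}.
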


%%%%%%%%%%%%%%%%%%%%%%%%%%%%%%%%%%%%%%%%%%%%%%
\subsubsection{Clique + independent set and graphs of degree at most 1}
\label{sec:clique+deg1}
%%%%%%%%%%%%%%%%%%%%%%%%%%%%%%%%%%%%%%%%%%%%%%

In this section we characterize functions of $\cC \cup \cD_1$ as graph classes of structurally bounded degree.

For a graph $G$ we denote by $\tau(G)$ the minimum number $t \in \bN$ such that a graph of degree at most $t$ can be obtained from $G$ by making at most $t$ subgraph complementation. We show in \cref{thm:tau-structurally-bounded-degree} that graph classes in which parameter $\tau$ is bounded are precisely the classes of \emph{structurally bounded degree}, i.e., classes of graphs that are first-order transductions of classes of graphs of bounded degree. This follows from the main result of \cite{GHOLR20} classifying graph classes of structurally bounded degree, which we recall below after defining the relevant notions.

\begin{definition}
    Given a graph $G$, we say $v,w \in G$ are \emph{near-$k$-twins} if the symmetric difference of their neighborhoods has size at most $k$.

    Given a graph class $\cX$ and $k, p \in \mathbb{N}$, we say $\cX$ is \emph{($k,p)$-near uniform} if for every $G \in \cX$ there is some $k_G \leq k$ such that the near-$k_G$-twin relation is an equivalence relation on $G$ with at most $p$ classes.
\end{definition}

We omit the formal definition of first-order transductions and refer the interested reader to e.g.~\cite{braunfeld2022first}.
Informally, given a first-order formula $\phi(x,y)$ and a graph $G$, a \emph{first-order $\phi$-transduction} of $G$ is
a transformation of $G$ to 
a graph that is obtained from $G$ by first taking a constant number of vertex-disjoint copies of $G$, 
then coloring the vertices of the new graph by a constant number of colors, 
then using $\phi$ as the new adjacency relation, 
and finally taking an induced subgraph.
A graph class $\cY$ is a first-order transduction of a graph class $\cX$ if there exists a first-order formula $\phi$ so that every graph in $\cY$ is an $\phi$-transduction of a graph from $\cX$.

\begin{theorem}[\cite{GHOLR20}]\label{thm:sbdd-deg-uniform}
A graph class $\cX$ is of structurally bounded degree if and only if there exist $k, p \in \mathbb{N}$ such that $\cX$ is $(k,p)$-near uniform.
\end{theorem}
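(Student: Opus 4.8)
The plan is to prove the two implications separately, using locality of first-order logic for the direction ``structurally bounded degree $\Rightarrow$ near uniform'', and an explicit reconstruction for the converse. Throughout, the easy half is the converse, while the locality half hides a subtlety about when near-twinship is genuinely an equivalence relation.

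For the forward direction, suppose every $G \in \cX$ is a $\phi$-transduction of a graph of maximum degree at most $d$, for a fixed formula $\phi$ of quantifier rank $q$, and fix such a $G$ together with its underlying coloured bounded-degree graph $G_0$. By Gaifman's locality theorem, whether $\phi(x,y)$ holds is determined by the isomorphism types of the radius-$r$ coloured neighbourhoods of $x$ and $y$ in $G_0$ (with $r$ depending only on $q$) together with whether $x,y$ lie within distance $2r$. Since $G_0$ has bounded degree and boundedly many colours, there are only boundedly many such radius-$r$ types, and I would partition $V(G)$ accordingly. Two vertices $v,w$ of the same type agree on adjacency to every $z$ lying at distance more than $2r$ from both; as only $O(d^{2r})$ vertices are close to $v$ or to $w$, this gives $|N_G(v)\triangle N_G(w)| \le k$ for a constant $k=k(d,q)$, so same-type vertices are near-$k$-twins and there are at most $p=p(d,q)$ types. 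It then remains to argue that for a suitable threshold $k_G \le k$ the near-$k_G$-twin relation coincides with the type partition and is therefore an equivalence relation with at most $p$ classes, yielding $(k,p)$-near uniformity.

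For the converse, suppose $\cX$ is $(k,p)$-near uniform and fix $G\in\cX$ with its partition $V(G)=C_1 \cup \cdots \cup C_t$, $t \le p$, into near-$k_G$-twin classes. The key structural step is a dichotomy: for each pair of classes the bipartite graph $G[C_i,C_j]$ (and each $G[C_i]$) is either of degree $O(k)$ on both sides (\emph{sparse}) or of co-degree $O(k)$ on both sides (\emph{co-sparse}). To see this, note that the neighbourhoods $N(v)\cap C_j$ for $v\in C_i$ all cluster within $k$ of a single set $S\subseteq C_j$; hence on average a vertex of $S$ fails to be adjacent to only few vertices of $C_i$, while a vertex outside $S$ is adjacent to only few. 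Since the degrees into $C_i$ are all within $k$ of one another, $S$ cannot contain both many elements and many non-elements, so $S$ is either all but $O(k)$ of $C_j$ (co-sparse) or all but $O(k)$ outside it (sparse); classes of size $O(k)$ are trivially both. Given the dichotomy, each vertex has at most $O(kp)$ neighbours along sparse pairs and at most $O(kp)$ non-neighbours along co-sparse pairs, so I would form the bounded-degree base graph $B$ on $V(G)$ carrying exactly the sparse edges and the co-sparse non-edges. Colouring each vertex by its class index together with the bounded-size symmetric pattern recording which class-pairs are sparse and which co-sparse (finitely many patterns on $\le p$ classes), a single fixed formula $\phi$ recovers $G$ from $B$ by copying $B$-edges on sparse pairs and negating them on co-sparse pairs; thus $G$ is a transduction of the bounded-degree graph $B$, and $\cX$ is structurally bounded degree.

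The main obstacle is the forward direction: upgrading ``same-type vertices are near-twins, with boundedly many types'' to the stronger assertion that the near-$k_G$-twin \emph{relation itself} is an equivalence relation (in particular transitive) with at most $p$ classes. Near-twinship need not be transitive a priori, so the delicate point is to choose $k_G$ so that the relation stabilises on the type partition, and this is exactly where boundedness of the number of types must be used. The counting behind the sparse/co-sparse dichotomy in the converse is the other place requiring care, but it follows cleanly from the near-twin bound once the intermediate case and the small classes are handled.
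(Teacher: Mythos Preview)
The paper does not prove this theorem; it is stated with attribution to \cite{GHOLR20} and invoked as a black box in the proof of \cref{thm:tau-structurally-bounded-degree}. There is therefore nothing in the present paper to compare your sketch against.

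For what it is worth, your sketch tracks the standard outline of the cited result: Gaifman locality for the forward direction, and the sparse/co-sparse dichotomy together with an explicit transduction for the converse. The converse is essentially complete as a sketch. In the forward direction you correctly identify the real obstacle---that near-$k$-twinship is not transitive in general---but you do not resolve it: ``it remains to argue that for a suitable $k_G$ the near-$k_G$-twin relation coincides with the type partition'' restates the difficulty rather than overcoming it, and there is no obvious reason the relation at any single threshold $k_G\le k$ should equal the type partition. Closing this gap requires additional work (arranging that distinct type-classes are well separated in symmetric-difference distance, possibly after adjusting the partition), which the cited paper carries out. As a pointer to a cited result this is fine; as a standalone proof it is incomplete exactly where you say it is.
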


\begin{theorem}\label{thm:tau-structurally-bounded-degree}
    A class $\cX$ is of structurally bounded degree if and only if $\tau$ is bounded in $\cX$.
\end{theorem}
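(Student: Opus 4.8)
The plan is to route both implications through the near-uniformity characterization of \cref{thm:sbdd-deg-uniform}, so that it suffices to prove that $\tau$ is bounded in $\cX$ if and only if $\cX$ is $(k,p)$-near uniform for some $k,p \in \bN$. Throughout I would use that, by \cref{obs:xor-with-clique}, a subgraph complementation with respect to $U$ is exactly the XOR with the graph $C_U$ in which $U$ is a clique and all other vertices are isolated; thus $\tau(G)\le t$ means precisely that $G \xor C_{U_1}\xor\cdots\xor C_{U_m}$ has maximum degree at most $t$ for some $m\le t$ and some $U_1,\dots,U_m\subseteq V(G)$.

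First I would prove that bounded $\tau$ implies near-uniformity. Fix $G$ with $\tau(G)\le t$, so $G=G'\xor C_{U_1}\xor\cdots\xor C_{U_m}$ with $\Delta(G')\le t$ and $m\le t$. Assigning to each vertex $v$ its signature $\{l: v\in U_l\}\subseteq[m]$ and the associated anchor set $D_{\sigma(v)}=\triangle_{l:\, v\in U_l}U_l$, one checks that $N_G(v)$ and $D_{\sigma(v)}$ differ in at most $t+1$ vertices (the error coming from $N_{G'}(v)$ and from $v$ itself). Hence every neighbourhood lies within symmetric-difference distance $t+1$ of one of at most $2^t$ fixed anchors. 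The remaining task is to pick a single bounded threshold $k_G$ so that the near-$k_G$-twin relation is an equivalence relation with boundedly many classes. For this I would cluster the anchors and choose $k_G$ in a gap of the pairwise anchor-distances, namely a bounded $r$ with no anchor-distance in the interval $(r-2(t+1),\,4r]$; such an $r$ exists within a bounded range because there are at most $\binom{2^t}{2}$ anchor-distances, so the forbidden zones can be escaped. The gap guarantees, via the triangle inequality for the symmetric-difference metric, that ``lying within $r$'' is transitive, so near-$k_G$-twin coincides with the induced clustering of the anchors and has at most $2^t$ classes; thus $\cX$ is $(\kappa(t),2^t)$-near uniform for a bound $\kappa(t)$.

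Conversely, assume $\cX$ is $(k,p)$-near uniform and fix $G$ with near-$k_G$-twin classes $V_1,\dots,V_q$, where $q\le p$ and $k_G\le k$. Since any two vertices in the same class have neighbourhoods differing in at most $k_G$ places, in the bipartite block between any two classes (and inside each class) all rows pairwise differ in at most $k_G$ columns and all columns pairwise differ in at most $k_G$ rows. The key lemma, which I expect to be the main obstacle, is a dichotomy for such blocks: taking any row as a reference pattern, either that pattern is constant, or two columns on which it disagrees would differ in all but at most $2k_G$ rows, forcing one side of the block to have at most $3k_G$ vertices; in either case the block is constant up to a correction graph of maximum degree at most $3k_G$ on each side. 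Granting this, let $G^*$ be the blow-up in which each block is made complete or empty according to its constant value. Then $G^*$ has at most $q\le p$ twin classes, so by the constructive proof of \cref{th:clique-is} it is obtained from the empty graph by at most $\binom{q}{2}+q$ subgraph complementations, while the correction graph $D=G\xor G^*$ has maximum degree at most $3k_G q\le 3kp$.

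To conclude this direction I would apply the same $\binom{q}{2}+q$ subgraph complementations that trivialise $G^*$ directly to $G$; since each complementation is XOR with a $C_U$ and XOR is associative, the result is $(G^*\xor C_{U_1}\xor\cdots\xor C_{U_{q'}})\xor D=O_n\xor D=D$, a graph of maximum degree at most $3kp$. Hence $\tau(G)\le\max\{\binom{p}{2}+p,\,3kp\}$, which is bounded, completing the equivalence. The bulk of the work is the block dichotomy of the previous paragraph: it is what converts the purely global near-twin condition into per-block homogeneity, and hence into a bounded-degree correction that a bounded number of subgraph complementations can absorb. The threshold-selection gap argument is the only other delicate point, and the forward implication ($\tau$ bounded $\Rightarrow$ structurally bounded degree) could alternatively be obtained more directly by observing that a single subgraph complementation is a quantifier-free first-order transduction, so a bounded sequence of them exhibits $\cX$ as a transduction of the bounded-degree class $\cD_t$.
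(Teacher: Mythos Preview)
Your overall strategy matches the paper's for the harder direction (structurally bounded degree $\Rightarrow$ $\tau$ bounded): both go through the near-uniformity characterization of \cref{thm:sbdd-deg-uniform}. The paper, however, does not attempt to prove the block-by-block structure lemma you call the ``block dichotomy''---it simply cites \cite{GHOLR20} for the fact that, given the near-$k_G$-twin partition, one obtains a bounded-degree graph after complementing edges between (and within) suitable pairs of parts, and then observes that a bipartite complementation is three subgraph complementations. For the easy direction ($\tau$ bounded $\Rightarrow$ structurally bounded degree), the paper uses exactly the transduction argument you mention in your final sentence; your anchors-and-gap route through near-uniformity is plausible in spirit but is a significant detour, and the threshold-selection step would need more care (you need the gap to survive chains of length up to $2^t$, not just $2$).

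Your block dichotomy, as stated, has a genuine gap. The conclusion ``the block is constant up to a correction graph of maximum degree at most $3k_G$ on each side'' fails when one side of the block is small. Concretely, take $G$ to be a star $K_{1,n}$ with centre $c$ together with $n$ further isolated vertices; then the near-$1$-twin relation is an equivalence relation with exactly the two classes $\{c\}$ and $V\setminus\{c\}$, yet in the block between them the single row (that of $c$) has exactly $n$ ones and $n$ zeros, so it is nowhere near constant and $c$ has degree $n$ in any ``correction''. Your argument correctly detects that one side ($\{c\}$) is small, but it does not follow that the block is near-constant, nor that the correction has bounded degree on the small side. (Note that $\tau(G)\le 2$ does hold here, via complementations along $\{c\}\cup N(c)$ and $N(c)$---but these are \emph{not} governed by the near-$1$-twin partition.) The actual structure result in \cite{GHOLR20} is more delicate than a per-block dichotomy for an arbitrarily chosen $k_G$; one must either choose the partition more carefully or handle small parts by a separate argument. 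Since the paper delegates this entirely to \cite{GHOLR20}, the cleanest fix is to do the same, or to replace your dichotomy by a correct statement and proof of the structural lemma.
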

\begin{proof}
For the backwards direction, if $\tau$ is bounded in $\cX$, then $\cX$ is of structurally bounded degree, since a bounded number of subgraph complementations can be achieved by a transduction.

For the forwards direction, if $\cX$ is of structurally bounded degree, then, by \cref{thm:sbdd-deg-uniform}, it is $(k,p)$-near uniform for some $k, p \in \mathbb{N}$, and it follows that $\tau(G) \leq 3p^2$ holds for every $G \in \cX$. Indeed, we may partition vertices of $G$ into the at most $p$ classes of the near-$k_G$-twin relation and  \cite{GHOLR20} shows that we may obtain a bounded-degree graph by considering each pair of (not necessarily distinct) parts and possibly complementing the edges between them. Since complementing the edges between two subgraphs can be achieved by three subgraph complementations, the bound follows.
\end{proof}

The next theorem characterizes functions of $\cC \cup \cD_1$ as graph classes of structurally bounded degree. Thus, we see that in passing from functions of $\cC$ in the previous subsection to functions of $\cC \cup \cD_1$, we pass from graphs with bounded twin number to graphs with bounded near-$k$-twin number for some $k$.

\begin{theorem}\label{th:clique-degree-1}
    Let $\cX$ be a class of graphs. Then $\cX$ is of structurally bounded degree if and only if $\cX$ is a function of $\cC \cup \cD_1$.
\end{theorem}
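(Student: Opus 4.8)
The plan is to prove the two directions by relating both sides to the characterization of structurally bounded degree classes via the parameter $\tau$ established in \cref{thm:tau-structurally-bounded-degree}. Thus it suffices to show that $\tau$ is bounded in $\cX$ if and only if $\cX$ is a function of $\cC \cup \cD_1$.

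For the forward direction, suppose $\cX$ is of structurally bounded degree, so by \cref{thm:tau-structurally-bounded-degree} there is a constant $t$ with $\tau(G) \leq t$ for every $G \in \cX$. Fix $G \in \cX$. By definition of $\tau$, there is a sequence of at most $t$ subgraph complementations turning $G$ into a graph $F$ of maximum degree at most $t$. By \cref{obs:xor-with-clique}, each subgraph complementation is a XOR with a graph in $\cC$ (a clique plus isolated vertices), so $G = F \xor H_1 \xor \cdots \xor H_m$ with $m \leq t$ and each $H_i \in \cC$. Meanwhile, the bounded-degree graph $F$ is a function of at most $t+1$ graphs from $\cD_1$: by Vizing's theorem (as in \cref{th:degree-1}) its edges decompose into at most $\Delta(F)+1 \leq t+1$ matchings, each of which is a graph in $\cD_1$, and $F$ is their union. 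Combining, $G$ is a boolean function of at most $(t+1) + t$ graphs from $\cC \cup \cD_1$, so $\cX$ is a $(2t+1)$-function of $\cC \cup \cD_1$.

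For the converse, suppose $\cX$ is a $k$-function of $\cC \cup \cD_1$ for some $k$; I aim to show $\tau$ is bounded in $\cX$. Fix $G \in \cX$, so $G = f(F_1, \ldots, F_r)$ with $r \leq k$, each $F_i$ in $\cC$ or $\cD_1$, and $f$ an $r$-variable boolean function. Write $f$ in Algebraic Normal Form via \cref{th:anf}, so $G = \Xor_{j=1}^{s} \bigwedge_{i \in I_j} F_i$ with $s \leq 2^r$; the plan is to bound the number of subgraph complementations needed to reduce $G$ to bounded degree. The key observation is to separate the $\cD_1$-inputs from the $\cC$-inputs. Since each $\cC$-graph is obtained from the empty graph by a single subgraph complementation (\cref{obs:xor-with-clique}), XORing with the $\cC$-terms contributes a bounded number of subgraph complementations; the contribution of terms involving $\cD_1$-graphs should, after these complementations are performed, leave a graph of bounded degree, because intersections and XORs involving bounded-degree graphs stay bounded-degree on all but a bounded set of vertices—exactly the kind of control captured by the $\XY_k$ analysis in \cref{lem:XY}.

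The main obstacle, and the step requiring the most care, is the converse direction: controlling how the conjunctions $\bigwedge_{i \in I_j} F_i$ that \emph{mix} $\cC$-inputs and $\cD_1$-inputs behave. An intersection of a clique-plus-isolated-vertices graph with a degree-$1$ graph is degree-bounded, but the XOR combination of many such terms could in principle create high-degree vertices concentrated on the few ``clique'' vertices coming from the $\cC$-factors. The plan is to argue that only boundedly many vertices (those lying in the cliques of the $\cC$-inputs) can acquire high degree, and that each such vertex's neighborhood can be corrected by a single subgraph complementation, so the total number of complementations needed is bounded by a function of $k$ alone. Quantifying this—showing that after removing the $\cC$-clique contributions via $O(s)$ subgraph complementations the remaining graph has degree bounded in terms of $k$—is the heart of the argument and parallels the near-$k$-twin structure underlying \cref{thm:sbdd-deg-uniform}.
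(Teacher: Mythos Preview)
Your forward direction (from $\tau$ bounded to $\cX$ being a function of $\cC \cup \cD_1$) is correct and essentially identical to the paper's argument.

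For the other direction, you have correctly set up the Algebraic Normal Form, but you have misidentified the ``main obstacle.'' The mixed conjunctions $\bigwedge_{i \in I_j} F_i$ that combine $\cC$-inputs and $\cD_1$-inputs are not a problem at all: if \emph{any} factor $F_i$ in such a conjunction lies in $\cD_1$, then the whole intersection is a subgraph of that $F_i$ and hence lies in $\cD_1$; if all factors lie in $\cC$, the intersection is again a clique plus isolated vertices and hence lies in $\cC$. In other words, $\cC \cup \cD_1$ is intersection-closed (and contains all complete graphs), so \cref{lem:intersection-closed-xor-representation} applies directly and each ANF term already sits in $\cC \cup \cD_1$. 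The paper does exactly this: write $G = \Xor_{i=1}^{s} H_i$ with each $H_i \in \cC \cup \cD_1$ and $s \le 2^k$, group the $\cD_1$-terms (their XOR has maximum degree at most $s$) and the $\cC$-terms (each is a single subgraph complementation by \cref{obs:xor-with-clique}), giving $\tau(G) \le s \le 2^k$ immediately.

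Your proposed route through near-$k$-twins and vertex-by-vertex correction is therefore unnecessary; it might be made to work, but you have not completed it, and the one-line intersection-closure observation eliminates the difficulty you were worried about.
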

\begin{proof}
    We will show that parameter $\tau$ is bounded in $\cX$ if and only if $\cX$ is a function of $\cC \cup \cD_1$, which is equivalent to the statement due to \cref{thm:tau-structurally-bounded-degree}.
    
    Suppose $\cX$ is a $k$-function of $\cC \cup \cD_1$ for some $k \in \bN$. We claim that $\tau(G)$ is at most $2^k$ for every $G \in \cX$.
    Let $G=(V,E)$ be a graph in $\cX$. 
    Since $\cC \cup \cD_1$ is intersection-closed and contains all complete graphs, by \cref{lem:intersection-closed-xor-representation}, there exist $s \leq 2^k$ and $H_1, H_2, \ldots, H_s \in \cC \cup \cD_1$ such that $G = \Xor_{i=1}^s H_i$.
    Without loss of generality, assume that the first $s_1$, $0 \leq s_1 \leq s$, of these graphs are from $\cD_1$, denote them $D_i = H_i$, $i \in [s_1]$; and the remaining $s_2 = s-s_1$ graphs are from $\cC$, denote them $F_i = H_{s_1+i}$, $i \in [s_2]$.
    Then, we have 
    \[
        G = \Xor_{i=1}^{s_1} D_i \xor \Xor_{i=1}^{s_2} F_i.
    \]
    Denote by $D$ the graph $\Xor_{i=1}^{s_1} D_i$ and note that the maximum degree of $D$ is at most $s_1$, since the maximum degree of each $D_i$ is at most one.
    Thus, $G = D \xor \Xor_{i=1}^{s_2} F_i$, where $D \in \cD_{s_1}$, and therefore,
    by \cref{obs:xor-with-clique}, $G$ is obtained from $D$ via the sequence of subgraph complementations associated with $F_1, F_2, \ldots, F_{s_2} \in \cC$. Consequently, $G$ is obtained from a graph of maximum degree at most $s_1 \leq s$ by making at most $s_2 \leq s$ subgraph complementations, i.e., $\tau(G) \leq s \leq 2^k$.
    
    Conversely, suppose that parameter $\tau$ is bounded in $\cX$, i.e., there exists $k \in \bN$ such that every graph in $\cX$ can be obtained from a graph of degree at most $k$ by making at most $k$ subgraph complementations. 
    We claim that $\cX$ is a $(2k+1)$-function of $\cC \cup \cD_1$.
    Let $G \in \cX$ and let $D$ be a graph of maximum degree at most $k$ such that $G$ is obtained from $D$ by making at most $k$ subgraph complementations. Let $F_1, F_2, \ldots, F_r$, $r \leq k$ be the graphs from $\cC$ corresponding to these subgraph complementations.
    It follows from the proof of \cref{th:degree-1} that $D$ is a union of $q \leq k+1$ graphs $M_1, M_2, \ldots, M_q$ of degree at most $1$.
    Thus, $G = \left(\bigvee_{i=1}^q M_i\right) \xor \Xor_{i=1}^r F_i$, and the claim follows, since each of $F_1, \ldots, F_r$ belong to $\cC$, and $M_1, \ldots, M_q$ belong to $\cD_1$.
\end{proof}

Finally, we characterize graph classes of structurally bounded degree as functions of proper subclasses of equivalence graphs.
We note that this result has a similar flavor to the transduction duality between star forests (which are transduction-equivalent to equivalence graphs) and classes of structurally bounded degree shown in \cite[Section 28.1]{braunfeld2022first}, though neither result follows readily from the other.

\begin{theorem}\label{thm:proper-equiv-struct-bdd-degree}
    Let $\cX$ be a class of graphs. Then $\cX$ has structurally bounded degree if and only if $\cX$ is a function of a \emph{proper} subclass of equivalence graphs.
\end{theorem}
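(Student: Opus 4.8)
The plan is to leverage \cref{th:clique-degree-1}, which already identifies the graph classes of structurally bounded degree with the functions of $\cC \cup \cD_1$. Thus it suffices to connect ``function of a proper subclass of equivalence graphs'' with ``function of $\cC \cup \cD_1$''. For the forward direction, I would simply observe that $\cC \cup \cD_1$ is itself a proper subclass of equivalence graphs: it is hereditary (a union of two hereditary classes), it is contained in the class of equivalence graphs (graphs in $\cC$ are a clique plus isolated vertices, and graphs in $\cD_1$ are disjoint unions of edges and isolated vertices, both of which are disjoint unions of cliques), and it is \emph{proper} because, e.g., the equivalence graph $K_3 + K_3$ lies in neither $\cC$ (it has two components of size greater than one) nor $\cD_1$ (it has maximum degree two). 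Hence if $\cX$ has structurally bounded degree, then by \cref{th:clique-degree-1} it is a function of $\cC \cup \cD_1$, and therefore a function of a proper subclass of equivalence graphs.

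For the backward direction, the key observation is that ``being a function of'' composes: if $\cX$ is a $k$-function of $\cY$ and $\cY$ is an $\ell$-function of $\cZ$, then substituting the boolean expressions for the $\cY$-graphs into the one for the $\cX$-graph exhibits $\cX$ as a $k\ell$-function of $\cZ$. Given this, it suffices to prove that every proper subclass $\cY$ of equivalence graphs is a function of $\cC \cup \cD_1$; then $\cX$, being a function of $\cY$, is a function of $\cC \cup \cD_1$, and \cref{th:clique-degree-1} finishes. Since $\cY$ is hereditary and strictly contained in the class of equivalence graphs, there is some equivalence graph $H_0 = K_{c_1} + \cdots + K_{c_m}$, with $c_1 \geq \cdots \geq c_m \geq 1$, that is not in $\cY$; by heredity every graph in $\cY$ is $H_0$-free. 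As ``being a function of $\cC \cup \cD_1$'' passes to subclasses, it is enough to establish the central lemma: every $H_0$-free equivalence graph is a function of $\cC \cup \cD_1$, with a bound depending only on $H_0$.

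I expect this lemma to be the main obstacle, and I would prove it by a size-based decomposition of the components. If an $H_0$-free equivalence graph $G$ had $m$ distinct components each of size at least $c_1$, we could embed $K_{c_i}$ into the $i$-th of them (each has size $\geq c_1 \geq c_i$) and recover $H_0$ as an induced subgraph; hence $G$ has at most $m-1$ components of size at least $c_1$. Let $V_L$ be the union of these ``large'' components and $V_S$ the union of the remaining components, so that $G[V_S]$ has maximum degree at most $c_1 - 2$ and there are no edges between $V_L$ and $V_S$. The graph obtained from $G[V_L]$ by isolating $V_S$ is the union of at most $m-1$ graphs of $\cC$ (one per large clique), while the graph obtained from $G[V_S]$ by isolating $V_L$ has bounded degree and so, by Vizing's theorem exactly as in the proof of \cref{th:degree-1}, is a union of at most $c_1 - 1$ graphs of $\cD_1$. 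Since $V_L$ and $V_S$ carry disjoint edge sets, $G$ is the disjunction of all these graphs, exhibiting $G$ as a $\big((m-1)+(c_1-1)\big)$-function of $\cC \cup \cD_1$ with a bound depending only on $H_0$, as required. The only subtle points are the greedy embedding argument bounding the number of large components, and the edge case $c_1 = 1$ (where $H_0 = O_m$, the set $V_S$ is empty, and $G$ is simply a union of at most $m-1$ cliques).
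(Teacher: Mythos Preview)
Your proposal is correct and follows essentially the same approach as the paper's proof: both directions go through \cref{th:clique-degree-1}, and for the backward direction both reduce to showing that any proper hereditary subclass $\cY$ of equivalence graphs is a function of $\cC \cup \cD_1$ via the same ``few large components, rest have bounded size'' decomposition. Your argument is in fact slightly more explicit than the paper's, which simply asserts the existence of a constant $k$ such that every graph in $\cY$ has at most $k$ components of size more than $k$, whereas you derive the two bounds $m-1$ and $c_1-1$ directly from a fixed forbidden equivalence graph $H_0$.
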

\begin{proof}
    By \cref{th:clique-degree-1}, if $\cX$ is of structurally bounded degree, then it is a function of $\cC \cup \cD_1$, which is clearly a proper subclass of equivalence graphs. To prove the converse, suppose that $\cX$ is a function of a proper subclass of equivalence graphs, denoted by $\cY$. Then, it is enough to show that $\cY$ is a function of $\cC \cup \cD_1$; indeed, if this is the case, then $\cX$ is also a function of $\cC \cup \cD_1$, and therefore, by \cref{th:clique-degree-1}, $\cX$ has structurally bounded degree.

    We first observe that there exists a constant $k = k (\cY)$ such that every graph in $\cY$ has at most $k$ connected components of size more than $k$. Indeed, if there were no such a constant, then, due to hereditariness, $\cY$ would contain all equivalence graphs, contradicting the assumption that $\cY$ is a proper subclass of equivalence graphs.

    Let $G$ be a graph in $\cY$. It follows from the above that for some $s \leq k$, there exist $s$ graphs $F_1, F_2, \ldots, F_s \in \cC$ (i.e., equivalence graphs with at most one component of size more than 1) and a graph $D \in \cD_k$ (i.e., a graph of maximum degree at most $k$) such that $G = D \vee \bigvee_{i=1}^s F_i$.
    Recall, from the proof of \cref{th:degree-1},  $D$ is a union of some $q \leq k+1$ graphs $M_1, M_2, \ldots, M_q \in \cD_1$.
    Thus, graph $G = \bigvee_{i=1}^q M_i \vee \bigvee_{i=1}^s F_i$ is a function of $q+s$ graphs from $\cC \cup \cD_1$, implying that $\cY$ is a function of $\cC \cup \cD_1$. 
\end{proof}

%%%%%%%%%%%%%%%%%%%%%%%%%%%%%%%%%%%%%%%%%%%%%%
\subsubsection{General equivalence graphs}
%%%%%%%%%%%%%%%%%%%%%%%%%%%%%%%%%%%%%%%%%%%%%%

Let $G=(V,E)$ be an arbitrary graph. Note that $G$ can be obtained from the empty graph $(V, \emptyset)$ via a sequence of partition complementations; for example, every edge $(a,b)$ in $E$ can be created as a single partition complementation corresponding to the partition of $V$ consisting of one class $\{a,b\}$ and all other vertices being singleton classes.

The \emph{partition complementation} number of $G$ is the minimum number of partition complementation operations needed to construct $G$ from the empty graph. Formally, the partition complementation number $G$ is the minimum $k$ such that there exists a sequence of graphs $G_0, G_1, G_2, \ldots, G_k$, where $G_0 = (V,\emptyset)$, $G_k = G$, and for every $i \in [k]$, $G_i$ is a partition complementation of $G_{i-1}$.

\begin{theorem}\label{th:general-equiv}
    Let $\cX$ be a class of graphs. 
    Then the partition complementation number is bounded in $\cX$ if and only if $\cX$ is a function of equivalence graphs.
\end{theorem}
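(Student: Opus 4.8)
The plan is to prove both directions by exploiting the correspondence, recorded in \cref{obs:xor-with-equivalence}, between partition complementations and the XOR operation with equivalence graphs, together with the fact that the class of equivalence graphs is intersection-closed and contains all complete graphs.

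First I would observe that the class of all equivalence graphs is intersection-closed: an equivalence graph on a vertex set $V$ is precisely the graph of an equivalence relation on $V$, and the intersection of two equivalence relations is again an equivalence relation, so the intersection of two equivalence graphs is again an equivalence graph. Moreover, this class contains all complete graphs $K_n$ (each of which consists of a single equivalence class). These two facts are exactly what allow me to invoke \cref{lem:intersection-closed-xor-representation}.

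For the forward direction (function of equivalence graphs $\Rightarrow$ bounded partition complementation number), suppose $\cX$ is a $k$-function of the equivalence graphs. By \cref{lem:intersection-closed-xor-representation}, and since the class contains all complete graphs, every $G \in \cX$ can be written as $G = \Xor_{i=1}^s H_i$ with $s \le 2^k$ and each $H_i$ an equivalence graph. Setting $G_0 = (V,\emptyset)$ and $G_j = \Xor_{i=1}^j H_i$ for $j \in [s]$, we have $G_j = G_{j-1} \xor H_j$, which by \cref{obs:xor-with-equivalence} is precisely a partition complementation of $G_{j-1}$ (with respect to the partition of $V$ into the connected components of $H_j$). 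Since $G_s = G$, the graph $G$ is obtained from the empty graph by at most $2^k$ partition complementations, so its partition complementation number is at most $2^k$; hence this parameter is bounded in $\cX$.

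For the converse (bounded partition complementation number $\Rightarrow$ function of equivalence graphs), suppose the partition complementation number is at most $k$ for every graph in $\cX$. Given $G \in \cX$, fix a witnessing sequence $G_0 = (V,\emptyset), G_1, \ldots, G_m = G$ with $m \le k$, in which each $G_i$ is a partition complementation of $G_{i-1}$. By \cref{obs:xor-with-equivalence}, for each $i \in [m]$ there is an equivalence graph $H_i$ with $G_i = G_{i-1} \xor H_i$. Telescoping and using $G_0 = (V,\emptyset)$ yields $G = H_1 \xor \cdots \xor H_m$, so $G$ is the XOR of $m \le k$ equivalence graphs, i.e., a function (via $x_1 \xor \cdots \xor x_m$) of at most $k$ equivalence graphs. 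Thus $\cX$ is a $k$-function of the equivalence graphs. The argument is essentially a translation between the two descriptions, so I expect no serious obstacle; the one point deserving care is verifying that the class of equivalence graphs is intersection-closed (through the equivalence-relation viewpoint), as this is what makes \cref{lem:intersection-closed-xor-representation} applicable and delivers the single uniform bound $2^k$ in the forward direction rather than an a priori unbounded number of XOR summands.
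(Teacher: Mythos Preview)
Your proposal is correct and follows essentially the same argument as the paper: both directions use \cref{obs:xor-with-equivalence} to translate between partition complementations and XORs with equivalence graphs, and the implication from ``function of equivalence graphs'' to ``bounded partition complementation number'' goes through \cref{lem:intersection-closed-xor-representation} using that equivalence graphs are intersection-closed and contain all complete graphs, yielding the same $2^k$ bound. Your explicit justification that equivalence graphs are intersection-closed (via the equivalence-relation viewpoint) is a nice addition that the paper leaves implicit.
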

\begin{proof}
    Suppose $\cX$ is a class of graphs and there exists $k \in \bN$ such that the partition complementation number of every graph in $\cX$ is at most $k$.
    
    Let $G=(V,E)$ be an arbitrary graph in $\cX$ and $G_0, G_1, G_2, \ldots, G_t$ be a sequence of graphs, where $t \leq k$, $G_0 = (V,\emptyset)$, $G_t = G$, and for every $i \in [t]$, $G_i$ is a partition complementation of $G_{i-1}$.
    By \cref{obs:xor-with-equivalence}, for every $i \in [t]$, there exists an equivalence graph $H_i$ such that $G_i = G_{i-1} \xor H_i$. Then $G = G_0 \xor H_1 \xor H_2 \xor \cdots \xor H_t = H_1 \xor H_2 \xor \cdots \xor H_t$. Thus, $G$ is a function of at most $k$ equivalence graphs, implying that $\cX$ is a function of the class of equivalence graphs.

    Conversely, suppose that $\cX$ is a $k$-function of the class of equivalence graphs for some $k \in \bN$. Since the latter class is intersection-closed and contains all complete graphs, by \cref{lem:intersection-closed-xor-representation}, every graph $G=(V,E)$ in $\cX$ is equal to $\Xor_{i=1}^s H_i$ for some $s \leq 2^k$ and some equivalence graphs $H_1, H_2, \ldots, H_s$.
    Let $H_0 = (V, \emptyset)$. Then, by \cref{obs:xor-with-equivalence}, for every $i \in [s]$, $H_1 \xor H_2 \xor \cdots \xor H_i$ is a subgraph complementation of 
    $H_0 \xor H_1 \xor H_2 \xor \cdots \xor H_{i-1}$. Consequently, $G$ is obtained from the empty graph $H_0$ via a sequence of at most $2^k$ subgraph complementation, implying that the subgraph complementation number is bounded in $\cX$.
\end{proof}

%%%%%%%%%%%%%%%%%%%%%%%%%%%%%%%%%%%%%%%%%%%%%
\section{Boolean functions and \texorpdfstring{$\chi$}{chi}-boundedness}
\label{sec:chi-boundedness}
%%%%%%%%%%%%%%%%%%%%%%%%%%%%%%%%%%%%%%%%%%%%%

In this section we explore how boolean combinations of graph classes may affect $\chi$-boundedness.
We begin in \cref{sec:intbdd} with some general results.
In \cref{sec:inteval-graphs}, we consider interval, split, and some related graph classes. Classes of interval and split graphs are both subclasses of perfect graphs. It is known that not every function of interval graphs is $\chi$-bounded (see \cref{thm:interval-intersection}). In contrast, we show that any function of the class of split graphs is $\chi$-bounded.
Similarly, permutation graphs are perfect, and we show in \cref{sec:permutation} that any function of the class of permutation graphs is \emph{polynomially} $\chi$-bounded. This, in particular, implies that any function of the class of cographs or the class of equivalence graphs is polynomially $\chi$-bounded.
In \cref{sec:chi-boundedness-equiv-graphs}, we consider in more detail the class of equivalence graphs as one of the simplest subclasses of perfect graphs. First, we show that for any polynomial $p(x)$ there exists a function of equivalence graphs whose smallest $\chi$-binding function is lower boundeded by $p(x)$. Then, we identify some restrictions under which functions of equivalence graphs are \emph{linearly} $\chi$-bounded. Finally, we show that in some cases functions of equivalence graphs preserve perfectness. 

Since the union of finitely many (linearly/polynomially) $\chi$-bounded classes is (linearly/polynomially) $\chi$-bounded, for our results in this section we usually show (linear/polynomial) $\chi$-boundedness of $f(\cX)$ for an arbitrary, but fixed function $f$ (see \cref{rem:single-function}).

\subsection{General classes}\label{sec:intbdd}

We begin with a result from \cite{Gya87}, showing that a disjunction of $\chi$-bounded classes is also $\chi$-bounded.

\begin{lemma}[{\cite[Proposition 5.1(a)]{Gya87}}]
\label{lem:union}
    Let $\cX_1, \cX_2, \ldots, \cX_k$ be $\chi$-bounded classes of graphs with
    $\chi$-binding functions $h_1(x), h_2(x), \ldots, h_k(x)$ respectively. Then the class $\bigvee_{i=1}^k \cX_i$ is $\chi$-bounded and $g(x)=\prod_{i=1}^k h_i(x)$ is a suitable $\chi$-binding function. In particular, for any $t \in \bN$, the $t$-union of a $\chi$-bounded class is also $\chi$-bounded.
\end{lemma}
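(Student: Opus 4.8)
The plan is to bound the chromatic number of any graph $G \in \bigvee_{i=1}^k \cX_i$ by combining proper colorings of its constituent graphs. Suppose $G = H_1 \vee H_2 \vee \cdots \vee H_k$ with $H_i \in \cX_i$ on a common vertex set $V$. The key observation is that an edge $(a,b)$ is present in $G$ precisely when it is present in at least one $H_i$, so a coloring that is simultaneously proper for every $H_i$ is automatically proper for $G$.

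First I would produce, for each $i \in [k]$, a proper coloring $c_i$ of $H_i$ using at most $h_i(\omega(H_i))$ colors. To make these available, I need to control $\omega(H_i)$. Since $H_i$ is a subgraph of $G$ (every edge of $H_i$ is an edge of $G$), any clique of $H_i$ is also a clique of $G$, giving $\omega(H_i) \leq \omega(G)$. As each $\cX_i$ is $\chi$-bounded with binding function $h_i$, monotonicity considerations (or just applying the bound directly) yield $\chi(H_i) \leq h_i(\omega(H_i)) \leq h_i(\omega(G))$, where I would take the $h_i$ to be non-decreasing without loss of generality.

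Next I would take the product coloring: assign to each vertex $v$ the tuple $c(v) = (c_1(v), c_2(v), \ldots, c_k(v))$. The number of distinct tuples is at most $\prod_{i=1}^k \chi(H_i) \leq \prod_{i=1}^k h_i(\omega(G)) = g(\omega(G))$. I claim $c$ is a proper coloring of $G$: if $(a,b) \in E(G)$, then $(a,b) \in E(H_j)$ for some $j$, so $c_j(a) \neq c_j(b)$ since $c_j$ is proper for $H_j$, and hence the tuples $c(a)$ and $c(b)$ differ in coordinate $j$. Therefore $\chi(G) \leq g(\omega(G))$, establishing that $g(x) = \prod_{i=1}^k h_i(x)$ is a $\chi$-binding function. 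To confirm $\chi$-boundedness of the class (not just for $G$ itself but for all induced subgraphs), I note that $\bigvee_{i=1}^k \cX_i$ is hereditary whenever the $\cX_i$ are, so the bound applies to every induced subgraph; alternatively, any induced subgraph of $G$ is itself a disjunction of induced subgraphs of the $H_i$, which remain in the respective classes. The final clause about the $t$-union follows by setting all $\cX_i = \cX$, giving binding function $h(x)^t$.

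I do not anticipate a genuine obstacle here; the only mild subtlety is ensuring the binding functions are non-decreasing so that $h_i(\omega(H_i)) \leq h_i(\omega(G))$, which is harmless since one may always replace a binding function by its running maximum.
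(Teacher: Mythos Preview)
Your proposal is correct and is precisely the standard product-coloring argument; the paper does not give its own proof of this lemma but simply cites Gy\'arf\'as \cite{Gya87}, whose argument is the one you have reproduced.
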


In the next statements, we show $\chi$-boundedness for more general functions when some restrictions are imposed on graph classes.

\begin{lemma} \label{lem:intersection}
Let $t \in \bN$ and let $f : \zo^t \rightarrow \zo$ be a monotone boolean function. If $\cX$ is a $\chi$-bounded class such that the $t$-intersection of $\cX$ is $\chi$-bounded, then $f(\cX)$ is $\chi$-bounded.
Furthermore, if $\cX$ is intersection-closed (resp.\ monotone) class and for every $s \in \bN$, $h_s(x)$ denotes a $\chi$-binding function for the $s$-union of $\cX$, then $h_{2^t-1}(x)$ (resp.\ $h_t(x)$) is a $\chi$-binding function for $f(\cX)$. 
\end{lemma}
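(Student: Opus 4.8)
The plan is to reduce $f(\cX)$ to a bounded union of intersections of members of $\cX$ by passing to the monotone DNF of $f$, and then to invoke \cref{lem:union}. First I would apply \cref{th:monotone-DNF} to write $f$ as a monotone DNF
\[
    f(x_1, \ldots, x_t) = \bigvee_{j=1}^{m} \bigwedge_{i \in I_j} x_i,
\]
where $I_1, \ldots, I_m$ are distinct nonempty subsets of $[t]$; the constant cases $f \equiv 0$ and $f \equiv 1$ (giving the class of all empty, resp.\ all complete, graphs) are trivially $\chi$-bounded and I would dispose of them separately. For any $G = f(H_1, \ldots, H_t)$ with $H_1, \ldots, H_t \in \cX$ on a common vertex set, evaluating $f$ entrywise yields $G = \bigvee_{j=1}^{m} T_j$ with $T_j = \bigwedge_{i \in I_j} H_i$. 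Since $f$ is fixed, $m$ is a constant depending only on $t$, and in fact $m \le 2^t - 1$ because the $I_j$ are distinct nonempty subsets of $[t]$.

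For the general statement, each $T_j$ is an intersection of at most $t$ graphs from $\cX$; padding with repeated copies (using idempotence of $\wedge$) places $T_j$ in the $t$-intersection $\bigwedge_{i=1}^t \cX$, which is $\chi$-bounded by hypothesis. Thus $G$ is a union of $m$ graphs each drawn from a single $\chi$-bounded class, and \cref{lem:union} gives that $f(\cX)$ is $\chi$-bounded. Note this part uses only the $\chi$-boundedness of the $t$-intersection.

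For the intersection-closed refinement, I would observe that each $T_j = \bigwedge_{i \in I_j} H_i$ now lies in $\cX$ itself, so $G$ is a union of $m \le 2^t - 1$ graphs from $\cX$; padding with repetitions (using idempotence of $\vee$) places $G$ in the $(2^t-1)$-union of $\cX$, so $h_{2^t-1}$ is a $\chi$-binding function. The monotone refinement is where I expect the main subtlety, and it relies on a sharper observation rather than on counting DNF clauses: a non-constant monotone $f$ satisfies $f(0,\ldots,0) = 0$, hence $f(x) \le x_1 \vee \cdots \vee x_t$ for all $x \in \zo^t$. Consequently $G$ is a spanning subgraph of $\bigvee_{i=1}^t H_i$, which belongs to the $t$-union $\bigvee_{i=1}^t \cX$; since $\cX$ is monotone this $t$-union is monotone and hence closed under taking subgraphs, so $G$ itself lies in the $t$-union of $\cX$, and $h_t$ is a $\chi$-binding function.

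The hard part will be precisely this last step: to get the sharper bound $h_t$ rather than $h_{2^t-1}$ one must resist expanding into $2^t-1$ intersection terms and instead exploit the inequality $f \le \mathrm{OR}$ together with subgraph-closedness of monotone classes to absorb $G$ into the $t$-union. The remaining work is routine bookkeeping: verifying the two padding arguments and checking the degenerate constant functions, which I would handle with a one-line remark at the start.
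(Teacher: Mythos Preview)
Your proposal is correct. The general statement and the intersection-closed refinement follow the same route as the paper: write $f$ in monotone DNF, decompose $G$ as a bounded union of intersections, and appeal to \cref{lem:union}. The only substantive difference is in the monotone refinement.

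For the monotone case, the paper argues by regrouping the DNF clauses: it partitions the clauses according to the smallest-index variable they contain, factors $x_i$ out of the $i$-th group to write $f(H_1,\ldots,H_t) = \bigvee_{i=1}^t (H_i \wedge F_i)$ for suitable graphs $F_i$, and then observes that each $H_i \wedge F_i$ is a subgraph of $H_i$ and hence lies in $\cX$. Your argument is more direct and more elementary: from $f(0,\ldots,0)=0$ you get $f \le x_1 \vee \cdots \vee x_t$ pointwise, so $G$ is a spanning subgraph of $\bigvee_{i=1}^t H_i$; since the paper already records (in the preliminaries) that the $t$-union of monotone classes is monotone, $G$ lies in the $t$-union of $\cX$. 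Both arguments reach the identical containment $f(\cX) \subseteq \bigvee_{i=1}^t \cX$, but yours avoids the clause-regrouping bookkeeping entirely; the paper's version, on the other hand, exhibits an explicit decomposition of $G$ into at most $t$ members of $\cX$, which is slightly more informative even if not needed here.
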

\begin{proof}
Note that if $f\equiv 1$, then $f(\cX)$ is contained in the class of complete graphs. Therefore, $f(\cX)$ is a class of perfect graphs, and thus it is $\chi$-bounded with the identity binding function, and the lemma obviously holds in this case. 
In the rest of the proof we assume that $f \not\equiv 1$.

Let $\cY=f(\cX)$ and let $F = \bigvee_{i=1}^r D_i$, where $r \in [2^t-1]$, $D_i = \bigwedge_{j=1}^{c_i} x_{a_j^i}$, and $c_i \in [t]$, be a monotone DNF representing $f$ (such a DNF exists by \cref{th:monotone-DNF}).
For $k \in [t]$, denote by $\cS_{k}$ the $k$-intersection of $\cX$. Then $\cY = \bigvee_{i=1}^r \cS_{c_i}$. 

Since $\cS_k \subseteq \cS_t$ for every $k \in [t]$, and, by assumption, $\cS_t$ is $\chi$-bounded, we have that each $\cS_k$, $k \in [t]$, is $\chi$-bounded. Thus, $\cY$ is also $\chi$-bounded by \cref{lem:union}.

Next, suppose that $\cX$ is intersection-closed. Then, for every $k \in [t]$, we have $\cX = \cS_1 \subseteq \cS_k \subseteq \cX$, and thus $\cS_k = \cX$. Therefore, $\cY$ is an $r$-union of $\cX$ for some $r \in [2^t-1]$, and thus $h_{2^t-1}(x)$ is a $\chi$-binding function for $\cY$. 

Assume now that $\cX$ is monotone. Let $\Gamma_1$  be the set of conjunctions of $F$ that contain variable $x_1$, and for $2 \leq j \leq t$, define $\Gamma_j$ as the set of conjunctions of $F$ that contain variable $x_j$, but none of the variables $x_1, x_2, \ldots, x_{j-1}$. Then, by regrouping the conjunctions, $F$ can be written as
\[
    F = \bigvee_{i=1}^t \bigvee_{D \in \Gamma_i} D.
\]
In turn, $\bigvee_{D \in \Gamma_i} D$ can be written as 
$x_i \wedge \left(\bigvee_{D \in \Gamma_i} D^{x_i}\right)$, 
where $D^{x_i}$ is the conjunction obtained from $D$ by removing variable $x_i$.
Thus, if $H_1, H_2, \ldots, H_t$ are some graphs from $\cX$, then 
\[
    f(H_1, H_2, \ldots, H_t) = \bigvee_{i=1}^t \left[ H_i \wedge F_i \right],
\]
where $F_i$ is a union of some of the graphs $H_1, H_2, \ldots, H_t$.
Note that $H_i \wedge F_i$ is a subgraph of $H_i$ and thus belongs to $\cX$, due to the monotonicity of $\cX$. Consequently, $f(H_1, H_2, \ldots, H_t)$ belongs to the $t$-union of $\cX$, which implies that $\cY \subseteq \bigvee_{i=1}^t \cX$, and hence $h_t(x)$ is a $\chi$-binding function for $\cY$. 
\end{proof}

\begin{lemma} \label{lem:complement}
    Let $\cX$ be a $\chi$-bounded hereditary class, and let $t$ be a natural number.
    For integers $k,\ell\geq 1$, denote by $\cS_k$ the $k$-intersection of $\cX$, by $\cT_{\ell}$ the $\ell$-intersection of $\overline{\cX}$, and by $\cS_0$ and $\cT_0$ denote the class of complete graphs. Then all $t$-functions of $\cX$ are $\chi$-bounded if and only if $\cS_k\wedge\cT_{\ell}$ is $\chi$-bounded for all integers $k, \ell \geq 0$ with $k+\ell\leq t$.
\end{lemma}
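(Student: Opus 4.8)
The plan is to exploit the disjunctive normal form (DNF) of boolean functions to translate between boolean combinations of $\cX$ and the intersection classes $\cS_k, \cT_\ell$, and then to invoke \cref{lem:union}, which guarantees that an edge-disjunction of finitely many $\chi$-bounded classes is $\chi$-bounded. The governing dictionary is this: a single DNF clause $\bigwedge_{i \in P} x_i \wedge \bigwedge_{i \in N} \neg x_i$ with $P \cap N = \emptyset$, evaluated on graphs $H_1, \ldots, H_t \in \cX$, yields a graph $\bigwedge_{i \in P} H_i \wedge \bigwedge_{i \in N} \overline{H_i}$, which lies in $\cS_{|P|} \wedge \cT_{|N|}$ (with the conventions that an empty positive part gives a complete graph $\cS_0$ and an empty negative part gives $\cT_0$), and satisfies $|P| + |N| \le t$ because $P, N$ are disjoint subsets of $[t]$.

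For the backward direction ($\Leftarrow$), I would first reduce to a single fixed $t$-variable boolean function $f$ using \cref{rem:single-function}, since the class of all $t$-functions of $\cX$ is the set-union of the boundedly many classes $f(\cX)$. Writing $f$ in DNF as $\bigvee_{j=1}^r C_j$ (discarding any unsatisfiable clause containing a variable and its negation, so each surviving clause has disjoint positive part $P_j$ and negative part $N_j$), one obtains for all $H_1, \ldots, H_t \in \cX$ that $f(H_1, \ldots, H_t) = \bigvee_{j=1}^r \big(\bigwedge_{i \in P_j} H_i \wedge \bigwedge_{i \in N_j} \overline{H_i}\big)$. By the dictionary above each disjunct lies in $\cS_{|P_j|} \wedge \cT_{|N_j|}$ with $|P_j| + |N_j| \le t$, so $f(\cX) \subseteq \bigvee_{j=1}^r (\cS_{|P_j|} \wedge \cT_{|N_j|})$. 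Every class on the right is $\chi$-bounded by hypothesis, so \cref{lem:union} makes this edge-union $\chi$-bounded, and hence so is its subclass $f(\cX)$.

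For the forward direction ($\Rightarrow$), I would show that each $\cS_k \wedge \cT_\ell$ with $k + \ell \le t$ is contained in the class of $t$-functions of $\cX$. Indeed, a graph in $\cS_k \wedge \cT_\ell$ has the form $H_1 \wedge \cdots \wedge H_k \wedge \overline{H_1'} \wedge \cdots \wedge \overline{H_\ell'}$ with all $H_i, H_j' \in \cX$, which is exactly $g(H_1, \ldots, H_k, H_1', \ldots, H_\ell')$ for the $(k+\ell)$-variable function $g = \bigwedge_i x_i \wedge \bigwedge_j \neg y_j$; since $k + \ell \le t$, this is a function of at most $t$ graphs from $\cX$. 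Thus $\cS_k \wedge \cT_\ell$ is a subclass of the $\chi$-bounded class of $t$-functions, and so it is $\chi$-bounded. Here I would use the elementary observation that any subclass of a $\chi$-bounded class is $\chi$-bounded, which follows directly from the definition since $\chi$-boundedness already quantifies over all induced subgraphs.

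There is no single hard step; the argument is essentially careful bookkeeping. The points that most need care are: matching the empty conjunctions of a DNF clause to the boundary classes $\cS_0 = \cT_0$ of complete graphs; observing that in $f(H_1, \ldots, H_t)$ the same graphs $H_i$ are reused across all clauses, so one only gets the containment $f(\cX) \subseteq \bigvee_j(\cS_{|P_j|} \wedge \cT_{|N_j|})$ into the free edge-union rather than equality (which is still enough); and verifying that the disjointness $P_j \cap N_j = \emptyset$ is precisely what delivers the index bound $k + \ell \le t$ keeping us within the claimed range of classes.
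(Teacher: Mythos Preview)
Your proposal is correct and follows essentially the same approach as the paper: write an arbitrary $t$-variable boolean function in DNF, observe that each clause yields a graph in some $\cS_k \wedge \cT_\ell$ with $k+\ell \le t$, and apply \cref{lem:union}; for the converse, note that each $\cS_k \wedge \cT_\ell$ is itself a $t$-function of $\cX$. Your write-up is in fact more careful than the paper's about the boundary cases ($\cS_0$, $\cT_0$, unsatisfiable clauses) and about the fact that one only gets containment of $f(\cX)$ in the free edge-union rather than equality.
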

\begin{proof}
    Assume $\cS_k\wedge\cT_\ell$ is $\chi$-bounded for any integers $k \geq 0$ and $\ell \geq 0$ such that $k+\ell\leq t$, and let $f$ be a boolean function on at most $t$ variables. Then $f$ can be expressed as a DNF with at most $2^t$ conjunctions, where each conjunction consists of at most $t$ literals. Therefore, each graph in $f(\cX)$ is the union of at most $2^t$ graphs from classes $\{\cS_k\wedge\cT_{\ell}\}_{k,\ell\geq 0\text{, }k+\ell\leq t}$. As each $\cS_k\wedge\cT_{\ell}$ is $\chi$-bounded we have that $f(\cX)$ is $\chi$-bounded by Lemma \ref{lem:union}. 

    The other direction holds trivially as each of the classes  $\{\cS_k\wedge\cT_{\ell}\}_{k,\ell\geq 0\text{, }k+\ell\leq t}$ is a $t$-function of $\cX$ by definition.
\end{proof}

\begin{lemma}\label{lem:all}
Let $t \in \bN$ and let $\cX$ be a \emph{self-complementary} $\chi$-bounded hereditary class.
If the $t$-intersection of $\cX$ is $\chi$-bounded with a $\chi$-binding function $h(x)$, then any $t$-function of $\cX$ is $\chi$-bounded with a $\chi$-binding function $(h(x))^{2^t}$.
\end{lemma}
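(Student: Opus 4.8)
The plan is to reduce to \cref{lem:complement} and then track the binding functions through its proof, exploiting self-complementarity to collapse the two families of classes appearing there. Since $\cX = \overline{\cX}$, the class $\cT_\ell$ (the $\ell$-intersection of $\overline{\cX}$) coincides with $\cS_\ell$ (the $\ell$-intersection of $\cX$). Consequently, for any $k,\ell \ge 0$ with $k+\ell \le t$, I claim $\cS_k \wedge \cT_\ell$ is a subclass of $\cS_t$. Writing $\cT_\ell = \cS_\ell$, a graph in $\cS_k \wedge \cS_\ell$ is an intersection of $k$ graphs from $\cX$ with $\ell$ graphs from $\cX$ on a common vertex set, hence a $(k+\ell)$-fold intersection of graphs from $\cX$; and since a $j$-fold intersection can always be padded to a $j'$-fold one for $j \le j'$ by repeating a factor, we get $\cS_k \wedge \cT_\ell \subseteq \cS_{k+\ell} \subseteq \cS_t$. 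The boundary cases $k=0$ or $\ell=0$ reduce to $\cT_\ell = \cS_\ell \subseteq \cS_t$ or $\cS_k \subseteq \cS_t$, using that intersecting with a complete graph is the identity. Because a subclass inherits any $\chi$-binding function of its superclass, each $\cS_k \wedge \cT_\ell$ with $k+\ell \le t$ is $\chi$-bounded with binding function $h(x)$.

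Next I would fix an arbitrary $t$-variable boolean function $f$ and expand it into its canonical disjunction of minterms, $f = \bigvee_{m} m$ over the (at most $2^t$) satisfying assignments, where each minterm $m$ has exactly $t$ literals. For graphs $H_1, \ldots, H_t \in \cX$, the minterm indexed by an assignment $a$ yields the graph $\bigwedge_{i : a_i = 1} H_i \wedge \bigwedge_{i : a_i = 0} \overline{H_i}$; since $\overline{H_i} \in \overline{\cX} = \cX$, this graph lies in some $\cS_k \wedge \cT_\ell$ with $k+\ell = t$, hence in $\cS_t$. Thus $f(H_1, \ldots, H_t)$ is a union of at most $2^t$ graphs drawn from classes each $\chi$-bounded with binding function $h(x)$. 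Applying \cref{lem:union} to this union shows that $f(\cX)$ is $\chi$-bounded with binding function equal to the product of the individual binding functions, i.e.\ $(h(x))^{2^t}$.

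Finally, I would observe that this conclusion is uniform in $f$: every $t$-function of $\cX$ is contained in $\bigcup_{f} f(\cX)$ over all $t$-variable $f$ (a function of fewer than $t$ graphs is padded to exactly $t$ by adjoining dummy arguments), and since each $f(\cX)$ is hereditary with the same binding function $(h(x))^{2^t}$, any graph that is a $t$-function of $\cX$ lies in such an $f(\cX)$ and so inherits this binding function together with all its induced subgraphs (cf.\ \cref{rem:single-function}).

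The place to be careful, and the main obstacle, is pinning the exponent down to exactly $2^t$ rather than merely establishing qualitative $\chi$-boundedness. This forces the use of the \emph{minterm} expansion: because every minterm uses all $t$ variables, each resulting conjunction-graph has $k+\ell = t$ and therefore lands inside $\cS_t$, giving the \emph{single} uniform binding function $h$ for every term, and the number of terms is at most $2^t$, so the product in \cref{lem:union} is precisely $(h(x))^{2^t}$. A coarser DNF (with conjunctions of fewer than $t$ literals, or an empty conjunction) would either break the containment $\cS_k \wedge \cT_\ell \subseteq \cS_t$ or force separate, larger binding functions and inflate the count.
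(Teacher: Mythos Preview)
Your proposal is correct and follows essentially the same approach as the paper: use self-complementarity to identify $\cT_\ell$ with $\cS_\ell$, so that every conjunction appearing in a DNF of $f$ yields a graph in some $\cS_j \subseteq \cS_t$ (hence $\chi$-bounded by $h$), and then apply \cref{lem:union} to the at most $2^t$ disjuncts. The only cosmetic difference is that you insist on the full minterm expansion (each conjunction has exactly $t$ literals, forcing $k+\ell=t$), whereas the paper invokes the general DNF from the proof of \cref{lem:complement} and uses $\cS_k\wedge\cT_\ell=\cS_{k+\ell}\subseteq\cS_t$ for $k+\ell\le t$; both routes arrive at the same bound $(h(x))^{2^t}$ for the same reason.
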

\begin{proof}
    For $k \in [t]$, denote by $\cS_{k}$ the $k$-intersection of $\cX$;
    since $\cS_k \subseteq \cS_t$, the function $h(x)$ is a $\chi$-binding function for $\cS_k$. Using notation from \cref{lem:complement}, since $\cX=\overline{\cX}$, we have that each $\cH_{k,\ell}=\cS_k\wedge\cT_\ell = \cS_{k+\ell}$.
    
    Thus, if $f$ is a boolean function on at most $t$ variables, then each graph in $f(\cX)$ is a union of at most $2^t$ graphs from classes
    $\{\cS_{k}\}_{0 < k \leq t}$. Consequently, by \cref{lem:union}, $f(\cX)$ is $\chi$-bounded with a $\chi$-binding function $(h(x))^{2^t}$.
\end{proof}

%%%%%%%%%%%%%%%%%%%%%%%%%%%%%%%%%%%%%%%%%%%%%
\subsection{Interval, split, and related graph classes}
\label{sec:inteval-graphs}
%%%%%%%%%%%%%%%%%%%%%%%%%%%%%%%%%%%%%%%%%%%%%

Interval graphs are intersection graphs of intervals on the real line.
Although interval graphs are perfect, it is known that the class of $3$-intersections of interval graphs is not $\chi$-bounded \cite{Gya87}. 
\begin{theorem}[{\cite[Theorem 5.6 (b)]{Gya87}}]
\label{thm:interval-intersection}
    The $t$-intersection of interval graphs is not $\chi$-bounded for any $t\geq 3$.
\end{theorem}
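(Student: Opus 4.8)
The plan is to reformulate the problem geometrically and then exhibit a family of triangle-free graphs of unbounded chromatic number inside the class. Recall that a graph $G$ on a vertex set $V$ is the $t$-intersection of interval graphs precisely when it has \emph{boxicity} at most $t$, i.e.\ there is an assignment $v \mapsto B_v$ of axis-parallel boxes $B_v \subseteq \bR^t$ to the vertices such that $u$ and $v$ are adjacent iff $B_u \cap B_v \neq \emptyset$. Indeed, given interval representations of $I_1,\dots,I_t$ with $G = I_1 \cap \dots \cap I_t$, the product of the $t$ intervals assigned to $v$ is the required box, and conversely the $j$-th coordinate of a box representation yields the $j$-th interval graph; box intersection is a conjunction over coordinates, which matches the $t$-intersection exactly. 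Hence it suffices to produce, for every $m \in \bN$, a graph $G_m$ with $\omega(G_m) \le 2$ and $\chi(G_m) \ge m$ of boxicity at most $3$: padding a box representation in $\bR^3$ with $t-3$ extra coordinates in which every box is one fixed interval (equivalently, intersecting $I_1,I_2,I_3$ with $t-3$ copies of the complete graph on $V$, which is an interval graph) exhibits each such $G_m$ as a $t$-intersection of interval graphs for every $t \ge 3$, and the family $\{G_m\}$ then witnesses the failure of $\chi$-boundedness.

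It therefore remains to construct triangle-free box intersection graphs in $\bR^3$ of arbitrarily large chromatic number, which I would do by Burling's recursive construction. One builds pairs $(\cB_k, \cC_k)$, where $\cB_k$ is a finite family of axis-parallel boxes in $\bR^3$ whose intersection graph $G_k$ is triangle-free, and $\cC_k$ is a family of independent sets of boxes (the \emph{columns}), maintaining the invariant that in every proper colouring of $G_k$ some column of $\cC_k$ receives at least $k$ distinct colours; this invariant immediately gives $\chi(G_k) \ge k$. The base case $k=1$ is a single box forming its own column. To pass from $k$ to $k+1$, one takes a master copy of $(\cB_k,\cC_k)$ and, for each column $C$ of the master copy, attaches a fresh scaled copy of $(\cB_k,\cC_k)$ together with one new \emph{probe} box positioned to intersect exactly the boxes of $C$ and nothing else; the geometry is arranged so that whenever the $k$ colours guaranteed on $C$ appear, the probe together with its attached copy is forced to introduce a $(k+1)$-st colour. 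Collecting the columns of all attached copies yields $\cC_{k+1}$.

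The main obstacle is precisely this inductive step: one must choose the box coordinates so that (i) each probe meets all boxes of its column yet creates no triangle (a probe is adjacent only to an independent set, and distinct attached copies stay pairwise non-adjacent), and (ii) the colouring invariant genuinely propagates, so that $k$ colours on a column are converted into $k+1$ colours somewhere at the next level. Verifying triangle-freeness and the colour-forcing invariant simultaneously through the recursion, while keeping everything realizable by axis-parallel boxes in only three dimensions, is the technical heart of the argument. I would also remark that three dimensions are necessary: by the theorem of Asplund and Grünbaum, intersection graphs of axis-parallel rectangles (boxicity $2$) are $\chi$-bounded, which is exactly why the statement is restricted to $t \ge 3$.
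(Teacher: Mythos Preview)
The paper does not give its own proof of this statement: it is quoted verbatim as a cited result from Gy\'arf\'as~\cite{Gya87}, with no argument supplied. So there is nothing in the paper to compare against directly.

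That said, your approach is the correct one and is exactly the route taken in the original source. The identification of the $t$-intersection of interval graphs with the class of graphs of boxicity at most $t$ is standard and correct, as is the padding argument reducing the general $t \ge 3$ case to $t = 3$. The heart of the matter is then Burling's 1965 construction of triangle-free intersection graphs of axis-parallel boxes in $\bR^3$ with unbounded chromatic number, which you sketch accurately. Your description of the recursive step---master copy, columns, probe boxes, and the invariant that some column sees at least $k$ colours---captures the essential mechanism; the geometric realization and the verification that probes introduce no triangles while forcing the extra colour are indeed the technical content, and you are right to flag them as such rather than wave them away. The closing remark about Asplund--Gr\"unbaum is also apt and explains why the threshold is exactly $t = 3$.

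In short: your plan is correct, matches the classical argument that underlies the cited theorem, and would yield a complete proof once the Burling recursion is written out in full.
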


Note, that this result demonstrates that the complementation does not always preserve $\chi$-boundedness. Indeed, denote by $\cI$ the class of interval graphs. Since interval graphs are perfect, the class $\overline{\cI}$ is a subclass of perfect graphs and thus $\chi$-bounded. Therefore, by \cref{lem:union}, $\cX = \overline{\cI} \vee \overline{\cI} \vee \overline{\cI}$ is $\chi$-bounded, but $\overline{\cX} = \cI \wedge \cI \wedge \cI$ is not by \cref{thm:interval-intersection}.

In this section, we show that, in contrast to \cref{thm:interval-intersection}, for some classes related to interval graphs, intersections preserve $\chi$-boundedness.

We start with the class of split graphs, which are graphs whose vertex set can be partitioned into an independent set and a clique. We show that for any fixed $t \in \bN$, the $t$-intersection of the class of split graphs is $\chi$-bounded. As a consequence, we show that any function of the class of split graphs is $\chi$-bounded.
This is interesting as almost all chordal graphs are split graphs \cite{BRW85}, yet the $3$-intersection of chordal graphs is not $\chi$-bounded, as interval graphs are chordal graphs.

\begin{lemma}{\label{lem:split}}
    For any $t \in \bN$, the $t$-intersection of the class of split graphs is polynomially $\chi$-bounded with a $\chi$-binding function $h(x) = x^{2^t}$.
\end{lemma}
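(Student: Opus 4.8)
The plan is to exploit the defining clique/independent-set partition of each split graph and track, for every vertex, on which side of each partition it lands. Fix $t$ split graphs $H_1, \dots, H_t$ on a common vertex set $V$ and set $G = \bigwedge_{i=1}^t H_i$. Since the class of split graphs is hereditary, so is its $t$-intersection, and hence it suffices to bound $\chi(G)$ in terms of $\omega(G)$ for each such $G$. For each $i \in [t]$ fix a partition $V = C_i \cup I_i$ with $C_i$ a clique and $I_i$ an independent set of $H_i$. To each vertex $v$ I associate its \emph{type} $\sigma(v) \in \zo^t$, where $\sigma(v)_i = 1$ if $v \in C_i$ and $\sigma(v)_i = 0$ if $v \in I_i$. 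This partitions $V$ into at most $2^t$ type classes $\{V_\sigma\}_{\sigma \in \zo^t}$.

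The key structural observation is that each type class is either a clique or an independent set of $G$. Indeed, if $\sigma$ has some coordinate $\sigma_i = 0$, then all vertices of $V_\sigma$ lie in $I_i$, hence are pairwise non-adjacent in $H_i$ and therefore pairwise non-adjacent in $G = \bigwedge_j H_j$; thus $V_\sigma$ is an independent set of $G$. On the other hand, the single class $V_{\mathbf{1}}$ (where $\mathbf{1} = (1,\dots,1)$) consists of vertices lying in $C_i$ for every $i$, so any two of them are adjacent in every $H_i$ and hence adjacent in $G$; thus $V_{\mathbf{1}}$ is a clique of $G$, and in particular $|V_{\mathbf{1}}| \le \omega(G)$.

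I would then colour $G$ directly from this decomposition: colour the clique $V_{\mathbf{1}}$ with $|V_{\mathbf{1}}| \le \omega(G)$ distinct colours, and give each of the at most $2^t - 1$ remaining (independent) classes its own fresh colour. Since distinct classes receive disjoint colour sets, this is a proper colouring, yielding $\chi(G) \le \omega(G) + 2^t - 1$. It remains to check that this is dominated by $h(x) = x^{2^t}$: when $\omega(G) \le 1$ the graph $G$ is edgeless and $\chi(G) \le 1 = h(1)$, while for $\omega(G) \ge 2$ the elementary inequality $\omega(G) + 2^t - 1 \le \omega(G)^{2^t}$ holds, so $\chi(G) \le \omega(G)^{2^t}$, as required.

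I do not expect a serious obstacle; the argument is essentially bookkeeping of partition sides. The only point requiring care is the direction of the implications in the structural observation: one must use that an \emph{intersection} retains only common edges, so a single coordinate forcing non-adjacency in one factor already forces non-adjacency in $G$, whereas adjacency in $G$ requires adjacency in \emph{all} factors — which is exactly why only the all-ones class is guaranteed to be a clique. I note that this approach in fact proves the stronger \emph{linear} bound $\chi(G) \le \omega(G) + 2^t - 1$; the stated polynomial binding function $x^{2^t}$ follows immediately and is convenient for uniformity with the permutation-graph case treated later.
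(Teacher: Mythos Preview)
Your proof is correct, but it takes a genuinely different---and in fact sharper---route than the paper. The paper decomposes the \emph{edge set} of $G = \bigwedge_i H_i$: it labels each edge by a vector in $\zo^t$ recording, for each $i$, whether both endpoints lie in the clique side $Q_i$ or straddle the partition, and then argues that each edge-class spans a Meyniel (hence perfect) graph. Invoking \cref{lem:union} on this union of at most $2^t$ perfect graphs gives the multiplicative bound $\chi(G)\le \omega(G)^{2^t}$.

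You instead decompose the \emph{vertex set} by type, observing that every type class with at least one zero coordinate is an independent set of $G$, while the single all-ones class is a clique. This yields the direct additive bound $\chi(G)\le \omega(G)+2^t-1$, from which the stated $\chi$-binding function follows immediately. Your argument is shorter, avoids the detour through Meyniel graphs and the union lemma, and actually establishes \emph{linear} $\chi$-boundedness of the $t$-intersection of split graphs---a strictly stronger conclusion than what the paper proves here. The paper's edge-labelling idea, on the other hand, is a template that would adapt more readily to situations where the vertex classes are not themselves homogeneous.
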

\begin{proof}
    Let $\cX$ be the $t$-intersection of the class of split graphs.
    We will show that each graph in $\cX$ is the union of at most $2^t$ perfect graphs, which, by Lemma \ref{lem:union}, implies that $\cX$ is $\chi$-bounded with a $\chi$-binding function $x^{2^t}$.

    Let $H=(V,E)$ be the intersection $G_1\wedge\ldots\wedge G_t$, where, for each $i \in [t]$, $G_i$ is a split graph with a fixed partition of its vertices into an independent set $S_i$ and a clique $Q_i$.
    Note that if two vertices are adjacent in $G_i$, then either both of them belong to the clique $Q_i$, or one belongs to the independent set $S_i$ and the other belongs to the clique $Q_i$.
    Furthermore, if two vertices are adjacent in $H$, then they are adjacent in all graphs $G_i$, $i \in [t]$.
    We assign to every edge $e$ of $H$ a binary vector $b_e \in \zo^t$, where the $i$-th coordinate of $b_e$ is 1 if both endpoints of $e$ belong to the clique $Q_i$ in $G_i$, and 0 if one endpoint of $e$ is in $S_i$ and the other in $Q_i$.

    Now, for a binary vector $b \in \zo^t$ we consider the subgraph $H_b=(V,E_b)$ of $H$ spanned by the edges assigned vector $b$.
    We claim that each $H_b$ is a perfect graph. To prove this, we show that each odd cycle in $H_b$ of length five or more has at least two chords. Graphs with this property are called Meyniel graphs and are known to be perfect \cite{Mey76}. In fact, we will show a stronger property that every odd cycle of length at least five has all possible chords.

    Suppose graph $H_b$ contains a cycle $C_{2k+1}$ as a subgraph for some $k \geq 1$. We claim that all coordinates of $b$ should be $1$ in this case. Suppose this is not the case, and assume without loss of generality that the first coordinate of $b$ is 0. Then every edge of the cycle connects a vertex from $S_1$ with a vertex from $Q_1$ in graph $G_1$, but this is not possible as the bipartite graph $G_1[S_1,Q_1]$ cannot contain cycles of odd length.
    Thus, for every $b$ containing at least one 0-coordinate, $H_b$ does not contain odd cycles of length at least 5 as subgraphs, and hence $H_b$ is a Meyniel graph. If $b = (1,1, \ldots, 1)$, then, for every $i\in [t]$, both endpoints of every edge of the cycle belong to the clique $Q_i$ in $G_i$, implying that all vertices of the cycle belong to $Q_i$ and thus form a clique in $G_i$, and therefore in $H_b$.
\end{proof}

\begin{theorem}
    For any $t \in \bN$, any $t$-function of the class of split graphs is polynomially $\chi$-bounded with a $\chi$-binding function $g(x)=x^{2^{2t}}$.
\end{theorem}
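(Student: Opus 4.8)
The plan is to deduce this theorem directly from \cref{lem:all}, treating \cref{lem:split} as the substantive input. \cref{lem:all} requires that the underlying class be self-complementary, hereditary, and $\chi$-bounded, and that its $t$-intersection be $\chi$-bounded; under these hypotheses it yields that every $t$-function is $\chi$-bounded with binding function $(h(x))^{2^t}$, where $h$ binds the $t$-intersection. So essentially all that remains is to check that the class of split graphs satisfies the three structural hypotheses and then substitute the binding function from \cref{lem:split}.

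First I would verify the three properties. Split graphs are hereditary, since deleting a vertex preserves the partition of the remaining vertices into a clique and an independent set. They are $\chi$-bounded because they are perfect (they are chordal, and chordal graphs are perfect), so the identity is a $\chi$-binding function. The key observation, and the only one requiring a short argument, is that the class of split graphs is \emph{self-complementary}: if $G$ has vertex partition $(Q,S)$ with $Q$ a clique and $S$ an independent set, then in $\overline{G}$ the set $Q$ becomes an independent set and $S$ becomes a clique, so $\overline{G}$ is split with partition $(S,Q)$. Hence $\cX = \overline{\cX}$.

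Next I would invoke \cref{lem:split}, which gives that the $t$-intersection of split graphs is $\chi$-bounded with $\chi$-binding function $h(x) = x^{2^t}$. Feeding this into \cref{lem:all} with $\cX$ equal to the class of split graphs, I obtain that every $t$-function of split graphs is $\chi$-bounded with $\chi$-binding function $(h(x))^{2^t} = \left(x^{2^t}\right)^{2^t} = x^{2^t \cdot 2^t} = x^{2^{2t}} = g(x)$, as claimed.

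I do not expect a genuine obstacle here: the real difficulty lives in \cref{lem:split} (the Meyniel-graph decomposition argument), and this statement is a clean corollary obtained by packaging it through the self-complementarity machinery of \cref{lem:all}. The only point deserving care is confirming self-complementarity of split graphs and the exponent arithmetic $\left(x^{2^t}\right)^{2^t} = x^{2^{2t}}$; both are routine.
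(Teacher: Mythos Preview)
Your proposal is correct and matches the paper's proof essentially verbatim: the paper also observes that the class of split graphs is self-complementary, invokes \cref{lem:split} to get $h(x)=x^{2^t}$ for the $t$-intersection, and then applies \cref{lem:all} to obtain $g(x)=(h(x))^{2^t}=x^{2^{2t}}$. Your additional checks (hereditary, perfect hence $\chi$-bounded) and the exponent arithmetic are exactly the routine verifications needed.
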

\begin{proof}
    As the class of split graphs is self-complementary and, by \cref{lem:split}, for any $t \in \bN$, the $t$-intersection of split graphs is $\chi$-bounded with a $\chi$-binding function $h(x) = x^{2^t}$, it follows from \cref{lem:all} that any $t$-function of the class of split graphs is $\chi$-bounded and $g(x) = (h(x))^{2^t} = x^{2^{2t}}$ is a suitable $\chi$-binding function. 
\end{proof}

Next, we consider the class of $\{ K_{1,3}, C_4, C_5 \}$-free graphs, which includes the class of unit interval graphs, and, more generally, the class of $K_{1,3}$-free chordal graphs. We show that any $t$-intersection of this class if polynomially $\chi$-bounded, and then extend this result to the class of all $K_{1,p}$-free graphs.

A Ramsey number, $R(p,q)$, is the minimum $n$ such that in any red-blue edge colouring of the complete graph $K_n$, there is a red clique of size $p$ or a blue clique of size $q$. 
It is known that for any $k \geq 3$, there exists $c_k$ such that $R(x,k)\leq c_k\frac{x^{k-1}}{(\ln x)^{k-2}}$ \cite{ajtai1980note}. In particular, when $k$ is fixed, $R(x,k)$ grows at most polynomially with respect to $x$.
A multicolour Ramsey number, $R(p_1,p_2,\ldots,p_t)$, is the minimum $n$ such that in any edge colouring of $K_n$ with $t$ colours, there exists a monochromatic clique of size $p_i$ for some $i \in [t]$. If $p=p_1=p_2=\ldots=p_t$, then we write $R(p_1,p_2,\ldots,p_t)=R_t(p)$.

\begin{lemma}\label{unit interval}
For any $t \in \bN$, the $t$-intersection of the class of $\{ K_{1,3}, C_4, C_5 \}$-free graphs is $\chi$-bounded with a \emph{polynomial} $\chi$-binding function $h(x) = R(x,2^t+1)$.  
\end{lemma}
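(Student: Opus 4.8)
The plan is to bound the maximum degree of every graph in the $t$-intersection by $R(\omega,2^t+1)-1$, which immediately yields $\chi \le \Delta+1 \le R(\omega,2^t+1)$. The whole argument rests on a structural fact about a single $\{K_{1,3},C_4,C_5\}$-free graph $G$: \emph{the neighbourhood $N(v)$ of every vertex $v$ induces the union of two cliques} (equivalently, $\overline{G[N(v)]}$ is bipartite). I would prove this by examining the three forbidden subgraphs through the complement $\overline{G[N(v)]}$: a triangle in $\overline{G[N(v)]}$ is a claw centred at $v$; an induced $2K_2$ in $\overline{G[N(v)]}$ corresponds to an induced $C_4$ in $G[N(v)]$ (the complement of $2K_2$ on four vertices is $C_4$); and an induced $C_5$ in $\overline{G[N(v)]}$ is an induced $C_5$ in $G[N(v)]$ (as $C_5$ is self-complementary). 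Hence $\overline{G[N(v)]}$ is $\{K_3,2K_2,C_5\}$-free, and such a graph is bipartite: being triangle-free, a shortest odd cycle would be induced of length at least $5$; length exactly $5$ is an induced $C_5$, while any induced cycle of length at least $7$ contains an induced $2K_2$ (two edges three apart along the cycle), both excluded, so no odd cycle exists.

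With this in hand, fix $H=G_1\wedge\cdots\wedge G_t$ with each $G_i$ in the class, and a vertex $v$. For each $i$ the co-bipartite structure partitions $N_{G_i}(v)\supseteq N_H(v)$ into two cliques, so I can assign to every $w\in N_H(v)$ a \emph{type} in $\{0,1\}^t$ recording which of the two cliques of $G_i$ contains $w$, for each $i$. The key observation is that if two vertices of $N_H(v)$ share a type, then for every $i$ they lie in a common clique of $G_i$, hence are adjacent in every $G_i$ and therefore adjacent in $H$. Consequently any independent set of $H$ inside $N_H(v)$ meets each type at most once, so it has at most $2^t$ vertices; that is, $\alpha(H[N_H(v)])\le 2^t$.

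Finally I would convert this into a degree bound via the two-colour Ramsey number. Suppose for contradiction that $|N_H(v)|\ge R(\omega(H),2^t+1)$, and colour the pairs of $N_H(v)$ red if adjacent in $H$ and blue otherwise. A red clique of size $\omega(H)$ would, together with $v$, form a clique of size $\omega(H)+1$ in $H$, which is impossible; a blue clique of size $2^t+1$ would be an independent set of $H$ inside $N_H(v)$ of size exceeding $2^t$, contradicting the previous paragraph. Hence $|N_H(v)|<R(\omega(H),2^t+1)$ for every $v$, so $\Delta(H)\le R(\omega(H),2^t+1)-1$ and $\chi(H)\le \Delta(H)+1\le R(\omega(H),2^t+1)$, giving the claimed binding function.

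I expect the structural step (co-bipartiteness of neighbourhoods) to be the main obstacle, and within it the delicate point is the clean translation of $C_4$- and $C_5$-freeness into the $\{2K_2,C_5\}$-freeness of the complemented neighbourhood, together with the odd-girth argument; the type/pigeonhole step and the Ramsey step are then routine. I note that the type argument alone already yields the linear bound $|N_H(v)|\le 2^t(\omega(H)-1)$, but I would nonetheless phrase the degree bound through $R(\cdot,\cdot)$ exactly as in the statement, since this is the form that generalises to the $K_{1,p}$-free setting treated next (where only $\alpha(N_{G_i}(v))\le p-1$ survives and the co-bipartite shortcut is unavailable).
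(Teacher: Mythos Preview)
Your proof is correct and follows essentially the same route as the paper: both arguments hinge on the observation that in a $\{K_{1,3},C_4,C_5\}$-free graph the neighbourhood of any vertex is co-bipartite (equivalently, its complement is $\{K_3,2K_2,C_5\}$-free and hence bipartite), which forces $\alpha(H[N_H(v)])\le 2^t$ in the $t$-intersection, i.e.\ $K_{1,2^t+1}$-freeness. The only cosmetic difference is that the paper packages this as ``$K_{1,2^t+1}$ is forbidden'' via a complement-and-$\chi$-product contradiction and then cites Gy\'arf\'as's bound $\chi\le R(\omega,p)$ for $K_{1,p}$-free graphs, whereas you unfold that citation explicitly with the Ramsey degree bound.
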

\begin{proof}
    Let $\cX$ be the class of $\{ K_{1,3}, C_4, C_5 \}$-free graphs. To prove the statement we will show that  $K_{1,2^t+1}$ is a forbidden induced subgraph for the $t$-intersection of $\cX$. The result then follows from the fact that, for any $p \geq 2$, the class of $K_{1,p}$-free graphs is $\chi$-bounded with a $\chi$-binding function $g(x) = R(x,p)$ \cite[Theorem 2.2]{Gya87}.
    
    Assume there exist $t$ graphs in $\cX$, denoted $G_1,\ldots,G_t$, such that $K_{1,2^t+1}=\bigwedge_{i=1}^t G_i$. Then $K_{2^t+1}+ O_1$ (the complement of $K_{1,2^t+1}$) is equal to $\bigvee_{i=1}^t\overline{G_i}$. 
    Note that for every $i \in [t]$, graph $\overline{G_i}$ is $\{ K_3+ O_1, 2K_2, C_5 \}$-free, as $G_i$ is $\{ K_{1,3}, C_4, C_5 \}$-free. 
    
    Let $w$ be the isolated vertex of the graph $K_{2^t+1}+ O_1$. Then, $w$ is an isolated vertex in every $\overline{G_i}$, $i \in [t]$, and since $K_3+ O_1$ is forbidden for each of these graphs, we conclude that each $\overline{G_i}$ is $K_3$-free. Consequently, each $\overline{G_i}$, $i \in [t]$, is $\{ K_3, 2K_2, C_5 \}$-free, and, in particular, bipartite.
    However, from Lemma \ref{lem:union}, \[2^t+1=\chi(K_{2^t+1}+ O_1)=\chi(\overline{G_1}\vee\ldots\vee \overline{G_t}) \leq \Pi_{i=1}^t \chi(\overline{G_i})=2^t,\] which is clearly a contradiction. 
\end{proof}

We now generalize the above result to the class of $K_{1,p}$-free graphs at the cost of a worse bound on the $\chi$-binding function.

\begin{lemma}\label{claw free}
    For any $t,p \in \bN$, the $t$-intersection of the class of $K_{1,p}$-free graphs is $\chi$-bounded with a \emph{polynomial} $\chi$-binding function $h(x) = R(x,R_t(p))$.
\end{lemma}
\begin{proof}
    Let $\cX$ be the class of $K_{1,p}$-free graphs. 
    As before, to prove the statement we will show that
    $K_{1,R_t(p)}$ is a forbidden induced subgraph for the $t$-intersection of $\cX$, which together with \cite[Theorem 2.2]{Gya87} would immediately imply the desired result.
    
    Assume there exist $t$ graphs $G_1,\ldots,G_t \in \cX$, such that $K_{1,R_t(p)}=\bigwedge_{i=1}^t G_i$, then $K_{R_t(p)}+ O_1$ (the complement of $K_{1,R_t(p)}$) is equal to $\bigvee_{i=1}^t\overline{G_i}$. Note that for every $i \in [t]$, graph $\overline{G_i}$ is $(K_p + O_1)$-free, as $G_i$ is $K_{1,p}$-free. 

    Let $w$ be the isolated vertex of the graph $K_{R_t(p)}+ O_1$. Then, $w$ is an isolated vertex in every $\overline{G_i}$, $i \in [t]$, and since $K_p+ O_1$ is forbidden for each of these graphs, we conclude that each $\overline{G_i}$ is $K_p$-free. 
    On the other hand, the union of $\overline{G_i}$, $i \in [t]$, contains $K_{R_t(p)}$ as an induced subgraph. Without loss of generality, we can assume that the graphs $\overline{G_i}$, $i \in [t]$, are edge-disjoint as if two of them have an edge in common we can remove the edge from one of them without changing their union or creating a $K_p$. 
    This disjoint union of $t$ $K_p$-free graphs 
    corresponds to a coloring of the edges of the $K_{R_t(p)}$ with $t$ colours that does not contain a monochromatic $K_p$, which contradicts the definition of $R_t(p)$.
\end{proof}

%%%%%%%%%%%%%%%%%%%%%%%%%%%%%%%%%%%%%%%%%%%%%%%%
\subsection{Permutation graphs}\label{sec:permutation}
%%%%%%%%%%%%%%%%%%%%%%%%%%%%%%%%%%%%%%%%%%%%%%%%

The comparability graph of a partial order $(V,<)$ is the graph with vertex set $V$ in which two elements are adjacent if and only if they are comparable in the partial order. A graph is called \emph{comparability} if it is a comparability graph of some partial order. A graph is \emph{co-comparability} if it is the complement of a comparability graph.

It is easy to see that any interval graph is a co-comparability graph, where the corresponding partial order can be naturally defined on the intervals of a geometric representation of the interval graph.
This together with \cref{thm:interval-intersection} imply that the 3-intersection of the class of co-comparability graphs is not $\chi$-bounded, and \emph{not} every function of the class of comparability graphs is $\chi$-bounded. However, Gy{\'a}rf{\'a}s showed \cite{Gya87} that the $t$-intersection of the class of comparability graphs is polynomially $\chi$-bounded for every $t \in \bN$. 

\begin{lemma}[{\cite[Proposition 5.8]{Gya87}}]{\label{lem:comparability}}
    For any $t \in \bN$,
    the $t$-intersection of the class of comparability graphs is $\chi$-bounded with a $\chi$-binding function $h(x) = x^{2^{t-1}}$.
\end{lemma}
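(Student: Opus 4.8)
The plan is to fix, for a graph $G$ in the $t$-intersection of comparability graphs, a representation $G = G_1 \wedge G_2 \wedge \cdots \wedge G_t$ together with a transitive orientation (equivalently, a strict partial order $<_i$) of each $G_i$, and to decompose the edges of $G$ into at most $2^{t-1}$ perfect subgraphs. Set $k = \omega(G)$. Every edge $uv$ of $G$ is comparable in all of the orders $<_1, \ldots, <_t$, so each edge carries a direction in each order. Using $<_1$ as a reference, orient each edge so that $u <_1 v$ and record the \emph{sign vector} $(s_2, \ldots, s_t) \in \{+,-\}^{t-1}$, where $s_i = +$ if $u <_i v$ and $s_i = -$ if $v <_i u$. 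This partitions $E(G)$ into at most $2^{t-1}$ classes; let $G_s$ denote the spanning subgraph of $G$ consisting of the edges with sign vector $s$.

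First I would show that each $G_s$ is a comparability graph. For a fixed sign vector $s = (s_2,\ldots,s_t)$, define for each $i \ge 2$ the strict partial order $\lessdot_i$ to be $<_i$ if $s_i = +$ and the reverse of $<_i$ if $s_i = -$, and let $\prec_s$ be the relation with $x \prec_s y$ if and only if $x <_1 y$ and $x \lessdot_i y$ for all $i \ge 2$. As an intersection of strict partial orders, $\prec_s$ is itself a strict partial order, and unwinding the definition of the sign vector shows that $G_s$ is exactly the comparability graph of $\prec_s$. Hence each $G_s$ is perfect, so $\chi(G_s) = \omega(G_s)$; and since $G_s$ is a subgraph of $G$ on the same vertex set, $\omega(G_s) \le \omega(G) = k$, giving $\chi(G_s) \le k$.

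Finally, since $G$ is the edge-union of the (at most) $2^{t-1}$ graphs $G_s$, the product-coloring argument behind \cref{lem:union} applies: assigning to each vertex the tuple of its colours in the individual proper $k$-colourings of the $G_s$ yields a proper colouring of $G$, because any edge of $G$ lies in some $G_s$ and is therefore bichromatic there. This uses at most $k^{2^{t-1}}$ colours, so $\chi(G) \le \omega(G)^{2^{t-1}}$, as required; the case $t = 1$ degenerates correctly, since then $G$ is a single comparability graph and $2^{t-1} = 1$. The main conceptual obstacle is that the naive guess---that $G$ is the comparability graph of the intersection order $\bigcap_i <_i$---is false, because an edge of $G$ may be oriented \emph{inconsistently} across the orders and hence be invisible to $\bigcap_i <_i$; the resolution is precisely to also allow the reversed orders, which accounts for the factor $2^{t-1}$ and recovers every edge in exactly one of the perfect pieces $G_s$.
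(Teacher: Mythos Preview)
Your proof is correct. The paper itself does not supply a proof of this lemma---it is simply quoted from Gy\'arf\'as~\cite{Gya87}---and your argument is the standard one: partition the edges of $G$ by the sign pattern in $\{+,-\}^{t-1}$ recording how the orientations $<_2,\ldots,<_t$ agree or disagree with the reference orientation $<_1$, observe that each piece is the comparability graph of the intersection of the suitably reversed orders (hence perfect with clique number at most $\omega(G)$), and finish with product colouring over the at most $2^{t-1}$ pieces.
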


Graphs that are both comparability and co-comparability are precisely the permutation graphs. We show next that any function of the class of permutation graphs is polynomially $\chi$-bounded.
\begin{theorem}\label{th:fun-permutation}
    Any function of the class of permutation graphs is polynomially $\chi$-bounded.
\end{theorem}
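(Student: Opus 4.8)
The plan is to invoke \cref{lem:all} with $\cX$ taken to be the class of permutation graphs, which I will denote $\cP$. Suppose $\cY$ is a function of $\cP$, so that $\cY$ is a $k$-function of $\cP$ for some fixed $k \in \bN$; by \cref{rem:single-function} it suffices to show that every $k$-function of $\cP$ is polynomially $\chi$-bounded. I would therefore verify that $\cP$ meets the three hypotheses of \cref{lem:all} (hereditary, self-complementary, and $\chi$-bounded) and that its $k$-intersection is $\chi$-bounded with a polynomial binding function.

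First I would check the hypotheses on $\cP$ itself. The class $\cP$ is hereditary, since an induced subgraph of a permutation graph is again a permutation graph. It is self-complementary: permutation graphs are exactly the graphs that are simultaneously comparability and co-comparability, and complementation swaps these two properties, so $\overline{\cP} = \cP$. Finally, since every comparability graph is perfect, $\cP$ is a class of perfect graphs and hence $\chi$-bounded (with the identity as a $\chi$-binding function).

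Next I would bound the $k$-intersection. Every permutation graph is a comparability graph, so $\cP$ is a subclass of the class of comparability graphs, and consequently the $k$-intersection of $\cP$ is a subclass of the $k$-intersection of comparability graphs. By \cref{lem:comparability}, the latter is $\chi$-bounded with the polynomial $\chi$-binding function $h(x) = x^{2^{k-1}}$, and a $\chi$-binding function for a class is automatically a $\chi$-binding function for any of its subclasses. Hence the $k$-intersection of $\cP$ is $\chi$-bounded with $\chi$-binding function $h(x) = x^{2^{k-1}}$.

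With all the hypotheses in place, \cref{lem:all} yields that every $k$-function of $\cP$ is $\chi$-bounded with $\chi$-binding function $\big(h(x)\big)^{2^{k}} = \big(x^{2^{k-1}}\big)^{2^{k}} = x^{2^{2k-1}}$, which is a polynomial for each fixed $k$. Applying this to the $k$ witnessing that $\cY$ is a $k$-function of $\cP$ completes the argument. I do not anticipate a genuine obstacle here; the proof is essentially an assembly of \cref{lem:comparability} and \cref{lem:all}, and the only points requiring care are the (standard) facts that $\cP$ is self-complementary and perfect, and the observation that $\chi$-boundedness of the intersection of comparability graphs descends to the subclass of intersections of permutation graphs.
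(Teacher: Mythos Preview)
Your proposal is correct and follows essentially the same route as the paper: verify that permutation graphs are self-complementary, use \cref{lem:comparability} to polynomially $\chi$-bound the $k$-intersection, and then apply \cref{lem:all} to obtain the binding function $x^{2^{2k-1}}$. The only difference is that you spell out the hereditariness and perfectness of permutation graphs explicitly, which the paper leaves implicit.
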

\begin{proof}
Let $t \in \bN$ and let $\cX$ be the class of permutation graphs. 
Then $\cX=\overline{\cX}$, as the complement of a permutation graph is also a permutation graph. Additionally, $\cX$ is contained in the class of comparability graphs, so, by \cref{lem:comparability}, the $t$-intersection of $\cX$ is $\chi$-bounded with $h(x) = x^{2^{t-1}}$ as a $\chi$-binding function. By Lemma \ref{lem:all}, this implies that any $t$-function of $\cX$ is $\chi$-bounded with a $\chi$-binding function $g(x)=(x^{2^{t-1}})^{2^t}=x^{2^{2t-1}}$.  
\end{proof}

As cographs and equivalence graphs are permutation graphs we have the following corollaries.
\begin{corollary}\label{th:fun-cographs}
    Any function of the class of cographs is polynomially $\chi$-bounded.
\end{corollary}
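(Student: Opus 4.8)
The plan is to deduce this corollary directly from \cref{th:fun-permutation} by showing that the class of cographs is \emph{contained} in the class of permutation graphs, since containment of graph classes is inherited by their boolean functions. This reduces the statement about cographs to the already-established statement about permutation graphs.

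First I would recall that cographs are exactly the $P_4$-free graphs, equivalently the graphs that can be built from single vertices by repeatedly applying disjoint union and join (equivalently, disjoint union and complementation). The class of permutation graphs is closed under each of these operations: it is self-complementary (a fact already invoked in the proof of \cref{th:fun-permutation}), and both the disjoint union and the join of two permutation graphs are again permutation graphs, as one can concatenate or appropriately combine their permutation diagrams. Since a single vertex is trivially a permutation graph, a straightforward induction on the recursive construction of a cograph shows that every cograph is a permutation graph. Writing $\cX$ for the class of cographs and $\cP$ for the class of permutation graphs, we thus obtain $\cX \subseteq \cP$.

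Next I would observe that this containment transfers to functions: if $\cY$ is a $k$-function of $\cX$, then every $G \in \cY$ is a boolean function of at most $k$ graphs from $\cX$, and since these graphs also lie in $\cP$, the same witnesses show that $\cY$ is a $k$-function of $\cP$. Combining this with \cref{th:fun-permutation} finishes the argument: any function of the class of cographs is a function of the class of permutation graphs, and hence is polynomially $\chi$-bounded. The only substantive ingredient here is the classical inclusion of cographs in permutation graphs; the reduction step is immediate from the definition of a $k$-function, so I do not anticipate any real obstacle beyond correctly citing or verifying that inclusion.
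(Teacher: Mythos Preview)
Your proposal is correct and follows exactly the paper's approach: the paper simply states ``As cographs and equivalence graphs are permutation graphs we have the following corollaries,'' deducing the result immediately from \cref{th:fun-permutation}. Your argument just spells out this one-line deduction in more detail.
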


\begin{corollary}\label{th:fun-equivalence}
    Any function of the class of equivalence graphs is polynomially $\chi$-bounded.
\end{corollary}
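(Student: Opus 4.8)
The plan is to obtain this as an immediate consequence of \cref{th:fun-permutation} once we know that every equivalence graph is a permutation graph. The glue is a general monotonicity principle for functions of graph classes: if $\cX \subseteq \cX'$ and a class $\cY$ is a $k$-function of $\cX$, then every $G \in \cY$ can be written as $G=f(H_1,\ldots,H_r)$ with $r \leq k$ and each $H_i$ in $\cX \subseteq \cX'$, so $\cY$ is also a $k$-function of $\cX'$. Hence any property of the form ``all functions of $\cX'$ are polynomially $\chi$-bounded'' is inherited by all functions of any subclass $\cX \subseteq \cX'$. Applying this with $\cX'$ the class of permutation graphs and $\cX$ the class of equivalence graphs, \cref{th:fun-permutation} yields the corollary.

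The only substantive point is therefore the containment: every equivalence graph is a permutation graph. I would verify this by recalling that a permutation graph on $[n]$ is the inversion graph of a permutation $\sigma$, where distinct $i,j$ are adjacent if and only if $i<j$ and $\sigma(i)>\sigma(j)$. Given an equivalence graph whose components are cliques of sizes $n_1,\ldots,n_m$, I would lay the vertices out in consecutive position-blocks $B_1,\ldots,B_m$ and assign to $B_t$ a contiguous block of values, the blocks of values ordered increasingly with $t$, but with the values \emph{reversed} inside each block. Then any two vertices in a common block $B_t$ occur in increasing position order and decreasing value order, forming an inversion and hence an edge; any two vertices lying in distinct blocks $B_a,B_b$ with $a<b$ occur in the same relative order in both positions and values, hence form a non-edge. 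Thus the inversion graph of this permutation is exactly the prescribed disjoint union of cliques, so every equivalence graph is a permutation graph.

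There is essentially no obstacle here; the entire content is the subclass containment plus the trivial inheritance of the ``$k$-function'' relation along class inclusions, and the only step needing a line of care is checking that the explicit permutation above realizes precisely the disjoint union of cliques (and creates no spurious edges), which is the routine verification just given. I note two shortcuts that avoid even the explicit construction: permutation graphs are closed under disjoint union and contain all complete graphs, which gives the containment at once; alternatively, equivalence graphs are $P_4$-free and hence cographs, so the corollary also follows directly from \cref{th:fun-cographs}.
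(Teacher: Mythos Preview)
Your proposal is correct and follows exactly the paper's approach: the paper derives this corollary in one line from \cref{th:fun-permutation} by noting that equivalence graphs are permutation graphs. You have supplied more detail (the explicit permutation realizing a disjoint union of cliques and the easy monotonicity of the $k$-function relation under class inclusion), but the argument is the same.
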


Note that equivalence graphs form one of the simplest classes of perfect graphs.
In the next section we study in more detail $\chi$-boundedness of functions of this class.

%%%%%%%%%%%%%%%%%%%%%%%%%%%%%%%%%%%%%%%%%%%%%%%%
\subsection{Equivalence graphs}\label{sec:chi-boundedness-equiv-graphs}
%%%%%%%%%%%%%%%%%%%%%%%%%%%%%%%%%%%%%%%%%%%%%%%%

Functions of equivalence graphs are polynomially $\chi$-bounded due to \cref{th:fun-equivalence}.
In \cref{sec:chi-non-linear}, we show that in general this cannot be improved. 
More specifically, we prove that for any given polynomial there exists a function of equivalence graphs whose smallest $\chi$-binding function grows at least as fast as this polynomial.
In \cref{sec:chi-linear}, we complement this result by identifying some restrictions under which functions of equivalence graphs are linearly $\chi$-bounded. In particular, we show that any function of a proper subclass of equivalence graphs and any monotone function of the class of equivalence graphs are linearly $\chi$-bounded.
Finally, in \cref{sec:chi-perfect}, we show that all functions of 2 equivalence graphs and some (but not all) functions of 3 equivalence graphs are subclasses of perfect graphs.

%%%%%%%%%%%%%%%%%%%%%%%%%%%%%%%%%%%%%%%%%%%%%%%%
\subsubsection{Strictly polynomially \texorpdfstring{$\chi$}{chi}-bounded classes}\label{sec:chi-non-linear}
%%%%%%%%%%%%%%%%%%%%%%%%%%%%%%%%%%%%%%%%%%%%%%%%

In this section, we show that for any $s \in \bN$, there exists a function of the class of equivalence graphs whose smallest $\chi$-binding function is lower bounded by a polynomial of degree $s$.

For $k \in \bN$, we denote by $\cB_k$ the $k$-XOR of the class of equivalence graphs. Let $H(n,k)$ be the graph with the vertex set
$[n]^k$, where two vertices $v=(v_1,v_2,\ldots,v_k)$ and $w=(w_1,w_2,\ldots,w_k)$ are adjacent if and only if they agree on an odd number of coordinates, i.e., $|\{ i \in [k] ~|~ v_i=w_i \}| \equiv 1 \pmod{2}$. We claim that for any $n \in \bN$, the graph $H(n,k)$ belongs to $\cB_k$. Indeed, denote by $G_i$ the equivalence graph with vertex set $[n]^k$
and edge set $\{ (v,w) ~|~ v,w \in [n]^k, v \neq w, v_i = w_i \}$. Then, from \cref{obs:xor-of-k-graphs}, we conclude that $H(n,k) = G_1 \xor G_2 \xor \cdots \xor G_k$.

We will show (\cref{th:binding lower bound}) that for any $s \geq 2$, the smallest $\chi$-binding function for $\cB_{2s}$ is $\Omega_s(x^s)$, where subscript $s$ in $\Omega$ indicates that the hidden constant depends on $s$. To do so, we will establish (\cref{lem:Hnk-clique-independent-set-even}, \cref{lem:Hnk-clique-independent-set-odd}) upper bounds on the clique number and the independence number of $H(n,k)$, from which we will derive a suitable lower bound on the chromatic number in terms of the clique number.
We will use the following classical results about set systems with restricted intersections.

\begin{theorem}[Oddtown Theorem {\cite[Corollary 1.2]{babai2022linear}}]\label{Oddtown}
    Let $\cF \subseteq 2^{[n]}$ be such that $|A|$ is odd for all $A\in\cF$, and $|A\cap B|$ is even for all distinct $A,B\in\cF$. Then $|\cF|\leq n$.
\end{theorem}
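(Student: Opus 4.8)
The plan is to prove this via the classical linear-algebra method over the field $\bF_2$. First I would associate to each set $A \in \cF$ its characteristic (indicator) vector $v_A \in \bF_2^n$, whose $i$-th coordinate is $1$ if $i \in A$ and $0$ otherwise. The key observation is that the standard bilinear form over $\bF_2$ records intersection parities: for any $A, B \in \cF$ we have $\langle v_A, v_B \rangle = |A \cap B| \bmod 2$. The two hypotheses then translate into clean algebraic statements: since $|A|$ is odd, $\langle v_A, v_A \rangle = 1$ for every $A \in \cF$; and since $|A \cap B|$ is even for distinct $A,B$, we have $\langle v_A, v_B \rangle = 0$ whenever $A \neq B$.

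The heart of the argument is to show that the vectors $\{v_A : A \in \cF\}$ are linearly independent over $\bF_2$. To see this, I would suppose a linear dependence $\sum_{A \in \cF} c_A v_A = 0$ with coefficients $c_A \in \bF_2$, and take the inner product of both sides with a fixed $v_B$. By the orthogonality relations above, every term with $A \neq B$ vanishes and only the $A = B$ term survives, yielding $c_B \cdot \langle v_B, v_B \rangle = c_B = 0$. Since $B$ was arbitrary, all coefficients vanish, establishing linear independence. Finally, as the $|\cF|$ vectors form a linearly independent subset of the $n$-dimensional vector space $\bF_2^n$, we conclude $|\cF| \leq n$, as required.

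The main subtlety --- easy to overlook but essential --- is that the argument crucially depends on working over $\bF_2$ rather than over $\mathbb{Q}$ or $\bR$: it is precisely the parity conditions (odd on the diagonal, even off the diagonal) that make the Gram matrix of the $v_A$ over $\bF_2$ equal to the identity, which is invertible and hence forces independence. The self-intersection condition $\langle v_A, v_A \rangle = 1$, coming from the odd set sizes, is what rules out the zero vector and drives the diagonal extraction step; without it the conclusion can fail, so I would be careful to emphasize where each hypothesis is used.
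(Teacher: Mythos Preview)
Your proof is correct and is precisely the classical linear-algebra argument over $\bF_2$; there is nothing to fix. Note, however, that the paper does not actually give its own proof of this statement: it merely quotes the Oddtown Theorem as a known result with a citation to \cite{babai2022linear}, so there is no ``paper's proof'' to compare against, and your argument is exactly the standard one found in that reference.
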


\begin{theorem}[Reverse Oddtown Theorem {\cite[Exercise 1.15]{babai2022linear}}]\label{Reverse Oddtown}
     Let $\cF\subseteq 2^{[n]}$ be such that $|A|$ is even for all $A\in\cF$, and $|A\cap B|$ is odd for all distinct $A,B\in\cF$. Then $|\cF|\leq n$.
\end{theorem}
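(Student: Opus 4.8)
The plan is to apply the linear algebra method over $\bF_2$. To each set $A \in \cF$ I would associate its characteristic vector $v_A \in \bF_2^n$ and use the standard bilinear form $\langle x,y\rangle = \sum_{i=1}^n x_i y_i$. The two hypotheses translate cleanly: since every $|A|$ is even we have $\langle v_A, v_A\rangle = |A| \bmod 2 = 0$, and since $|A\cap B|$ is odd for distinct $A,B$ we have $\langle v_A, v_B\rangle = |A\cap B| \bmod 2 = 1$. Listing $\cF = \{A_1,\ldots,A_m\}$ and forming the $m\times m$ Gram matrix $M$ with $M_{ij} = \langle v_{A_i}, v_{A_j}\rangle$, this says $M = J - I$ over $\bF_2$ (ones off the diagonal, zeros on it).

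A naive application of the method does not give the tight bound: over $\bF_2$ one computes $\det M \equiv m+1 \pmod 2$, which vanishes exactly when $m$ is odd, so the Gram matrix alone only certifies $\mathrm{rank}\{v_{A_i}\} \geq m-1$ and hence $m \leq n+1$. The extra observation that recovers the bound $n$ is that, because every $|A_i|$ is even, \emph{all} the vectors $v_{A_i}$ lie in the even-weight subspace $W = \{x \in \bF_2^n : \sum_{i=1}^n x_i = 0\}$, which has dimension $n-1$.

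Next I would pin down exactly which linear dependencies are possible. Suppose $\sum_{i\in S} v_{A_i} = 0$ for some nonempty $S \subseteq [m]$. Taking the inner product with $v_{A_j}$ and using $M = J-I$ yields $|S|-1 \equiv 0 \pmod 2$ when $j \in S$, and $|S| \equiv 0 \pmod 2$ when $j \notin S$; these are incompatible unless $S = [m]$, in which case $m = |S|$ must be odd. Hence the $v_{A_i}$ are either linearly independent, or $m$ is odd and the unique dependence is $\sum_{i=1}^m v_{A_i} = 0$, with every proper subfamily independent.

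Finally I would conclude by casework against $\dim W = n-1$. If the $v_{A_i}$ are linearly independent, then since they all lie in $W$ we get $m \leq n-1 \leq n$. Otherwise $m$ is odd and $v_{A_1}, \ldots, v_{A_{m-1}}$ are linearly independent vectors of $W$, so $m-1 \leq n-1$, again giving $m \leq n$. I expect the main obstacle to be precisely this singular (odd-$m$) case: the Gram-matrix computation by itself loses a $+1$, and the fix is to notice that the even-size hypothesis confines all characteristic vectors to the $(n-1)$-dimensional even-weight subspace rather than to all of $\bF_2^n$.
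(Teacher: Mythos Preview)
The paper does not supply a proof of this statement: it is cited as a known result (an exercise from Babai's \emph{Linear Algebra Methods in Combinatorics}) and is then used as a black box in the proof of \cref{lem:Hnk-clique-independent-set-even}. So there is nothing in the paper to compare your argument against.

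That said, your proof is correct. The Gram-matrix computation is right: over $\bF_2$ you get $M = J - I$, whose determinant is $m-1 \equiv m+1 \pmod 2$, so the naive rank bound only yields $m \leq n+1$. Your recovery of the sharp bound is clean and correct on both counts: (i) the dependency analysis shows that the only possible nontrivial relation among the $v_{A_i}$ is the full sum, and only when $m$ is odd; (ii) the even-size hypothesis forces all $v_{A_i}$ into the $(n-1)$-dimensional even-weight hyperplane $W$, which is exactly the extra dimension you need to shave off. The two cases (independent family in $W$, or odd $m$ with an independent subfamily of size $m-1$ in $W$) both give $m \leq n$.

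This is one of the standard approaches to the Reverse Oddtown bound; an essentially equivalent alternative is to adjoin a new ground element $n+1$ to every set in $\cF$, turning the family into an Oddtown family on $[n+1]$, and then to observe that the resulting characteristic vectors are not only independent in $\bF_2^{n+1}$ but all have last coordinate $1$, whence adding the all-ones vector (which is not in their span) shows their span has dimension at most $n$. Your argument achieves the same compression more directly via the even-weight subspace.
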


Let $L$ be a set of integers. A set family $\cF \subseteq 2^{[n]}$ is \emph{$L$-intersecting} if $|A\cap B|\in L$ for any distinct $A,B\in \cF$; and $\cF$ is 
\emph{$k$-uniform} if $|A|=k$ for all $A\in\cF$.

\begin{theorem}[Ray-Chaudhuri-Wilson Theorem~\cite{RCW}]\label{Uniform Ray-Chaudhuri-Wilson}
    Let $n,k \in \bN$ and $k \leq n$.
    Let $L \subseteq [n]$ be a set of integers and $\cF\subseteq 2^{[n]}$ an $L$-intersecting $k$-uniform family of sets. Then, $|\cF|\leq \binom{n}{|L|}$. 
\end{theorem}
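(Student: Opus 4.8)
The plan is to prove this by the polynomial (linear-algebra) method: attach to each member of $\cF$ a multilinear polynomial, show that these polynomials are linearly independent, and then argue that they all lie in a space of dimension exactly $\binom{n}{s}$, where $s=|L|$. First I would record a harmless normalization. Since any two distinct $k$-element sets meet in at most $k-1$ elements, only the values of $L$ lying in $\{1,\dots,k-1\}$ can ever be realized as intersection sizes; discarding the larger values leaves the hypothesis intact, so I may assume every $\ell\in L$ satisfies $\ell<k$, and in particular $s\le k-1$. Write $L=\{\ell_1,\dots,\ell_s\}$ and, for $A\in\cF$, let $v_A\in\zo^n$ be its characteristic vector, so that $\langle v_A,v_B\rangle=|A\cap B|$.

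For each $A$ I would define the degree-$s$ polynomial
\[
    f_A(x)=\prod_{i=1}^{s}\bigl(\langle v_A,x\rangle-\ell_i\bigr),\qquad \langle v_A,x\rangle=\sum_{j\in A}x_j.
\]
Evaluating on characteristic vectors gives the key \emph{diagonal} property: $f_A(v_A)=\prod_{i}(k-\ell_i)\neq 0$ because every $\ell_i<k$, whereas for distinct $A,B$ we have $|A\cap B|\in L$, so one factor vanishes and $f_A(v_B)=0$. Reducing each $f_A$ modulo the relations $x_j^2=x_j$ (which does not change values on $\zo^n$) yields multilinear polynomials $\widetilde f_A$ of degree at most $s$ that keep the diagonal property. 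A vanishing linear combination $\sum_A\lambda_A\widetilde f_A$, evaluated at each $v_B$, then forces $\lambda_B=0$, so the $\widetilde f_A$ are linearly independent and $|\cF|$ is at most the dimension of the space they span.

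The crux, and the step I expect to be the main obstacle, is to see that this dimension is $\binom{n}{s}$ rather than the weaker $\sum_{i=0}^{s}\binom{n}{i}$ that degree-$\le s$ multilinear polynomials give naively; this is precisely where $k$-uniformity must be used. Every $v_A$ lies on the slice $\{x:\sum_j x_j=k\}$, so it suffices to bound the dimension of degree-$\le s$ multilinear functions restricted to this slice. On the slice the identity
\[
    (k-|S|)\,x_S=\sum_{j\notin S}x_{S\cup\{j\}},\qquad x_S:=\prod_{i\in S}x_i,
\]
holds (multiply $x_S$ by $\sum_j x_j=k$ and use $x_j^2=x_j$), and since $|S|\le s-1<k$ the coefficient $k-|S|$ is nonzero. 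Iterating this downward rewrites every multilinear monomial of degree $<s$ as a combination of monomials of degree exactly $s$, so on the slice the relevant functions lie in the span of $\{x_S:|S|=s\}$, of dimension $\binom{n}{s}$. Since the $\widetilde f_A$ retain their diagonal behaviour on the points $\{v_A\}$ (all of which lie on the slice), they remain independent there, and therefore $|\cF|\le\binom{n}{s}$, as required.
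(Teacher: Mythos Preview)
The paper does not prove this theorem; it is quoted as a classical result from \cite{RCW} and used as a black box in \cref{lem:Hnk-clique-independent-set-even}. Your argument is the standard linear-algebra proof (essentially the Alon--Babai--Suzuki refinement of Frankl--Wilson), and the key step --- restricting to the slice $\sum_j x_j = k$ and using the identity $(k-|S|)\,x_S = \sum_{j\notin S} x_{S\cup\{j\}}$ to push every monomial up to degree exactly $s$ --- is carried out correctly.

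One small caveat about your normalization. Discarding the elements of $L$ that are at least $k$ does keep the \emph{hypothesis} intact, but it also changes $|L|$ and hence the \emph{conclusion} $|\cF|\le\binom{n}{|L|}$; you do not argue that the new bound implies the old one, and in general it does not. In fact the theorem as literally phrased in the paper (with $L\subseteq[n]$ arbitrary) is not quite true: take $n=3$, $k=2$, $L=\{1,2,3\}$ and $\cF=\binom{[3]}{2}$; then $|\cF|=3>1=\binom{3}{3}$. The standard formulation of Ray--Chaudhuri--Wilson assumes every element of $L$ is strictly less than $k$ (which is also what holds in the paper's application, where $L=\{0,2,\dots,k-2\}$), and under that assumption your normalization is vacuous and your proof is complete as written.
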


\begin{lemma}\label{lem:Hnk-clique-independent-set-even}
    Let $n, k \in \bN$, and $k$ be even. Then
    \begin{enumerate}
        \item[(1)] the clique number of $H(n,k)$ is at most $nk$. 
        \item[(2)] the independence number of $H(n,k)$ is at most $(2en)^{\frac{k}{2}}$.
    \end{enumerate}
\end{lemma}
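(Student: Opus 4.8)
The plan is to translate the combinatorial structure of $H(n,k)$ into the language of set systems with restricted intersections, and then invoke the two extremal theorems quoted just above the statement (\cref{Reverse Oddtown} and \cref{Uniform Ray-Chaudhuri-Wilson}). The crux is choosing the right encoding. To each vertex $v=(v_1,\ldots,v_k)\in[n]^k$ I would associate the set $A_v=\{(i,v_i):i\in[k]\}$, a $k$-element subset of the ground set $[k]\times[n]$, which has exactly $nk$ elements. Distinct vertices give distinct sets, since if $v\neq w$ they differ in some coordinate $i$ and then $(i,v_i)\in A_v\setminus A_w$. The key identity is
\[
    |A_v\cap A_w| = |\{i\in[k] : v_i=w_i\}|,
\]
so the number of coordinates on which $v$ and $w$ agree equals the intersection size of the associated sets. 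Consequently $v,w$ are adjacent in $H(n,k)$ exactly when $|A_v\cap A_w|$ is odd, and non-adjacent exactly when it is even. Since $k$ is even, every $A_v$ has \emph{even} size.

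For part (1), a clique of $H(n,k)$ corresponds to a family $\cF=\{A_v\}$ of even-sized subsets of $[k]\times[n]$ with pairwise \emph{odd} intersections. This matches exactly the hypotheses of the Reverse Oddtown Theorem, so $|\cF|\leq nk$ (the size of the ground set), giving $\omega(H(n,k))\leq nk$ as claimed.

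For part (2), an independent set corresponds to a $k$-uniform family $\cF$ with pairwise \emph{even} intersections. Two distinct $k$-sets intersect in at most $k-1$ elements, and since $k$ is even the admissible (even) intersection sizes lie in $L=\{0,2,4,\ldots,k-2\}$, so $|L|=k/2$. Applying the Ray--Chaudhuri--Wilson Theorem with ground set $[k]\times[n]$ of size $nk$ yields $\alpha(H(n,k))\leq\binom{nk}{k/2}$. The claimed form then follows from the routine estimate $\binom{N}{r}\leq(eN/r)^r$ with $N=nk$ and $r=k/2$, which gives $\binom{nk}{k/2}\leq(2en)^{k/2}$.

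\textbf{Main obstacle.} The whole argument hinges on spotting the encoding into subsets of $[k]\times[n]$; once it is in place, both bounds fall out of the quoted theorems with only elementary estimates. The only points needing care are verifying that the map $v\mapsto A_v$ is injective and correctly identifying $L$: because distinct $k$-sets meet in at most $k-1$ elements, the largest even intersection is $k-2$, so $|L|=k/2$ (not $k/2+1$), which is precisely what produces the exponent $k/2$ in the final bound. A minor technical remark is that $0\in L$ (disjoint sets are allowed), so one should read \cref{Uniform Ray-Chaudhuri-Wilson} as permitting $0$ among the intersection values.
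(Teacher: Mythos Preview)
Your proof is correct and follows essentially the same approach as the paper: the paper encodes each vector as a $k$-subset of $[nk]$ by shifting the $i$-th coordinate into the block $[(i-1)n+1, in]$, which is isomorphic to your encoding into $[k]\times[n]$, and then applies \cref{Reverse Oddtown} and \cref{Uniform Ray-Chaudhuri-Wilson} in exactly the same way with the same $L$ and the same final binomial estimate. Your formulation via $A_v=\{(i,v_i)\}$ is arguably cleaner, and your remark about $0\in L$ is apt (the paper also silently includes $0$ in $L$).
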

\begin{proof}
    We start with the clique number. By definition, a clique in $H(n,k)$ corresponds to a set of vectors in $[n]^k$ such that any two vectors agree on an odd number of entries. 
    Consider the set $S$ obtained by adding the vector $(0,n,2n, \ldots, (k-1)n)$ to every vector in $[n]^k$, i.e., $S := \{ v + (0,n,2n, \ldots, (k-1)n) ~|~ v \in [n]^k \}$. Note that this transformation does not change whether two vectors agree on a particular entry. Furthermore, for every vector $u \in S$ and every $t \in [k]$, the $t$-th entry of $u$ is in the interval $[(t-1)n+1, tn]$.
    Thus, there is a one-to-one correspondence between $S$ (and hence $[n]^k$) and the family $\cF$ of subsets of size $k$ of $[nk]$ which have exactly one element in $[(t-1)n+1, tn]$ for $t \in [k]$. This implies a one-to-one correspondence between subsets of $[n]^k$ whose elements pairwise agree on an odd number of entries and subfamilies of $\cF$ in which any two sets intersect at an odd number of elements.
    Since $k$ is even, by \cref{Reverse Oddtown}, any such family has at most $nk$ sets, which implies (1).

    Consider now the independence number. By definition, any independent set in $H(n,k)$ corresponds to a set of vectors in $[n]^k$ such that any two vectors agree on an even number of entries. As in the previous case, we can establish 
    a one-to-one correspondence between such sets of vectors and subfamilies of $\cF$ in which any two sets intersect at an even number of elements, i.e., $k$-uniform subfamilies of $\cF$ that are $L$-intersecting with $L=\{2i ~|~ 0\leq i \leq k/2-1\}$ (note that $k/2$ is excluded from the range of $i$ as the intersection of two distinct subsets of size $k$ cannot be $k$).
    By \cref{Uniform Ray-Chaudhuri-Wilson}, the largest size such a family can have is at most 
    \[
        \binom{nk}{|L|} = \binom{nk}{\frac{k}{2}} \leq \frac{(nk)^{\frac{k}{2}}}{(k/2)!}\leq (2en)^{\frac{k}{2}},
    \]
    which implies (2).
\end{proof}

With a very similar argument, using \cref{Oddtown} instead of \cref{Reverse Oddtown}, one can show the following lemma; we omit the proof.

\begin{lemma}\label{lem:Hnk-clique-independent-set-odd}
    Let $n, k \in \bN$, and $k$ be odd. Then
    \begin{enumerate}
        \item[(1)] the clique number of $H(n,k)$ is at most $(2en)^{\frac{k-1}{2}}$. 
        \item[(2)] the independence number of $H(n,k)$ is at most $nk$.
    \end{enumerate}
\end{lemma}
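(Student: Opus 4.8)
The plan is to mirror the proof of \cref{lem:Hnk-clique-independent-set-even} almost verbatim, reusing the same encoding of $[n]^k$ as a family of sets and simply swapping the roles played by the two Oddtown-type theorems. Concretely, I would again shift every vector $v \in [n]^k$ by $(0,n,2n,\ldots,(k-1)n)$ and read off the resulting vector as a $k$-element subset of $[nk]$ that meets each block $[(t-1)n+1,tn]$ in exactly one element; write $\cF$ for the family of all such sets. Under this bijection the number of coordinates on which two vectors agree equals the size of the intersection of the two corresponding sets, so a clique in $H(n,k)$ corresponds to a subfamily of $\cF$ with pairwise \emph{odd} intersections and an independent set corresponds to a subfamily with pairwise \emph{even} intersections. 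The whole point of the parity of $k$ is that now every member of $\cF$ has \emph{odd} cardinality $k$, which is exactly the hypothesis that flips which theorem applies.

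For part (2), an independent set gives a subfamily of $\cF$ in which every set has odd size $k$ and any two distinct sets meet in an even number of elements; this is precisely the hypothesis of the Oddtown Theorem (\cref{Oddtown}) over the ground set $[nk]$, which immediately yields at most $nk$ sets, and hence $\alpha(H(n,k)) \le nk$. This is the step where \cref{Oddtown} is used in place of \cref{Reverse Oddtown}.

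For part (1), a clique gives a $k$-uniform subfamily of $\cF$ whose pairwise intersections are odd and, being intersections of distinct $k$-sets, are at most $k-1$; since $k$ is odd these intersection sizes all lie in $L = \{1,3,\ldots,k-2\}$, a set of size $(k-1)/2$. Thus the family is $k$-uniform and $L$-intersecting, so by the Ray-Chaudhuri-Wilson Theorem (\cref{Uniform Ray-Chaudhuri-Wilson}) it has at most $\binom{nk}{(k-1)/2}$ members. Estimating this binomial coefficient by the same factorial bound as in the even case (using $m! \ge (m/e)^m$ with $m=(k-1)/2$) gives the claimed bound $\omega(H(n,k)) \le (2en)^{(k-1)/2}$.

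There is essentially no obstacle here beyond bookkeeping, since the structural correspondence is identical to the even case; the only things to get right are (i) identifying the correct intersection-parity condition for cliques versus independent sets, and (ii) pinning down $L = \{1,3,\ldots,k-2\}$ and checking $|L| = (k-1)/2$, which controls the degree of the polynomial bound. The one mild subtlety worth double-checking is the final binomial estimate: the factorial bound produces a constant of order $2en\cdot k/(k-1)$ rather than exactly $2en$, so one either absorbs the harmless factor $k/(k-1)\le 2$ or records the slightly cruder constant; in any case it is the degree $(k-1)/2$ in $n$ that matters for the subsequent lower bound on the chromatic number.
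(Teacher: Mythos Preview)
Your proposal is correct and is exactly the argument the paper intends: it explicitly says to mirror the proof of \cref{lem:Hnk-clique-independent-set-even}, swapping \cref{Reverse Oddtown} for \cref{Oddtown}, which is precisely what you do. On your final caveat about the constant, the sharper Stirling bound $m!\ge\sqrt{2\pi m}\,(m/e)^m$ with $m=(k-1)/2$ absorbs the stray factor, since $(k/(k-1))^{(k-1)/2}<\sqrt{e}<\sqrt{2\pi m}$ for all $m\ge 1$, so the stated bound $(2en)^{(k-1)/2}$ holds as written.
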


We are now ready to prove the main result of this section. 

\begin{theorem}\label{th:binding lower bound}
    Let $k \geq 3$ be an integer. Then the smallest $\chi$-binding function for $\cB_k$ is $\Omega_k(x^{\lfloor k/2 \rfloor})$ if $k \geq 4$, and is $\Omega(x^2)$ if $k=3$.
\end{theorem}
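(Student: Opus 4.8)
The plan is to build the lower bound from the explicit graphs $H(n,k)$ (shown above to lie in $\cB_k$) together with the bounds on $\omega(H(n,k))$ and $\alpha(H(n,k))$ from \cref{lem:Hnk-clique-independent-set-even,lem:Hnk-clique-independent-set-odd} and the elementary inequality $\chi(G)\ge |V(G)|/\alpha(G)$ (each colour class is independent). For even $k$ this directly produces graphs with small clique number and large chromatic number; for odd $k\ge 5$ I will reduce to the even case, and $k=3$ I will treat separately. Throughout I write $f^*$ for the smallest $\chi$-binding function of $\cB_k$; it is finite since $\cB_k$ is polynomially $\chi$-bounded by \cref{th:fun-equivalence}.

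Before the estimates I will record two structural facts that let me convert a pointwise bound along $\{H(n,k)\}_n$ into a bound on $f^*(x)$ for \emph{every} large $x$. First, $\cB_k$ is closed under disjoint union: if $G=A_1\xor\cdots\xor A_k$ and $G'=B_1\xor\cdots\xor B_k$ live on disjoint vertex sets with each $A_i,B_i$ an equivalence graph, then $G+G'=(A_1+B_1)\xor\cdots\xor(A_k+B_k)$ and each $A_i+B_i$ is again an equivalence graph. Second, $\cB_k$ contains every complete graph (write $K_n=K_n\xor O_n\xor\cdots\xor O_n$). Together these yield that $f^*$ is non-decreasing: for $m\le m'$, taking $G$ with $\omega(G)=m$, $\chi(G)=f^*(m)$ and forming $G+K_{m'}\in\cB_k$ (which has clique number $m'$) gives $f^*(m')\ge\chi(G+K_{m'})\ge f^*(m)$.

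For even $k=2s$ I will apply \cref{lem:Hnk-clique-independent-set-even}: since $\alpha(H(n,k))\le(2en)^{s}$ and $|V(H(n,k))|=n^{2s}$, I get $\chi(H(n,k))\ge n^{2s}/(2en)^{s}=(n/(2e))^{s}$, while $\omega(H(n,k))\le nk$. Given large $x$, choosing $n=\lfloor x/k\rfloor$ makes $\omega(H(n,k))\le nk\le x$, so monotonicity of $f^*$ gives $f^*(x)\ge\chi(H(n,k))\ge (n/(2e))^{s}=\Omega_k(x^{s})$, which is the claim as $\lfloor k/2\rfloor=s$. For odd $k\ge 5$ I will observe that a $(k-1)$-XOR of equivalence graphs is also a $k$-XOR (append an empty graph, which is an equivalence graph), so $\cB_{k-1}\subseteq\cB_k$ and the smallest $\chi$-binding function of $\cB_k$ dominates that of $\cB_{k-1}$; since $k-1$ is even and $(k-1)/2=\lfloor k/2\rfloor$, the even case already delivers $\Omega_k(x^{\lfloor k/2\rfloor})$. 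For $k=3$ the target $\Omega(x^2)$ is stronger than this embedding would give, so I will argue directly: by \cref{lem:Hnk-clique-independent-set-odd}, $\alpha(H(n,3))\le 3n$ and $\omega(H(n,3))\le 2en$, hence with $|V|=n^3$ we obtain $\chi(H(n,3))\ge n^3/(3n)=n^2/3$; choosing $n=\lfloor x/(2e)\rfloor$ (so $\omega(H(n,3))\le x$) and invoking monotonicity yields $f^*(x)\ge n^2/3=\Omega(x^2)$.

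The main obstacle is not any single estimate but the logical passage from ``there is a sequence of graphs whose chromatic number is polynomially large in their clique number'' to a statement about $f^*(x)$ for \emph{all} large $x$; this is precisely the role of the monotonicity of $f^*$ together with the freedom to pick $n$ so that $\omega(H(n,k))\le x$. A secondary subtlety is that the naive direct computation for odd $k$ only reaches $\Omega(x^2)$, so the reduction $\cB_{k-1}\subseteq\cB_k$ to the even case is essential for attaining the full exponent $\lfloor k/2\rfloor$ when $k\ge 5$, while $k=3$ must instead be obtained from the direct odd estimate.
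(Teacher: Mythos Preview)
Your proof is correct and follows essentially the same route as the paper: both use the graphs $H(n,k)$, bound $\chi$ from below via $|V|/\alpha$ and $\omega$ from above via \cref{lem:Hnk-clique-independent-set-even,lem:Hnk-clique-independent-set-odd}, handle odd $k\ge 5$ by the inclusion $\cB_{k-1}\subseteq\cB_k$ (the paper phrases this as $H(n,k-1)\xor O_n$), and treat $k=3$ directly. The one point where you go further is that the paper simply derives $\chi(H(n,k))\ge c_k\,\omega(H(n,k))^{\lfloor k/2\rfloor}$ and stops, whereas you explicitly justify why this implies a lower bound on $f^*(x)$ for \emph{all} large $x$ by proving $f^*$ is non-decreasing (via closure of $\cB_k$ under disjoint union and inclusion of cliques); this extra step is a genuine, if minor, gain in rigor.
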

\begin{proof}
    First, let $k \geq 4$ be even. Denote by $G=(V,E)$ the graph $H(n,k)$.
    Since $\chi(G)\alpha(G)\geq |V|$, by \cref{lem:Hnk-clique-independent-set-even} (2), we have 
    $\chi(G)(2en)^{\frac{k}{2}}\geq n^k$, and thus from \cref{lem:Hnk-clique-independent-set-even} (1) we conclude the desired lower bound
    \[
        \chi(G)
        \geq \frac{n^{\frac{k}{2}}}{(2e)^{\frac{k}{2}}}
        = \frac{(nk)^{\frac{k}{2}}}{(2ek)^{\frac{k}{2}}}
        \geq \frac{\omega(G)^{\lfloor k/2 \rfloor}}{(2ek)^{\frac{k}{2}}}.
    \]

    Now, let $k\geq 5$ be odd. Denote by $G=(V,E)$ the graph $H(n,k-1)\xor O_n=H(n,k-1)$, where, recall, $O_n$ denotes the empty graph on $n$ vertices. Then $G$ belongs to $\cB_k$.
    Furthermore, since $k-1 \geq 4$ and is even, by the previous case we conclude
    \[
        \chi(G)
        \geq \frac{\omega(G)^{\frac{k-1}{2}}}{(2e(k-1))^{\frac{k-1}{2}}}
        = \frac{\omega(G)^{\lfloor k/2 \rfloor}}{(2e(k-1))^{\frac{k-1}{2}}}.
    \]

    Finally, let $k=3$. Denote by $G=(V,E)$ the graph $H(n,3)$. Then, by \cref{lem:Hnk-clique-independent-set-odd} (2), we have $\chi(G) \cdot 3n \geq n^3$, and thus from \cref{lem:Hnk-clique-independent-set-odd} (1) we conclude
    \[
        \chi(G)\geq \frac{n^2}{3} = \frac{(2en)^2}{3(2e)^2} \geq \frac{\omega(G)^2}{3(2e)^2},
    \]
    which concludes the proof.
\end{proof}

%%%%%%%%%%%%%%%%%%%%%%%%%%%%%%%%%%%%%%%%%%%%%%%%
\subsubsection{Linearly \texorpdfstring{$\chi$}{chi}-bounded classes}\label{sec:chi-linear}
%%%%%%%%%%%%%%%%%%%%%%%%%%%%%%%%%%%%%%%%%%%%%%%%

In this section we reveal special cases when functions of the class of equivalence graphs are linearly $\chi$-bounded.

We start with arbitrary functions of proper subclasses of equivalence graphs.
Recall, by \cref{thm:proper-equiv-struct-bdd-degree}, any such class has structurally bounded degree, and thus structurally bounded expansion \cite{GKNOdMPST20}. Graph classes of structurally bounded expansion are known to be linearly $\chi$-bounded \cite{NOdMPRS21}. Thus, we have the following.

\begin{theorem}\label{thm:proper-equiv-linearly-chi}
    Let $\cX$ be a \emph{proper} subclass of equivalence graphs.
    Then any function of $\cX$ is linearly $\chi$-bounded.
\end{theorem}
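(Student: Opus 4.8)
The plan is to recognize this statement as the composition of three facts, two of which are established earlier in the paper or in the literature, so that the proof is a short bridging argument rather than a fresh combinatorial construction. Let $\cY$ be an arbitrary function of $\cX$, where $\cX$ is a proper subclass of equivalence graphs; the goal is to show that $\cY$ is linearly $\chi$-bounded.

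First I would invoke \cref{thm:proper-equiv-struct-bdd-degree}, which characterizes graph classes of structurally bounded degree as exactly the functions of proper subclasses of equivalence graphs. Since $\cY$ is, by hypothesis, a function of the proper subclass $\cX$, that theorem immediately yields that $\cY$ has structurally bounded degree.

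Next I would pass along the known chain of implications among these structural-sparsity notions. Classes of structurally bounded degree are in particular of structurally bounded expansion \cite{GKNOdMPST20}, and classes of structurally bounded expansion are linearly $\chi$-bounded \cite{NOdMPRS21}. Composing these with the previous step gives that $\cY$ is linearly $\chi$-bounded, as required.

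The only real content of the argument is the identification of $\cY$ as a class of structurally bounded degree via \cref{thm:proper-equiv-struct-bdd-degree}; everything downstream is a citation. Consequently the main obstacle does not lie in this proof but is already absorbed into \cref{thm:proper-equiv-struct-bdd-degree} (and, through it, the classification of \cite{GHOLR20}), which supplies the transduction-theoretic machinery linking proper subclasses of equivalence graphs to bounded-degree graphs. One subtlety worth flagging is that \emph{function} here means $k$-function for some fixed $k$, so the structurally-bounded-degree conclusion holds with parameters depending on $k$ and on $\cX$; this is harmless, since linear $\chi$-boundedness is only asked of the fixed class $\cY$.
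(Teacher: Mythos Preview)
Your proposal is correct and mirrors the paper's own argument exactly: apply \cref{thm:proper-equiv-struct-bdd-degree} to conclude that any function of a proper subclass of equivalence graphs has structurally bounded degree, then cite \cite{GKNOdMPST20} for the passage to structurally bounded expansion and \cite{NOdMPRS21} for linear $\chi$-boundedness. There is nothing to add.
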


Next we consider the entire class of equivalence graphs, but restrict the boolean functions.
Recall, \cref{lem:union} says that the union of $\chi$-bounded classes is also $\chi$-bounded and the product of the corresponding $\chi$-binding functions is a $\chi$-binding function for the union class.
We show that in the case of equivalence graphs the bound on the $\chi$-binding function for the union class can be replaced with the sum instead of the product. 

\begin{lemma}\label{lem:equiv}
   Let $t \in \bN$. The $t$-union of the class of equivalence graphs is linearly $\chi$-bounded with a $\chi$-binding function $h(x) = tx$.
\end{lemma}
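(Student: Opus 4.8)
The plan is to bound the chromatic number through the maximum degree, exploiting that each connected component of an equivalence graph is a clique whose size is controlled by $\omega(G)$. Since the class $\bigvee_{i=1}^t \cE$ is hereditary (the restriction of an equivalence graph to an induced subgraph is again an equivalence graph, and the union is taken vertex-wise), it suffices to show $\chi(G) \leq t\,\omega(G)$ for an arbitrary $G = H_1 \vee \cdots \vee H_t$ where each $H_i$ is an equivalence graph on a common vertex set $V$.

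The key step is a degree bound. Fix $v \in V$ and $i \in [t]$, and let $C_i(v)$ be the connected component of $v$ in $H_i$; since $H_i$ is an equivalence graph, $C_i(v)$ is a clique, and because $H_i$ is a (spanning) subgraph of $G$, the set $C_i(v)$ is also a clique of $G$. Hence $|C_i(v)| \leq \omega(G)$, so $\degree_{H_i}(v) = |C_i(v)| - 1 \leq \omega(G) - 1$. As the edge set of $G$ is the union of the edge sets of the $H_i$, we have $N_G(v) = \bigcup_{i=1}^t N_{H_i}(v)$, and therefore $\degree_G(v) \leq \sum_{i=1}^t \degree_{H_i}(v) \leq t(\omega(G)-1)$. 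This gives $\Delta(G) \leq t(\omega(G)-1)$.

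Finally, colouring $G$ greedily in any vertex ordering uses at most $\Delta(G) + 1 \leq t(\omega(G)-1) + 1 = t\,\omega(G) - (t-1) \leq t\,\omega(G)$ colours, since $t \geq 1$. Thus $h(x) = tx$ is a $\chi$-binding function for the $t$-union of the class of equivalence graphs, as claimed. I do not anticipate a genuine obstacle here: the argument is elementary, and the only points needing care are the observation that a component of $H_i$ is a clique of $G$ (so its size is at most $\omega(G)$ rather than merely $\omega(H_i)$) and the trivial inequality $t(\omega(G)-1)+1 \leq t\,\omega(G)$ that absorbs the additive slack into the claimed linear bound.
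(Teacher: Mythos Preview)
Your proof is correct and follows essentially the same route as the paper: bound $\Delta(G)$ by $t(\omega(G)-1)$ using that each component of an $H_i$ is a clique of $G$, then apply $\chi(G)\le\Delta(G)+1$. The paper phrases the degree step as $\Delta(H_i)=\omega(H_i)-1\le\omega(G)-1$, but this is the same observation.
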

\begin{proof}
    Let $H=G_1 \vee \ldots \vee G_t$, where $G_i$, $i \in [t]$, are equivalence graphs. Observe that, for every $i \in [t]$, $\Delta(G_i) = \omega(G_i)-1$, and $\omega(G_i) \leq \omega(H)$. Furthermore, 
    $\Delta(H) \leq \sum_{i=1}^t \Delta(G_i)$. From these, and the fact that the chromatic number of a graph does not exceed its maximum degree by more than 1, we conclude the desired result:
    \[
        \chi(H) 
        \leq \Delta(H) + 1 
        \leq \sum_{i=1}^t \Delta(G_i) + 1 
        = \sum_{i=1}^t (\omega(G_i)-1) + 1 \leq t(\omega(H)-1)+1 \leq t \omega(H).\qedhere
    \]
\end{proof}

As a corollary of \cref{lem:equiv} and \cref{lem:intersection} we obtain that any monotone function of the class of equivalence graphs is linearly $\chi$-bounded.

\begin{theorem}
    Let $t \in \bN$, and let $\cX$ be the class of equivalence graphs. For any monotone boolean function $f$ on $t$ variables, the graph class $f(\cX)$ is linearly $\chi$-bounded with a $\chi$-binding function $h(x)=(2^t-1)x$. 
\end{theorem}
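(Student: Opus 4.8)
The plan is to obtain the statement as a direct assembly of \cref{lem:intersection} and \cref{lem:equiv}, applied to the class $\cX$ of equivalence graphs. The two ingredients needed to feed into \cref{lem:intersection} are that $\cX$ is intersection-closed and that its $t$-intersection is $\chi$-bounded; once these are verified, the intersection-closed branch of \cref{lem:intersection} produces the binding function $h_{2^t-1}(x)$, and \cref{lem:equiv} evaluates it.

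First I would verify that $\cX$ is intersection-closed. An equivalence graph is exactly a graph whose adjacency relation becomes an equivalence relation once the diagonal is added; since the intersection of two equivalence relations is again an equivalence relation, the intersection $G_1 \wedge G_2$ of two equivalence graphs on the same vertex set is again an equivalence graph. Hence $\cX$ is intersection-closed. I would also record that $\cX$ is not monotone (deleting one edge of a triangle yields $P_3$, which is not an equivalence graph), so only the intersection-closed branch of \cref{lem:intersection} is available to us, which is precisely what yields the bound $h_{2^t-1}$ matching the claimed $(2^t-1)x$.

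Next I would note that $\cX$ is $\chi$-bounded: every equivalence graph is a disjoint union of cliques, hence perfect, so $\chi(G)=\omega(G)$ and the identity is a $\chi$-binding function. Because $\cX$ is intersection-closed, its $t$-intersection coincides with $\cX$ (as argued inside the proof of \cref{lem:intersection}, intersection-closedness forces $\cS_k=\cX$), and is therefore $\chi$-bounded. This supplies exactly the hypothesis required by \cref{lem:intersection}. Now, for a monotone boolean function $f$ on $t$ variables, the intersection-closed branch of \cref{lem:intersection} asserts that $h_{2^t-1}(x)$ is a $\chi$-binding function for $f(\cX)$, where $h_s(x)$ denotes a $\chi$-binding function for the $s$-union of $\cX$. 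Finally, \cref{lem:equiv} gives $h_s(x)=sx$ for the $s$-union of equivalence graphs, so substituting $s=2^t-1$ yields the linear $\chi$-binding function $h(x)=(2^t-1)x$, as desired.

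I do not expect any serious obstacle here, since the argument is a direct corollary of the two cited lemmas. The only point that genuinely requires attention is the observation that equivalence graphs are intersection-closed but not monotone: this is what both legitimizes the use of \cref{lem:intersection} and pins down which branch of its conclusion applies, giving the exponent $2^t-1$ rather than the smaller $t$ one would get in the monotone case.
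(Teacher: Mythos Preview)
Your proposal is correct and follows exactly the approach the paper intends: the theorem is stated there simply as a corollary of \cref{lem:equiv} and \cref{lem:intersection}, and you have correctly identified that the intersection-closed branch of \cref{lem:intersection} applies (since equivalence graphs are intersection-closed but not monotone), yielding $h_{2^t-1}(x)$, which \cref{lem:equiv} evaluates to $(2^t-1)x$.
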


As we showed in \cref{lem:equiv}, any $t$-union of the class of equivalence graphs is linearly $\chi$-bounded.
Next, we will show that the complement of any such class is also linearly $\chi$-bounded. This is in contrast with the fact that, in general, the complement of a $\chi$-bounded class is not necessarily $\chi$-bounded.
Let $\cX$ be the class of equivalence graphs. Then the complement of the $t$-union of $\cX$ is the $t$-intersection of $\overline{\cX}$, i.e., the $t$-intersection of the class of complete multipartite graphs. 
We start by deriving forbidden induced subgraphs for these classes, which we then use to establish linear $\chi$-boundedness.

\begin{lemma}\label{lem:multipartite forbidden}
    Let $t \in \bN$, and let $\cY$ be the $t$-intersection of the class of complete multipartite graphs. Then $K_{t+1}+ O_1$ and $(2^{t-1}+1)K_2$ are forbidden induced subgraphs for $\cY$.
\end{lemma}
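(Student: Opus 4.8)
The plan is to dualize and argue on the signatures coming from the $t$ multipartite partitions. Since induced subgraphs of complete multipartite graphs are again complete multipartite, if either graph $F$ were an induced subgraph of some $G=\bigwedge_{i=1}^t G_i\in\cY$, then restricting each $G_i$ to $V(F)$ lets me assume $F=\bigwedge_{i=1}^t G_i$ with every $G_i$ complete multipartite on $V(F)$. Writing $\sigma_i(v)$ for the part of $v$ in $G_i$ and $\sigma(v)=(\sigma_1(v),\dots,\sigma_t(v))$, the defining property is that $uv\in E(F)$ iff $\sigma_i(u)\neq\sigma_i(v)$ for every $i$ (the signatures differ in all coordinates), while $uv\notin E(F)$ iff $u,v$ share a part in some $G_i$ (the signatures agree in some coordinate). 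All of the work is to show that neither $F$ admits such a representation.

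For $F=K_{t+1}+O_1$ I would argue by pigeonhole. Let $C=\{c_1,\dots,c_{t+1}\}$ be the clique and $w$ the isolated vertex. Since $w$ is non-adjacent to each $c_\ell$, there is a coordinate $i_\ell$ with $\sigma_{i_\ell}(w)=\sigma_{i_\ell}(c_\ell)$. As there are $t+1$ leaves and only $t$ coordinates, two of them, say $c_\ell$ and $c_{\ell'}$, satisfy $i_\ell=i_{\ell'}=i$; then $\sigma_i(c_\ell)=\sigma_i(w)=\sigma_i(c_{\ell'})$, so $c_\ell$ and $c_{\ell'}$ agree in coordinate $i$ and are therefore non-adjacent, contradicting that $C$ is a clique. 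Hence $K_{t+1}+O_1\notin\cY$. (Equivalently, in the complementary language this says the star $K_{1,t+1}$ is not a union of $t$ equivalence graphs, since in each equivalence graph the component of the centre can contain at most one leaf.)

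For $F=(2^{t-1}+1)K_2$ the plan is to reduce to a clean extremal statement about signatures. Let the matching edges be $\{a_j,b_j\}$, $j\in[m]$ with $m=2^{t-1}+1$. By the dictionary above, each matched pair has $\sigma(a_j),\sigma(b_j)$ differing in every coordinate, while for $j\neq k$ every cross pair agrees in some coordinate; associating to each edge the two coordinate--part transversals $A_j=\{(i,\sigma_i(a_j)):i\in[t]\}$ and $B_j=\{(i,\sigma_i(b_j)):i\in[t]\}$ then gives a set-pair system with $A_j\cap B_j=\emptyset$ and $A_j\cap B_k\neq\emptyset$ for all $j\neq k$. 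The core lemma to establish is: any family of $m$ matched pairs of signatures in a $t$-fold product with ``matched $=$ disagree everywhere, non-matched $=$ agree somewhere'' has $m\le 2^{t-1}$. The binary case is immediate: if each $G_i$ has only two parts then the signatures live in $\{0,1\}^t$, the unique disagree-everywhere partner of a point is its antipode, and so $2m\le 2^t$. I would then try to obtain the general bound by induction on $t$, peeling off one coordinate and splitting the edges according to it.

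The main obstacle is precisely this last step. A single multipartite partition may have many parts, so a naive peel-one-coordinate induction breaks: two non-matched vertices lying in the same part of the peeled coordinate can still disagree in all remaining coordinates (as already happens in the extremal $t=2$ example), producing spurious disagree-everywhere pairs that are not edges, so the hypothesis is not preserved under deleting a coordinate. The set-pair reformulation does give the finite bound $m\le\binom{2t}{t}$ via Bollob\'as' inequality, but this is not tight; reaching the sharp $2^{t-1}$ requires exploiting the extra structure that each $A_j,B_j$ is a transversal (exactly one part per coordinate) and that \emph{all four} cross pairs between two edges agree, not merely that $A_j\cap B_k\neq\emptyset$. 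I expect the cleanest route is either to massage the signatures into an equivalent binary system (reducing to the trivial case) or to strengthen the inductive hypothesis so that it survives coordinate deletion, and this is where the real difficulty lies.
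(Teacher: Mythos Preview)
Your argument for $K_{t+1}+O_1$ is correct and is exactly the paper's argument, just rephrased in signature language: the isolated vertex must share a part with each clique vertex in some $G_i$, and pigeonhole forces two clique vertices to share a part, contradicting their adjacency.

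For $(2^{t-1}+1)K_2$ you have correctly located the difficulty but not resolved it, and your self-assessment is accurate: the naive induction on $t$ fails, and Bollob\'as gives only $\binom{2t}{t}$. The paper does \emph{not} attempt a direct combinatorial proof here at all. Instead it dualizes (as you do) to the statement that $\overline{rK_2}$ cannot be written as a union of $t$ equivalence graphs when $r=2^{t-1}+1$, and then simply invokes a theorem of Alon~\cite{Alon86}: the minimum number of equivalence graphs whose union is $\overline{rK_2}$ (the cocktail-party graph) equals $\lceil \log_2(2r)\rceil$. With $r=2^{t-1}+1$ this gives $\lceil \log_2(2^t+2)\rceil = t+1 > t$, and the claim follows. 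So the ``real difficulty'' you identify is a known nontrivial result that the paper treats as a black box; your proposal to reduce to the binary case or strengthen the induction is in effect an attempt to reprove Alon's theorem, which is why it resists a quick argument.
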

\begin{proof}
    To prove that $H=K_{t+1}+ O_1$ is forbidden for $\cY$, we suppose that $H$ is the intersection of $k$ complete multipartite graphs $G_1, \ldots, G_k$, and show that $k$ should be at least $t+1$. Assume that the vertices of $K_{t+1}$ are labeled $1$ to $t+1$ and the isolated vertex of $H$ is labeled $t+2$.
    Then each $G_i$ must have each of the vertices from $[t+1]$ in a different independent set. So in each $G_i$, $t+2$ can be in an independent set with at most one other vertex from $[t+1]$. As $t+2$ is non-adjacent to all other vertices in $H$, for each vertex $v \in [t+1]$ there must be a $G_i$ where $t+2$ appears in an independent set with $v$. As in each $G_i$, $t+2$ is in an independent set with at most one vertex from $[t+1]$, we conclude that there must be at least $t+1$ graphs in the intersection, i.e., $k\geq t+1$.

    To show that $(2^{t-1}+1)K_2$ is a forbidden induced subgraph for $\cY$, we use the fact that the smallest $t$ such that $\overline{rK_2}$ is a $t$-union of equivalence graphs is $\lceil \log_2(2r)\rceil$ \cite{Alon86}. By taking complements, this means the smallest $t$ such that $rK_2$ is the intersection of $t$ complete multipartite graphs is $\lceil\log_2(2r)\rceil$. The latter implies that $(2^{t-1}+1)K_2$ is a forbidden induced subgraph for $\cY$.
\end{proof}

To prove the desired result, we will use the above \cref{lem:multipartite forbidden} together with the following theorem. 
\begin{theorem}[\cite{Wag80}]\label{thm:mK2-free}
    For any $r \in \bN$, 
    the class of $rK_2$-free graphs is $\chi$-bounded with a $\chi$-binding function $g(x) = x^{2(r-1)}$.
\end{theorem}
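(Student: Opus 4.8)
The plan is to prove the slightly stronger statement that every $rK_2$-free graph $G$ satisfies $\chi(G)\le \binom{\omega(G)+1}{2}^{\,r-1}$, which implies the claimed bound because $\binom{x+1}{2}=\tfrac{x(x+1)}{2}\le x^2$ for $x\ge 1$, so $\binom{\omega+1}{2}^{\,r-1}\le \omega^{2(r-1)}$. I would argue by induction on $r$. For $r=1$ a $1K_2$-free graph is edgeless, hence $\chi(G)=1=\binom{\omega+1}{2}^{0}$, and the base case holds. The goal of the inductive step is to partition $V(G)$ into at most $\binom{\omega+1}{2}$ classes, each of which either is independent or induces an $(r-1)K_2$-free graph, and then recurse on each class with a fresh palette.

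For the inductive step, fix a maximum clique $Q=\{q_1,\dots,q_\omega\}$ of $G$; the case $\omega\le 1$ gives an edgeless graph and is trivial, so assume $\omega\ge 2$. For a vertex $v\notin Q$ let $\sigma(v)\subseteq[\omega]$ record the indices of the non-neighbours of $v$ inside $Q$; by maximality of $Q$ we have $\sigma(v)\ne\emptyset$. I would index the classes by the one- and two-element subsets of $[\omega]$: assign $q_a\in Q$ and every $v\notin Q$ with $\sigma(v)=\{a\}$ to the singleton class $P_{\{a\}}$, and assign every remaining $v$ (those with $|\sigma(v)|\ge 2$) to the class $P_{\{a,b\}}$, where $a<b$ are the two smallest elements of $\sigma(v)$. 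Every vertex then lands in exactly one class, and the number of classes is at most $\omega+\binom{\omega}{2}=\binom{\omega+1}{2}$.

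The two structural claims are (i) each singleton class $P_{\{a\}}$ is independent, and (ii) each pair class $P_{\{a,b\}}$ induces an $(r-1)K_2$-free graph. For (i), every $v\in P_{\{a\}}\setminus Q$ is adjacent to all of $Q\setminus\{q_a\}$, so if two such vertices were adjacent they would together with $Q\setminus\{q_a\}$ form a clique of size $\omega+1$, contradicting maximality of $Q$; moreover $q_a$ is non-adjacent to every such $v$, so $P_{\{a\}}$ is independent. For (ii) — which I expect to be the crux — every $v\in P_{\{a,b\}}$ is non-adjacent to both $q_a$ and $q_b$, while $q_aq_b$ is an edge of $Q$; hence any induced $(r-1)K_2$ lying inside $P_{\{a,b\}}$, together with the edge $q_aq_b$, would be an induced $rK_2$ of $G$, which is impossible. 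Thus $G[P_{\{a,b\}}]$ is $(r-1)K_2$-free, and since $\omega(G[P_{\{a,b\}}])\le\omega$, the induction hypothesis gives $\chi(G[P_{\{a,b\}}])\le\binom{\omega+1}{2}^{\,r-2}$. Colouring the classes with disjoint palettes yields
$$
\chi(G)\le\sum_{C}\chi(G[C])\le \binom{\omega+1}{2}\cdot\binom{\omega+1}{2}^{\,r-2}=\binom{\omega+1}{2}^{\,r-1},
$$
where $C$ ranges over the classes of the partition, completing the induction. The heart of the argument is step (ii): it is what forces the choice of a \emph{maximum} clique (so that two common non-neighbours of a class exist and are themselves joined by an edge) and what makes the induced-matching number drop from $r$ to $r-1$ on each class; the count of $\binom{\omega+1}{2}$ classes and the additive combination are then routine bookkeeping.
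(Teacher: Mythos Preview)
The paper does not give its own proof of this theorem; it is quoted as a black box from \cite{Wag80}. Your argument is correct and is in fact essentially Wagon's original proof: fix a maximum clique, bucket the remaining vertices by (the first one or two of) their non-neighbours in that clique, observe that the singleton buckets are independent while each pair bucket is anticomplete to an edge of the clique and hence $(r-1)K_2$-free, and recurse. Your bound $\binom{\omega+1}{2}^{r-1}$ is exactly Wagon's, and your remark that $\binom{\omega+1}{2}\le\omega^2$ for $\omega\ge 1$ correctly recovers the form $x^{2(r-1)}$ stated in the paper.
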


\begin{theorem}{\label{thm:rK_2+Ks+O1-free}}
    Let $s,r \in \bN$, $s \geq 2$, and let $\cY$ be the class of $\{ K_s+O_1, rK_2 \}$-free graphs.
    Then $\cY$ is linearly $\chi$-bounded with a $\chi$-binding function
    $h(x) = (s-1)^{2(r-1)} \cdot x$.
\end{theorem}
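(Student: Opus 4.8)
The plan is to combine a structural reformulation of $(K_s+O_1)$-freeness with the bound of \cref{thm:mK2-free} for $rK_2$-free graphs, the link being a partition of $V(G)$ into $\omega(G)$ parts each of small clique number. The target constant $(s-1)^{2(r-1)}$ is exactly the Wagner $\chi$-binding function $x^{2(r-1)}$ evaluated at $x = s-1$, which is the signal that each part of the partition should have clique number at most $s-1$, so that a $\chi$-binding function for $rK_2$-free graphs applied part-by-part yields the claim.

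First I would record the key reformulation: a graph $G$ is $(K_s+O_1)$-free if and only if, for every vertex $v$, the subgraph induced by the non-neighbours of $v$ has clique number at most $s-1$. Indeed, an induced copy of $K_s+O_1$ is precisely an $s$-clique contained in the set of non-neighbours of some vertex $v$ (the isolated vertex), and conversely; note this holds for every vertex irrespective of the value of $\omega(G)$. Next, given $G \in \cY$, I would fix a maximum clique $Q = \{q_1, \ldots, q_\omega\}$ with $\omega = \omega(G)$ and define a partition $V(G) = C_1 \cup \cdots \cup C_\omega$ as follows. By maximality of $Q$, every vertex $v \notin Q$ is non-adjacent to at least one $q_i$; I assign such a $v$ to $C_i$ for the smallest index $i$ with $v q_i \notin E(G)$, and I assign each $q_i$ to $C_i$. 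Then every vertex of $C_i$ other than $q_i$ is a non-neighbour of $q_i$, so $C_i \setminus \{q_i\}$ induces a subgraph of clique number at most $s-1$ by the reformulation above. Crucially, $q_i$ is non-adjacent to every other vertex of $C_i$, so it is isolated in $G[C_i]$; adding it back therefore does not raise the clique number, and $\omega(G[C_i]) \leq \max\{s-1,1\} = s-1$ using the hypothesis $s \geq 2$.

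Finally, each $G[C_i]$ is an induced subgraph of $G$, hence $rK_2$-free, with $\omega(G[C_i]) \leq s-1$; by \cref{thm:mK2-free} it admits a proper colouring with at most $(s-1)^{2(r-1)}$ colours. Colouring the $\omega$ parts with pairwise disjoint palettes yields a proper colouring of $G$ with at most $(s-1)^{2(r-1)} \cdot \omega(G)$ colours, giving the claimed linear $\chi$-binding function $h(x) = (s-1)^{2(r-1)} x$; since $\cY$ is hereditary, establishing this bound for every $G \in \cY$ suffices.

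The only real subtlety — and the step I would be most careful with — is folding each clique vertex $q_i$ into its own part $C_i$ without inflating the clique number: this is exactly where the non-adjacency of $q_i$ to $C_i \setminus \{q_i\}$ is used, and it is what absorbs the $\omega$ vertices of $Q$ ``for free'' rather than spending an extra $\omega$ colours on $Q$. This is precisely what pins the constant to $(s-1)^{2(r-1)}$ rather than something larger, so I would make sure the partition is genuinely into exactly $\omega(G)$ parts and that the isolation argument is stated cleanly (including the degenerate case $C_i = \{q_i\}$).
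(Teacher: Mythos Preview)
Your proof is correct. Both your argument and the paper's rest on the same two ingredients: the observation that $(K_s+O_1)$-freeness forces every vertex's non-neighbourhood to have clique number at most $s-1$, and Wagner's bound from \cref{thm:mK2-free} applied to such pieces. The execution differs: the paper proceeds by induction on $\omega(G)$, picking any vertex $v$, colouring $\{v\}$ together with its non-neighbours using $(s-1)^{2(r-1)}$ colours, and recursing on $G[N(v)]$ where the clique number drops by one. Your argument is the unrolled, non-inductive version of this: fixing a maximum clique $\{q_1,\ldots,q_\omega\}$ up front and partitioning all at once according to first non-neighbour in the clique. The two are equivalent in spirit (the paper's recursion implicitly builds such a clique one vertex at a time), but your direct partition is arguably cleaner since it avoids the induction machinery and makes the ``$\omega$ parts, each cheap'' structure visible in one step.
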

\begin{proof}
    To prove the statement, we will show, by induction on the clique number, that 
    \begin{equation}\label{eq:chi}
        \chi(G) \leq (s-1)^{2(r-1)} \cdot \omega(G)
    \end{equation}
    holds for every $G \in \cY$.
    
    Clearly, if the clique number of $G \in \cY$ is 1, then $\chi(G) = 1 \leq (s-1)^{2(r-1)}$, as required.
    Let $k \geq 2$ and assume that (\ref{eq:chi}) holds for all graphs in $\cY$ with the clique number less than $k$. Let $G=(V,E)$ be an arbitrary graph in $\cY$ with $\omega(G) = k$.
    Let $v$ be a vertex in $G$, and
    let $Q \subseteq V$ be the set of non-neighbors of $v$.
    Note that $G[Q]$ must avoid an induced $K_{s}$ as otherwise such a subgraph together with $v$ would form the forbidden $K_{s}+ O_1$ in $G$. Therefore, the maximum clique size in $G[Q]$ is at most $s-1$ and so, since $G[Q]$ is $rK_2$-free, it follows from \cref{thm:mK2-free} that $\chi(G[Q]) \leq (s-1)^{2(r-1)}$. 
    Furthermore, as $v$ is non-adjacent to $Q$, the graph $G[Q \cup \{v\}]$ can be properly coloured with at most $(s-1)^{2(r-1)}$ colours.
    Now, if $N(v) = \emptyset$, then $G = G[Q \cup \{v\}]$ and we have $\chi(G) \leq (s-1)^{2(r-1)} \leq (s-1)^{2(r-1)}\omega(G)$.
    If $N(v) \neq \emptyset$, then $\omega(G[N(v)]) \leq \omega(G)-1$, so
    \[
        \chi(G) \leq \chi(G[Q \cup \{v\}]) + \chi(G[N(v)]) \leq (s-1)^{2(r-1)} + (s-1)^{2(r-1)} \cdot (\omega(G)-1) = (s-1)^{2(r-1)}\omega(G),
    \]
    using the inductive hypothesis.
\end{proof}

As a corollary of \cref{lem:multipartite forbidden} and \cref{thm:rK_2+Ks+O1-free} we finally obtain 
\begin{theorem}{\label{thm:multipartite intersection}}
    Let $t \in \bN$, and let $\cY$ be the $t$-intersection of the class of complete multipartite graphs. 
    Then $\cY$ is linearly $\chi$-bounded with a $\chi$-binding function
    $h(x) = t^{2^t} \cdot x$.
\end{theorem}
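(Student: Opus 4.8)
The plan is to derive the result immediately from the two preceding statements, since all of the genuine combinatorial work has already been carried out. First I would invoke \cref{lem:multipartite forbidden}, which tells us that the $t$-intersection $\cY$ of the class of complete multipartite graphs forbids both $K_{t+1}+O_1$ and $(2^{t-1}+1)K_2$ as induced subgraphs. Consequently, $\cY$ is contained in the class of $\{K_s+O_1, rK_2\}$-free graphs for the parameter choice $s=t+1$ and $r=2^{t-1}+1$. I would record that $s=t+1\geq 2$, so the hypothesis of \cref{thm:rK_2+Ks+O1-free} is met.

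Next I would apply \cref{thm:rK_2+Ks+O1-free} to this class, obtaining that the $\{K_{t+1}+O_1,(2^{t-1}+1)K_2\}$-free graphs are linearly $\chi$-bounded with $\chi$-binding function $(s-1)^{2(r-1)}\cdot x$. Since $\cY$ is a subclass of this family (and $\chi$-binding functions are inherited by subclasses), the same function binds $\cY$. It then remains only to simplify the constant: with $s-1=t$ and $r-1=2^{t-1}$ we have $2(r-1)=2^t$, and hence $(s-1)^{2(r-1)}=t^{2^t}$, yielding the claimed binding function $h(x)=t^{2^t}\cdot x$.

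The only point requiring care — and it is the closest thing to an obstacle — is verifying that the parameters align so that the exponent collapses to exactly $2^t$ rather than being off by a constant factor; this is a one-line arithmetic check. Beyond that, the argument is a direct appeal to the two cited results, so no new idea is needed and the proof is essentially a corollary.
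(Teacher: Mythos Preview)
Your proposal is correct and matches the paper's own argument exactly: the paper states \cref{thm:multipartite intersection} as an immediate corollary of \cref{lem:multipartite forbidden} and \cref{thm:rK_2+Ks+O1-free}, with the same substitution $s=t+1$, $r=2^{t-1}+1$ and the same arithmetic yielding $(s-1)^{2(r-1)}=t^{2^t}$.
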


%%%%%%%%%%%%%%%%%%%%%%%%%%%%%%%%%%%%%%%%%%%%%%%%
\subsubsection{Subclasses of perfect graphs}\label{sec:chi-perfect}
%%%%%%%%%%%%%%%%%%%%%%%%%%%%%%%%%%%%%%%%%%%%%%%%

In this section we reveal special cases when functions of the class of equivalence graphs preserve perfectness. We will use the characterization of perfect graphs in terms of forbidden induced subgraphs. An \emph{odd hole} in a graph is an induced cycle of odd length at least $5$, and an \emph{odd antihole} is an induced complement of a cycle of odd length at least $5$.

\begin{theorem}[Strong Perfect Graph Theorem \cite{CRST06}]\label{thm:strongPerfect}
    A graph is perfect if and only if it has neither odd holes nor odd antiholes.
\end{theorem}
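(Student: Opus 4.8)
The plan is to treat the two implications separately: the forward direction is elementary, while the reverse is the celebrated theorem of Chudnovsky, Robertson, Seymour and Thomas. For the forward direction, suppose $G$ is perfect. Every odd hole $C_{2k+1}$ (with $k\geq 2$) has $\omega = 2$ but $\chi = 3$, and every odd antihole $\overline{C_{2k+1}}$ has $\omega = k$ but $\chi = k+1$; hence neither is perfect. Since every induced subgraph of a perfect graph is perfect, a perfect graph can contain no induced odd hole and no induced odd antihole, i.e.\ it is a \emph{Berge graph}.

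The real content is the converse: every Berge graph is perfect. First I would pass to a minimum counterexample, that is, a \emph{minimally imperfect} Berge graph $G$ (imperfect, but with all proper induced subgraphs perfect); if the theorem were false such a $G$ would exist. By Lov\'asz's (weak) perfect graph theorem $\overline{G}$ is also minimally imperfect and Berge, so the class of potential obstructions is self-complementary, a symmetry I would exploit repeatedly. The goal is to show that no such $G$ exists.

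The engine of the argument is a structural decomposition theorem for Berge graphs (the Conforti--Cornu\'ejols--Vu\v{s}kovi\'c roadmap, proved in \cite{CRST06}): every Berge graph is either \emph{basic} --- bipartite, the line graph of a bipartite graph, one of their two complements, or a double split graph --- or else admits one of a short list of decompositions, namely a $2$-join, a $2$-join of the complement, or a balanced skew partition. I would then finish in two stages. First, one verifies that each of the five basic classes is perfect: bipartite graphs and line graphs of bipartite graphs follow from K\"onig-type matching theorems, their complements follow by Lov\'asz's theorem, and double split graphs can be checked directly. Second, one shows that a minimally imperfect Berge graph admits none of the decompositions: such a graph cannot have a $2$-join (nor, by self-complementarity, a complement $2$-join), and the deepest input is that no minimally imperfect Berge graph has a balanced skew partition. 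Since a minimum counterexample would be neither basic nor decomposable, none exists, and every Berge graph is perfect.

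The main obstacle is the decomposition theorem itself: proving that a Berge graph avoiding all the listed decompositions must be basic is the heart of the $150$-plus-page argument of \cite{CRST06}, carried out through an intricate case analysis driven by the presence or absence of particular induced configurations (prisms and their stretchings, the Roussel--Rubio lemma on how induced paths attach to the rest of the graph, and the analysis of even pairs). Establishing that a balanced skew partition cannot occur in a minimal obstruction is the second serious hurdle. For the purposes of this paper none of this machinery is reproved; the theorem is invoked as the cited black box, and I would use only its clean conclusion, that the perfect graphs are exactly those with neither odd holes nor odd antiholes.
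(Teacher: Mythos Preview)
The paper does not prove this theorem at all: it is simply stated with the citation \cite{CRST06} and then used as a black box (in the proofs of \cref{lem:2-equivalence}, \cref{lem:3-Xor}, and \cref{lem:f_1}) to reduce perfectness to the absence of odd holes and odd antiholes. Your final paragraph already anticipates this, so your proposal is consistent with the paper's treatment; the preceding outline of the Chudnovsky--Robertson--Seymour--Thomas decomposition strategy is accurate as a sketch of the actual proof in \cite{CRST06}, but it goes well beyond anything the present paper attempts or needs.
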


\begin{theorem}\label{lem:2-equivalence}
    Let $f : \zo^2 \rightarrow \zo$ be a boolean function on two variables and $\cX$ be the class of equivalence graphs. Then $f(\cX)$ is a class of perfect graphs.
\end{theorem}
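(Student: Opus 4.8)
The plan is to check all sixteen boolean functions $f:\zo^2\to\zo$ directly, cutting the work down by symmetry. The starting point is the following description of $G=f(G_1,G_2)$ when $G_1,G_2$ are equivalence graphs on a common vertex set $V$: give each vertex $v$ the label $(c_1(v),c_2(v))$, where $c_i(v)$ is the component (equivalence class) of $v$ in $G_i$. Then whether two distinct vertices $u,v$ are adjacent in $G$ depends only on the pair $\big(a,b\big)\in\zo^2$, where $a=1$ iff $c_1(u)=c_1(v)$ and $b=1$ iff $c_2(u)=c_2(v)$. Since $\overline{f(G_1,G_2)}=(\neg f)(G_1,G_2)$, and, by \cref{thm:strongPerfect}, perfectness is preserved under complementation (complementation interchanges odd holes and odd antiholes), it suffices to treat one function from each complementary pair; swapping $G_1$ and $G_2$ gives a further symmetry since both inputs range over the same class. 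Up to these symmetries the functions fall into the families: the constants and the (negated) projections, the intersection $x_1\wedge x_2$, the difference $x_1\wedge\overline{x_2}$, the union $x_1\vee x_2$, and the XOR $x_1\xor x_2$.

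The easy families are immediate: the constants give empty and complete graphs, the projections give the equivalence graphs $G_1,G_2$, and the negated projections give their complements (complete multipartite graphs), all perfect. For the intersection, $u,v$ are adjacent in $G_1\wedge G_2$ precisely when their labels agree in both coordinates; this is an equivalence relation on $V$, so $G_1\wedge G_2$ is again an equivalence graph, hence perfect. For the difference $x_1\wedge\overline{x_2}$, partition $V$ by the first coordinate: within each part adjacency is ``disagree in the second coordinate'', i.e.\ the complement of an equivalence graph, while distinct parts are anticomplete, so $G_1\wedge\overline{G_2}$ is a disjoint union of complete multipartite graphs and is perfect.

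The main case, and the step I expect to require the most care, is the union $G=G_1\vee G_2$, which I would handle via \cref{thm:strongPerfect} by excluding odd holes and odd antiholes separately. For an induced cycle of odd length $2k+1\geq 5$, I would first argue that every edge has its endpoints agreeing in \emph{exactly} one coordinate: if an edge $v_iv_{i+1}$ agreed in both, then, since $v_i$ and $v_{i+2}$ are non-adjacent and thus disagree in both coordinates, the next edge $v_{i+1}v_{i+2}$ would disagree in both and fail to be an edge. Labelling each edge by the coordinate its endpoints share, two consecutive edges cannot carry the same label (common agreement in one coordinate across $v_i,v_{i+1},v_{i+2}$ would force the absent edge $v_iv_{i+2}$ to be present), so the labels would properly $2$-colour an odd cycle --- impossible. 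For odd antiholes, an induced $\overline{C_{2k+1}}$ with $k\geq 2$ inside $G$ would be a union of two equivalence graphs, each of whose components is a clique of $\overline{C_{2k+1}}$ and hence has at most $\omega(\overline{C_{2k+1}})=\alpha(C_{2k+1})=k$ vertices; an equivalence graph on $2k+1$ vertices with all components of size at most $k$ has at most $k(k-1)$ edges, so two such graphs contribute at most $2k(k-1)$ edges, while $\overline{C_{2k+1}}$ has $(2k+1)(k-1)>2k(k-1)$ edges --- a contradiction. Thus $G_1\vee G_2$ is perfect. (Intuitively, after identifying vertices with equal labels, $G_1\vee G_2$ is an induced subgraph of a rook's graph, i.e.\ a line graph of a complete bipartite graph; the two arguments above make this self-contained via \cref{thm:strongPerfect}.)

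Finally, the XOR $G_1\xor G_2$ reduces to the union. Two vertices sharing a label are non-adjacent twins in $G_1\xor G_2$, since the adjacency of each to any third vertex is determined by its label; but no odd hole $C_{2k+1}$ and no odd antihole $\overline{C_{2k+1}}$ with $2k+1\geq 5$ has two vertices with equal open neighbourhoods. Hence every induced odd hole or antihole of $G_1\xor G_2$ uses vertices with pairwise distinct labels, and on pairs of distinct labels (which never agree in both coordinates) the XOR and the union of $G_1,G_2$ have the same adjacency. Since $G_1\vee G_2$ has no odd hole or antihole, neither does $G_1\xor G_2$, so it too is perfect by \cref{thm:strongPerfect}. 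Covering all families, $f(\cX)$ is a class of perfect graphs for every $f:\zo^2\to\zo$.
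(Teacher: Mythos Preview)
Your proof is correct and follows the same overall architecture as the paper's proof: reduce the sixteen functions by complementation (and input-swapping) to a handful of representatives, dispatch the trivial ones, and handle the remaining cases via \cref{thm:strongPerfect}. However, several of your individual case arguments differ substantively from the paper's and are noticeably cleaner. For the difference $x_1\wedge\overline{x_2}$, you observe directly that the result is a disjoint union of complete multipartite graphs, whereas the paper argues separately that no odd hole or antihole arises. For the antihole case of the union, your uniform edge-count (each equivalence graph on $2k+1$ vertices with components bounded by $k$ has at most $k(k-1)$ edges, so two of them cannot cover the $(2k+1)(k-1)$ edges of $\overline{C_{2k+1}}$) is self-contained, while the paper invokes an external result of Alon and then treats $k=2,3$ by hand. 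Most strikingly, your XOR argument---same-label vertices are non-adjacent twins, odd holes and antiholes have no such twins, and on distinct-label pairs XOR coincides with OR---reduces the XOR case to the already-established union case in a few lines, replacing the paper's lengthy direct case analysis. Conversely, the paper's odd-hole argument for the union (clique number $2$ forces each equivalence graph to be a matching, so at most $2k$ edges in total) is a bit quicker than your edge-labelling parity argument, though both are short. Your parenthetical rook's-graph remark is also apt: identifying same-label vertices embeds $G_1\vee G_2$ into a line graph of a complete bipartite graph, giving an alternative route to perfection.
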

\begin{proof}
    To prove the statement, we will verify it for each of the 2-variable boolean functions. There are 16 such functions, which we denote $f_i$ for $0 \leq i \leq 15$ and define below.
    \[
    \begin{array}{c|c||c|c|c|c|c|c|c|c|c|c|c|c|c|c|c|c}
        x & y & f_0 & f_1 & f_2 & f_3 & f_4 & f_5 & f_6 & f_7 & f_8 & f_9 & f_{10} & f_{11} & f_{12} & f_{13} & f_{14} & f_{15} \\ \hline
        0 & 0 & 0   & 0   & 0   & 0   & 0   & 0   & 0   & 0   & 1   & 1   & 1      & 1      & 1      & 1      & 1      & 1      \\
        0 & 1 & 0   & 0   & 0   & 0   & 1   & 1   & 1   & 1   & 0   & 0   & 0      & 0      & 1      & 1      & 1      & 1      \\
        1 & 0 & 0   & 0   & 1   & 1   & 0   & 0   & 1   & 1   & 0   & 0   & 1      & 1      & 0      & 0      & 1      & 1      \\
        1 & 1 & 0   & 1   & 0   & 1   & 0   & 1   & 0   & 1   & 0   & 1   & 0      & 1      & 0      & 1      & 0      & 1      \\
    \end{array}
    \]
    To simplify the analysis, first notice that if a boolean function $g : \zo^2 \rightarrow \zo$ is the negation of a boolean function $f : \zo^2 \rightarrow \zo$, i.e., $g(x,y) = \overline{f(x,y)}$ for all $x,y \in \zo$, then for any two graphs $H_1$, $H_2$ on the same vertex set, the graph $g(H_1,H_2)$ is the complement of $f(H_1,H_2)$. Thus, since the complement of a perfect graph is also perfect \cite{Lov72}, we only need to verify the claim for $f_i$, $0 \leq i \leq 7$, as $f_{15-i}$ is the negation of $f_i$ for every $0 \leq i \leq 7$. We proceed by showing that for each $0 \leq i \leq 7$ and for any two equivalence graphs $H_1,H_2$ the graph $G=f_i(H_1,H_2)$ is perfect.
    
    %\begin{enumerate}
    %\renewcommand{\labelenumi}{(\theenumi)}
        \item (1) \textbf{Function $f_0$}. The graph $G$ is empty, which is clearly perfect.

        \medskip
        
        \item (2) \textbf{Function $f_1$}. The graph $G$ is the intersection of $H_1$ and $H_2$, and therefore it is an equivalence graph and thus perfect.

        \medskip
        
        \item (3) \textbf{Functions $f_2$ and $f_4$}. Note that $f_2(x,y) = x \wedge \overline{y}$ and $f_4(x,y) = \overline{x} \wedge y$, and thus each of $f_2(H_1,H_2)$ and $f_4(H_1,H_2)$ is the intersection of an equivalence graph and the complement of an equivalence graph (i.e., a complete multipartite graph).
        From \cref{thm:strongPerfect} and since any induced subgraph of an equivalence graph (respectively, complete multipartite graph) is again an equivalence graph (respectively, complete multipartite graph), in order to verify this case, it is enough to show that no odd hole or odd antihole can be expressed as 
        the intersection of an equivalence graph and a complete multipartite graph.

        Assume that there exists a $k\geq 2$ and two equivalence graphs, $H_1$ and $H_2$, such that $C_{2k+1} = H_1 \wedge \overline{H_2}$. Then the edge set of $C_{2k+1}$ is a subset of the edges of $H_1$ and so $H_1$ is a connected graph. As $H_1$ is an equivalence graph, this means it must be $K_{2k+1}$. Therefore $\overline{H_2}=C_{2k+1}$, which implies that  $\overline{C_{2k+1}}$ is an equivalence graph, which is a contradiction.
    
        Assume now that there exists a $k\geq 2$ and two equivalence graphs, $H_1$ and $H_2$, such that $\overline{C_{2k+1}} = H_1 \wedge \overline{H_2}$. Then $C_{2k+1} = \overline{H_1} \vee H_2$, where $\overline{H_1}$ is a complete multipartite graph. Then the edges of $\overline{H_1}$ must be a subset of the edges of $C_{2k+1}$, and we claim that $\overline{H_1}$ must be the empty graph. Indeed, assume that $\overline{H_1}$ contains an edge between vertices $v$ and $w$ which are adjacent in $C_{2k+1}$. Take a vertex, $z$, that is not adjacent to $v$ or $w$ in $C_{2k+1}$. Then $z$ is not adjacent to $v$ or $w$ in $\overline{H_1}$, which is not possible as $K_2+O_1$ is a forbidden induced sugraph for complete multipartite graphs.
        This proves that $\overline{H_1}$ is the empty graph and implies that $H_2=C_{2k+1}$, which is not possible as $H_2$ is an equivalence graph.

        \medskip

        \item (4) \textbf{Functions $f_3$ and $f_5$}. Note that $f_3(x,y)=x$ and $f_5(x,y)=y$. Thus, $f_3(H_1,H_2) = H_1$ and $f_5(H_1,H_2) = H_2$, which, obviously, are perfect graphs.

        \medskip

        \item (5) \textbf{Function $f_6$}. In this case $G = H_1 \xor H_2$. 
        As in case (3), we will show that no odd hole or odd antihole is a XOR of two equivalence graphs.

        Assume that there exists a $k\geq 2$ and two equivalence graphs, $H_1$ and $H_2$, such that $C_{2k+1} = H_1 \xor H_2$. Then each edge of the cycle belongs to exactly one of the two equivalence graphs. Denote by $v_1, v_2, \ldots, v_{2k+1}$ the vertices along the cycle $C_{2k+1}$.
        
        First, we claim that there are two vertex disjoint edges of the cycle that belong to the same clique in one of the equivalence graphs. Let $s$ be the length of the longest sequence of consecutive vertices of the cycle that belong to the same clique in one of $H_1$ and $H_2$. If $s \geq 4$, then we are done. Otherwise, we should have $s=3$. Indeed, since the cycle has an odd number of edges, there are two consecutive edges that belong to the same equivalence graph, and as they share a vertex, they must be in the same clique.
        Now, without loss of generality, let $v_2,v_3,v_4$ be three consecutive vertices of the cycle that belong to a clique in $H_1$. Then, by assumption, $(v_1,v_2)$ and $(v_4,v_5)$ are adjacent in $H_2$, but not in $H_1$. Furthermore, since $v_2$ and $v_4$ are not adjacent in the  cycle, but are adjacent in $H_1$, they must be adjacent in $H_2$ as well. This implies that $v_1,v_2,v_4,v_5$ form a path in $H_2$, and thus belong to the same clique in $H_2$, which implies the claim. 
 
        Now suppose that $v_{i_1},v_{i_2},v_{i_3},v_{i_4}$ are four distinct vertices that are in the same clique in $H_1$, and $(v_{i_1},v_{i_2})$ and $(v_{i_3},v_{i_4})$ are two edges of the cycle.
        Then at least one of $v_{i_1}$ and $v_{i_2}$, say $v_{i_1}$, is neither adjacent to $v_{i_3}$ nor to $v_{i_4}$ in the cycle. This means that $v_{i_1}$ is adjacent to $v_{i_3}$ and $v_{i_4}$ in both $H_1$ and $H_2$. But then $v_{i_1},v_{i_3},v_{i_4}$ must be in the same clique in $H_2$, and in particular $v_{i_3}$ is adjacent to $v_{i_4}$ in $H_2$, which is not possible as $v_{i_3}$ and $v_{i_4}$ should be adjacent in exactly one of $H_1$ and $H_2$. 

        \medskip
        Assume now that there exists a $k\geq 2$ and two equivalence graphs, $H_1$ and $H_2$, such that $\overline{C_{2k+1}} = H_1 \xor H_2$.
        Then each edge of the $\overline{C_{2k+1}}$ belongs to exactly one of the two equivalence graphs. Denote by $v_1, v_2, \ldots, v_{2k+1}$ the vertices along the cycle $C_{2k+1}$.

        First, since $\overline{C_5} = C_5$, the case of $k=2$ is covered by the previous analysis. Hence, assume $k \geq 3$. Note that neither $H_1$ nor $H_2$ is a complete or empty graph, as otherwise one of them must be $C_{2k+1}$ or $\overline{C_{2k+1}}$, which is not possible due to the perfectness of both graphs. Furthermore, the clique number of at least one of $H_1$ and $H_2$ should be at least 3, as otherwise $H_1$ and $H_2$ would be graphs of maximum degree one, and the total number of edges in $H_1 \xor H_2$ would not exceed $2k$. The latter is not possible as, for $k \geq 3$, the number $\binom{2k+1}{2}-(2k+1)$ of edges in $\overline{C_{2k+1}}$ is more than $2k$.
        Thus, without loss of generality, we can assume that $3 \leq \omega(H_1) \leq 2k $.

        Let $Q$ be a clique of maximum size in $H_1$. We claim that there exist three vertices $v_i, v_j, v_k$ such that $v_i, v_j \in Q$, $v_k \not\in Q$, and no two of these vertices are consecutive vertices on the cycle $C_{2k+1}$.
        Let $s$ be the maximum number of consecutive vertices of $C_{2k+1}$ that belong to $Q$. Without loss of generality, assume that $v_1, v_2, \ldots, v_s$ belong to $Q$. Then $v_{2k+1}$ and $v_{s+1}$ are not in $Q$.
        \begin{enumerate}
            \item If $s \geq 5$, then $v_2,v_4,v_{s+1}$ are the sought vertices.

            \item If $s=4$, then $v_1,v_3,v_5$ are the sought vertices.

            \item Let $s=3$. If $v_5 \not\in Q$, then $v_1,v_3,v_5$ are the sought vertices; if $v_5 \in Q$, then $v_2,v_5,v_{2k+1}$ are the sought vertices (note that as $2k+1 \geq 7$ the vertices $v_5,v_{2k+1}$ are not consecutive on the cycle $C_{2k+1}$).

            \item Let $s \leq 2$. Then, since $|Q| \geq 3$, there are two vertices in $Q$ that are not consecutive vertices of the cycle $C_{2k+1}$. Without loss of generality, assume that $v_1$ and $v_t$ are two vertices in $Q$ such that $t \geq 3$ and every vertex $v_{\ell}$, where $2 \leq \ell \leq t-1$, does not belong to $Q$. 
            \begin{enumerate}
                \item If $t \geq 5$, then $v_1,v_t$, and $v_3$ are the sought vertices. 
                
                \item Let $t = 4$. If $v_6 \not\in Q$, then $v_1,v_4,v_6$ are the sought vertices.
                If $v_6 \in Q$, then $v_1,v_6,v_3$ are the sought vertices.

                \item Let $t = 3$. If $v_6 \not\in Q$, then $v_1,v_3,v_6$ are the sought vertices.
                If $v_6 \in Q$, then, due to the assumption that $s \leq 2$, at least one of $v_4$ and $v_5$ is not in $Q$.
                If $v_4 \not\in Q$, then $v_1,v_6,v_4$ are the sought vertices. If $v_5 \not\in Q$, then 
                $v_1,v_3,v_5$ are the sought vertices.
            \end{enumerate}

            This completes the proof of the claim.
        \end{enumerate}
        
        Now, suppose $v_i,v_j,v_k$ are three non-consecutive vertices of the cycle $C_{2k+1}$ such that $v_i,v_j \in Q$ and $v_k \not\in Q$. Since $v_k$ is adjacent to $v_i$ and $v_j$ in $H_1 \xor H_2$ and $v_k$ is not adjacent to $v_i$ or $v_j$ in $H_1$,
        it follows that $v_k$ must be adjacent to $v_i$ and $v_j$ in $H_2$. This implies that $v_i,v_j,v_k$ are in the same clique in $H_2$, and thus $v_i$ and $v_j$ are adjacent in $H_2$. Since $v_i$ and $v_j$ are also adjacent in $H_1$, we arrive at a contradiction, as every edge of $H_1 \xor H_2$ must belong to exactly one of the two graphs.

        \medskip

        \item (6) \textbf{Function $f_7$}. In this case $G = H_1 \vee H_2$.
        As before, to prove that $G$ is perfect, we will show that no odd hole or odd antihole is a union of two equivalence graphs. 

        Assume that there exists a $k\geq 2$ and two equivalence graphs, $H_1$ and $H_2$, on $2k+1$ vertices, such that $C_{2k+1} = H_1 \vee H_2$. As the largest clique in $C_{2k+1}$ has size $2$, every connected component of $H_1$ and $H_2$ has at most $2$ vertices. Therefore, the most edges that either $H_1$ or $H_2$ could have is $k$, which implies that $H_1 \vee H_2 = C_{2k+1}$ has at most $2k$ edges, a contradiction.

        Assume now that there exists a $k \geq 2$ and two equivalence graphs, $H_1$ and $H_2$, on $2k+1$ vertices such that $\overline{C_{2k+1}} = H_1 \vee H_2$.
        Note that if $k=2$, then $\overline{C_{2k+1}} = \overline{C_5} = C_5$, which as we showed earlier cannot be a union of two equivalence graphs.
        It is known \cite{Alon86}, that for any $s \geq 3$, if $\overline{C_s}$ is a union of $t$ equivalence graphs, then $t \geq \log_2 s-1$.
        This implies that $\overline{C_{2k+1}}$ cannot be the union of $2$ equivalence graphs for any $k\geq 4$. 
        It remains to verify the  case of $k=3$, i.e., to show that $\overline{C_{7}}$ is not a union of two equivalence graphs.
        Suppose to the contrary, that there exist two equivalence graphs $H_1$ and $H_2$ such that $\overline{C_7} = H_1 \vee H_2$.
        The largest clique in $\overline{C_7}$ is of size $3$ (as the largest independent set in $C_7$ has size $3$), so each connected component of $H_1$ and $H_2$ has at most $3$ vertices. Since each $H_1$ and $H_2$ has $7$ vertices, this implies that none of them can have more than $6$ edges. Consequently, $H_1 \vee H_2$ has at most $12$ edges. However, $\overline{C_7}$ has $14$ edges, a contradiction.
    %\end{enumerate}
\end{proof}

\cref{lem:2-equivalence} is best possible in the sense that there exists a 3-variable boolean function $f$ such that if $\cX$ is the class of equivalence graphs, then $f(\cX)$ is not a subclass of perfect graphs. Indeed, \cref{th:binding lower bound} implies that the 3-XOR of the class of equivalence graphs is not a subclass of perfect graphs as its smallest $\chi$-binding function is at least quadratic.

Below we prove that some specific 3-variable functions applied to the class of equivalence graphs preserve perfectness. We will use these results to deduce that
the smallest $\chi$-binding function for the 3-union of the class of perfect graphs is $\Omega(x^2)$.

\begin{lemma}\label{lem:3-Xor}
    Let $f$ be a 3-variable boolean function defined as $f(x,y,z) = \overline{x} \wedge \overline{y} \wedge z$, and let $\cX$ be the class of equivalence graphs. Then $f(\cX)$ is a class of perfect graphs.
\end{lemma}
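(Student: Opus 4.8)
The plan is to reduce the claim to the union case already settled in \cref{lem:2-equivalence}. Let $G = f(H_1,H_2,H_3) = \overline{H_1}\wedge\overline{H_2}\wedge H_3$ for equivalence graphs $H_1,H_2,H_3$ on a common vertex set $V$. The crucial observation is that $G$ is a spanning subgraph of $H_3$: every edge of $G$ is, in particular, an edge of $H_3$. Since applying $f$ to induced subgraphs of the $H_i$ (which are again equivalence graphs) yields the corresponding induced subgraph of $G$, the class $f(\cX)$ is hereditary, and by the Strong Perfect Graph Theorem (\cref{thm:strongPerfect}) it suffices to show that no odd hole and no odd antihole can be written in the form $\overline{H_1}\wedge\overline{H_2}\wedge H_3$ with $H_1,H_2,H_3$ equivalence graphs.

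First I would suppose, for contradiction, that $G$ equals $C_{2k+1}$ or $\overline{C_{2k+1}}$ for some $k \geq 2$. In either case $G$ is connected with minimum degree at least $2$, so it has no isolated vertices and is connected on all of $V$. Because $G$ is a subgraph of $H_3$ and $H_3$ is a disjoint union of cliques, any two vertices joined by a path in $G$ lie in the same connected component of $H_3$; connectivity of $G$ therefore forces all of $V$ into a single clique of $H_3$, i.e.\ $H_3 = K_{|V|}$. Consequently the condition $H_3(a,b)=1$ holds for every pair of distinct vertices, and $G$ simplifies to $\overline{H_1}\wedge\overline{H_2}$.

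Taking complements, $\overline{G} = H_1 \vee H_2$ is a union of two equivalence graphs. If $G = C_{2k+1}$ then $\overline{G} = \overline{C_{2k+1}}$ is an odd antihole, while if $G = \overline{C_{2k+1}}$ then $\overline{G} = C_{2k+1}$ is an odd hole. In both cases we obtain an odd hole or odd antihole expressed as the union of two equivalence graphs, which was shown to be impossible in the analysis of function $f_7$ in the proof of \cref{lem:2-equivalence}. This contradiction establishes the claim.

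I expect the only delicate point to be the reduction step: verifying that $f(\cX)$ is hereditary so that the Strong Perfect Graph Theorem can be applied to the whole class, and carrying out the connectivity argument that pins all vertices of an odd hole or antihole inside one clique of $H_3$. Everything after that is an immediate appeal to the already-established union case, so there is essentially no new combinatorial work to do — the two negated variables collapse exactly to the intersection of two complete multipartite graphs handled before, and the third variable merely restricts attention to a single clique.
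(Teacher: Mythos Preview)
Your proposal is correct and follows essentially the same approach as the paper: reduce to the Strong Perfect Graph Theorem, show that if an odd hole or odd antihole arose as $\overline{H_1}\wedge\overline{H_2}\wedge H_3$ then $H_3$ must be the complete graph, and then appeal to the two-variable case (\cref{lem:2-equivalence}) for a contradiction. The only cosmetic difference is that you handle both the hole and the antihole cases uniformly via connectivity of $G$ inside $H_3$, whereas the paper treats the antihole case by taking complements and arguing that $\overline{H_3}$, being a complete multipartite subgraph of $C_{2k+1}$, must be edgeless; both routes arrive at $H_3 = K_{2k+1}$ and the same appeal to \cref{lem:2-equivalence}.
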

\begin{proof}
    To prove the statement, we will show that no odd hole or odd antihole can be expressed as the intersection of two complements of equivalence graphs (i.e., complete multipartite graphs) and an equivalence graph.

    Assume that there exists a $k\geq 2$ and three equivalence graphs $H_1, H_2, H_3$ such that $C_{2k+1} = \overline{H_1} \wedge \overline{H_2} \wedge H_3$. For this to be the case, the edges of $C_{2k+1}$ must be a subset of the edges of $H_3$. As this implies that $H_3$ is a connected graph, and as every connected component of $H_3$ is a clique, we see that $H=K_{2k+1}$. Therefore $C_{2k+1} = \overline{H_1} \wedge \overline{H_2}$ and so $\overline{C_{2k+1}} = H_1 \vee H_2$. However, by \cref{lem:2-equivalence}, the union of two equivalence graphs is a perfect graph, a contradiction.

    Assume now that there exists a $k\geq 2$ and three equivalence graphs $H_1, H_2, H_3$ such that $\overline{C_{2k+1}} = \overline{H_1} \wedge \overline{H_2} \wedge H_3$.
    Then $C_{2k+1} = H_1 \vee H_2 \vee \overline{H_3}$, and therefore the edges of $\overline{H_3}$ must be a subset of the edges of $C_{2k+1}$. As shown in the proof of \cref{lem:2-equivalence} Case (3), this means that $\overline{H_3}$ must be the empty graph on $2k+1$ vertices, and so $C_{2k+1} = H_1 \vee H_2$. However, by \cref{lem:2-equivalence}, the union of two equivalence graphs is a perfect graph, a contradiction. As before, this contradicts the fact that the union of any two equivalence graphs is perfect.
\end{proof}

\begin{lemma}\label{lem:f_1}
    Let $f$ be a 3-variable boolean function defined as $f(x,y,z) = x \wedge (\overline{y \xor z})$, and let $\cX$ be the class of equivalence graphs. Then $f(\cX)$ is a class of perfect graphs.
\end{lemma}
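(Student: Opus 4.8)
The plan is to first rewrite $f$ in a structurally transparent form. Since $\overline{y \xor z}$ equals $1$ exactly when $y=z$, for equivalence graphs $H_1,H_2,H_3$ on a common vertex set we have $f(H_1,H_2,H_3) = H_1 \wedge \overline{H_2 \xor H_3}$. Note that $f(\cX)$ is hereditary: for any vertex subset $S$ we have $f(H_1,H_2,H_3)[S] = f(H_1[S],H_2[S],H_3[S])$, and induced subgraphs of equivalence graphs are again equivalence graphs. Consequently, by the Strong Perfect Graph Theorem (\cref{thm:strongPerfect}), it suffices to show that neither an odd hole $C_{2k+1}$ nor an odd antihole $\overline{C_{2k+1}}$ (with $k \geq 2$) is itself equal to $H_1 \wedge \overline{H_2 \xor H_3}$ for some equivalence graphs $H_1,H_2,H_3$ on $2k+1$ vertices.

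Next I would exploit the conjunction with $H_1$ to reduce both cases to the XOR case already settled in \cref{lem:2-equivalence}. The key observation is that in $G = H_1 \wedge \overline{H_2 \xor H_3}$ every edge of $G$ is an edge of $H_1$. Hence if $G$ equals a connected graph spanning all $2k+1$ vertices, then $H_1$ contains all edges of that connected spanning graph; since $H_1$ is an equivalence graph (a disjoint union of cliques), its unique nontrivial component must be all of $V$, forcing $H_1 = K_{2k+1}$. Both $C_{2k+1}$ and $\overline{C_{2k+1}}$ are connected spanning graphs on their $2k+1$ vertices: the former obviously, and the latter because $\overline{C_n}$ has minimum degree $n-3 \geq n/2$ for $n \geq 6$, while $\overline{C_5} = C_5$ is connected.

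With $H_1 = K_{2k+1}$ the factor $H_1$ becomes vacuous: $G(a,b) = \overline{H_2 \xor H_3}(a,b)$ for all distinct $a,b$, i.e.\ $\overline{G} = H_2 \xor H_3$. In the odd-hole case this yields $\overline{C_{2k+1}} = H_2 \xor H_3$, and in the odd-antihole case it yields $C_{2k+1} = H_2 \xor H_3$. But the proof of \cref{lem:2-equivalence} (the case of function $f_6$, namely XOR) already shows that neither an odd hole nor an odd antihole can be written as the XOR of two equivalence graphs. Each case therefore produces a contradiction, completing the argument.

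I expect the only genuine subtlety to be the connectivity bookkeeping for $\overline{C_{2k+1}}$ that licenses the step $H_1 = K_{2k+1}$; once that is in place, the reduction to \cref{lem:2-equivalence} is immediate and carries essentially all the weight of the proof. The decomposition $f = H_1 \wedge \overline{H_2 \xor H_3}$ is what makes the argument short: the conjunction $\wedge\, H_1$ can only delete edges, so on any connected spanning induced odd hole or antihole it must act trivially, leaving a pure XOR of two equivalence graphs, which is known to be perfect.
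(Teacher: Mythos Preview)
Your proof is correct and follows essentially the same approach as the paper: observe that every edge of $G = H_1 \wedge \overline{H_2 \xor H_3}$ lies in the equivalence graph $H_1$, use connectivity of $C_{2k+1}$ and $\overline{C_{2k+1}}$ to force $H_1 = K_{2k+1}$, and then reduce to the two-variable XOR case already handled in \cref{lem:2-equivalence}. The only difference is cosmetic: the paper phrases the contradiction as $C_{2k+1} = \overline{H_2 \xor H_3}$ (respectively $\overline{C_{2k+1}} = \overline{H_2 \xor H_3}$) and appeals to the general statement of \cref{lem:2-equivalence}, whereas you complement both sides and appeal specifically to the $f_6$ case; these are the same step.
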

\begin{proof}
    To prove the statement, we will show that no odd hole or odd antihole can be expressed as $H_1 \wedge (\overline{H_2 \xor H_3})$ for some equivalence graphs $H_1, H_2, H_3$.

    Assume that there exists a $k \geq 2$ and three equivalence graphs $H_1, H_2, H_3$ such that $C_{2k+1} = H_1 \wedge (\overline{H_2 \xor H_3})$. Then, the edges of $C_{2k+1}$ must be a subset of the edges of $H_1$, which implies that $H_1$ is connected, and thus $H_1 = K_{2k+1}$. In turn, this implies that $C_{2k+1}= \overline{H_2\xor H_3}$.
    However, by \cref{lem:2-equivalence}, any function of any two equivalence graphs is a perfect graph, a contradiction.

    Assume now that there exists a $k \geq 2$ and three equivalence graphs $H_1, H_2, H_3$ such that $\overline{C_{2k+1}} = H_1 \wedge (\overline{H_2 \xor H_3})$.
    Then the edges of $\overline{C_{2k+1}}$ must be a subset of the edges of $H_1$, implying that $H_1$ is connected and so $H_1 = K_{2k+1}$. Thus $\overline{C_{2k+1}} = \overline{H_2\xor H_3}$. As before, this contradicts \cref{lem:2-equivalence}.
\end{proof}

\begin{lemma}\label{lem:3-union-perfect}
    Let $\cQ$ be the 3-union of the class of perfect graphs.
    Then the smallest $\chi$-binding function for $\cQ$ is $\Omega(x^2)$. 
\end{lemma}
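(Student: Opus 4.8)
The plan is to exhibit a subclass of $\cQ$ whose smallest $\chi$-binding function is already known to be at least quadratic, and the natural candidate is $\cB_3$, the $3$-XOR of the class of equivalence graphs: indeed \cref{th:binding lower bound} tells us that the smallest $\chi$-binding function for $\cB_3$ is $\Omega(x^2)$. Since the smallest $\chi$-binding function is monotone under class inclusion (a larger class maximizes $\chi$ over a superset of graphs of each fixed clique number), it therefore suffices to prove the inclusion $\cB_3 \subseteq \cQ$, \ie that every XOR of three equivalence graphs can be written as the union of three perfect graphs.

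So I would fix equivalence graphs $H_1, H_2, H_3$ on a common vertex set $V$ and set $G = H_1 \xor H_2 \xor H_3$; by \cref{obs:xor-of-k-graphs}, a pair of vertices is an edge of $G$ precisely when it lies in exactly one, or in all three, of the $H_i$. The key idea is to split these edges into only three (rather than four) perfect pieces by bundling the ``$H_1$-only'' edges together with the ``all-three'' edges. Concretely, let $P_1 = H_1 \wedge (\overline{H_2 \xor H_3})$, collecting the pairs in $H_1$ on which $H_2$ and $H_3$ agree (that is, the pairs in $H_1$ alone together with the pairs in all three); let $P_2 = \overline{H_1} \wedge H_2 \wedge \overline{H_3}$ (the pairs in $H_2$ alone); and let $P_3 = \overline{H_1} \wedge \overline{H_2} \wedge H_3$ (the pairs in $H_3$ alone).

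A short check over the eight membership patterns of a pair confirms that $G = P_1 \vee P_2 \vee P_3$: a pair in exactly one $H_i$ lands in the corresponding $P_i$, a pair in all three lands in $P_1$, and a pair in exactly two of the $H_i$, which is a non-edge of $G$, is excluded from every $P_i$. Each piece is perfect: $P_1$ is perfect by \cref{lem:f_1}, and $P_3$ is perfect by \cref{lem:3-Xor}. The graph $P_2$ has the same form as $P_3$ up to permuting the roles of the three input graphs, and since $H_1, H_2, H_3$ are all drawn from the same class $\cX$ of equivalence graphs, \cref{lem:3-Xor} applies to $P_2$ verbatim after relabeling. Hence $G \in \cQ$, establishing $\cB_3 \subseteq \cQ$, and combining this inclusion with \cref{th:binding lower bound} yields the claimed $\Omega(x^2)$ lower bound for $\cQ$.

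I expect the only delicate point to be the bookkeeping in the case analysis verifying $G = P_1 \vee P_2 \vee P_3$, and in particular making sure that the combined piece $P_1$ genuinely absorbs both the ``$H_1$ only'' and the ``all three'' edges (so that exactly three perfect graphs suffice, matching the definition of $\cQ$ as a $3$-union); the perfectness of the three pieces themselves is then immediate from the two lemmas \cref{lem:3-Xor} and \cref{lem:f_1} established just above.
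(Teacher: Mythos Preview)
Your proposal is correct and follows essentially the same approach as the paper: you decompose $H_1 \xor H_2 \xor H_3$ into the same three pieces $H_1 \wedge (\overline{H_2 \xor H_3})$, $\overline{H_1} \wedge H_2 \wedge \overline{H_3}$, and $\overline{H_1} \wedge \overline{H_2} \wedge H_3$, invoke \cref{lem:f_1} and \cref{lem:3-Xor} for their perfectness, and then appeal to \cref{th:binding lower bound}. The paper arrives at this decomposition via the boolean identity $x \xor y \xor z = [x \wedge (\overline{y \xor z})] \vee (\overline{x} \wedge y \wedge \overline{z}) \vee (\overline{x} \wedge \overline{y} \wedge z)$, while you verify it by a direct case analysis, but the argument is the same.
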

\begin{proof}
    We will show that the 3-XOR of equivalence graphs is contained in $\cQ$, which together with \cref{th:binding lower bound} will imply the result.

    Since $x \xor y = (x \wedge \overline{y}) \vee (\overline{x} \wedge y)$, we have
    \[
        x \xor y \xor z = 
        [ x \wedge (\overline{y \xor z}) ] \vee [ \overline{x} \wedge (y \xor z) ] =
        [ x \wedge (\overline{y \xor z}) ] \vee (\overline{x} \wedge y \wedge \overline{z}) \vee (\overline{x} \wedge \overline{y} \wedge z).
    \]

    Thus, if $H_1, H_2, H_3$ are equivalence graphs, then 
    \[
        H_1 \xor H_2 \xor H_3 = [ H_1 \wedge (\overline{H_2 \xor H_3}) ] \vee (\overline{H_1} \wedge H_2 \wedge \overline{H_3}) \vee (\overline{H_1} \wedge \overline{H_2} \wedge H_3),
    \]
    and therefore by \cref{lem:f_1} and \cref{lem:3-Xor}, $H_1 \xor H_2 \xor H_3$ is a union of 3 perfect graphs.
\end{proof}

%%%%%%%%%%%%%%%%%%%%%%%%%%%%%%%%%%%%%%%%%%%%%%
\section{Model-theoretic perspective}
\label{sec:model-theory}
%%%%%%%%%%%%%%%%%%%%%%%%%%%%%%%%%%%%%%%%%%%%%%

Boolean combinations of graph classes provide a way to produce new graph classes from old. In this section, we take a model-theoretic view of this process, and compare boolean combinations to transductions, another method for producing new graph classes from old. 

From the model-theoretic point of view, boolean combinations are a composition of the two operations of expansion and quantifier-free definition. In order to obtain $G$ as a function of graphs from a class $\cX$, we are allowed to choose a graph $H \in \cX$, repeatedly expand it by other graphs from $\cX$ (using a distinct relation symbol for each edge set), and then define the new edge relation using a quantifier-free formula. By contrast, transductions allow only expansions by unary relations, but then allow for the new edge relation to be defined by an arbitrary formula. 

Boolean combinations and first-order transductions are incomparable in their expressive power. For example, first-order transductions can take the square of a graph (connecting two vertices at distance at most 2) by using an existential formula. In particular, the class of all graphs can be first-order transduced from the class of 1-subdivisions of complete graphs. This is not possible to achieve with boolean combinations as the hereditary closure of the 1-subdivisions of complete graphs has degeneracy 2, and, by \cref{cor:bdd-dgn}, any function of this class is contained in the union $\cY \cup \overline{\cY}$ for some class $\cY$ of bounded degeneracy, which cannot be the case for the class of all graphs.

On the other hand, the class of 2-unions of star forests contains $2$-subdivisions of complete graphs. However, the class of $2$-subdivisions of complete graphs cannot be first-order transduced from the class of star forests as the latter is mondaically NIP and the former is not, and first-order transductions preserve the property of being monadically NIP (see e.g.~\cite{braunfeld2022first}).

It seems natural to ask to what extent these operations could be merged. For structures of structurally bounded degree (recall \cref{sec:clique+deg1}), \cite{laskowski2013mutually} (which calls these classes \emph{mutually algebraic}) shows that we may freely overlap these structures on the same vertex set, freely expand by unary relations, and then consider all relations definable by first-order formulas, and the result will still have structurally bounded degree. However, \cite{braunfeld2022worst} shows that this merging cannot be pushed any further: if $\cX$ is a class that is not of structurally bounded degree and $\cY$ is a class that is not definable from unary relations, then there is some way to overlay the structures in $\cX$ and $\cY$ so that the result is not monadically NIP. Furthermore, \cite{braunfeld2022existential} shows that the complexity of classes that are not monadically NIP already appears when considering graphs definable by existential formulas, rather than arbitrary first-order formulas, and that boolean combinations of $\exists\forall$ suffice to produce all graphs. This places strong restrictions on what sort of formulas we may use in combination with expansions outside of the structurally bounded degree case, although there may be some space to consider, for example, classes existentially defined from overlaps of permutation graphs.

There is also some connection to the notion of set-defined classes from \cite{jiang2020regular}. A class is \emph{set-defined} if it is a subclass of the class of finite induced subgraphs of a graph definable in $(\mathbb{N}, =)$. Since this structure has quantifier elimination, this amounts to saying that there is some $d$ such that every vertex can be associated with an element of $\mathbb{N}^d$, and whether to put an edge between two vertices is determined by some boolean combination of equalities between the entries of these tuples, where comparisons between entries in different coordinates is allowed. These are precisely the classes of graphs that admit equality-based labeling schemes \cite{HWZ22}. 
%We note 
It is not hard to show that functions of the class of equivalence graphs are the subset of set-defined classes where comparisons between entries in different coordinates is not allowed. This ensures more structure since, for example, functions of equivalence graphs are $\chi$-bounded while set-defined classes need not be \cite{jiang2020regular}. 

%%%%%%%%%%%%%%%%%%%%%%%%%%%%%%%%%%%%%%%%%%%%%%
\section{Conclusions}
%%%%%%%%%%%%%%%%%%%%%%%%%%%%%%%%%%%%%%%%%%%%%%
In this paper we studied boolean combinations of graphs and how they affect various graph properties.

A natural direction for future research is to study ``boolean complexity'' of graphs with respect to specific graph classes.
Given a graph $G$ and a class of graphs $\cX$, we say that the \emph{boolean dimension of $G$ with respect to $\cX$} is the minimum number $k$ such that $G$ is a boolean combination of $k$ graphs from $\cX$. Using this notion, the fact that a class $\cY$ is a function of a class $\cX$ is equivalent to saying that the elements of $\cY$ have bounded boolean dimension with respect to $\cX$. Studying which graph classes have bounded boolean dimension with respect to some well-behaved graph classes may be a helpful tool for deriving useful graph properties about these classes. 

%TODO(VZ): continue from here
For example, moving from classes of graphs to posets (which can be seen as transitive acyclic directed graphs), one can consider the notion of boolean dimension for a class of posets with respect to the class of linear orders. This is studied in \cite{nevsetvril1989note,GNT90}, which asks the question of whether the class of planar posets has bounded boolean dimension with respect to linear orders. Related results and open problems are surveyed in \cite{barrera2020comparing}.
%For example, moving from classes of graphs to directed graphs, one can consider the notion of boolean dimension for a class $\cY$ of posets with respect to the class of linear orders. This is studied in \cite{nevsetvril1989note,GNT90}, which asks the question of whether the class of planar posets has bounded boolean dimension with respect to linear orders. Related results and open problems are surveyed in \cite{barrera2020comparing}.

Boolean dimension can further be specified by restricting the type of boolean functions that are allowed to be used. For example, by restricting ourselves to conjunctions, we obtain the notion of intersection dimension \cite{CozzensR89,KratochvilT94}; and by restricting ourselves to disjunctions, we obtain notions of cover number (see e.g. \cite{GMT24,Mar24}). Restricting to monotone boolean functions, gives a common generalization of the two, which is still not as general as arbitrary boolean functions.

\bigskip

\textbf{Acknowledgments.}
We are grateful to Nathan Harms for useful discussions on the topic of this paper and for pointing out the results in learning theory showing that stability is preserved under boolean combinations.
This work has been supported by the Royal Society (IES\textbackslash R2\textbackslash 242173). This paper is part of a project that has received funding from the 
	European Research Council (ERC) under the European Union's Horizon 2020 
	research and innovation programme (grant agreement No 810115 - Dynasnet). Samuel Braunfeld is further supported by Project 24-12591M of the Czech Science Foundation (GA\v{C}R), and supported partly by the long-term strategic development financing of the Institute of Computer Science (RVO: 67985807).

\bibliography{references}

\end{document}